\title{Iterated Minkowski sums, horoballs and north-south dynamics}
\author{Jeremias Epperlein}
\address{Jeremias Epperlein, Universit\"at Passau, Fakult\"at f\"ur Informatik und Mathematik, Innstra{\ss}e 33, 94032 Passau, Germany}
\email{jeremias.epperlein@uni-passau.de}
\author{Tom Meyerovitch}
\address{Tom Meyerovitch,
	Department of Mathematics, Ben Gurion University of the Negev, P.O.B. 653
	Be'er Sheva 8410501, Israel.}
\email{mtom@math.bgu.ac.il}
\subjclass[2000]{37B10, 37B15, 11H06, 20F65}
\keywords{Minkowski sum, Minkowski product, maximum cellular
automaton, amenability, abstract convexity structure}
\newtheorem{thm}{Theorem}[section]
\newtheorem{lem}[thm]{Lemma}
\newtheorem{defn}[thm]{Definition}
\newtheorem{prop}[thm]{Proposition}
\newtheorem{exam}[thm]{Example}
\newtheorem{quest}[thm]{Question}
\newtheorem{cor}[thm]{Corollary}
\newtheorem{rem}[thm]{Remark}
\newcommand{\CM}{\mathscr{M}}
\newcommand{\Zinfty}{\Z_{\pm \infty}}
\newcommand{\indi}{\mathbb{1}}
\newcommand{\stab}{\mathit{stab}}
\DeclareMathOperator{\Hor}{Hor}
\newcommand{\setsep}{:~}
\DeclareMathOperator{\vol}{vol}
\DeclareMathOperator{\conv}{conv}
\DeclareMathOperator{\Cayley}{Cayley}
\DeclareMathOperator{\Ext}{Ext}
\DeclareMathOperator{\Env}{Env}
\DeclareMathOperator{\Evt}{Evt}
\newcommand{\N}{\mathbb{N}}
\newcommand{\R}{\mathbb{R}}
\newcommand{\Z}{\mathbb{Z}}
\newcommand{\PP}{\mathcal{P}}
\newcommand{\CL}{\mathcal{L}}
\newcommand{\eps}{\varepsilon}
\newcommand{\Fin}{\mathit{Fin}}
\begin{document}

\maketitle
\begin{abstract}
Given a finite generating set $A$ for a group $\Gamma$, we study the map $W \mapsto WA$ as a topological dynamical system --  a continuous self-map of the compact metrizable space of subsets of $\Gamma$. 
If the set $A$ generates $\Gamma$ as a semigroup and contains the identity, there are precisely  two fixed points, one of which is attracting. This supports the initial impression that the dynamics of this map is rather trivial. 
 Indeed,  at least when  $\Gamma= \Z^d$ and $A \subseteq \mathbb{Z}^d$ a finite positively  generating set containing the natural invertible extension of the map $W \mapsto W+A$ is always topologically conjugate to the unique ``north-south'' dynamics on the Cantor set. In contrast to this, 
we show that various    natural ``geometric'' properties of the finitely generated group $(\Gamma,A)$ can be recovered from the dynamics of this map, in particular, the growth type and amenability of $\Gamma$. When $\Gamma = \Z^d$, we show that the volume of the convex hull of the generating set $A$ is also an invariant of topological conjugacy. Our study introduces,  utilizes and develops a certain convexity structure on subsets of the group $\Gamma$,  related to a new concept which we call the sheltered hull of a set. We also relate this study to the structure of  horoballs in  finitely generated groups, focusing on the abelian case. 
\end{abstract}

\section{Introduction}

In this paper we study the topological dynamical system associated to
a finitely generated group via the ``Minkowski product''.  We denote
the collection of subsets of a countably infinite group $\Gamma$ by
$\PP(\Gamma)$. This space is naturally identifiable with the space
$\{0,1\}^\Gamma$, thus naturally equipped with a topology 
turning it into a topological Cantor set.

Any finite subset $A \subseteq \Gamma$ defines a continuous map $\varphi_A:\PP(\Gamma) \to \PP(\Gamma)$ given by $\varphi_A(W):=WA$,
where
\[ WA :=  \{w a \setsep w \in W,~a \in A\}.\]
The set $WA$ is often called  \emph{the Minkowski product} of $W$ and $A$.

 Most instances in the literature  deal with the the abelian case, where the group operation is usually denoted by $+$ and the set $W + A := \left\{ w +a \setsep w \in W,~a \in A\right\}$ is called the \emph{Minkowski sum}.

 Our point of view here is to study $(\PP(\Gamma),\varphi_A)$ as a dynamical system. 
We ask how  dynamical properties of  $\varphi_A$ related to algebraic properties of $\Gamma$ and more specifically  to geometric properties of the Cayley graph of $\Gamma$ with respect to the set $A$.  
We focus on the case where $A$ is \emph{positively generating}, i.e.
$\Gamma= \bigcup_{n=1}^\infty A^n$ and $1_\Gamma \in A$.
We call such  a pair $(\Gamma,A)$ as above simply a \emph{finitely generated group}.

In this case $\lim_{n \to \infty} \varphi_A^n(W) = \Gamma$ for any
non-empty set $W \subset \Gamma$ so $\Gamma$ and $\emptyset$ are the
only fixed point for $\varphi_A$, and the forward orbit of any set in $\PP(\Gamma)\setminus \{\emptyset\}$ converges to the fixed point $\Gamma$.
Thus, the only ergodic
$\varphi_A$-invariant probability measures on $\PP(\Gamma)$ are the delta
measures $\delta_{\Gamma}$ and $\delta_{\emptyset}$.
So
from the ergodic theory viewpoint, the maps $\varphi_A$ are completely
trivial. 
Nevertheless, from the point of view of topological dynamics it turns out that the system $(\PP(\Gamma),\varphi_A)$
encodes non-trivial properties of  the finitely generated group $(\Gamma,A)$.

Call a property $P$ of finitely generated groups \emph{dynamically
  recognizable} (among a family $\mathcal{G}$ of finitely generated
groups) if for any pair of  finitely generated groups	$(\Gamma_1,A_1)$
and $(\Gamma_2,A_2)$ (in the family $\mathcal{G}$) such that
$(\PP(\Gamma_1),\varphi_{A_1}) \cong (\PP(\Gamma_2),\varphi_{A_2})$,
$(\Gamma_1,A_1)$ has the property $P$ if and only if $(\Gamma_2,A_2)$
has the property $P$.
This approach is analogous to the study of group properties only
depending on the Cayley graph in geometric group theory. 

We show that the following properties are dynamically recognizable:
\begin{itemize}
	\item Growth type (polynomial, exponential, \ldots), see \Cref{cor:growth_invariant}.
	\item Rank and volume of the convex hull of the generating set, among $\{\Z^d \setsep d \in \N \}$, see \Cref{cor:vol_conv_invariant}.
	\item The exponential growth rate among e.g. free groups,
     \Cref{cor:exp_growth_inv}. 
	\item Amenability, see \Cref{cor:amenablity_recognizable}.
\end{itemize}

From the geometric point of view, the dynamical invariants underlaying
the above results are related to a certain convexity structure we
introduce on subsets of an arbitrary  finitely generated group
$(\Gamma,A)$, and a new concept which we call the \emph{sheltered hull}. This is introduced in \Cref{sec:sheltered}. The convexity structure given by the sheltered hull is related to the notion of a \emph{horoball}, which we recall and discuss in \Cref{sec:horoball}.

The dynamical system $(\PP(\Gamma),\varphi_A)$  only depends on
the directed Cayley graph $\Cayley(\Gamma,A)$.
One can easily generalize the map $\varphi_A$ to
general countable, locally finite, directed graphs 
(see \Cref{sec:horoball}), and for many results we don't actually need
that this graph is a
Cayley graph. Hence this is the level of generality we assume if
it doesn't add further complications.

The dynamically recognizable properties above, captured by the
sheltered hull, are in some sense based on ``quantifying
non-invertibility'' of the map $\varphi_A$ in a manner which is
invariant under topological conjugacy. It is thus natural to ask if there
are dynamically recognizable properties that can be ``detected
disregarding non-invertibility''.  Thus, in
\Cref{sec:eventual_image_natural_extension} we consider ``the
invertible analog'' of $\varphi_A$: Namely, the natural extension of the
restriction of $\varphi_A$ to its eventual image (for brevity we refer
to this as ``the natural extension of $\varphi_A$''). This is a
homeomorphism with ``north-south dynamics'', which we discuss and
recall in \Cref{sec:north_south}.  In \Cref{sec:nat_ext_Z_d} we show that,
at least for $\Gamma=\Z^d$, the natural extensions are all
topologically conjugate to one another.

The first part of the paper, up to
\Cref{sec:eventual_image_natural_extension} deals with results about
general finitely generated groups (in some cases these are general
results about locally finite graphs). In the second part of the paper,
from \Cref{sec:Z_d_geo_horoballs} onwards, we specialize to the case
$\Gamma=\Z^d$. In \Cref{sec:Z_d_geo_horoballs} we discuss the
structure of horoballs in $\Z^d$ with respect to a generating set $A$
and show that up to translation these are in a natural bijection with
the faces of the convex hull of $A$ (viewed as a subset of $\R^d$). En
route we recall some old results about the structure of iterated
Minkowski sums in $\Z^d$. In \Cref{sec:Z_d_top_dynam}, still working
with $\Gamma=\Z^d$, we consider the space of horoballs (and its
closure) as a topological space and a $\Z^d$-dynamical system. In
particular, we observe that its homeomorphism type is uniquely
determined by  the rank $d$ (\Cref{cor:Hor_Z_d_top_invariant}). We
also provide an explicitly  checkable characterisation of the topological conjugacy class of the associated dynamical system (\Cref{prop:conjugacy_of_Hor_ZD_A}). Our proof in  \Cref{sec:nat_ext_Z_d} that the natural extension of $(\PP(\Z^d),\varphi_A)$ is perfect, thus topologically conjugate to the unique north-south dynamics on the Cantor set, is contrasted in \Cref{sec:eventual-image-zd}, where we show that the topological structure of the eventual image is sensitive to the specific generating set $A \subset \Z^d$, already when $d=2$. In \Cref{sec:factoring} we discuss the problem of when $(\PP(\Gamma_1),\varphi_{A_1})$ factors onto  $(\PP(\Gamma_2),\varphi_{A_2})$, a problem which for the most part we have not been able to resolve.

\subsection*{Acknowledgements}
We thank Ville Salo for valuable comments on a preliminarily version of this work, and for sharing with us examples of non-Busemann horoballs.  
Jeremias Epperlein thanks Ben-Gurion University of the Negev, where this work
was written, for its hospitality and stimulating atmosphere. Tom Meyerovitch thanks the University of British Columbia and the Pacific Institute of Mathematical  Sciences for an  excellent Sabbatical, partly overlapping this work. This
research was supported by a post-doctoral research fellowship from
the Minerva Foundation and by the Israel Science Foundation (grant number 1052/18).

\section{Horoballs in directed graphs and finitely generated groups}\label{sec:horoball}

\subsection{Quasi-metrics on directed graphs}

Let $G=(V(G),E(G))$ be a countably infinite,  locally finite
directed graph. We assume throughout that $G$ is strongly connected, meaning that for every $v,w \in V(G)$ there is a directed path in $G$ from $v$ to $w$.

Denote the collection of subsets of $V(G)$ by $\PP(V(G))$,
which we identify with $\{0,1\}^{V(G)}$, equipped with the product
topology.  
Define a continuous self-map
$\varphi_G:\PP(V(G)) \to \PP(V(G))$ by
\[\varphi_G(W) := W \cup \{v \in V(G):~ \exists w \in W \mbox{ s.t. } (w,v) \in E(G)\}.\]
Via the identification of subsets of $V(G)$ with their characteristic
functions this can be rewritten as follows:
$$\indi_{\varphi_G(W)}(w) = \max \{ \indi_W(v) \setsep (w,v) \in E(G) \mbox{ or } w=v\}.$$

Under the assumption that  the graph $G$ is strongly connected, there are precisely two
fixed points for $\varphi_G$, namely the $V(G)$ and $\emptyset$. Also,
for any $W \in \PP(V(G)) \setminus \{\emptyset\}$,
$\lim_{n \to \infty} \varphi_G^n(W)= V(G)$. For $v,w \in V(G)$ define
\[d(w,v)=\min \{n \in \N_0 \setsep v \in \varphi_G^n(\{w\})\}.\] The
function $d:V(G) \times V(G) \to \N_0$ is a \emph{quasi-metric},
meaning it satisfies the axioms of a metric, apart from symmetry.  For
$w,v \in V(G)$ $d(w,v)$ is the minimal number of edges in a
directed path from $w$ to $v$.  For background on quasi-metrics,
sometimes also called ``non-symmetric metrics'', see for instance
\cite{kellyBitopological1963,wilsonQuasiMetric1931}. In the case where
$G$ is an undirected graph (which we think of as a directed graph with
edges going both ways), $d$ is called the graph metric. The reason we
allow for directed graphs is to handle non-symmetric
generating sets in finitely generated groups.  We can also express
$\varphi_G^n$ directly in terms of the quasi-metric as
$\varphi_G^n(W)=\{v \in V(G) \setsep \exists w \in W: d(w,v)\leq
n\}$. Many notions in metric geometry still make sense in our setting:

We define a \emph{geodesic ray} in a graph $G$ as
a sequence of vertices
$(\gamma_{n})_{n \in \N_{0}}$ in $V(G)$ such that
the shortest directed path from $\gamma_{i}$ to $\gamma_j$ for $i>j$
has length $i-j$. In particular, this means that
$(\gamma_{n+1},\gamma_n)_{n \in \N_0} \in E(G)$ for every $n$.

For a vertex $v \in V$, we refer to
$\varphi_G^n(\{v\})$ as \emph{the ball of radius $n$ centered at $v$}. In the
case where the graph $G$ is undirected, this is actually a ball with
respect to the graph metric.  In general, for $W \subseteq V(G)$ one can think of
$\varphi_G^n(W)$ is \enquote{the set of elements accessible from $W$ in
$n$ steps}. 

The space of balls is clearly
invariant under $\varphi_G$ but it is not a closed subset of $\PP(V(G))$. The new elements
arising in the closure  are called horoballs.

 \begin{defn}\label{defn:horoball}
Let $G$ be a locally finite graph.
 	A  limit point in $\PP(V(G))$ of
   a sequence of balls $(\varphi_{G}^{n_i}(\{v_i\}))_{i \in \N}$
   with radii $(n_i)_{i\in \N}$ tending to infinity is called a \emph{horoball} in $G$ if it is  non-empty and has
   non-empty complement. Let $\Hor(G)$ be the set of all horoballs
   in $G$.
\end{defn}

The definition of horoballs is due to Gromov
\cite{gromovHyperbolic1978} and comes from hyperbolic geometry.
It makes sense in any quasi-metric space. Gromov's definition
is slightly different from ours, but we will see later in this section
that they are equivalent. For the definition in terms of limits of balls, see for instance
 \cite{meyerovitchSaloPointwisePeriodicity2019}. See also
 \cite{auslanderGlasnerWeissRecurrence2007}, where horoballs ``tangent
 to a base point'' are called ``cones'' (in the context of finitely
 generated groups, where the base point is the identity element of the
 group).
 
\subsection{Busemann balls and Gromov's
 horofunction boundary}
Busemann balls are  horoballs coming from geodesic rays:
 \begin{defn}\label{defn:busemannball}
  A \emph{Busemann ball} in $G$ is a set $H \in
   \PP(V(G))\setminus \{\emptyset, V(G)\}$ of the
   form $\bigcup_{r=0}^\infty \varphi_G^r(\{\gamma_r\})$, where
   $(\gamma_{n})_{n \in \N_{0}} \in V(G)^{\N_{0}}$ is a geodesic ray. Note that this is
   an increasing union, so $H$ is indeed a limit of balls. 
 \end{defn}

 There is a slightly more classical approach to horoballs via Gromov's
 horofunction boundary \cite{gromovHyperbolic1978}, which we now
 recall (restricting to the case where the underlaying quasi-metric
 space is a locally finite directed graph). See
 \cite{walshHorofunction2014} and references therein for background and
 further details.  Fix a base vertex $v_0$ in $V(G)$ and consider the
 space $C(V(G),\Z)$ of integer valued continuous functions on $V(G)$
 with the topology of pointwise convergence (in a more general setting
 one needs to consider real-valued functions, with the topology of
 uniform convergence on compact sets).
We have an embedding $\varrho: V(G) \to C(V(G),\Z)$ via
$w \mapsto \varrho_w$ with $\varrho_w(v)=d(w,v)-d(w,v_0)$.
The elements of $C(V(G),\Z)$ which are on the closure of
$\varrho(V(G))$ but not in $\varrho(V(G))$ are called
\emph{horofunctions}.  By definition, for every $x \in V(G)$, the
sublevel sets of the function $\varrho_x$ are balls around $x$. As in
\cite{walshHorofunction2014}, horoballs are sometimes defined as
sublevel sets of horofunctions. The following proposition verifies
that in our setting sublevel sets of horofunctions are exactly
accumulation points of balls (one implication follows directly from
\cite[Proposition $2.8$]{walshHorofunction2014}:
\begin{prop}\label{lem:horofunction-sublevel}
	Let $F$  be a horofunction on $G$. For every $r \in \Z$ the
	sublevel set $H_{F,R}:=\{v \in V(G) \setsep F(v)\leq R\}$
	is either a horoball or equal to $V(G)$. 
	Conversely, if $H$ is a horoball, then there is 
	$R \in \Z$ and a horofunction $F$ such that
	$H=H_{F,R}$.
\end{prop}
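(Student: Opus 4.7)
The plan is to handle the two implications separately, using the basic observation that sublevel sets of the embedded functions $\varrho_w$ are explicit balls, namely $H_{\varrho_w,R} = \varphi_G^{d(w,v_0)+R}(\{w\})$ whenever $d(w,v_0)+R \ge 0$. For the forward implication, I would write the horofunction $F$ as a pointwise limit $F = \lim_i \varrho_{w_i}$. Since $F$ and each $\varrho_{w_i}$ are integer-valued, pointwise convergence forces $\varrho_{w_i}(v) = F(v)$ for all $i$ large (depending on $v$), so $H_{\varrho_{w_i},R} \to H_{F,R}$ in the product topology on $\PP(V(G))$. The hypothesis $F \notin \varrho(V(G))$ prevents $(w_i)$ from having a constant subsequence, and local finiteness of $G$ then forces $d(w_i,v_0) \to \infty$, so the radii tend to infinity. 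Consequently $H_{F,R}$ is a subsequential limit of balls with unbounded radii; if it is non-empty and proper it is a horoball, while the case $H_{F,R} = V(G)$ is covered by the statement. (The degenerate case $H_{F,R} = \emptyset$, which can occur when $F$ is bounded below on $V(G)$, is not strictly covered; it seems to require the tacit restriction $R \ge 0$ or an implicit non-emptiness assumption.)

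For the converse, let $H = \lim_i \varphi_G^{n_i}(\{v_i\})$ be a horoball with $n_i \to \infty$. First I would show that $(v_i)$ leaves every finite subset of $V(G)$, since otherwise a constant subsequence $v_i \equiv v^\ast$ would produce $\varphi_G^{n_i}(\{v^\ast\}) \nearrow V(G)$, contradicting properness of $H$. The triangle inequality yields the pointwise bounds $-d(v,v_0) \le \varrho_{v_i}(v) \le d(v_0,v)$, so $(\varrho_{v_i})$ is a pointwise bounded family of integer-valued functions, and a diagonal extraction provides a subsequence along which $\varrho_{v_i} \to F$ pointwise. Next I would analyse the integer sequence $R_i := n_i - d(v_i,v_0)$: if $R_i \to +\infty$ then the upper bound $\varrho_{v_i}(v) \le d(v_0,v)$ forces every $v$ eventually into $\varphi_G^{n_i}(\{v_i\})$, giving $H = V(G)$, while $R_i \to -\infty$ symmetrically gives $H = \emptyset$. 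Since neither is allowed for a horoball, $(R_i)$ is bounded and, after one further refinement, may be taken constant equal to some $R \in \Z$.

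The equality $H = H_{F,R}$ then follows by unwinding: $v \in \varphi_G^{n_i}(\{v_i\})$ is equivalent to $\varrho_{v_i}(v) \le R_i$, which by integrality and pointwise convergence is eventually equivalent to $F(v) \le R$. The main obstacle I anticipate is verifying that the constructed $F$ is actually a horofunction, i.e.\ that $F \notin \varrho(V(G))$. If $F = \varrho_w$ for some $w$, the identity $H = H_{\varrho_w,R}$ would force the horoball $H$ to coincide with a ball centered at the fixed vertex $w$; I would rule this out by combining $v_i \to \infty$ with the asymptotic tightness $d(v_i,v_0) = d(v_i,w) + d(w,v_0) + o(1)$ enforced by $F = \varrho_w$, and by producing a test vertex whose inclusion in $\varphi_G^{n_i}(\{v_i\})$ is incompatible with its status in the putative limit ball $\varphi_G^{d(w,v_0)+R}(\{w\})$. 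This geometric step, combining strong connectedness and local finiteness with the quasi-metric triangle inequalities, is the most delicate ingredient in the argument.
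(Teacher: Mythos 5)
Your argument follows the same basic route as the paper's: express sublevel sets of $\varrho_w$ as balls $\varphi_G^{d(w,v_0)+R}(\{w\})$ and pass to limits in both directions, using local finiteness for diagonal extraction. That said, there are three places where the details diverge or need correcting.

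First, the ``degenerate case'' you flag in the forward direction does not occur: $H_{F,R}$ is never empty for a horofunction $F$. The paper's argument is that every $B_k = \varphi_G^{d(w_k,v_0)+R}(\{w_k\})$ meets the \emph{fixed finite set} $\{v_0\}\cup\{v \setsep d(v,v_0)=-R\}$ (for $R\ge 0$ because $\varrho_{w_k}(v_0)=0\le R$; for $R<0$ because a geodesic from $w_k$ to $v_0$ passes through a vertex at distance $-R$ to $v_0$, which lies in $B_k$ once $d(w_k,v_0)\ge -R$), so by compactness $H_{F,R}$ meets it as well. Equivalently, a horofunction is always unbounded below. You should remove the hedge and supply this; on the other hand your observation that the radii $d(w_i,v_0)+R$ tend to infinity (so the limit is genuinely a limit of balls of unbounded radius) is correct and fills in a step the paper takes for granted.

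Second, in the converse direction the paper bounds $R_k := r_k - d(w_k,v_0)$ \emph{before} extracting $F$, by a short argument you should compare against yours: pick $u\in\varphi_G(H)\setminus H$ (non-empty since $H$ is neither $\emptyset$ nor $V(G)$); for large $k$ one has $d(w_k,u)=r_k+1$, whence $|R_k| = |1+d(w_k,u)-d(w_k,v_0)| \le 1+d(v_0,u)+d(u,v_0)$. Your route (extract $F$ first, then rule out $R_k\to\pm\infty$ via the limit becoming $V(G)$ or $\emptyset$) also works, but needs an extra subsequence step if $R_k$ merely fails to be bounded rather than actually diverging. Third, you are right that one must check $F\notin\varrho(V(G))$ — the paper does gloss over this — but the ``asymptotic tightness'' argument you sketch is heavier than needed. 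The clean route is: if $F=\varrho_w$ then $H=H_{\varrho_w,R}=\varphi_G^{d(w,v_0)+R}(\{w\})$ is a ball of finite radius, hence a finite set; but every horoball is infinite, since by the geodesic-extraction argument of \Cref{thm:Busemannball} it contains a Busemann ball and hence an infinite geodesic ray. This gives the contradiction directly without the delicate metric bookkeeping you anticipate.
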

\begin{proof}
Let $(\varrho_{w_k})_{k \in \N}$ be a sequence of functions as above converging
to a horofunction $F$. Then
\begin{align*}
  B_k &:= \{v \in V(G) \setsep
  \varrho_{w_k}(v)\leq R\} = \{v \in V(G) \setsep d(w_k,v) \leq
          d(w_k,v_0) + R\} \\
  &\phantom{:}= \varphi_G^{d(w_k,v_0)+R}(\{w_k\})
\end{align*}
is a sequence of balls converging to $H_{F,R}$.

The set $H_{F,R}$ is non-empty because every $B_k$  has a non-empty
intersection with $\{v_0\} \cup \{v \in V(G)
\setsep d(v,v_0)=-R\} $. Thus, $H_{F,R}$ is either a horoball or equal to $V(G)$.


Conversely let $(\varphi_G^{r_k}(\{w_k\}))_{k \in \N}$ be a sequence of balls converging to the horoball $H$.
We will show that $r_k-d(w_k,v_0)$ is bounded.
Let $u$ be a point on the \enquote{boundary of $H$}, namely $u \in \varphi_G(H) \setminus H$.
For sufficiently large $k$ we have $d(w_k,u)=r_k+1$
and hence (keeping in mind that $d$ is not
necessarily symmetric),
\[|r_k-d(w_k,v_0)| = |1+d(w_k,u)-d(w_k,v_0)| \leq 1+d(v_0,u) + d(u,v_0).\]
Therefore we can find a subsequence of balls
$\varphi_G^{r_k}(\{w_k\})$
converging to $H$ with $r_k-d(w_k,v_0)$
constant equal to $R \in \N_0$.


Next we will show that the corresponding functions $\varrho_{w_k}$ are
uniformly bounded. Namely for $v \in V(G)$ we have
$|\varrho_{w_k}(v)|=|d(w_k,v)-d(w_k,v_0)|\leq d(v,v_0) + d(v_0,v)$ by the
triangle inequality. Therefore we can select a subsequence of
$(w_k)_{k \in \N}$ such that $\varrho_{w_k}$ converges to a
horofunction $F$. But then $H_{F,R}$ is the limit of the
sequence of sets
\[
	\{v \in V(G) \setsep \varrho_{w_k}(v) \leq R=r_k-d(w_k,v_0)\}
	= 
	\{v \in V(G) \setsep d(w_k,v) \leq r_k\}
	=
	\varphi_G^{r_k}(\{w_k\})
\] and hence $H_{F,R}=H$.	
\end{proof}
Thus, for fixed $R \in \Z$,  the map $F \mapsto H_{F,R}$ from the
horofunction boundary to the space of horoballs (and possibly the
whole vertex set) is surjective. 
It is natural to ask the following.
\begin{quest}
  When is the map $F \mapsto H_{F,R}$ from the horofunction boundary
  to the space of horoballs (and possibly the whole vertex set)
  injective for fixed $R \in \Z$?
\end{quest}


If we take the limit along a geodesic ray $\gamma$ starting in
$x_0 \in V(G)$, the functions $\varrho_{\gamma_k}$ converge to a
special kind of horofunction, called a \emph{Busemann function}, see
\cite{rieffelGroupAlgebrasCompact2002, walshActionNilpotentGroup2011}.
The sublevel sets of Busemann functions are precisely the Busemann
balls.  There are known examples of Cayley graphs with horofunctions
which are not Busemann functions
\cite{walshBusemann2009,walshNilpotent2011,,websterWinchester2006}.
In response to a question raised in a preliminary version of this
work, Salo \cite{saloExamples2020} provided various examples of Cayley
graphs with non-Busemann horoballs and showed that connectedness of every
horoball is equivalent to ``almost-convexity'' as introduced by Cannon
\cite{cannon1987almostConvex}. Furthermore, Salo shows that the
lamplighter group has horoballs which are not even coarsely connected.
%
The following observation shows that horoballs that are minimal with respect to inclusion are Busemann:

\begin{prop}\label{thm:Busemannball}
	Let $H$ be a horoball and $v \in H$. There is a Busemann ball
	$B$ such that $v \in B \subseteq H$.
\end{prop}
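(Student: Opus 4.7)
My strategy is to construct, for the given $v \in H$, a geodesic ray $(\gamma_n)_{n \in \N_0}$ with $\gamma_0 = v$ as a diagonal limit of reversed geodesics running into $v$ from the centers of a defining sequence of balls for $H$, and to check that the associated Busemann ball is contained in $H$. Write $H = \lim_{i \to \infty} \varphi_G^{n_i}(\{v_i\})$ with $n_i \to \infty$; after discarding finitely many terms we may assume $v \in \varphi_G^{n_i}(\{v_i\})$, so $d_i := d(v_i, v) \leq n_i$. I first claim $d_i \to \infty$. Otherwise, by local finiteness the set $\{u \in V(G) \setsep d(u,v) \leq D\}$ is finite for every $D$, so some subsequence of $(v_i)$ is constant equal to a single vertex $w$; strong connectivity together with $n_i \to \infty$ then forces $H = \lim_i \varphi_G^{n_i}(\{w\}) = V(G)$, contradicting that $H$ is a horoball.

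For each $i$, fix a directed geodesic $(u_0^{(i)}, u_1^{(i)}, \ldots, u_{d_i}^{(i)})$ from $v_i = u_0^{(i)}$ to $v = u_{d_i}^{(i)}$ and reindex by $\tau_i^{(k)} := u_{d_i - k}^{(i)}$ for $0 \leq k \leq d_i$. Since a subpath of a geodesic is itself a geodesic, the reversed sequence satisfies $(\tau_i^{(k+1)}, \tau_i^{(k)}) \in E(G)$ and $d(\tau_i^{(a)}, \tau_i^{(b)}) = a-b$ for $a > b$. Because the in-degree of every vertex is finite, a K\"onig-style diagonal extraction produces a single subsequence $(i_s)$ and vertices $\gamma_0 = v, \gamma_1, \gamma_2, \ldots$ such that for every $k \in \N_0$ and every sufficiently large $s$, $\tau_{i_s}^{(k)} = \gamma_k$. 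Since $d_{i_s} \to \infty$, every $\gamma_k$ is defined and the geodesic relations pass to the limit, producing an infinite geodesic ray $(\gamma_n)_{n \in \N_0}$.

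It remains to verify $B := \bigcup_{r \geq 0} \varphi_G^r(\{\gamma_r\}) \subseteq H$. Given $u \in B$, pick $r$ with $d(\gamma_r, u) \leq r$; for all sufficiently large $s$ we have $\tau_{i_s}^{(r)} = \gamma_r$, and as $\gamma_r$ lies on the geodesic from $v_{i_s}$ to $v$ at distance $d_{i_s} - r$ from $v_{i_s}$, the triangle inequality gives $d(v_{i_s}, u) \leq (d_{i_s} - r) + r = d_{i_s} \leq n_{i_s}$, so $u \in \varphi_G^{n_{i_s}}(\{v_{i_s}\})$; passing to the limit yields $u \in H$. Therefore $v \in B \subseteq H \subsetneq V(G)$, so $B$ is a nonempty proper Busemann ball containing $v$. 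The only step that truly requires care is the reduction $d_i \to \infty$, where both local finiteness and strong connectivity of $G$ are used essentially to exclude the degenerate limit $H = V(G)$; everything else is a standard diagonal compactness argument combined with the triangle inequality.
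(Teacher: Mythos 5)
Your proof is correct and uses the same core construction as the paper: reverse geodesics from the centers of the defining balls to $v$, then extract a limiting geodesic ray via finite in-degree and a diagonal (K\"onig) argument. You make explicit two details the paper's terse proof leaves implicit---the check that $d(v_i,v)\to\infty$ (needed so the limit is genuinely an infinite ray, not a finite path, ruling out $H=V(G)$ via strong connectivity) and the triangle-inequality verification that the resulting Busemann ball lies inside $H$---but the approach is the same.
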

\begin{proof}
  Let $\varphi_G^{r_k}(\{w_k\})$ be a sequence of balls converging to
  $H$.  For each $k$, let 
  $\gamma^k= (\gamma^k_{\ell_k},\gamma^k_{\ell_k-1}, \dots,\gamma^k_{0})$ be a shortest path
  in $G$
  starting in $w_k=\gamma^k_{\ell_k}$ and ending in $v =\gamma^k_0$. 
    Because $G$ is locally finite, and in particular has finite
    in-degrees, we can assume that for every $\ell \in \N_{0}$
    the sequence
    $(\gamma^{k}_{\ell})_{k \in \N_{0}}$ stabilizes, hence
    we can assume that the sequences $\gamma^k$
  converge to a geodesic ray $\gamma= (\gamma_k)_{k \in \N_{0}}$ with
  $\gamma_0=v$. The Busemann ball corresponding to the geodesic ray
  $\gamma$ is contained in $H$ and contains $v$.
\end{proof}

Since unions of horoballs will play the role of the eventual image for the dynamical systems $(\PP(V(G)),\varphi_G)$,
we mention the following conclusion of \Cref{thm:Busemannball}:
\begin{cor}
Every set which is a union of horoballs is also a union of Busemann balls.
\end{cor}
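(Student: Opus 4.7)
The plan is to derive this directly from \Cref{thm:Busemannball}, which is precisely the pointwise version of what the corollary asserts. The strategy is to produce, for every point $v$ in the given union $S$ of horoballs, a Busemann ball $B_v$ sandwiched between $\{v\}$ and $S$, and then to reconstitute $S$ as the union of these $B_v$.

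More concretely, suppose $S = \bigcup_{i \in I} H_i$ where each $H_i$ is a horoball. First I would fix an arbitrary $v \in S$, pick some index $i = i(v) \in I$ with $v \in H_i$, and apply \Cref{thm:Busemannball} to the horoball $H_i$ and the point $v$ to obtain a Busemann ball $B_v$ satisfying $v \in B_v \subseteq H_i$. In particular $B_v \subseteq S$.

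Then one observes the two inclusions: $\bigcup_{v \in S} B_v \subseteq S$ since each $B_v$ lies in some $H_i \subseteq S$, and $S \subseteq \bigcup_{v \in S} B_v$ since every $v \in S$ lies in its own $B_v$. Hence $S = \bigcup_{v \in S} B_v$, exhibiting $S$ as a union of Busemann balls.

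There is really no main obstacle here; all the work has been done in \Cref{thm:Busemannball}, and the corollary is a one-step formal consequence. The only minor subtlety worth mentioning is that one should remark that $B_v$ is genuinely a Busemann ball (in particular nonempty and a proper subset of $V(G)$), which is guaranteed by the statement of \Cref{thm:Busemannball} since $H_i$ itself is assumed to be a horoball (hence a proper nonempty subset of $V(G)$) and $v \in B_v \subseteq H_i$.
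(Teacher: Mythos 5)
Your proof is correct and is the same one-line deduction from \Cref{thm:Busemannball} that the paper has in mind; indeed the paper states this corollary without proof precisely because the argument you give is the obvious one. No gaps.
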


As we will see in \Cref{sec:Z_d_geo_horoballs}, in every Cayley graph of  $\Z^n$  there are 
only finitely many horoballs up to translation.  It is thus natural to ask:
\begin{quest}
	Which Cayley graphs allow for finitely many horoballs up to translation?
 \end{quest}

Tointon and Yadin \cite[Conjecture $1.3$]{tointonYadin2016} ask if
groups of polynomial growth admit finitely many horofunctions, and
recall an observation  of Karlsson that an affirmative solution would
yield an alternate proof of Gromov's theorem on groups of polynomial growth.

\subsection{Iterated Minkowski products, positively generating sets and Cayley graphs} 
Recall that a subset $A$ in a group $\Gamma$ is called \emph{positively
    generating} if $\Gamma=\bigcup_{n=1}^\infty A^n$.  This means that
  $A$ is a generating set for $\Gamma$ considered as a semigroup.
  Throughout the paper when we consider a finitely generated group $(\Gamma,A)$ we will assume $A$ is a finite positively generating set which contains the identity, although we will not necessarily  assume $A$ is symmetric. The Cayley graph $\Cayley(\Gamma,A)$ for the finitely generated group $(\Gamma,A)$ has vertex set $\Gamma$ and edges of the form $(g,ga)$ with $g \in \Gamma$ and $a \in A$.
In this case the map $\varphi_{\Cayley(\Gamma,A)}$ coincides with the Minkowski product maps
$\varphi_A$ given by
\[\varphi_A(W) := W A.\]
Note that $\varphi_A$ is a cellular automaton over the group $\Gamma$,
in the sense that it is a continuous map that commutes with the
$\Gamma$ action of translation from the left.
 \begin{defn}\label{defn:horoball_grp}
For a finitely generated group $(\Gamma,A)$ an
   \emph{$A$-horoball} is a non-empty subset of $\Gamma$ that is the
   limit of balls $\{g_nA^{r_n}\}$ that is neither a ball itself
   nor the whole set $\Gamma$.  We abbreviate $ \Hor(\Cayley(\Gamma,A))$ by $\Hor(\Gamma,A)$.
 \end{defn}

 \begin{exam}\label{exam:horoball-z2}
 	Consider $\Gamma=\Z^2$ with the generating set $A=\{-1,0,1\}^2$.
 	Then horoballs are either translated vertical or horizontal halfspaces
 	or translated quadrants. As we will show, up to translation there are therefore eight of them, corresponding to the $4$ edges and $4$ vertices of the convex hull of $A$, which is a square.
 \end{exam}
 
 We note that
 $\overline{\Hor(\Gamma,A)}=\Hor(\Gamma,A) \cup \{\emptyset, \Gamma\}$
 is a closed $\Gamma$-invariant subset of $\PP(\Gamma)$. It is natural to wonder what properties of the group $\Gamma$ or of the generating set $A$ can be extracted from the topology of $\overline{\Hor(\Gamma,A)}$ or from the dynamics of the $\Gamma$ action on $\overline{\Hor(\Gamma,A)}$. 

\begin{quest} 
	Let  $A_1,A_2$ be two positively generating sets for $\Gamma$. Is it the case that $\overline{\Hor(\Gamma,A_1)}$ and $\overline{\Hor(\Gamma,A_2)}$ are  homeomorphic?
\end{quest}
We will later provide an affirmative answer
 in the particular case $\Gamma =\Z^d$ (see \Cref{cor:Hor_Z_d_top_invariant} below).

\section{The sheltered hull}\label{sec:sheltered}
From now on let $G$ be a locally finite strongly connected
directed graph. Our next goal is to introduce a convexity
structure on the vertices of $G$. We will  use this convexity structure in  later sections  to extract an invariant  of topological conjugacy. This new convexity  structure might also be of independent interest from the point of view of geometric group theory.


\begin{defn}
	Given  $W \subseteq V(G)$ and $r>0$ we define the
   \emph{$r$-sheltered hull} of $W$ to be
\begin{equation}\label{eq:r_sheltered_def}
S_G^r(W):=\{v \in V(G) \setsep \varphi_G^r(\{v\}) \subseteq \varphi_G^r(W)\}.
\end{equation}

The \emph{sheltered hull} of $W$ is then given by
\begin{equation}\label{eq:sheltered_def}
S_G(W):=\bigcup_{r=1}^\infty S^r_G(W).
\end{equation}
We call a set $W \subseteq V(G)$ \emph{sheltered} if it agrees with its sheltered hull.
\end{defn}




\begin{rem}
  In \enquote{mathematical morphology}, a subfield of image
  analysis, the operation $W \mapsto S^r_G(W)$ is called
  \emph{closing}, see e.g. \cite{serraImageAnalysisMathematical1982}.
\end{rem}

\begin{rem}
The definition of the sheltered hull naturally extends to general metric and quasi-metric spaces. 
In Euclidean space, the closure of the sheltered hull of a set agrees with the  closed convex hull. 
\end{rem}

We denote by $\overline{G}$ the graph with the same vertex set as $G$ and
the directions of edges in $G$ reversed, that is,
$E(\overline{G})=\{(w,v) \setsep (v,w) \in E(G)\}$.

There is an interesting characterization of the complement of $S_G^r(W)$ in terms of $r$-balls of $\overline{G}$:
\begin{prop}\label{prop:complement-shel-r-a-w}
	A vertex $v \in V(G)$ is contained in the complement of $S^r_G(W)$ if and only if it
	is covered  by an $r$-ball in  $\overline{G}$ which is disjoint from $W$. More precisely, 
	\begin{align*}
	V(G) \setminus S^r_G(W)&=\bigcup \{ \varphi_{\overline{G}}^r(\{u\}) \setsep 
	u \in V(G), \;
	\varphi_{\overline{G}}^r(\{u\}) \cap W = \emptyset\}.	
	\end{align*}
\end{prop}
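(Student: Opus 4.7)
The plan is to unravel both sides of the asserted equality using the natural duality between directed distances in $G$ and in $\overline{G}$, namely the identity $d_G(v,u) = d_{\overline{G}}(u,v)$, which is immediate from the definition of the edge-reversal $\overline{G}$. This translates into the pointwise statement
\[
u \in \varphi_G^r(\{v\}) \iff v \in \varphi_{\overline{G}}^r(\{u\}),
\]
which will be the workhorse of the proof.

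First I would negate the definition of membership in $S_G^r(W)$: a vertex $v$ fails to lie in $S_G^r(W)$ iff there exists $u \in \varphi_G^r(\{v\})$ with $u \notin \varphi_G^r(W)$. Using the duality identity, the condition $u \in \varphi_G^r(\{v\})$ becomes $v \in \varphi_{\overline{G}}^r(\{u\})$. Next I would rewrite $u \notin \varphi_G^r(W)$ as the statement that no $w \in W$ satisfies $u \in \varphi_G^r(\{w\})$; applying the duality again turns this into: no $w \in W$ lies in $\varphi_{\overline{G}}^r(\{u\})$, i.e.\ $\varphi_{\overline{G}}^r(\{u\}) \cap W = \emptyset$.

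Chaining these equivalences yields: $v \notin S_G^r(W)$ iff there exists $u \in V(G)$ with $v \in \varphi_{\overline{G}}^r(\{u\})$ and $\varphi_{\overline{G}}^r(\{u\}) \cap W = \emptyset$, which is precisely the statement that $v$ belongs to the right-hand-side union. This establishes the claim.

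Since the argument is purely definitional, there is no serious obstacle; the only care needed is to make sure the duality $u \in \varphi_G^r(\{v\}) \Leftrightarrow v \in \varphi_{\overline{G}}^r(\{u\})$ is stated and applied in the correct direction (since $d_G$ is not symmetric), and that both quantifier swaps pass through cleanly.
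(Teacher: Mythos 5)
Your proof is correct and is essentially the same as the paper's: both hinge on the duality $u \in \varphi_G^r(\{v\}) \iff v \in \varphi_{\overline{G}}^r(\{u\})$ and the observation that $u \notin \varphi_G^r(W)$ is equivalent to $\varphi_{\overline{G}}^r(\{u\}) \cap W = \emptyset$. The paper presents the two inclusions separately while you phrase it as a chain of equivalences, but the content is identical.
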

\begin{proof}
  Suppose $u \in V(G)$ and
  $\varphi_{\overline{G}}^r(\{u\}) \cap W = \emptyset$ or, in other
  words, $u \not\in \varphi_G^r(W)$.
  Let $v \in \varphi_{\overline{G}}^r(\{u\})$.  Then
  $u \in \varphi_G^r(\{v\})$  and therefore
  $\varphi_G^r(\{v\}) \not\subseteq \varphi_G^r(W)$. Thus
  $v \not\in S^r_G(W)$.
	
	Now let $v \in V(G) \setminus S_G^r(W)$. Then $\varphi_{G}^r(\{v\})$ contains 
	at least one element $u$ not contained in $\varphi_G^r(W)$.
	This implies $\varphi_{\overline{G}}^r(\{u\}) \cap W = \emptyset$.
\end{proof}

In analogy to \Cref{prop:complement-shel-r-a-w} we can characterize the complement of $S_G(W)$
as the union of all horoballs in $\overline{G}$ disjoint from $W$. See \Cref{fig:horoball} for an illustration.

\begin{figure}
	\begin{center}
	\includegraphics[width=0.5\textwidth]{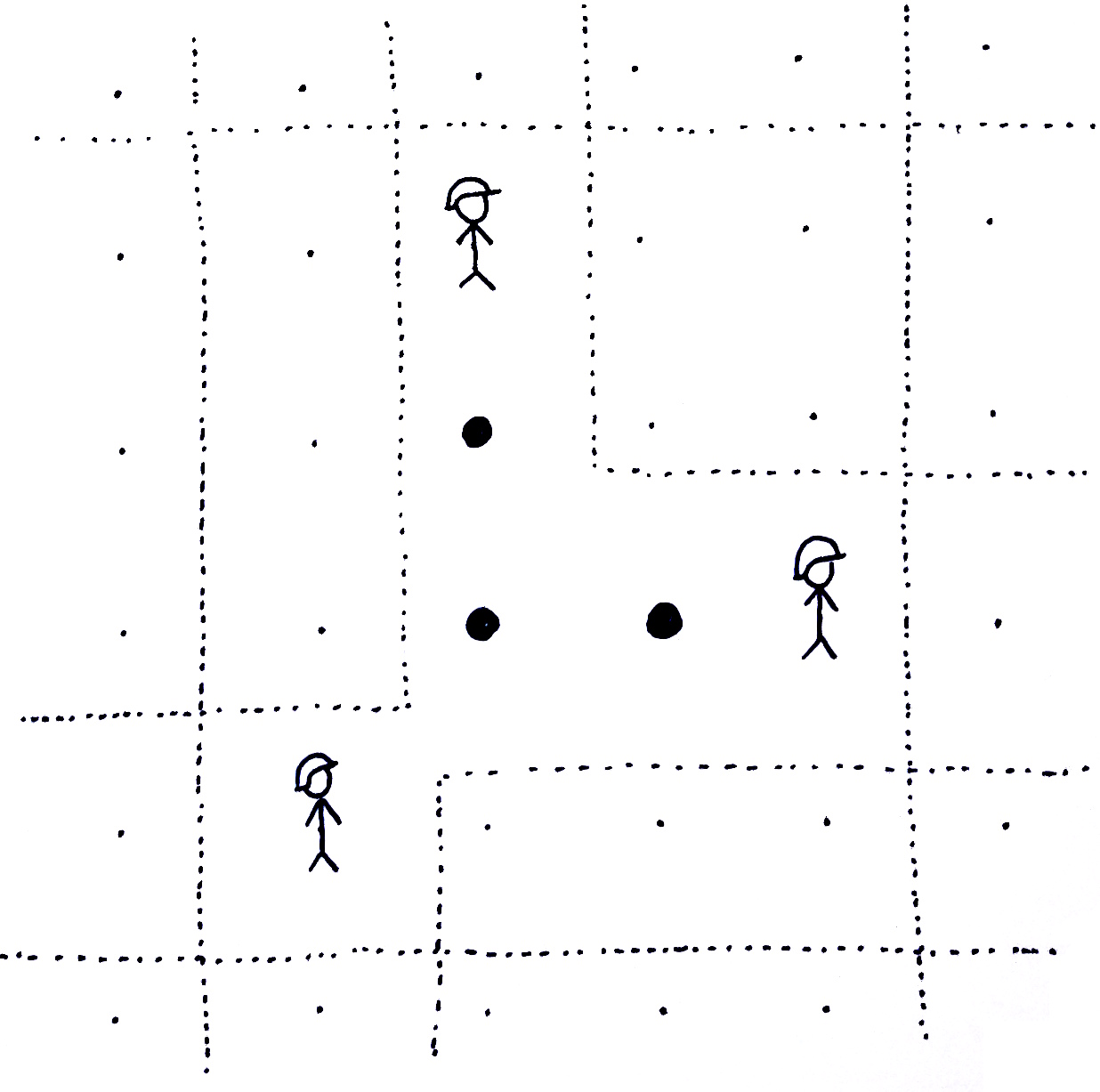}
	\end{center}
	\caption{The sheltered hull of three elements (depicted by little
     guards providing shelter from the evil horoballs)
	of $\Z^2$ with generator $\{-1,0,1\}^2$ and 
	the horoballs covering the complement.}
	\label{fig:horoball}
\end{figure}

\begin{prop}\label{prop:complement-shel-a}
	For every non-empty subset of $W \subseteq \Gamma$ we have
	\[V(G) \setminus S_G(W) = \bigcup\{H \in \Hor(\overline{G}) \setsep H \cap W = \emptyset \}.\]
\end{prop}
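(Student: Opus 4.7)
The plan is to prove the two set inclusions separately, in each case reducing to the finite-radius Proposition~\ref{prop:complement-shel-r-a-w}. Two preliminary observations guide the reduction. First, the sequence $(S_G^r(W))_{r \geq 1}$ is monotone increasing: applying $\varphi_G$ to both sides of the defining inclusion $\varphi_G^r(\{v\}) \subseteq \varphi_G^r(W)$, and using that $\varphi_G$ preserves inclusions, gives $\varphi_G^{r+1}(\{v\}) \subseteq \varphi_G^{r+1}(W)$, so $S_G^r(W) \subseteq S_G^{r+1}(W)$. Hence $v \in V(G) \setminus S_G(W)$ exactly when $v \notin S_G^r(W)$ for every $r \geq 1$. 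Second, reversing all edges preserves the standing hypotheses of strong connectedness and local finiteness, so Proposition~\ref{thm:Busemannball} applies equally well to $\overline{G}$.

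For the inclusion $\supseteq$, I would fix a horoball $H \in \Hor(\overline{G})$ with $H \cap W = \emptyset$ and a vertex $v \in H$. Proposition~\ref{thm:Busemannball} applied to $\overline{G}$ yields a geodesic ray $(\gamma_r)_{r \geq 0}$ in $\overline{G}$ with $\gamma_0 = v$ whose Busemann ball $\bigcup_{r \geq 0}\varphi_{\overline{G}}^r(\{\gamma_r\})$ is contained in $H$. For each $r$, the ball $\varphi_{\overline{G}}^r(\{\gamma_r\})$ contains $v = \gamma_0$, since the geodesic ray provides a directed path of length $r$ in $\overline{G}$ from $\gamma_r$ to $\gamma_0$, and it is disjoint from $W$ as a subset of $H$. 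Proposition~\ref{prop:complement-shel-r-a-w} then yields $v \notin S_G^r(W)$ for every $r$, and hence $v \notin S_G(W)$.

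For the reverse inclusion $\subseteq$, let $v \in V(G) \setminus S_G(W)$. By the monotonicity observation and Proposition~\ref{prop:complement-shel-r-a-w}, for every $r \geq 1$ I can choose $u_r \in V(G)$ with $v \in \varphi_{\overline{G}}^r(\{u_r\})$ and $\varphi_{\overline{G}}^r(\{u_r\}) \cap W = \emptyset$. Compactness of $\PP(V(G))$ gives a subsequence converging to some $H$, which is by construction a limit of balls in $\overline{G}$ whose radii tend to infinity. Pointwise convergence on $V(G)$ forces $v \in H$, since $v$ lies in every term of the sequence, and $w \notin H$ for every $w \in W$; combined with the hypothesis $W \neq \emptyset$, this shows $\emptyset \neq H \neq V(G)$, so $H$ is a horoball in $\overline{G}$ disjoint from $W$ and containing $v$.

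I do not anticipate a serious obstacle: the substance of the proposition has been packaged into Propositions~\ref{prop:complement-shel-r-a-w} and~\ref{thm:Busemannball}, and the only thing to watch carefully is the direction of edges, ensuring one consistently works with balls in $\overline{G}$ when passing from sheltered hulls in $G$ to their complementary horoballs.
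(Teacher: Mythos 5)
Your proof is correct, and it genuinely diverges from the paper's argument in one direction. The $\subseteq$ inclusion (passing from $v \notin S_G(W)$ to a horoball via compactness) coincides with what the paper does. But for the $\supseteq$ inclusion the paper argues by contrapositive with a finiteness reduction: given $v \in S_G(W)$ with $\varphi_G^r(\{v\}) \subseteq \varphi_G^r(W)$, the finiteness of $\varphi_G^r(\{v\})$ yields a \emph{finite} $\tilde{W} \subseteq W$ with $\varphi_G^r(\{v\}) \subseteq \varphi_G^r(\tilde{W})$; then every $\overline{G}$-ball of radius $\geq r$ containing $v$ meets the fixed finite set $\tilde{W}$, and since horoballs are limits of such balls and $\tilde{W}$ is finite, every horoball containing $v$ meets $\tilde{W} \subseteq W$. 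Your argument instead invokes \Cref{thm:Busemannball} to carve a Busemann ball inside $H$ containing $v$, whose approximating balls then furnish the obstruction at every radius. Both routes are sound: the paper's uses only finiteness of balls and the definition of horoballs as limits, whereas yours imports the compactness-plus-diagonalization machinery of \Cref{thm:Busemannball}; in exchange, yours makes explicit the link to Busemann balls that the paper relegates to the remark after \Cref{thm:sheltered_conv}. One small caveat: you take $\gamma_0 = v$, which is visible only from the \emph{proof} of \Cref{thm:Busemannball}, not its statement; if one reads that proposition purely as stated, the Busemann ball only contains $v$ at some radius $r_0$ onward, but your opening monotonicity observation (together with the increasing-union property of Busemann balls) already closes that gap, so the proof stands either way.
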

\begin{proof}
	Let $v \in V(G) \setminus S_G(W)$. By \Cref{prop:complement-shel-r-a-w} for every $r \in \N$ 
	there is a vertex $u_r \in V(G)$ such that $\varphi_{\overline{G}}^r(\{u_r\})$ is 
	disjoint from $W$ but contains $v$.
	Let $(r_k)_{k \in \N}$ be a growing sequence of radii such that 
	$\varphi_{\overline{G}}^{r_k}(\{u_{r_k}\})$ converges to a limit $H$.
	Since $v$ is contained in $H$ and $H \cap W = \emptyset$ this
	limit $H$ is neither empty nor the whole of $V(G)$. Hence we found
	a horoball $H$ with $v \in H$ and $H \cap W = \emptyset$.
	
	Now let $v$ be in $S_G(W)$. By definition there is $r \in \N$ with
   $\varphi_G^r(\{v\}) \subseteq \varphi_G^r(W)$.  Since
   $\varphi_G^r(\{v\})$ is finite, there is a finite subset
   $\tilde{W}$ of $W$ such that already
   $\varphi_G^r(\{v\})\subseteq \varphi_G^r(\tilde{W})$ and thus
   $v \in S_G^R(\tilde{W})$ for all $R \geq r$.  In particular every
   ball in $\overline{G}$ with radius $R \geq r$ containing $v$ has
   non-trivial intersection with $\tilde{W}$ by
   \Cref{prop:complement-shel-r-a-w}.  Since $\tilde{W}$ is finite
   this also means that every horoball in $\overline{G}$ containing
   $v$ must intersect $\tilde{W} \subseteq W$.  Therefore $v$ is not
   contained in
   $\bigcup\{H \in \Hor(\overline{G}) \setsep H \cap W = \emptyset
   \}$.
\end{proof}

The definition of a sheltered set extends to metric (and quasi-metric) spaces. 
In $\mathbb{R}^d$, with respect to the euclidean metric, the closure of the sheltered hull of a set $A \subseteq \mathbb{R}^d$ coincides with the closure of the convex hull  of $A$.  There is an analogy between sheltered sets in graphs and 
convex subsets of $\mathbb{R}^d$, as  sheltered sets actually form an abstract convexity structure in the  sense of \cite{vandevelTheoryConvexStructures1993}:
\begin{prop}\label{thm:sheltered_conv}
For any directed graph $G$ the  sheltered sets in $G$ fulfill the following axioms:
 \begin{enumerate}[(a)]
 	\item $\emptyset, V(G)$ are both sheltered.
    \item The family of sheltered sets is closed under arbitrary intersections.
    \item The union of an increasing chain of sheltered sets is sheltered.
 \end{enumerate}
Furthermore, for any $W \subseteq V(G)$ the sheltered hull $S_G(W)$ is equal to the intersection of all sheltered sets containing $W$.
\end{prop}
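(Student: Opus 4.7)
The plan is to verify axioms (a)--(c) by direct arguments, and then reduce the ``furthermore'' claim to the single fact that $S_G(W)$ is itself sheltered. Axiom (a) is immediate: $\varphi_G^r(\emptyset)=\emptyset$ forces $S_G^r(\emptyset)=\emptyset$, while $\varphi_G^r(\{v\})\subseteq V(G)=\varphi_G^r(V(G))$ for every $v \in V(G)$. Axiom (b) is also easy: if $v\in S_G^r(\bigcap_i W_i)$, then $\varphi_G^r(\{v\})\subseteq \varphi_G^r(\bigcap_i W_i)\subseteq \varphi_G^r(W_i)$ for every $i$, so $v\in S_G^r(W_i)\subseteq S_G(W_i)=W_i$ and therefore $v\in\bigcap_i W_i$. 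For axiom (c) the key observation is monotonicity in the radius: applying $\varphi_G$ to both sides of $\varphi_G^r(\{v\})\subseteq \varphi_G^r(W)$ yields $S_G^r(W)\subseteq S_G^{r+1}(W)$. Given an increasing chain of sheltered sets $(W_n)_{n \in \N}$ with union $W$ and $v\in S_G^r(W)$, the finite set $\varphi_G^r(\{v\})$ is already covered by $\varphi_G^r(\{w_1,\ldots,w_k\})$ for finitely many $w_j\in W$ (since $\varphi_G^r(\{v\})$ is finite by local finiteness), and these $w_j$ all lie in some $W_n$, so $v\in S_G^r(W_n)\subseteq W_n\subseteq W$.

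Once $S_G(W)$ is known to be sheltered, the ``furthermore'' claim follows from a standard abstract-convexity argument: $S_G(W)$ is a sheltered set containing $W$, hence contains the intersection $T$ of all sheltered sets containing $W$; conversely, for any sheltered $U\supseteq W$ and any $v\in S_G^r(W)$, one has $\varphi_G^r(\{v\})\subseteq \varphi_G^r(W)\subseteq \varphi_G^r(U)$, so $v\in S_G^r(U)\subseteq S_G(U)=U$, giving $S_G(W)\subseteq T$.

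The main obstacle, and the step I expect to require real work, is to verify that $S_G(W)$ is sheltered. Suppose $v\in S_G^r(S_G(W))$, so $\varphi_G^r(\{v\})\subseteq \varphi_G^r(S_G(W))$. For each $u$ in the finite set $\varphi_G^r(\{v\})$, pick $w_u\in S_G(W)$ with $u\in\varphi_G^r(\{w_u\})$ together with some $s_u$ with $w_u\in S_G^{s_u}(W)$. Setting $s:=\max_u s_u$, the monotonicity established above gives $w_u\in S_G^{r+s}(W)$ for every $u$. The quasi-metric triangle inequality shows $\varphi_G^s(\{u\})\subseteq \varphi_G^{r+s}(\{w_u\})$ whenever $u\in\varphi_G^r(\{w_u\})$, hence
\[
	\varphi_G^{r+s}(\{v\})=\varphi_G^s(\varphi_G^r(\{v\}))\subseteq \bigcup_u \varphi_G^{r+s}(\{w_u\})\subseteq \varphi_G^{r+s}(W),
\]
so $v\in S_G^{r+s}(W)\subseteq S_G(W)$. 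The delicate point is that the radii $r$ and $s_u$ arise from unrelated choices; monotonicity of $S_G^{\bullet}(W)$ in the radius is precisely what allows us to promote them to a common radius $r+s$ and close the argument.
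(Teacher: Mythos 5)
Your proof is correct, and it takes a genuinely different route from the paper. The paper's argument for parts (b) and (c) relies on the characterization established just beforehand (Propositions~\ref{prop:complement-shel-r-a-w} and~\ref{prop:complement-shel-a}): a set is sheltered iff every point of its complement lies in a horoball of $\overline{G}$ disjoint from it. With this in hand, (b) is immediate and (c) becomes a compactness argument about limits of balls. You instead work directly from the definition of $S_G^r$, proving (b) via the simple implication $\varphi_G^r(\{v\})\subseteq \varphi_G^r(\bigcap_i W_i) \Rightarrow v\in S_G^r(W_i)=W_i$, and proving (c) from local finiteness plus the monotonicity $S_G^r(W)\subseteq S_G^{r+1}(W)$. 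This avoids any appeal to horoballs and is more elementary and self-contained; the paper's route, on the other hand, makes the geometric picture (sheltered sets as complements of unions of horoballs) visible, which is used again later. Notably, you also explicitly prove the ``furthermore'' clause, including the nontrivial step that $S_G(W)$ is itself sheltered (the point where $r$ and the various $s_u$ must be reconciled via monotonicity and the quasi-metric triangle inequality); the paper's written proof only covers (a)--(c) and leaves the ``furthermore'' claim implicit, presumably as a consequence of (b) together with the idempotence of $S_G$, which is essentially the same observation you make. Your chain argument in (c) also extends verbatim to arbitrary (not just countable) chains, whereas the paper writes the proof for a sequence.
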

\begin{proof}
	\begin{enumerate}[(a)]
		\item This is clear by definition.
		\item Let $\CM$ be a family of sheltered sets. By
        \Cref{prop:complement-shel-a} we know that a set $W$ is
        sheltered if and only if every element in its complement is
        contained in a horoball in $\overline{G}$ disjoint from
        $W$. For every $v \in V(G) \setminus \bigcap \CM$ there is
        $W \in \CM$ with $v \in V(G) \setminus W$ and hence there is a
        horoball $H$ disjoint from $W$ with $v \in H$ . But then
        $H$ is also disjoint from $\bigcap \CM$ and thus $\bigcap \CM$
        is sheltered.
    \item Let $W_1 \subseteq W_2 \subseteq W_3 \subseteq \dots$ be an
      increasing chain of sheltered sets with $W = \bigcup_{k=1}^\infty W_k$.  Let
      $v \in V(G) \setminus W$ and hence
      $v \in V(G) \setminus W_k$ for all $k \in \N$. We want to find
      a horoball in $\overline{G}$ disjoint from $W$ containing $v$. By
      \Cref{prop:complement-shel-r-a-w} we know that for each
      $k \in \N$ we can find a ball $\varphi_{\overline{G}}^{r_k}(\{v_k\})$ which
      contains $v$ and which is disjoint from $W_k$.  By compactness
      we can take a subsequence of these balls converging to a
      horoball $H$ in $\overline{G}$.  Clearly $v \in H$ and if there
      would be $u \in H \cap W$, then for sufficiently large $k$ we
      would have $u \in \varphi_{\overline{G}}^{r_k}(\{v_k\})$ and $u \in W_k$, since the
      later sets are monotonically increasing.  But the set $W_k$ and
      $\varphi_{\overline{G}}^{r_k}(\{v_k\})$ are disjoint, thus $H \cap W =
      \emptyset$. Therefore $W$ is sheltered.
	\end{enumerate}
\end{proof}

\begin{rem} 
  By \Cref{prop:complement-shel-a} and \Cref{thm:sheltered_conv} the
  sheltered sets form a convexity structure generated by the complements
  of horoballs in the sense that they are the smallest family of sets
  closed under intersections and unions of increasing chains which
  contains horoball complements.  Notice the analogy with the
  situation in $\R^n$ where the usual convexity structure is generated
  by the set of all half spaces (or equivalently the complements half
  spaces). A similar concept of convex hull was considered in
  \cite{fletcherHoroballHullsExtents2011}, but there the convexity
  structure is generated by the horoballs instead of their complements.
 \end{rem}

 \begin{rem}
   Since every horoball is the union of Busemann balls by
   \Cref{thm:Busemannball}, a set $M$ is sheltered if and only if
   every point in its complements is contained in a Busemann ball
   disjoint from $M$. In other words, the convexity structure of
   sheltered sets is also generated by the complements of Busemann balls.
 \end{rem}



Let's now turn our attention back to Cayley graphs.
Let $\Gamma$ be a finitely generated group an let $A$ be a positively generating set containing the unit. For $g \in \Gamma$ we denote 
\[|g|_A = d(1_\Gamma,g) = \min \{n \in \mathbb{N}_0 \setsep g \in A^n \}.\]
To simplify notation we denote $S^r_{\Cayley(G,A)}$ and $S_{\Cayley(G,A)}$ by $S_A^r$ and $S_A$.

A \emph{dead end} for $(\Gamma,A)$ is an element $g$ from which
no element of word length larger than $|g|_A$ can be reached
via one of the generators. Equivalently, $g$ is a dead end if and only
if $\varphi_A(\{g\}) \subseteq A^{|g|_A}$. This notion is due to
Bogopolskii, see \cite{bogopolskiiInfiniteCommensurableHyperbolic1997}.
We say that $g \in \Gamma$ is a  \emph{dead end of depth at least $n$} for  $(\Gamma,A)$  if $\varphi_A^n(\{g\})$ is contained in $A^{|g|_A}$.

\begin{prop}\label{prop:non-tight-shelter-dead-end}
	If $w \in S_A(A^n) \setminus A^n$, then there is a dead end in $\Gamma \setminus A^n$ of depth at least $|w|_A-n$.
\end{prop}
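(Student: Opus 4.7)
Let $k := |w|_A - n \geq 1$, and set $r^* := \min\{r \in \N_0 \setsep wA^r \subseteq A^{n+r}\}$, which exists since $w \in S_A(A^n)$. From $w \in wA^{r^*} \subseteq A^{n+r^*}$ one gets $|w|_A \leq n + r^*$, so $r^* \geq k$.

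If $r^* = k$, the computation $\varphi_A^k(\{w\}) = wA^k \subseteq A^{n+k} = A^{|w|_A}$ shows that $w$ itself is a dead end of depth at least $k$; and $|w|_A = n+k > n$ places $w$ in $\Gamma \setminus A^n$, finishing this case.

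When $r^* > k$, the plan is to locate an element $g$ in the smaller ball $wA^{r^*-k}$ of maximal possible word length $n+r^*$. The minimality of $r^*$ gives $w \notin S_A^{r^*-1}(A^n)$, hence $wA^{r^*-1}$ contains some $g$ with $|g|_A \geq n+r^*$, and since $wA^{r^*-1} \subseteq wA^{r^*} \subseteq A^{n+r^*}$ equality $|g|_A = n+r^*$ holds. Provided such $g$ can be chosen in the smaller set $wA^{r^*-k}$, one computes
\[
gA^k \subseteq (wA^{r^*-k}) A^k = wA^{r^*} \subseteq A^{n+r^*} = A^{|g|_A},
\]
exhibiting $g$ as a dead end of depth at least $k$ lying in $\Gamma \setminus A^n$.

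The main obstacle I anticipate is the refinement step: showing that the extremal element may be taken in $wA^{r^*-k}$ rather than just in $wA^{r^*-1}$. The triangle inequality only yields $d(w, g) \geq |g|_A - |w|_A = r^*-k$, and the reverse estimate has to be coaxed out of the minimality of $r^*$. I would attempt this by induction on $k$, with the case $k=1$ above serving as the base: if the maximum word length in $wA^{r^*-k}$ were strictly below $n+r^*$, then $wA^{r^*-k} \subseteq A^{n+r^*-1}$, which rewrites as $w \in S_A^{r^*-k}(A^{n+k-1})$; thus $w \in S_A(A^{n+k-1}) \setminus A^{n+k-1}$ with reduced parameter $|w|_A - (n+k-1) = 1$, and iterating the construction should produce the deeper dead end one seeks. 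The potential cancellation issues in non-abelian Cayley graphs, where $d(w,g)$ need not coincide with the word length of any particular representative of $w^{-1}g$, seem to be the chief technical difficulty in carrying out this induction cleanly.
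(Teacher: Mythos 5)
Your strategy is essentially the same as the paper's: after passing to the minimal exponent $r^*$ (the paper's $m$) with $wA^{r^*}\subseteq A^{n+r^*}$, both proofs seek an element of maximal word length $n+r^*$ inside the smaller ball $wA^{r^*-k}$, since any such element $g$ gives $gA^k\subseteq wA^{r^*}\subseteq A^{n+r^*}=A^{|g|_A}$ and hence a dead end of depth at least $k$. The minimality of $r^*$ only supplies such a $g$ in $wA^{r^*-1}$, and you correctly flag the gap between $wA^{r^*-1}$ and $wA^{r^*-k}$ as the crux. In fact the published proof quietly faces the same issue: with "any $v\in A^{n+m-|w|_A}$" the displayed chain ends with $A^{|w|_A+|v|_A}\subseteq A^{|wv|_A}$, which, together with the triangle inequality, forces $|wv|_A=|w|_A+|v|_A=m+n$; that equality is precisely the assertion that $wv$ attains the extremal word length $m+n$ while lying in $wA^{m-k}$, and it is not established for an arbitrary $v$ (it fails, for instance, for $v=1_\Gamma$). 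So you have accurately reproduced the paper's argument and, to your credit, identified the step that is glossed over.

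However, your proposed repair does not close the gap. If the maximum word length on $wA^{r^*-k}$ falls short of $n+r^*$ you correctly deduce $w\in S_A(A^{n+k-1})\setminus A^{n+k-1}$, but the new depth parameter is $|w|_A-(n+k-1)=1$. Feeding this back into the proposition (or into the base case $k=1$) only produces a dead end of depth at least $1$, not of depth at least $k$; the quantity you need to propagate does not survive the reduction, so the induction on $k$ does not close. A correct argument has to extract more than "depth $\geq 1$" from this degenerate situation — e.g.\ by tracking, for each $g\in wA^{r}\setminus A^{n+r}$, both its word length and its "escape time'' to a strictly higher word length, and showing these cannot simultaneously stay compatible with $wA^{r}\subseteq A^{n+r}$ for $r=r^*$ unless a dead end of depth $\geq k$ appears somewhere along the way. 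As written, the attempt (like the paper's own phrasing) leaves this essential step unjustified, and the iteration you sketch cannot supply it.
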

\begin{proof}
	Let $w \in S_A(A^n) \setminus A^n$. Because $w \not \in A^n$ we have $|w|_A > n$. Because  $w \in S_A(A^n)$  there exists $m >|w|_A-n$
	such that $wA^m\subseteq A^{m+n}$.  
Choose any $v \in A^{n+m-|w|_A}$.
	Then 
		\begin{align*}
		wvA^{|w|_A-n} &\subseteq wA^m \subseteq A^{m+n}=A^{|w|_A+(m+n-|w|_A)} = A^{|w|_A + |v|_A}  \subseteq A^{|wv|_A}. \qedhere
		\end{align*}
Then $wv$ is a dead end of depth at least $|w|_A-n$.
\end{proof}

\begin{cor}\label{cor:infinite-dead-end-depth}
	If $S_A(W)$ is infinite for a finite set $W$, then
	$(\Gamma,A)$ has  dead ends of unbounded depth.
\end{cor}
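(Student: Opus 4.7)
The plan is to deduce the corollary from \Cref{prop:non-tight-shelter-dead-end} by a monotonicity-and-pigeonhole argument, reducing the case of a general finite $W$ to the case $W=A^n$.

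First, since $A$ is finite and positively generates $\Gamma$, every element of $\Gamma$ has finite word length, and therefore the finite set $W$ is contained in $A^n$ for some $n\in\N$. Next I verify that $S_A$ is monotone in its argument: if $W_1\subseteq W_2$, then $\varphi_A^r(W_1)\subseteq\varphi_A^r(W_2)$ for every $r$, so the defining condition $\varphi_A^r(\{v\})\subseteq\varphi_A^r(W_1)$ implies $\varphi_A^r(\{v\})\subseteq\varphi_A^r(W_2)$; thus $S_A^r(W_1)\subseteq S_A^r(W_2)$, and taking the union over $r$ gives $S_A(W_1)\subseteq S_A(W_2)$. Applying this with $W_1=W$ and $W_2=A^n$, the assumption that $S_A(W)$ is infinite forces $S_A(A^n)$ to be infinite as well.

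Now, because each ball $A^m$ is a finite set (as $A$ is finite), the infinitude of $S_A(A^n)$ means that $S_A(A^n)$ contains elements $w$ of arbitrarily large word length $|w|_A$. In particular, for every $M\in\N$ we can choose $w\in S_A(A^n)$ with $|w|_A \geq M+n$; such $w$ automatically lies outside $A^n$. Applying \Cref{prop:non-tight-shelter-dead-end} to this $w$ produces a dead end in $\Gamma\setminus A^n$ of depth at least $|w|_A-n\geq M$. As $M$ was arbitrary, $(\Gamma,A)$ has dead ends of unbounded depth.

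There is no real obstacle here: the content of the corollary is that the hypothesis on $W$ can be weakened from ``$W$ is a ball'' (the setting of \Cref{prop:non-tight-shelter-dead-end}) to ``$W$ is finite'', and the only ingredients needed are monotonicity of the sheltered hull and the fact that balls in a locally finite graph are finite. The single point worth flagging is that we indeed extract elements of unbounded word length from $S_A(A^n)$; this uses local finiteness (so that bounded word length sets are finite) together with $|S_A(A^n)|=\infty$.
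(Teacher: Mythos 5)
Your proof is correct and follows essentially the same route as the paper: embed $W$ in a ball $A^n$, use monotonicity of the sheltered hull to conclude $S_A(A^n)$ is infinite, extract elements of unbounded word length via local finiteness, and apply \Cref{prop:non-tight-shelter-dead-end}. You merely spell out the monotonicity of $S_A$ more explicitly than the paper does.
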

\begin{proof}
	If $S_A(W)$ is infinite and $W \subseteq A^n$, then
	also $S_A(A^n)$ is infinite. In particular
	there are elements of arbitrary large word length
	in $S_A(A^n)$ and by \Cref{prop:non-tight-shelter-dead-end}
	this implies that $S_A(A^n)$ has dead ends of unbounded depth.
\end{proof}

\begin{cor}\label{cor:finite-dead-end-depth}
  If all dead ends in $(\Gamma,A)$ have  depth at most $C$,
  then $S_A(A^n) \subseteq A^{n+C}$.
\end{cor}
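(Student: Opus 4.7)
The plan is to deduce this corollary directly as the contrapositive of the preceding \Cref{prop:non-tight-shelter-dead-end}. Fix $n \in \N$, and take an arbitrary $w \in S_A(A^n)$; the goal is to show $|w|_A \leq n + C$, which is equivalent to $w \in A^{n+C}$. If $w \in A^n$, the bound is immediate since $|w|_A \leq n \leq n+C$, so I may assume $w \in S_A(A^n) \setminus A^n$. In that case the hypothesis of \Cref{prop:non-tight-shelter-dead-end} is satisfied, so it provides the existence of a dead end in $\Gamma \setminus A^n$ of depth at least $|w|_A - n$.

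By the standing assumption of the corollary, every dead end in $(\Gamma,A)$ has depth at most $C$. Applied to the dead end just furnished, this yields $|w|_A - n \leq C$, i.e., $w \in A^{n+C}$. Since $w \in S_A(A^n)$ was arbitrary, $S_A(A^n) \subseteq A^{n+C}$.

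There is essentially no obstacle: the corollary is a one-line formal consequence of \Cref{prop:non-tight-shelter-dead-end} together with the hypothesis, and no additional structural input on $(\Gamma,A)$ is required. The only minor point worth mentioning is that the depth bound $C$ must apply uniformly to \emph{all} dead ends of $(\Gamma,A)$ (including the specific one produced inside $\Gamma \setminus A^n$), which is exactly what the hypothesis provides.
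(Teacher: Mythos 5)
Your proof is correct and follows essentially the same route as the paper: both apply \Cref{prop:non-tight-shelter-dead-end} to each $w \in S_A(A^n) \setminus A^n$ to produce a dead end of depth at least $|w|_A - n$, and then invoke the hypothesis to conclude $|w|_A - n \leq C$. The only difference is presentational — your version spells out the trivial case $w \in A^n$ explicitly, while the paper elides it.
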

\begin{proof}
  By
  \Cref{prop:non-tight-shelter-dead-end}
  for every $w \in S_A(A^n) \setminus A^n$ we have 
  $|w|_A -n \leq C$ and
  therefore $S_A(A^n) \subseteq A^{C+n}$.
\end{proof}
The following groups are known to have bounded dead end depth (with respect
to any finite \emph{symmetric} generating set, if not mentioned otherwise):
\begin{itemize}
  	\item finitely generated abelian groups \cite[Theorem 1]{lehnertRemarksDepthDead2007} (see \cite[Theorem 3]{sunicFrobeniusProblemDead2008} for the case of the group $\Z$),
			\item hyperbolic groups 
				\cite{bogopolskiiInfiniteCommensurableHyperbolic1997}, see also \cite{warshallDeepPocketsLattices2010}, 
			\item groups with more than one end
				 \cite{lehnertRemarksDepthDead2007},
			\item Thompson's group $F$ with 
				the two standard generators and their inverses
				\cite{clearyCombinatorialPropertiesThompson2004},
			\item Euclidean groups,
				 i.e.\ groups acting discretely by isometries on some $\R^n$ \cite{warshallDeepPocketsLattices2010}.
		\end{itemize}
	Hence in these groups the sheltered hull of every finite set $W \subseteq \Gamma$
	is again a finite set $S_A(W)$.

\begin{cor}\label{cor:finite-dead-ends}
	If $(\Gamma,A)$ has only finitely many dead ends,
	then $S_A(A^n)=A^n$ for sufficiently large $n$.
\end{cor}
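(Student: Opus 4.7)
The plan is to deduce this directly from Proposition \ref{prop:non-tight-shelter-dead-end} by choosing $n$ large enough that $A^n$ already contains every dead end.

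Let $D \subseteq \Gamma$ denote the (finite, by hypothesis) set of all dead ends of $(\Gamma,A)$, and set
\[
N := \max\{ |d|_A \setsep d \in D\}
\]
(with the convention $N=0$ if $D = \emptyset$). The inclusion $A^n \subseteq S_A(A^n)$ is immediate from the definition of the sheltered hull for every $n$, so it suffices to establish the reverse inclusion for $n \geq N$.

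Assume $n \geq N$ and suppose, toward a contradiction, that there exists $w \in S_A(A^n) \setminus A^n$. Since $w \notin A^n$ we have $|w|_A > n$, so in particular $|w|_A - n \geq 1$. Applying \Cref{prop:non-tight-shelter-dead-end} to $w$, we obtain a dead end $g \in \Gamma \setminus A^n$ of depth at least $|w|_A - n \geq 1$. But every dead end of $(\Gamma,A)$ lies in $D$, and by the choice of $N$ every element of $D$ has word length at most $N \leq n$, hence lies in $A^n$. This contradicts $g \in \Gamma \setminus A^n$, and we conclude $S_A(A^n) \subseteq A^n$ for every $n \geq N$.

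There is essentially no obstacle: the work has already been done in \Cref{prop:non-tight-shelter-dead-end}, which turns any excess of the sheltered hull over $A^n$ into a dead end outside $A^n$, and the finiteness hypothesis on the set of dead ends ensures this is impossible once $n$ is large enough to contain all of them. The only point to keep in mind when writing the proof cleanly is to explicitly handle the vacuous case $D = \emptyset$, for which the statement holds already at $n = 0$ (giving $S_A(A^0) = S_A(\{1_\Gamma\}) = \{1_\Gamma\} = A^0$, as no strict containment can produce a dead end).
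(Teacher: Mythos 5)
Your proof is correct and follows the same route as the paper: choose $n$ large enough that every dead end lies in $A^n$, then invoke \Cref{prop:non-tight-shelter-dead-end} to see that any element of $S_A(A^n) \setminus A^n$ would produce a dead end outside $A^n$. The only cosmetic difference is that you spell out the numeric threshold $N$ and the $D=\emptyset$ case, which the paper leaves implicit.
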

\begin{proof}
	If $(\Gamma,A)$ has only finitely many dead ends, then
	there is $n \in \N$ such that all dead ends are contained 
	in $A^n$ and then $S_A(A^n)=A^n$ by \Cref{prop:non-tight-shelter-dead-end}.
\end{proof}

\begin{rem}
	While some groups have ``very few'' dead ends,
	 Šunić shows in  \cite[Theorem A.1 3]{sunicFrobeniusProblemDead2008}
	 that every group has a generating 
	 set with respect to which it has at least one dead
	 end. 
\end{rem}

\begin{quest}
 Which finitely generated groups $\Gamma$ have the property that for some finite positively generating set $A$ the sheltered hull  $S_A(W)$ is finite for any finite $W \subseteq \Gamma$? Does this depend on the positive generating set $A$?
\end{quest}

\begin{exam}
Consider the Heisenberg group $H$, given by  the
standard presentation
\begin{align}\label{eqn:Heisenberg}
  H=\left<a,b \,|\, [a,[a,b]], [b,[a,b]] \right>.
\end{align}
This is the simplest  example of a non-abelian finitely generated nilpotent group.
With respect to the generating set $A=\{a,b,a^{-1},b^{-1},e\}$, there are finite sets whose sheltered hull is infinite.
More precisely, any subsets of $H$ which contains $A^2$ has an infinite sheltered hull.
This is a consequence of the following result: If we abbreviate the
commutator of $a$ and $b$ by $c:=[a,b]$, then for any $m \in \N$ there exists $n \in \N$ such that $c^mA^n \subseteq A^{n+2}$. This last result can be extracted (with some additional arguments) for instance using  \cite[Proposition 5.3]{warshallDeepPocketsLattices2010}, which involves a certain ``normal form'' for elements of $H$.
In the light of \Cref{cor:finite-dead-end-depth}
this recovers the known result that $(H,A)$ has dead ends of unbounded depth.
\end{exam}

The following example shows that the converse of \Cref{cor:finite-dead-end-depth} fails.
More specifically, we show that while the standard lamplighter group
with standard generators has  dead ends of unbounded depth, the sheltered hull of every finite set is  nevertheless
finite.

\begin{exam}
The \emph{Lamplighter group} $L := \Z^2 \wr \Z$ is given by the presentation
\[ L = \left<a,t \,|\, a^2, [a,t^{-n}at^n] \right>.\] This is a
finitely generated solvable group of exponential growth.
The generator ``$a$'' corresponds to ``switching a light'', ``$t$''
corresponds to ``moving the lamplighter''. With respect to the
generating set $A=\{a^{\pm 1},t^{\pm 1},e\}$, $L$ has dead ends of
unbounded depth. Indeed, the element
$$w=  a^{(t^{-n})} \cdot \ldots \cdot  a^{(t^{-1})} \cdot a \cdot a^{(t)} \cdot a^{(t^n)},$$ where $a^{(t^i)}= t^{-i}at^i$, is a  dead end of depth
at least $n+1$. The element $w$ is depicted in \Cref{fig:lamplighter}.
\begin{figure} [h]
	\begin{center}
		\includegraphics[width=.9\textwidth]{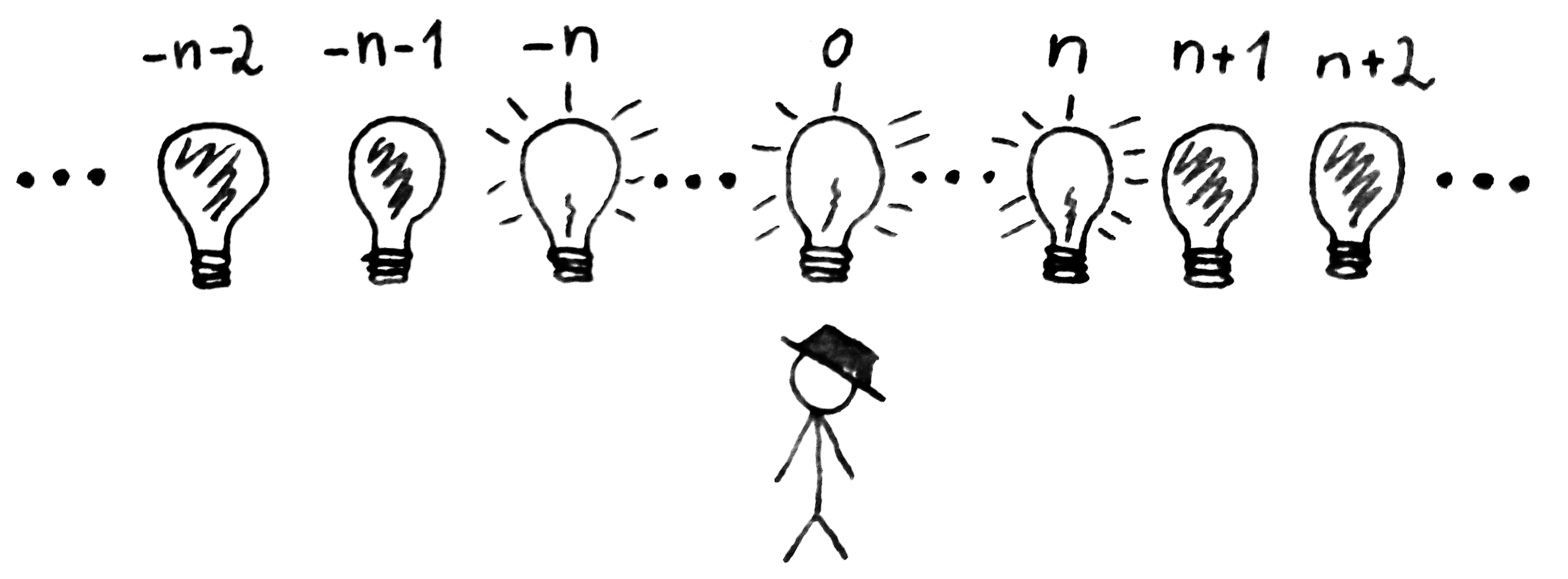}
	\end{center}
	\caption{A dead end in the lamplighter group of depth $n$.}
	\label{fig:lamplighter}
\end{figure}

The element $w$ corresponds to ``the lamplighter  standing at zero with 
lamps from $-n$ to $n$ switched on''. 
Note that  $w$ has length $6n+1$
with respect to the generating set $A$ because 
one needs $n$ steps to move from the origin
to position $n$, $2n$ steps to move to $-n$
and $n$ steps to move back to the origin, and  
additionally there are $2n+1$ lamps to switch on.
Moving the lamplighter within the range from
$-n$ to $n$ only decreases the length of $w$, 
as we don't have to move the lamplighter back to the
origin at the end of the movement. Switching lights
off also decreases the length of $w$, hence
$wA^n \subseteq A^{|w|}$ and $w$ is a dead end of depth
at least $n+1$. 

Nevertheless, the sheltered hull of every finite subset of the lamplighter group $L$ with respect to the generating set $A$ is finite, as shown by the following proposition:

\end{exam}

\begin{prop}
  The sheltered hull of every finite subset of the lamplighter group $L = \Z^2 \wr \Z =\left<a,t \,|\, a^2, [a,t^{-n}at^n] \right>$
  with respect to the
  generators $\{a,t,a^{-1},t^{-1},e\}$ is finite.
\end{prop}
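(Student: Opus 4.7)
The plan is to prove the stronger statement $S_A(A^n) \subseteq A^{2n}$ for every $n \in \N$. This suffices because every finite $W \subseteq L$ is contained in some $A^n$, and the sheltered hull is monotone in its argument.

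First, I would parametrize elements of $L$ as pairs $(f,p)$, where $f\colon \Z \to \Z/2\Z$ has finite support (the lamp configuration) and $p \in \Z$ is the lamplighter's position. The word length with respect to $A$ admits the closed form
\[
|(f,p)|_A = |\supp(f)| + \ell(\supp(f),p),
\]
where $\ell(S,p)$ is the minimum number of unit steps on $\Z$ for a walk starting at $0$, visiting every point of $S$, and ending at $p$ --- a one-dimensional traveling-salesman quantity. Since $vA^r \subseteq A^{n+r}$ implies $vA^{r+1} \subseteq A^{n+r+1}$, the filtration $S_A^r(A^n)$ is increasing in $r$, so if $w \in S_A(A^n)$ then $w \in S_A^r(A^n)$ for all sufficiently large $r$.

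The key step is to probe this membership using the two explicit words $u = t^{\pm r}$ for $r$ large. For $r$ so large that $p + r$ lies well to the right of $\supp(f)$, the walk realizing $|w \cdot t^r|_A = |(f, p+r)|_A$ is optimized by first going to the leftmost point $a := \min(\supp(f) \cup \{0\})$ and then sweeping rightward all the way to $p+r$, giving cost $-2a + p + r$; the situation for $w \cdot t^{-r}$ is symmetric. The inequalities $|w \cdot t^{\pm r}|_A \le n + r$ therefore collapse to
\begin{align*}
|\supp(f)| + p - 2a &\le n, \\
|\supp(f)| + 2b - p &\le n,
\end{align*}
where $a \le 0 \le b := \max(\supp(f) \cup \{0\})$. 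Adding them yields the crucial bound $|\supp(f)| + (b - a) \le n$.

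To finish, split on the position of $p$: if $p \in [a,b]$, then $\ell(\supp(f),p) \le 2(b-a)$, so
\[
|w|_A \le |\supp(f)| + 2(b-a) = \bigl(|\supp(f)| + (b-a)\bigr) + (b-a) \le 2n;
\]
and if $p$ lies outside $[a,b]$, then $\ell(\supp(f),p) = p - 2a$ or $2b - p$, so one of the two displayed inequalities directly gives $|w|_A \le n$. Either way $|w|_A \le 2n$, establishing $S_A(A^n) \subseteq A^{2n}$. The main conceptual point is the realization that although $L$ admits dead ends of unbounded depth, those dead ends always ``point outward'' in one of the two directions along the $t$-axis, so the probes $t^{\pm r}$ for large $r$ already suffice to detect that an element of large word length cannot be in the sheltered hull; the only technical care needed is handling degenerate cases such as $f \equiv 0$ or $a = b = 0$, which is routine.
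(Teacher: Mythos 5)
Your proof is correct and uses essentially the same mechanism as the paper's: probing an element of the sheltered hull with the translations $t^{\pm r}$ for large $r$ and showing that the resulting word-length inequalities force both the lamp support and the lamplighter position into a bounded window (the paper writes $wa^{\pm n}$ where it clearly means $wt^{\pm n}$). Your version is slightly more streamlined in that it records both probe inequalities, adds them, and case-splits on $p$, which yields the clean quantitative bound $S_A(A^n) \subseteq A^{2n}$, whereas the paper argues by direct contradiction from the two "bad" configurations separately and stops at finiteness of $S_A(A^m)$.
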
 
\begin{proof}
More precisely we will show that
for every $m \in \N$ if $w \in S_A(A^m)$ 
then the lamplighter in $w$
is positioned within $-m, \dots, m$
and there is no lamp switched on outside
of $\{-m, \dots, m\}$.

Assume the lamplighter in $w$
is positioned at $k \in \Z$ 
with $|k| > m$.
Assume wlog.\ that $k>m$.
The element $wa^n$ has
the lamplighter at position $k+n > m+n$,
hence $wa^n \not\in A^{m+n}$ for every $n$.
Thus $w \not\in S_A(A^m)$.

Now assume the lamplighter in
$w$ is positioned at $k \in \{-m,\dots,m\}$ and there is a light switched on
in $w$ at position $\ell$ outside of $\{-m,\dots,m\}$.
If $\ell>0$ consider the element 
$wa^{-n}$. It still has a light switched on at $\ell$
and the lamplighter is positioned at $k-n \leq m <\ell$.
Hence $wa^{-n}$ has length at least $\ell+\ell-(k-n)$,
since the lamplighter has to get to $\ell$ and then back
to $k-n$. But $2\ell-k+n > 2m-m+n=m+n$.
Therefore $wa^{-n} \not \in A^{m+n}$ and $w \not\in S_A(A^m)$.
If on the other hand $\ell<0$, the same reasoning
for $wa^{n}$ leads to $w \not\in S_A(A^m)$.
Therefore $|S_A(A^m)|<(2m+1)2^{2m+1}<\infty$.
Now every finite set $M \subseteq L$ is
contained in $A^m$ for some $m$ and
$S_A(M) \subseteq S_A(A^m)$. 
\end{proof}

\section{Growth related conjugacy invariants for $\varphi_G$}

In this section we will show that we can recover some ``coarse  geometric'' properties of
a  graph $G$ from the dynamics of the map $\varphi_G$. We
begin by showing how to characterize the finite vertex sets
``dynamically''.

\begin{defn}\label{def:fin}
	Suppose  $\varphi:X \to X$ is a function. Define
	\[\Fin(\varphi):=
     \{ x\in X \setsep
     |\varphi^{-r}(\{\varphi^{r+n}(x)\})| < \infty ~\forall r,n \in \N \}.\]
 \end{defn}

\begin{lem}\label{lem:fin-is-fin}
  Let $G$ be a countable, strongly connected locally
  finite directed graph. Then $\Fin(\varphi_G) = \{M \subseteq V(G)
  \setsep |M|<\infty\}$.
\end{lem}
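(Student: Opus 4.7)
The direction ``$W$ finite implies $W \in \Fin(\varphi_G)$'' is routine and I would handle it first. By local finiteness of $G$, the set $\varphi_G^{r+n}(W)$ is finite for every $r, n \in \N$. Since $\varphi_G^r$ is monotone (every set is contained in its image under $\varphi_G$, hence under $\varphi_G^r$), any $W' \in \varphi_G^{-r}(\{\varphi_G^{r+n}(W)\})$ satisfies $W' \subseteq \varphi_G^r(W') = \varphi_G^{r+n}(W)$. Thus the fiber has cardinality at most $2^{|\varphi_G^{r+n}(W)|} < \infty$, and $W \in \Fin(\varphi_G)$.

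For the converse, assuming $W$ is infinite, I aim to exhibit $r, n \in \N$ together with infinitely many distinct $W' \in \varphi_G^{-r}(\{\varphi_G^{r+n}(W)\})$. The plan is to take $r = 1$ with $n$ chosen depending on $W$, and to construct preimages of $\varphi_G^{n+1}(W)$ under $\varphi_G$ by local modification of the canonical preimage $\varphi_G^n(W)$. Call a vertex $v \in \varphi_G^n(W)$ \emph{removable} if $\varphi_G(\varphi_G^n(W) \setminus \{v\}) = \varphi_G(\varphi_G^n(W))$; this holds whenever $v$ has an in-neighbor in $\varphi_G^n(W) \setminus \{v\}$, and every out-neighbor of $v$ either lies in $\varphi_G^n(W) \setminus \{v\}$ or admits a further in-neighbor there distinct from $v$. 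For vertices $v$ that lie strictly in the interior of $\varphi_G^n(W)$ --- say, $v \in \varphi_G^{n-1}(W) \setminus W$ sitting in a non-terminal position on a shortest directed path from some $w \in W$ --- both conditions are satisfied: the penultimate vertex on such a path serves as the required in-neighbor of $v$ in $\varphi_G^{n-1}(W) \setminus \{v\}$, and each forward neighbor of $v$ remains in $\varphi_G^n(W)$ with witnesses inherited from the path structure. Using local finiteness of the symmetric quasi-metric $\tilde d(u,v) := d(u,v) + d(v,u)$, I would extract an infinite $\tilde d$-uniformly discrete subfamily $\{w_i\}_{i \in \N} \subseteq W$ with separation much larger than $n$, so that interior modifications associated to different $w_i$'s do not interfere; independent binary choices of inclusion/exclusion at the resulting interior vertices then produce $2^{\aleph_0}$ distinct preimages, establishing $W \notin \Fin(\varphi_G)$.

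The step I expect to be the main obstacle is the \emph{forward-closed} edge case $\varphi_G(W) = W$ (equivalently, $\varphi_G^n(W) = W$ for all $n$), in which $\varphi_G^n(W)$ has no strict ``interior'' in the above sense and the construction of interior removable vertices collapses. Here a separate argument is required: for such $W$ the fiber $\varphi_G^{-r}(\{W\})$ consists of the $r$-dominating subsets $W' \subseteq W$ (i.e.\ those with $\varphi_G^r(W') = W$), and I would show that an infinite forward-closed set in a strongly connected locally finite graph admits infinitely many proper infinite $r$-dominating subsets. The idea is that for infinite $W$, local finiteness forces many elements $w \in W$ whose $r$-out-ball is already covered by other elements of $W$, and independently dropping such $w$'s over a $\tilde d$-separated infinite subfamily again yields $2^{\aleph_0}$ valid preimages. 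The careful verification of redundant coverage in the forward-closed case, and the reconciliation of cycle-length non-uniformity (since strong connectivity only provides a priori unbounded cycles $c(w)$ through vertices $w \in W$), is the technical core of the argument.
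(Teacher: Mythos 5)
Your forward direction is fine and matches the paper: any preimage $W'$ of $\varphi_G^{r+n}(W)$ under $\varphi_G^r$ satisfies $W' \subseteq \varphi_G^r(W') = \varphi_G^{r+n}(W)$, so finiteness is immediate. The converse, however, has a genuine gap. Your construction only modifies \emph{interior vertices outside $W$}, i.e.\ vertices in $\varphi_G^{n-1}(W)\setminus W$, and you want infinitely many of these spread out near a $\tilde d$-separated family $\{w_i\}\subseteq W$. But $\varphi_G^{n-1}(W)\setminus W$ can be finite for every $n$ even when $W$ is infinite. Take $G = \Cayley(\Z,\{-1,0,1\})$ and $W = \N$: then $\varphi_G^{n-1}(W)\setminus W = \{-n+2,\dots,0\}$, a set of $n-1$ elements all clustered near $0$. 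For $w_i \gg n$ the entire $n$-ball around $w_i$ lies inside $W$, so there is no ``interior modification associated to $w_i$'' at all, and the separated-family strategy collapses. Note also that your worry about the forward-closed case $\varphi_G(W)=W$ is misplaced: in a strongly connected graph a nonempty forward-closed set is all of $V(G)$, so this case never arises for a proper infinite $W$; the real trouble spot is the finite-boundary phenomenon above, which you do not address.

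The paper's proof avoids this entirely by letting the flexible vertices live \emph{inside} $M$. Fixing $r=1$ and $n=2$, it picks an infinite $1$-separated $N\subseteq M$, sets $M' := (\varphi_G^2(M)\setminus\varphi_G(N))\cup N$ and $M'' := \varphi_G(N)\setminus N$, and verifies that $\varphi_G(M'\cup U)=\varphi_G^3(M)$ for \emph{every} $U\subseteq M''$. The set $M''$ consists of out-neighbors of $N$-vertices that are not themselves in $N$; these may or may not be in $M$, and local finiteness (finite in-degrees) is exactly what forces $M''$ to be infinite. This gives infinitely many distinct preimages of $\varphi_G^3(M)$ with a single fixed pair $(r,n)$, uniformly in $M$. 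If you wanted to salvage your sketch you would need to drop the ``outside $W$'' restriction on removable vertices and argue about modifications inside $\varphi_G^2(W)$, at which point you are essentially re-deriving the paper's $M',M''$ decomposition.
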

\begin{proof}
  Let $M$ be finite. Then $\varphi_G^{r+n}(M)$ is also finite for every $r,n \in \N$.
  Every subset $N \subseteq V(G)$ with
  $\varphi_G^{r}(N)=\varphi_G^{r+n}(M)$ must be a subset of
  $\varphi_G^{r+n}(M)$, hence there are only finitely many such sets. This shows $M \in \Fin(\varphi_G)$.

  Now let $M \subseteq V(G)$ be infinite. We will show that $|\varphi_G^{-1}(\varphi_G^3(M))| = \infty$, and so $M \not\in \Fin(\varphi_G)$. 
  Since $M$ is infinite and $G$ is locally finite we can choose an infinite $1$-separated set $N \subseteq M$. By this we mean that $N$ is an infinite subset of $M$ such that
  $v_1 \not\in \varphi_G(\{v_2\})$
  for all $v_1,v_2 \in N$ with $v_1 \neq v_2$.
  Define
  \begin{align*}
    M' &:= \left(\varphi_G^2(M) \setminus \varphi_G(N)\right) \cup N,\\
    M''&:= \varphi_G^2(M) \setminus M' = \varphi_G(N) \setminus N.
  \end{align*}
  The sets $M'$ and $M''$ are clearly disjoint.
  From the fact that $N$ is infinite and $1$-separated, it follows that both $M'$ and $M''$ are infinite.
  We will show that for every $U \subseteq M''$, $ U \cup M' \in  \varphi_G^{-1}(\varphi_G^3(M))$. Since $\{U \cup M' \setsep U \subseteq M''\}$ is infinite, this will complete the proof.
  
   Let
  $U \subseteq M''$. Clearly $\varphi_G(U \cup M') \subseteq
  \varphi_G^3(M)$.
  On the other hand
  \begin{align*}
    \varphi_G^3(M)&=\varphi_G((\varphi_G^2(M) \setminus \varphi_G(N)) \cup
                  \varphi_G(N)) \\
                &\subseteq \varphi_G(M') \cup \varphi_G^2(N) \\
    &\subseteq \varphi_G(M') \cup M' \cup \varphi_G(N) \subseteq
      \varphi_G(M') \subseteq
      \varphi_G(M' \cup U).\qedhere
  \end{align*}
\end{proof}

\begin{defn}\label{def:Lk}
  Let $(\Gamma,A)$ be a finitely generated group.
  Define
  \begin{align*}
    \CL^k_A(M) &:= \{N \subseteq \Gamma \setsep
                 \varphi_A^k(N)=\varphi_A^k(M)\} \\
               &\phantom{:}=\varphi_A^{-k}(\{\varphi_A^{k}(M)\}), \\
    L^k_A(M) &:=\log_2 | \CL^k_A(M) |.
  \end{align*}
\end{defn}  

Recall that $A$ is positively generating and contains the identity,
hence  there exists $q \in \N$ such $A^{-1} \subseteq A^q$.

\begin{lem}\label{lem:estimate-number-preimages}
  Let $(\Gamma,A)$ be a finitely generated group and let $q \in \N$ be
  as above.
  Let $M$ be a finite non-empty subset of $\Gamma$ and let 
 $k$ be a positive integer
  such that $M \subseteq A^k$.
  The following inequalities
  hold for every $r \in \N$
  \begin{align*}
    |\varphi_A^{r-1}(M)|
    \leq
    L_A^{(q+1)rk}(\varphi_A^r(M))
    \leq
    |S_A^{(q+1)rk}\left(\varphi_A^r(M)\right)|.
  \end{align*}
\end{lem}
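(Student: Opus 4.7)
Setting $s := (q+1)rk$, $W := \varphi_A^r(M)$, and $P := \varphi_A^{r-1}(M)$---so that $W = PA \supseteq P$ since $e \in A$---I treat the two inequalities separately. The right-hand inequality $L_A^{s}(W) \leq |S_A^s(W)|$ is essentially tautological: any $N \in \CL^s_A(W)$ satisfies $NA^s = WA^s = \varphi_A^s(W)$, so for each $n \in N$ we have $nA^s \subseteq \varphi_A^s(W)$, i.e.\ $n \in S_A^s(W)$ by the definition of the sheltered hull. Hence $\CL^s_A(W) \subseteq 2^{S_A^s(W)}$, and taking $\log_2$ of cardinalities gives the bound.

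For the left-hand inequality $|P| \leq L_A^s(W)$, I plan to exhibit an injection $2^P \hookrightarrow \CL^s_A(W)$ via $T \mapsto N_T := W \setminus T$. This is well-defined since $P \subseteq W$, and distinctness follows from $N_T \cap P = P \setminus T$. The substantive task is to verify $N_T \in \CL^s_A(W)$ for every $T \subseteq P$, i.e.\ $(W \setminus T)A^s = WA^s$. The inclusion $\subseteq$ is automatic, and since $T \mapsto W \setminus T$ is inclusion-reversing, the reverse inclusion reduces to the extreme case
\[
    (W \setminus P) A^s \supseteq WA^s = MA^{r+s}.
\]
Equivalently, for every $x \in MA^{r+s}$ I must produce $w' \in W \setminus P = \{v \in \Gamma : d(M, v) = r\}$ with $d(w', x) \leq s$.

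I would split by the value of $d(M, x)$. When $d(M, x) \geq r$, pick $m \in M$ realizing $d(M, x) = d(m, x)$ and let $w'$ be the vertex at distance $r$ from $m$ along a shortest $m$-to-$x$ geodesic; by minimality of $d(m, x)$ one concludes $d(M, w') = r$, and $d(w', x) = d(m, x) - r \leq s$. When $d(M, x) < r$ (so $x \in P$), I would exploit the fact that $MA^r \supsetneq MA^{r-1}$---which holds because iterated $A$-balls around a finite set $M$ cannot stabilize in an infinite group---to choose $w' = m'h$ with $m' \in M$ and $h \in A^r$ of length exactly $r$, subject to $d(M, m'h) = r$ and $m'$ taken as close as possible to $x$. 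The bound $d(w', x) \leq s$ then comes from combining $|h^{-1}|_A \leq qr$ (via $A^{-1} \subseteq A^q$), $|x|_A \leq k + r - 1$ (via $x \in P \subseteq A^{k+r-1}$), $|m'|_A \leq k$, and the proximity of $m'$ to $x$.

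The hard part is this second case: squeezing out the bound $(q+1)rk$ rather than the weaker $(q+1)(r+k) - 1$ that the naive triangle inequality through the group identity would yield. Carrying this through requires using the Cayley-graph structure more carefully than via a "detour through the identity"---in particular, exploiting that the direct path between $w' = m'h$ and $x$ can be substantially shorter than any route through a common base point, since $x \in P$ already lies within distance $r-1$ of $M$ and hence is geometrically close to the sphere $W \setminus P$.
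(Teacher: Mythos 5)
Your argument for the upper bound $L_A^s(W)\leq|S_A^s(W)|$ is exactly the paper's, and your overall strategy for the lower bound also matches: the paper likewise reduces to the identity $\varphi_A^s(W\setminus P)=\varphi_A^s(W)$ and then counts the subsets of $W$ sandwiched between $W\setminus P$ and $W$. Your geodesic argument in the case $d(M,x)\geq r$ is correct, and it fills in a step the paper's final display asserts without comment.

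The case $x\in P$, however, is genuinely left open: you list the ingredients, note that the triangle inequality through $1_\Gamma$ only yields $(q+1)(r+k)-1$, and declare improving this to $(q+1)rk$ to be ``the hard part'' without carrying it out. That is a real gap, and you have also misdiagnosed the difficulty. Note that $(q+1)(r+k)-1\leq(q+1)rk$ is equivalent to $(r-1)(k-1)\geq 1-\tfrac1{q+1}$, hence holds whenever $r,k\geq 2$ --- and that is precisely the range in which the paper's own argument is valid, since its step $1_\Gamma\in\varphi_{A^{-1}}^{rk}(\{y\})$ (equivalently $y\in A^{rk}$) is deduced from $y\in A^{k+r}$ via $k+r\leq rk$. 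So the ``detour through the identity'' you dismiss in fact suffices, and there is no need to engineer $w'=m'h$ close to $x$: simply take any $y\in M'=W\setminus P$ (nonempty since $M$ is finite and nonempty while $\Gamma$ is infinite, as you observe) and estimate
\[
  d(y,x)\;\leq\; d(y,1_\Gamma)+d(1_\Gamma,x)\;\leq\; q|y|_A+|x|_A\;\leq\; q(k+r)+(k+r-1)\;=\;(q+1)(k+r)-1\;\leq\; s
\]
for $r,k\geq2$. The remaining corner cases $r=1$ or $k=1$ are harmless for the intended application, \Cref{thm:growth_strongly_equiv}, which fixes $k$ and $q$ and lets $r\to\infty$ (and one may always enlarge $k$ to be at least $2$ since $A^k$ increases in $k$). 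In short: your structure is the paper's, the first case is complete, but the second case is unfinished for a reason that dissolves once you spell out the arithmetic.
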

\begin{proof}
  If $\varphi_A^{(q+1)rk}(\{v\}) \subseteq \varphi_A^{(q+1)rk}(\varphi_A^r(M))$
  then by definition $v \in S_A^{(q+1)rk}(\varphi_A^r(M))$.
  This establishes the upper bound on $L_A^{(q+1)rk}(M)$.
  To proof the lower bound on $L_A^{(q+1)rk}(M)$, we verify the following identity.
  \[\varphi_A^{(q+1)rk} (\varphi_A^r(M)) = \varphi_A^{(q+1)rk}
  (\varphi_A^r(M) \setminus \varphi_A^{r-1}(M)).\]

  Clearly the set on the right hand side is contained in the set appearing on the left hand side of
  the identity we wish to check. Since $\Gamma$ is infinite, there is a point $y \in M':=\varphi_A^r(M) \setminus
  \varphi_A^{r-1}(M)$. 
  Hence $1_\Gamma \in \varphi_{A^{-1}}^{rk}(\{y\}) \subseteq \varphi_A^{q r k}(M')$
  and so $\varphi_A^{r-1}(M) \subseteq \varphi_A^{k+r-1+q r k}(M')
  \subseteq  \varphi_A^{(q+1)rk}(M')$.
  Thus
  \begin{align*}
    \varphi_A^{(q+1)rk}(\varphi_A^r{M})&=\left(\varphi_A^{(q+1)rk+r}(M)\setminus
    \varphi_A^{r-1}(M)\right) \cup \varphi_A^{r-1}(M) \\
  &\subseteq \varphi_A^{(q+1)rk}(\varphi_A^{r}(M) \setminus \varphi_A^{r-1}(M)).\qedhere
  \end{align*}
\end{proof}

We recall notions of strong domination and strong equivalence  for functions on the integers, following \cite[Chapter VI]{harpeTopicsGeometricGroup2000}:

\begin{defn}
  A function $f_1: \N \to \N$ is said
  to \emph{strongly dominate}  a function
  $f_2: \N \to \N$ if
  if there is a constant $C>0$ such that
  $f_1(n)<f_2(Cn)$ for all $n \in \N$.
  Two functions $f_1$ and $f_2$ are
  said to be \emph{strongly equivalent} if they strongly
  dominate each other.
\end{defn}
It is easily verified that strong equivalence  is indeed an equivalence relation.
\begin{defn}
The \emph{growth function} of a finitely generated group $(\Gamma,A)$ is given by
\[r \mapsto |A^r|. \]
\end{defn}

\begin{thm}\label{thm:growth_strongly_equiv}
  Let $(\Gamma,A)$ be a finitely generated group and let $q \in \N$ be such that
  $A^{-1} \subseteq A^q$.
  Let $M \subseteq \Fin(\varphi_A)$ be non-empty and let $k$ and $q$
  be such that $M \subseteq A^k$ and $A^{-1} \subset A^q$. Then the growth function of $(\Gamma,A)$ is strongly equivalent to the function 
  \[r \mapsto L_A^{(q+1)rk}(\varphi_A^r(M)).\]
\end{thm}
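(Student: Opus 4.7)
The plan is to leverage the sandwich
\[|\varphi_A^{r-1}(M)| \leq L_A^{(q+1)rk}(\varphi_A^r(M)) \leq |S_A^{(q+1)rk}(\varphi_A^r(M))|\]
supplied by \Cref{lem:estimate-number-preimages} and to squeeze its two outer terms between dilations of the growth function $\gamma(r) := |A^r|$. Writing $f(r) := L_A^{(q+1)rk}(\varphi_A^r(M))$, I want to produce constants $C_1, C_2$ so that $\gamma(n) \leq f(C_1 n)$ and $f(n) \leq \gamma(C_2 n)$ hold for every $n \geq 1$; the strict-inequality form of strong equivalence then follows by bumping up either constant.

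For the lower bound, fix any $g \in M$ (possible since $M$ is non-empty) and note that $g A^{r-1} \subseteq \varphi_A^{r-1}(M)$, so $|\varphi_A^{r-1}(M)| \geq |A^{r-1}|$. Combined with the lemma this gives $f(r) \geq |A^{r-1}|$, and substituting $r = 2n$ together with monotonicity of $\gamma$ yields $\gamma(n) \leq \gamma(2n-1) \leq f(2n)$, so $C_1 = 2$ works.

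For the upper bound, the main ingredient is the elementary inclusion
\[S_A^m(W) \subseteq W A^m,\]
valid for every $m \in \N$ and $W \subseteq \Gamma$: because $1_\Gamma \in A$, every $v \in S_A^m(W)$ satisfies $v \in \varphi_A^m(\{v\}) \subseteq \varphi_A^m(W) = W A^m$. Applying this with $m = (q+1)rk$ and $W = \varphi_A^r(M) \subseteq A^{r+k}$ and combining with the upper half of the lemma gives $f(r) \leq |A^{k + r(1+(q+1)k)}|$; choosing $C_2 = 1 + (q+2)k$ makes this at most $\gamma(C_2 r)$ for every $r \geq 1$.

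The whole argument is essentially bookkeeping on top of \Cref{lem:estimate-number-preimages}; the only nontrivial ingredient I expect is the inclusion $S_A^m(W) \subseteq W A^m$, which is what translates the abstract sheltered-hull upper bound into a concrete word-length estimate governed by $\gamma$.
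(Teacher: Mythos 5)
Your proof is correct and follows essentially the same route as the paper's: both bound $f(r) := L_A^{(q+1)rk}(\varphi_A^r(M))$ from below via the left half of \Cref{lem:estimate-number-preimages} and the inclusion $gA^{r-1}\subseteq\varphi_A^{r-1}(M)$ for $g\in M$ (substituting $r\mapsto 2r$ to absorb the off-by-one), and from above by combining the right half with the inclusion $S_A^m(W)\subseteq WA^m$ applied to $W=\varphi_A^r(M)\subseteq A^{r+k}$. The only cosmetic difference is that you state the inclusion $S_A^m(W)\subseteq WA^m$ explicitly and arrive at the slightly sharper constant $1+(q+2)k$ where the paper uses $(q+3)k$; the paper leaves that inclusion implicit (and has a typo, writing $S_A^{(q+1)rk}(M)$ where $S_A^{(q+1)rk}(\varphi_A^r(M))$ is meant), so spelling it out is a welcome clarification rather than a divergence.
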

\begin{proof}
  Using the left inequality in  \Cref{lem:estimate-number-preimages}  we have
  \[|\varphi_A^{2r-1}(M)|\leq L_A^{2(q+1)rk}(\varphi_A^{2r}(M)).\]
  Since $M$ is non-empty we have $|A^{2r-1}| \le |\varphi_A^{2r-1}(M)|$,
  and as $r \le 2r-1$ it follows that  \[|A^r| \le L_A^{2(q+1)rk}(\varphi_A^{2r}(M)).\]
  On the other hand, using the right inequality in  \Cref{lem:estimate-number-preimages} and
  \[S_A^{(q+1)rk}(M) \subseteq S_A^{(q+1)rk}(A^{r+k}) \subseteq A^{(q+1)rk+r+k},\]
  we conclude that $L_A^{(q+1)rk}(\varphi_A^r(M)) \leq |A^{(q+3)kr}|$.
\end{proof}

\begin{cor}\label{cor:growth_invariant}
  Let $(\Gamma_1, A_1)$ and $(\Gamma_2,A_2)$ be finitely generated groups.
  If $\varphi_{A_1}$ and $\varphi_{A_2}$ are topologically
  conjugate, then $\Gamma_1$ and $\Gamma_2$ have strongly equivalent
  growth functions.
\end{cor}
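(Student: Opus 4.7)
My plan is to leverage \Cref{thm:growth_strongly_equiv}, which expresses the growth function (up to strong equivalence) as a quantity built purely from the map $\varphi_A$ once one has in hand a finite non-empty reference set. The two ingredients we need are preserved under topological conjugacy almost by definition: both the family $\Fin(\varphi_A)$ (\Cref{def:fin}) and the quantity $L^k_A(M)$ (\Cref{def:Lk}) are intrinsic to the self-map and the point $M$.

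First I would fix a conjugacy $h:\PP(\Gamma_1)\to\PP(\Gamma_2)$ with $h\circ\varphi_{A_1}=\varphi_{A_2}\circ h$. Conjugacy carries $\Fin(\varphi_{A_1})$ bijectively onto $\Fin(\varphi_{A_2})$, which by \Cref{lem:fin-is-fin} is exactly the collection of finite subsets of $\Gamma_i$. The map $h$ also permutes the two fixed points, and these are distinguished by $\Fin$: $\emptyset$ lies in $\Fin$ while $\Gamma_i$ does not. Hence $h(\emptyset)=\emptyset$, so for any non-empty finite $M_1\subseteq\Gamma_1$ the image $M_2:=h(M_1)$ is again non-empty and finite.

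Next I would pick $K\in\N$ large enough that $M_i\subseteq A_i^K$ and $A_i^{-1}\subseteq A_i^K$ for both $i=1,2$. Intertwining gives
\[
h\left(\varphi_{A_1}^{-k}(\{\varphi_{A_1}^{k+r}(M_1)\})\right)=\varphi_{A_2}^{-k}(\{\varphi_{A_2}^{k+r}(M_2)\}),
\]
so $L^{k}_{A_1}(\varphi_{A_1}^{r}(M_1))=L^{k}_{A_2}(\varphi_{A_2}^{r}(M_2))$ for all $k,r\in\N$. Applying \Cref{thm:growth_strongly_equiv} with the common choice $q=k=K$ to each group, both growth functions $r\mapsto|A_i^r|$ are strongly equivalent to the \emph{same} function $r\mapsto L_{A_i}^{(K+1)rK}(\varphi_{A_i}^r(M_i))$, and therefore to one another.

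There is no substantial obstacle here: the real work has already been done in \Cref{thm:growth_strongly_equiv}. The only subtlety worth flagging is the need to check that $M_2$ is non-empty, for which \Cref{lem:fin-is-fin} together with the identification of the two fixed points plays the crucial role, bridging the dynamical notion $\Fin(\varphi_A)$ with finiteness of subsets.
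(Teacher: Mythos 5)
Your proposal is correct and follows essentially the same route as the paper: apply the conjugacy $\Phi$ to a non-empty finite set, use the conjugacy to match the quantities $L^k_A(\cdot)$, and conclude via \Cref{thm:growth_strongly_equiv}. The only difference is that you spell out why $M_2=\Phi(M_1)$ is non-empty (distinguishing the two fixed points via $\Fin$), a step the paper dismisses with ``clearly.''
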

\begin{proof}
Suppose  $\Phi:\PP(\Gamma_1) \to \PP(\Gamma_2)$ is a topological conjugacy between $\varphi_{A_1}$ and $\varphi_{A_2}$.
Take a non-empty $M_1 \in \Fin(\varphi_{A_1})$, and let $M_2=\Phi(M_1)$. Clearly $M_2 \in \Fin(\varphi_{A_2})$ is non-empty.
Also, for any $r,q,k \in \N$,
\[
L_{A_1}^{(q+1)rk}(\varphi_{A_1}^r(M_1)) =
L_{A_2}^{(q+1)rk}(\Phi(\varphi_{A_1}^r(M_1))) = L_{A_2}^{(q+1)rk}(\varphi_{A_2}^r(M_2))).
\]
Choosing $k$ and $q$ large enough to apply \Cref{thm:growth_strongly_equiv}, it follows that the growth functions of $(\Gamma_1,A_1)$ and $(\Gamma_2,A_2)$ are equivalent.
\end{proof}

When  $\Gamma= \Z^d$ we can say more:
\begin{cor}\label{cor:vol_conv_invariant}
	Let $A_1$ and $A_2$ be positively generating sets of $\Z^{d_1}$ and $\Z^{d_2}$ respectively,
	both containing $0$.
	If $\varphi_{A_1}$ and $\varphi_{A_2}$ are conjugate, then
	$d_1=d_2$ and $\vol_{d_1}(\conv(A_1))=\vol_{d_2}(\conv(A_2))$.
 \end{cor}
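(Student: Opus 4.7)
The plan is to refine \Cref{cor:growth_invariant}, which merely gives strong equivalence of growth functions, into an exact asymptotic identity that recovers both the dimension and the leading coefficient. Fix a conjugacy $\Phi : \PP(\Z^{d_1}) \to \PP(\Z^{d_2})$ between $\varphi_{A_1}$ and $\varphi_{A_2}$. Its only fixed points are $\emptyset$ and $\Gamma_i$, the latter attracting, so $\Phi(\emptyset)=\emptyset$. Take $M_1 := \{0\} \in \PP(\Z^{d_1})$ and set $M_2 := \Phi(M_1)$; by \Cref{lem:fin-is-fin} the set $M_2$ is nonempty and finite. Choose $k, q \in \N$ with $M_i \subseteq A_i^{k}$ and $A_i^{-1} \subseteq A_i^{q}$ for both $i \in \{1,2\}$. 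Since $\Phi$ conjugates each iterate $\varphi_{A_1}^{s}$ to $\varphi_{A_2}^{s}$ and is a bijection on $\PP$, it induces a bijection between $\CL^{s}_{A_1}(\varphi_{A_1}^r(M_1))$ and $\CL^{s}_{A_2}(\varphi_{A_2}^r(M_2))$, whence
\[
N(r) \;:=\; L^{(q+1)rk}_{A_1}\!\left(\varphi_{A_1}^r(M_1)\right) \;=\; L^{(q+1)rk}_{A_2}\!\left(\varphi_{A_2}^r(M_2)\right), \qquad r \in \N.
\]

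Next, I would show that on each side, $N(r) = \vol_d(\conv A)\, r^d + o(r^d)$, where $d$ and $A$ denote the dimension and generating set on that side. By \Cref{lem:estimate-number-preimages},
\[
|\varphi_A^{r-1}(M)| \;\le\; N(r) \;\le\; \bigl|S_A^{(q+1)rk}(\varphi_A^r(M))\bigr|.
\]
For the lower bound, $|\varphi_A^{r-1}(M)| \ge |(r-1)A|$, and Khovanskii's theorem for iterated Minkowski sums in $\Z^d$ yields $|nA| = \vol_d(\conv A)\, n^d + O(n^{d-1})$. For the upper bound, the key geometric observation is that in $\Z^d$ the sheltered hull of any $W \subseteq \Z^d$ lies inside its ordinary convex hull: if $v \notin \conv(W)$, a separating linear functional $f$ and a maximiser $a^{\ast} \in A$ of $f$ on $A$ give
\[
f(v + r a^{\ast}) \;=\; f(v) + r f(a^{\ast}) \;>\; \max_{w \in W} f(w) + r f(a^{\ast}) \;\ge\; \max f(W + r A),
\]
so $v + r a^{\ast} \notin W + r A$ and $v \notin S_A^{r}(W)$ for every $r$. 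Applied to $W = \varphi_A^{r}(M) = M + rA$, whose convex hull is $\conv(M) + r \conv(A)$, the Steiner/mixed-volume formula together with the standard lattice-point count yields
\[
\bigl|S_A^{(q+1)rk}(\varphi_A^{r}(M))\bigr| \;\le\; |(\conv(M) + r \conv(A)) \cap \Z^d| \;=\; \vol_d(\conv A)\, r^d + O(r^{d-1}).
\]
Sandwiching, $\lim_{r \to \infty} N(r)/r^d = \vol_d(\conv A)$.

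Applying this limit on both sides of the identity for $N(r)$ gives $N(r) \sim \vol_{d_1}(\conv A_1)\, r^{d_1}$ and $N(r) \sim \vol_{d_2}(\conv A_2)\, r^{d_2}$ simultaneously. The order of growth $\log N(r)/\log r$ then forces $d_1 = d_2$, and matching the positive leading coefficients forces $\vol_{d_1}(\conv A_1) = \vol_{d_2}(\conv A_2)$. The main obstacle is the upper bound: extracting the correct leading coefficient (not merely the correct polynomial degree, which would already follow from \Cref{cor:growth_invariant}) requires combining the containment $S_A(W) \subseteq \conv(W)$ with a sharp $O(r^{d-1})$ error term in Khovanskii's asymptotic, so that the sheltered hull does not inflate the count beyond a negligible boundary layer around $r\conv(A)$.
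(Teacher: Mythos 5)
Your proof is correct and follows the same overall strategy as the paper --- start from the sandwich
\[
|\varphi_A^{r-1}(M)| \;\le\; L_A^{(q+1)rk}\bigl(\varphi_A^r(M)\bigr) \;\le\; \bigl|S_A^{(q+1)rk}(\varphi_A^r(M))\bigr|
\]
of \Cref{lem:estimate-number-preimages}, identify the invariant $L_A^{(q+1)rk}(\varphi_A^r(M))$ across the conjugacy, and show both bounds are asymptotic to $\vol(\conv A)\,r^d$. The genuine difference is in how you cap the sheltered hull on the right. The paper invokes the fact that $(\Z^d,A)$ has only finitely many dead ends (citing Lehnert, with a remark that the symmetric-generating-set hypothesis can be dropped), and then uses \Cref{cor:finite-dead-ends} to conclude $S_A(A^n)=A^n$ for large $n$, so that $S_A^{(q+1)rk}(\varphi_A^r(M)) \subseteq A^{r+k}$; the asymptotic $|A^{r+k}| \sim \vol(\conv A)\,r^d$ is then read off from \Cref{prop:iterated_mikowski_sums_convex}. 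You instead prove directly, by a separating linear functional and a maximiser $a^\ast$ of it over $A$, that $S_A^r(W) \subseteq \conv(W)\cap\Z^d$ for finite $W\subseteq\Z^d$, and then count lattice points in $\conv(M)+r\conv(A)$. Your route is more self-contained and makes the Euclidean geometry behind the statement transparent, at the cost of being specific to $\Z^d$; the paper's route reuses the group-theoretic dead-end machinery it already developed (Propositions 3.8--3.12), which is the formulation that would more naturally extend to other groups where one can control dead-end depth. Both then feed into the same asymptotics for $|nA|$ (your Khovanskii reference and the paper's Shapley--Folkman-based \Cref{prop:iterated_mikowski_sums_convex} deliver the same leading term). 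Two small points of rigor you should make explicit: the separation argument needs $\conv(W)$ closed, which is automatic because $W=\varphi_A^r(M)$ is finite; and the lattice-point estimate for $\conv(M)+r\conv(A)$ relies on $\conv(A)$ being full-dimensional, which holds because $A$ positively generates $\Z^d$.
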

 \begin{proof}
   The growth type of $\Z^d$ is $n \mapsto n^d$
   and these growth types are different for pairwise different $d$.
   Recall that by  \cite{lehnertRemarksDepthDead2007} $(\Z^d,A)$ has only finitely many dead ends (the statement in \cite{lehnertRemarksDepthDead2007} is only for symmetric generating sets, but the proof goes through for positively generating sets).
   By \Cref{cor:finite-dead-ends}, 
   we have $S_A(A^n)=A^n$ for sufficiently large $n$  (we use multiplicative notation here for the group operation, even through the group is $\mathbb{Z}^d$).

   Therefore, for $M \in \Fin(\varphi_A)$, $q$ so that $A^{-1}
   \subseteq A^q$, $k$ so that $M \subseteq A^k$ and large $r$,
    \begin{align*}
    \frac1{r^d}|A^{r-1}|
    \leq
    \frac1{r^d} L_A^{(q+1)rk}(\varphi_A^r(M))
      \leq
      \frac1{r^d}|A^{r+k}|.
    \end{align*}
    Sending $r$ to infinity the left and the right side of this
    inequality
    both converge to $\vol(\conv(A))$, a fact which follows directly from  \Cref{prop:iterated_mikowski_sums_convex} below.
    Therefore
       \begin{align*}
    \lim_{d \to \infty}
    \frac1{r^d}\log_2|\varphi_A^{-(q+1)rk}(\{\varphi_A^{(q+1)rk}(\varphi_A^r(M))\})|
         &=\vol(\conv(A)). \qedhere
       \end{align*}
 \end{proof}

\begin{defn}
  For $f: \N \to \N$ we call
  $\omega(f) = \lim_{n \to \infty} \sqrt[n]{f(n)}$ the exponential
  growth rate of $f$ if this limit exists.
\end{defn}

 An argument very similar to that in the proof of \Cref{cor:vol_conv_invariant} shows that 
 for free groups of rank at least $2$ (as well as other hyperbolic  groups) the exponential growth is also ``dynamically recognizable''.
\begin{cor}\label{cor:exp_growth_inv}
  Let $(\Gamma_1,A_1), (\Gamma_2,A_2)$ be two finitely generated groups
  of exponential growth with bounded dead end depth.
  If $\varphi_{A_1}$ and $\varphi_{A_2}$ are topologically
  conjugate, then $\Cayley(\Gamma_1,A_1)$ and $\Cayley(\Gamma_1,A_1)$ have the same exponential
  growth rate.
\end{cor}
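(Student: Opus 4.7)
The strategy is to follow the template of the proof of \Cref{cor:vol_conv_invariant}, replacing the polynomial volume computation by an exponential growth computation. The three key ingredients are \Cref{lem:estimate-number-preimages}, the fact that $L^s_A(M)$ is preserved under topological conjugacy (already exploited in the proof of \Cref{cor:growth_invariant}), and \Cref{cor:finite-dead-end-depth}, which under the bounded dead end depth hypothesis sandwiches $|S_A^{(q+1)rk}(\varphi_A^r(M))|$ between two balls.

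Concretely, given a topological conjugacy $\Phi : \PP(\Gamma_1) \to \PP(\Gamma_2)$ and any nonempty finite $M_1 \in \Fin(\varphi_{A_1})$, set $M_2 := \Phi(M_1)$, which is also nonempty and finite by \Cref{lem:fin-is-fin}. Since $\Phi$ intertwines $\varphi_{A_1}$ and $\varphi_{A_2}$, the identity $L_{A_1}^s(M_1) = L_{A_2}^s(M_2)$ holds for every $s \in \N$. Choose $k, q \in \N$ large enough that $M_i \subseteq A_i^k$ and $A_i^{-1} \subseteq A_i^q$ for both $i = 1,2$, and let $C \in \N$ be a common upper bound on the depth of dead ends in both groups. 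Applying the left inequality of \Cref{lem:estimate-number-preimages} to $(\Gamma_1, A_1)$ together with $|A_1^{r-1}| \le |\varphi_{A_1}^{r-1}(M_1)|$ (which holds because $M_1$ is nonempty), and applying the right inequality to $(\Gamma_2, A_2)$ together with monotonicity of $S_{A_2}^{(q+1)rk}$ and the inclusion $S_{A_2}(A_2^{r+k}) \subseteq A_2^{r+k+C}$ from \Cref{cor:finite-dead-end-depth}, yields the chain
\[
|A_1^{r-1}| \;\le\; L_{A_1}^{(q+1)rk}(\varphi_{A_1}^r(M_1)) \;=\; L_{A_2}^{(q+1)rk}(\varphi_{A_2}^r(M_2)) \;\le\; |A_2^{r+k+C}|.
\]

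Because $n \mapsto \log_2 |A_i^n|$ is subadditive (as $A_i^{n+m} \subseteq A_i^n A_i^m$), Fekete's lemma guarantees that the exponential growth rate $\omega(|A_i^n|)$ exists for $i = 1, 2$. Taking $r$-th roots in the displayed sandwich and letting $r \to \infty$ gives $\omega(|A_1^n|) \le \omega(|A_2^n|)$; the reverse inequality follows by symmetry. I do not expect a substantive obstacle: the argument is essentially a verbatim adaptation of the proof of \Cref{cor:vol_conv_invariant}, and the only care needed is to pick the constants $q$, $k$, and $C$ uniformly large enough to be valid for both groups, so that the additive offsets $k+C$ are absorbed when passing to the $r$-th root limit. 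The one routine verification along the way is the monotonicity of $S_A^s$, which is immediate from the definition $S_A^s(W) = \{v : vA^s \subseteq WA^s\}$.
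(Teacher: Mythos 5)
Your proof is correct and is exactly the argument the paper intends: the paper states only that ``an argument very similar to that in the proof of \Cref{cor:vol_conv_invariant} shows'' the result, and your write-up faithfully instantiates that sketch, replacing the equality $S_A(A^n)=A^n$ (valid in $\Z^d$ by \Cref{cor:finite-dead-ends}) with the inclusion $S_A(A^n)\subseteq A^{n+C}$ from \Cref{cor:finite-dead-end-depth}, and passing from $r^{-d}$ normalization to $r$-th roots. All the auxiliary checks you flag (monotonicity of $S_A^s$, existence of $\omega$ via Fekete, absorbing the additive offset $k+C$ in the limit) are handled correctly.
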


For example, the assumptions in \Cref{cor:exp_growth_inv} apply to all hyperbolic groups which are not virtually $\Z$.

\section{Amenability}
Throughout this section  $(\Gamma,A)$ will be a finitely generated
group, and $q \in \N$ will be a constant such that $A^{-1} \subseteq
A^q$.
The aim of this section is to show that amenability of
$(\Gamma,A)$ can be
characterized in terms of the dynamics of $\varphi_{A}$.

Recall that a sequence $(M_n)_{n \in \N}$ of finite subsets of
$\Gamma$ is called a \emph{(right) F{\o}lner sequence} if for any $g
\in \Gamma$ $|M_n g \setminus M_n| / |M_n| \to 0$ as $n \to
\infty$. Existence of a  F{\o}lner sequence is one of many equivalent conditions for amenability of a group \cite{folner1955}. The following lemma states two of many well-known equivalent
conditions for a sequence of subsets to be a  F{\o}lner sequence. See for instance \cite[Chatper $4$]{coornaertCellularBook2010} for details.

\begin{lem}\label{lem:characterization-Folner}
  For an increasing sequence $(M_n)_{n \in \N}$ of finite subsets of
  $\Gamma$ the following are equivalent.
  \begin{enumerate}
  \item $M_n$ is a F{\o}lner sequence.
  \item There is $\ell \in \N$ with $|\varphi_A^{\ell}(M_n)|/|M_n| \to 1$.
  \item There is $\ell \in \N$ with $|\varphi_A^{\ell}(M_n) \setminus M_n| /|M_n| \to 0$.
  \end{enumerate}
\end{lem}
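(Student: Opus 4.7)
My plan is to establish (2)$\Leftrightarrow$(3) trivially, then prove (1)$\Rightarrow$(2) using $\ell = 1$ and the finiteness of $A$, and finally prove (3)$\Rightarrow$(1) via a subadditivity argument on right-translation symmetric differences, lifting the conclusion from $A^\ell$ to arbitrary $g \in \Gamma$.

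First, since $1_\Gamma \in A$, we have $M_n \subseteq M_n A^\ell = \varphi_A^\ell(M_n)$ for every $\ell$, so
\[
\frac{|\varphi_A^\ell(M_n)|}{|M_n|} - 1 = \frac{|\varphi_A^\ell(M_n) \setminus M_n|}{|M_n|},
\]
which yields (2)$\Leftrightarrow$(3). (Note: monotonicity of $(M_n)$ is not even required for this step; it only helps in that the $|M_n|$ in the denominator is nondecreasing.)

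Next, for (1)$\Rightarrow$(2), I would take $\ell = 1$. Since $\varphi_A(M_n) = \bigcup_{a \in A} M_n a$, we have
\[
|\varphi_A(M_n) \setminus M_n| \le \sum_{a \in A} |M_n a \setminus M_n|.
\]
Assuming (1), each summand is $o(|M_n|)$, and because $A$ is finite the whole sum is $o(|M_n|)$. Applying the equivalence (2)$\Leftrightarrow$(3) just established gives (2) with $\ell=1$.

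The slightly more substantial direction is (3)$\Rightarrow$(1). Fix $\ell$ with $|\varphi_A^\ell(M_n) \setminus M_n|/|M_n| \to 0$. For any $a \in A^\ell$ we have $M_n a \subseteq \varphi_A^\ell(M_n)$, so $|M_n a \setminus M_n| \le |\varphi_A^\ell(M_n) \setminus M_n|$, which tends to zero relative to $|M_n|$. Because right-translation is a bijection, $|M_n a \setminus M_n| = |M_n \setminus M_n a|$, hence $|M_n a \triangle M_n|/|M_n| \to 0$ for every $a \in A^\ell$. Now I use the standard subadditivity estimate: for any $h, h' \in \Gamma$,
\[
|M_n (hh') \triangle M_n| \le |M_n h \triangle M_n| + |M_n h' \triangle M_n|,
\]
which follows from $(M_n hh') \triangle M_n \subseteq ((M_n h \triangle M_n) h') \cup (M_n h' \triangle M_n)$ together with right-invariance of cardinality. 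Since $1_\Gamma \in A$ forces $A \subseteq A^\ell$, the set $A^\ell$ is itself a positively generating set for $\Gamma$, so any $g \in \Gamma$ can be written as $g = a_1 \cdots a_r$ with $a_i \in A^\ell$. Iterating the subadditivity bound yields $|M_n g \triangle M_n|/|M_n| \le \sum_{i=1}^r |M_n a_i \triangle M_n|/|M_n| \to 0$, establishing (1).

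The only place where one has to be careful is the last step: verifying that the subadditivity bound is stated for the correct (right) translation action, and that $A^\ell$ inherits the positive generation property — the hypothesis $1_\Gamma \in A$ is precisely what makes both work. Everything else is a direct consequence of $\varphi_A(W) = WA$ and the fact that $A$ is finite.
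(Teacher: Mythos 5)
Your proof is correct. Note that the paper itself does not supply an argument for this lemma --- it simply invokes the reference \cite{coornaertCellularBook2010} --- so there is no in-paper proof to compare against. Your decomposition into (2)$\Leftrightarrow$(3) (from $1_\Gamma\in A$ giving $M_n\subseteq\varphi_A^\ell(M_n)$), the union bound $|\varphi_A(M_n)\setminus M_n|\le\sum_{a\in A}|M_na\setminus M_n|$ for (1)$\Rightarrow$(2), and the subadditivity of $h\mapsto|M_nh\triangle M_n|$ combined with the observation that $A^\ell$ is again a positively generating set for (3)$\Rightarrow$(1) is exactly the standard argument, and each step checks out; in particular the set inclusion $(M_nhh')\triangle M_n\subseteq ((M_nh\triangle M_n)h')\cup(M_nh'\triangle M_n)$ and the identity $|M_na\setminus M_n|=|M_n\setminus M_na|$ both hold as you claim. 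As you observe, the ``increasing'' hypothesis is never actually used.
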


\begin{lem}\label{lem:inner-flexibility}
  Let $k \in \N$ and let $M, N \subseteq \Gamma$ with $\varphi_A^k(M) \subseteq
  \varphi_A^k(N)$. Then
 \[
    \varphi_A^k\left(
    \left(N \cap M\right)
    \cup
    \left(\varphi_A^{(q+1)k}(M) \setminus M\right)
    \right)
    = \varphi_A^k\left(\varphi_A^{(q+1)k}(M)\right).
\]
\end{lem}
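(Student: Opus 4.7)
The plan is to write $T := (N \cap M) \cup (\varphi_A^{(q+1)k}(M) \setminus M)$ and prove the two inclusions separately. The right-hand side simplifies to $\varphi_A^{(q+2)k}(M)$, so what must be shown is $\varphi_A^k(T) = \varphi_A^{(q+2)k}(M)$. The hypothesis $\varphi_A^k(M) \subseteq \varphi_A^k(N)$ will be used once, in one particular case of the harder direction; the constant $q$ will enter through $A^{-1} \subseteq A^q$ (equivalently $A^{-k} \subseteq A^{qk}$), which is the reason for the exponent $(q+1)k$ in the statement.

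For the inclusion $\varphi_A^k(T) \subseteq \varphi_A^{(q+2)k}(M)$, I would just observe that $N \cap M \subseteq M \subseteq \varphi_A^{(q+1)k}(M)$ (since $1_\Gamma \in A$) and $\varphi_A^{(q+1)k}(M) \setminus M \subseteq \varphi_A^{(q+1)k}(M)$, so $T \subseteq \varphi_A^{(q+1)k}(M)$ and applying $\varphi_A^k$ finishes this direction.

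For the reverse inclusion, I would pick $x \in \varphi_A^{(q+2)k}(M)$ and write $x = ya$ with $y \in \varphi_A^{(q+1)k}(M)$ and $a \in A^k$. If $y \notin M$ then $y \in T$ and we are done. If $y \in M$, then $x \in \varphi_A^k(M) \subseteq \varphi_A^k(N)$ by hypothesis, so there exist $y' \in N$ and $a' \in A^k$ with $x = y'a'$. If $y' \in M$ then $y' \in N \cap M \subseteq T$; otherwise $y' = x (a')^{-1}$, and since $x \in \varphi_A^k(M) = M A^k$ and $(a')^{-1} \in A^{-k} \subseteq A^{qk}$, we get $y' \in M A^k A^{qk} = \varphi_A^{(q+1)k}(M)$, so $y' \in \varphi_A^{(q+1)k}(M) \setminus M \subseteq T$. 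In every case $x \in \varphi_A^k(T)$.

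The main obstacle is the subcase where the chosen preimage $y$ lies in $M$: here the natural preimage $y$ is useless for expressing $x$ through $T$, so one must swap it for a different preimage $y'$ coming from $N$ (afforded by the hypothesis) and then check that $y'$ is still controlled. The control is exactly that $y'$ lies within $(q+1)k$ steps of $M$, which is why the exponent in the definition of $T$ needs the factor $q+1$ rather than something smaller. Apart from this small juggling of preimages, the argument is routine.
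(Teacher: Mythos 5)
Your proof is correct and follows essentially the same route as the paper's: for the nontrivial inclusion, both arguments use the hypothesis $\varphi_A^k(M)\subseteq\varphi_A^k(N)$ to swap a $k$-step preimage in $M$ for one in $N$, then use $A^{-k}\subseteq A^{qk}$ to show the swapped preimage still lies in $\varphi_A^{(q+1)k}(M)$ and hence in the displayed set. The only cosmetic difference is in the case bookkeeping: the paper assumes $u\notin\varphi_A^k(\varphi_A^{(q+1)k}(M)\setminus M)$ up front and uses that assumption to force the $N$-preimage into $M$, whereas you split on whether your chosen preimage lies in $M$ and, in the subcase $y'\notin M$, place $y'$ directly in $\varphi_A^{(q+1)k}(M)\setminus M$ rather than deriving a contradiction; both lead to the same conclusion with the same two ingredients.
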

\begin{proof}
Clearly
 \[
    \varphi_A^k\left(
    \left(N \cap M\right)
    \cup
    \left(\varphi_A^{(q+1)k}(M) \setminus M\right)
    \right)
   \subseteq \varphi_A^k\left(\varphi_A^{(q+1)k}(M)\right).
\]
Let us show the other inclusion.
Suppose  $u \in \varphi_A^k(\varphi_A^{(q+1)k}(M))$. We need to show that 
$u \in \varphi_A^k\left(
    \left(N \cap M\right)
    \cup
    \left(\varphi_A^{(q+1)k}(M) \setminus M\right)
    \right)$. If   $u \in \varphi_A^k(\varphi_A^{(q+1)k}(M) \setminus M)$, we are done. So   assume that
  $u \not\in \varphi_A^k(\varphi_A^{(q+1)k}(M) \setminus M)$. 
Thus,
\[
u \in  \varphi_A^k(M)
  \setminus \left(\varphi_A^k(\varphi_A^{(q+1)k}(M) \setminus M)\right).
\]
In particular, because $\varphi_A^{k}(M)\subseteq \varphi_A^{k}(N)$,
$u \in \varphi_A^{k}(N)$, so there exists $v \in N$ so that $u  \in
\varphi_A^k(\{v\})$. Hence $v \in \varphi_A^{qk}(\{u\}) \subseteq \varphi_A^{(q+1)k}(M)$.
But the assumption  $u \not\in \varphi_A^k(\varphi_A^{(q+1)k}(M)
\setminus M)$ together with $u \in \varphi_A^{k}(\{v\})$ implies that $v \in M$.  
We conclude that
$u \in \varphi_A^k( N \cap M)$.
This completes the proof.
\end{proof}

\begin{lem}\label{lem:Lk-upper-bound}
  Let $M \subseteq  \Fin(\varphi_A)$ and $k \in \N$.
  If $\ell \geq (q+1)k$  then
  \[L_A^k(\varphi_A^\ell(M)) \leq L_A^k(\varphi_A^{(q+1)k}(M)) +
|\varphi_A^{\ell+k}(M) \setminus M|.\]
\end{lem}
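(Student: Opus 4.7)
The plan is to exhibit an injection
\[
\Psi: \CL_A^k(\varphi_A^\ell(M)) \hookrightarrow \CL_A^k(\varphi_A^{(q+1)k}(M)) \times 2^{\varphi_A^{\ell+k}(M) \setminus M};
\]
since $M$ is finite (interpreting $M \in \Fin(\varphi_A)$ via \Cref{lem:fin-is-fin}), the target factor is finite, and taking $\log_2$ of the resulting cardinality inequality gives the claimed bound. Writing $U := \varphi_A^{(q+1)k}(M)$, I would define
\[
\Psi(N) := \bigl((N \cap M) \cup (U \setminus M),\ N \setminus M\bigr).
\]
The guiding idea is to \emph{canonicalise} the portion of $N$ sitting outside $M$ by replacing it with the fixed set $U \setminus M$, while recording $N \setminus M$ as the bookkeeping necessary to invert the map.

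Three checks are needed. First, the first coordinate lands in $\CL_A^k(U)$: this is exactly the conclusion of \Cref{lem:inner-flexibility} applied to $M$ and $N$, whose hypothesis $\varphi_A^k(M) \subseteq \varphi_A^k(N)$ is automatic because $\varphi_A^k(N) = \varphi_A^{\ell+k}(M) \supseteq \varphi_A^k(M)$ (using $1_\Gamma \in A^\ell$). Second, the second coordinate truly lies in $2^{\varphi_A^{\ell+k}(M) \setminus M}$: since $1_\Gamma \in A^k$, every $x \in N$ satisfies $x \in \varphi_A^k(\{x\}) \subseteq \varphi_A^k(N) = \varphi_A^{\ell+k}(M)$, so $N \subseteq \varphi_A^{\ell+k}(M)$ and hence $N \setminus M \subseteq \varphi_A^{\ell+k}(M) \setminus M$.

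Third, for injectivity, suppose $\Psi(N_1) = \Psi(N_2)$. Intersecting the first coordinate with $M$, and using that $U \setminus M$ is disjoint from $M$, yields $N_1 \cap M = N_2 \cap M$; equality of the second coordinates directly gives $N_1 \setminus M = N_2 \setminus M$; together these determine each $N_i$ and force $N_1 = N_2$.

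I do not anticipate any real obstacle: the lemma is essentially a clean packaging of \Cref{lem:inner-flexibility} (which supplies the flexibility in the ``inner'' region $M$) together with the tautology $N \subseteq \varphi_A^k(N)$ (which confines $N$ to $\varphi_A^{\ell+k}(M)$). The hypothesis $\ell \geq (q+1)k$ does not actually seem to be needed in the argument itself; it is presumably imposed so that $U \subseteq \varphi_A^\ell(M)$, which is what will matter in the subsequent uses of the bound.
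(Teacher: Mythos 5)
Your proof is correct and follows essentially the same route as the paper: you define the same canonicalised first coordinate $(N \cap M) \cup (U \setminus M)$ and invoke \Cref{lem:inner-flexibility} for exactly the same purpose; the only difference is that the paper packages the argument as a surjection $\Psi(W_1,W_2)=W_1\triangle W_2$ with the paper's bookkeeping set $W_2 := U \triangle (M \cup W)$ being the complement of your $N\setminus M$ within $U \setminus M$ (and equal to it outside $U$). One small point in your favour: the paper's choice of $W_2$ requires $U\subseteq\varphi_A^{\ell+k}(M)$ (which is where the hypothesis $\ell\geq (q+1)k$ enters), whereas your choice $N\setminus M$ lies in $\varphi_A^{\ell+k}(M)\setminus M$ automatically, so your observation that the hypothesis is not needed for this lemma is accurate.
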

\begin{proof}
  Consider the map
  \begin{align*}
    \Psi&:  \CL^k_A\left(\varphi_A^{(q+1)k}(M)\right)
 \times \PP\left(\varphi_A^{\ell+k}(M) \setminus
    M\right)     \to \varphi_A^{\ell+k}(M), \\
    \Psi(W_1,W_2)&:=W_1 \triangle W_2.
  \end{align*}

\def\firstcircle{(0,0) circle (1cm)}
\def\secondcircle{(0,0) circle (2cm)}
\def\thirdcircle{(2,0) ellipse (1.5cm and 0.8cm)}

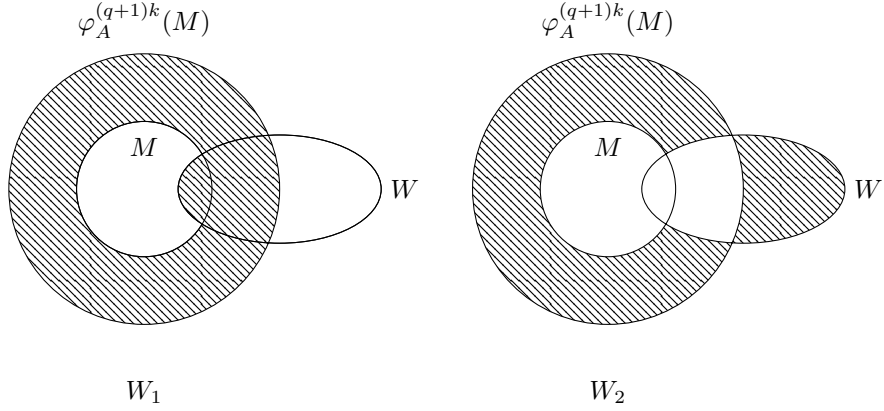
\begin{figure}
  \begin{center} \begin{tikzpicture}[scale=0.9]  
    \fill[pattern=north west lines] \secondcircle;
    \fill[draw=black, fill=white] \firstcircle;
    \fill[draw=black, fill=white] \thirdcircle;
    \draw \secondcircle \firstcircle \thirdcircle;
  	\begin{scope} 
	\clip \secondcircle;
	\fill[pattern=north west lines] \thirdcircle;
 \end{scope}
   \draw (0,2.1) node[above]{$\varphi_A^{(q+1)k}(M)$};
   \draw (0,0.9) node[below] {$M$};
	\draw (3.5,0) node [right] {$W$};
   \draw (0,-3) node {$W_1$};
 \end{tikzpicture} \hspace{1em}
\begin{tikzpicture}[scale=0.9]
    \fill[pattern=north west lines] \secondcircle;
    \fill[fill=white] \firstcircle;
    \fill[pattern=north west lines] \thirdcircle;
   \begin{scope} 
	\clip \secondcircle;
	\fill[white] \thirdcircle;
   \end{scope}
   \draw \secondcircle \firstcircle \thirdcircle;
  	\draw (0,2.1) node[above]{$\varphi_A^{(q+1)k}(M)$};
   \draw (0,0.9) node[below] {$M$};
	\draw (3.5,0) node [right] {$W$};
   \draw (0,-3) node {$W_2$};
 \end{tikzpicture}
\end{center}
\caption{Illustration of the decomposition of $W$ in the proof of
  \Cref{lem:Lk-upper-bound}.}
\label{fig:decomp-upper-bound}
\end{figure}

  We have to show that $\CL_A^k(\varphi_A^\ell(M))$ is contained in
  the image of $\Psi$. Let $W \in \CL_A^k(\varphi_A^\ell(M))$.
  Then (see \Cref{fig:decomp-upper-bound})
  \begin{align*}
    W &= W_1 \triangle W_2 \text{ with} \\
    W_1&:=\left(\left(W \cap M\right) \cup \left(\varphi_A^{(q+1)k}(M)
         \setminus M\right)\right),\\
    W_2&:=
         \left(\varphi_A^{(q+1)k}(M) \triangle (M \cup W)\right). 
  \end{align*}
  By \Cref{lem:inner-flexibility} the set $W_1$ is contained in
  $\CL_A^k(\varphi_A^{(q+1)k}(M))$.
  Since $W \in \CL_A^{k}(\varphi_A^\ell(M)) \subseteq
  \PP(\varphi_A^{\ell+k}(M))$ and $W_2 \cap M = \emptyset$
  we also have $W_2 \subseteq \varphi_A^{\ell+k}(M)\setminus M$.
\end{proof}

\begin{lem}\label{lem:bound-vertex-cover}
  Let $H$ be a finite undirected graph  such that
  every connected component contains at least two vertices.
  Then $H$ has a vertex cover of size at most $|V(H)|/2$.
\end{lem}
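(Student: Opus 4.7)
The plan is to build a vertex cover $C \subseteq V(H)$ of size at most $|V(H)|/2$ from a maximum matching. Since vertex covers and the target bound are both additive over connected components of $H$, I would work component by component: in each connected component $K$ with $|V(K)| \ge 2$, I seek a vertex cover $C_K \subseteq V(K)$ with $|C_K| \le |V(K)|/2$.

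Fix such a component $K$ and let $M$ be a maximum matching in $K$; the component hypothesis ensures $M$ is nonempty (every connected graph on at least two vertices contains an edge). Write $U := V(K) \setminus V(M)$ for the vertices unmatched by $M$; by maximality, $U$ is an independent set, since any edge between two unmatched vertices could be added to $M$. Next, run an alternating breadth-first search from $U$, alternating non-matching and matching edges, and from each $uv \in M$ place into $C_K$ the endpoint that is not at even alternating distance from $U$ (the K\"onig-type choice). By construction $|C_K| = |M|$, and $|M| \le |V(K)|/2$ since every edge of $M$ uses two distinct vertices of $K$.

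The main obstacle is verifying that $C_K$ actually covers every edge of $K$. Edges from $U$ into $V(M)$ are covered automatically, since $U$ is independent and the selected endpoints lie on the $V(M)$ side. For an edge $xy$ lying entirely inside $V(M)$, suppose for contradiction that neither $x$ nor $y$ lies in $C_K$; then the matching edges at $x$ and $y$, together with $xy$ and an alternating path back to $U$, produce an augmenting alternating walk, contradicting maximality of $M$. The hard step, and the one carrying the real content of the proof, is dealing with odd alternating cycles (``blossoms'') which may appear in the general (non-bipartite) setting and which complicate the BFS labelling: in the bipartite case the construction is exactly K\"onig's theorem, but in general one needs a Gallai--Edmonds-style contraction argument. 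The assumption that every component has at least two vertices is used throughout to ensure $M$ is nonempty on each component and that no isolated vertex forces a vertex into $C$ without a matching-edge budget.
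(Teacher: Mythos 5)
There is a genuine gap, and it sits exactly at the point you flagged as ``the hard step.'' Your construction needs a vertex cover of size $|M|$ for a maximum matching $M$, i.e.\ K\"onig's theorem, and that is false in non-bipartite graphs; no Gallai--Edmonds or blossom-contraction argument can repair it, because under the standard reading of ``vertex cover'' (a set of vertices meeting every edge) the lemma itself is false. Already for $H=K_3$ every component has at least two vertices and $|V(H)|/2=3/2$, yet the minimum vertex cover has size $2$; the same happens for every odd cycle and every odd clique. Consequently your covering step for an edge $xy$ inside $V(M)$ with neither endpoint selected cannot be completed: the alternating walk you produce may close up along an odd cycle instead of reaching $U$, and in the triangle it indeed does, so no augmenting path and no contradiction arises. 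The deferral of the non-bipartite case is therefore not a technicality but the place where the argument (and, under this reading, the statement) breaks.

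What the paper actually proves --- and all that is used later, in \Cref{lem:Lk-is-size} --- is a \emph{domination} statement, with ``vertex cover'' used loosely: there is a set $W$ with $|W|\le |V(H)|/2$ such that every vertex of $H$ either lies in $W$ or has a neighbour in $W$. The paper's argument is much softer than yours: choose a spanning tree in each component, two-colour it, and take the smaller colour class; since each tree has at least two vertices, every vertex outside the chosen class has a tree-neighbour inside it, and the class has size at most half of the component. (This is the classical bound of Ore on the domination number of graphs without isolated vertices.) In the application, the requirement that for every $v\in\varphi_A(M)$ there is $w\in W$ with $v\in\varphi_A^q(\{w\})$ is met either by a neighbour $w$ of $v$ in the auxiliary graph or by $w=v$ itself, using $1_\Gamma\in A$; so closed-neighbourhood domination is exactly what is needed, while covering all edges is neither needed nor, by the triangle example, achievable within the stated bound. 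If you reformulate the lemma as a dominating-set bound, the spanning-tree two-colouring gives a complete three-line proof, and the matching machinery can be dropped entirely.
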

\begin{proof}
  Choose a spanning tree in every connected component. Since every one
  of these spanning trees is bipartite, we can pick the smaller of the two
  partition classes in each of them. Since all our spanning trees
  contain at least two vertices, the picked vertices form a vertex cover
  and we picked at most half of the vertices.
\end{proof}

\begin{lem}\label{lem:Lk-is-size}
  Let $M \subseteq  \Fin(\varphi_A)$.
  Then for all $k \geq q$, $\ell >1$ we have
  $L_A^k(\varphi_A^\ell(M)) \geq \frac{1}{2} |M|$.
\end{lem}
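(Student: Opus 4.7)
The plan is to produce at least $2^{|M|/2}$ distinct elements of $\CL_A^k(\varphi_A^\ell(M))$, which immediately yields the claimed lower bound on $L_A^k(\varphi_A^\ell(M))$. All of these preimages will be of the form $N_S := \varphi_A^\ell(M) \setminus S$, as $S$ ranges over the subsets of a suitably chosen $I \subseteq M$ with $|I| \ge |M|/2$.

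The core observation is the following ``replacement'' inclusion: for every $v \in \Gamma$,
\[
vA^k \;\subseteq\; \varphi_A^k(vA \setminus \{v\}).
\]
This rests on the hypothesis $k \ge q$ (so that $A^{-1} \subseteq A^k$) together with the fact that $A$ contains a non-identity element (automatic, since $\Gamma$ is infinite). Indeed, a given $u = v a_1 \cdots a_k \in vA^k$ is either different from $v$, in which case one takes the first $i^*$ with $a_{i^*} \ne 1$ and reads off $u \in v a_{i^*} A^k$, or else $u = v$, in which case any $a^* \in A \setminus \{1\}$ yields $u = (v a^*)(a^*)^{-1} \in v a^* A^k$ since $(a^*)^{-1} \in A^q \subseteq A^k$.

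To turn this into a counting bound, I introduce the auxiliary graph $H$ with vertex set $M$, putting an edge between $v \ne w$ whenever $w \in vA$ or $v \in wA$. If $S \subseteq M$ is independent in $H$, then $(vA \setminus \{v\}) \cap S = \emptyset$ for every $v \in S$; since $vA \subseteq \varphi_A^\ell(M)$ (using $\ell \ge 1$ and $1 \in A$), the replacement inclusion then gives $vA^k \subseteq \varphi_A^k(\varphi_A^\ell(M) \setminus S)$ for every $v \in S$, and therefore $\varphi_A^k(N_S) = \varphi_A^k(\varphi_A^\ell(M))$. Applying \Cref{lem:bound-vertex-cover} to $H$ (after peeling off isolated vertices, which may be adjoined to any independent set for free) produces an independent $I \subseteq M$ with $|I| \ge |M|/2$; since distinct $S \subseteq I$ give distinct $N_S$, we conclude $|\CL_A^k(\varphi_A^\ell(M))| \ge 2^{|I|} \ge 2^{|M|/2}$. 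The only point needing care is the replacement inclusion itself, but once that is in place the rest is an independent-set packaging via the preceding lemma.
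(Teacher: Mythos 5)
Your replacement inclusion $vA^k \subseteq \varphi_A^k(vA \setminus \{v\})$ is correct and nicely proved, as is the deduction that $\varphi_A^k(\varphi_A^\ell(M)\setminus S)=\varphi_A^k(\varphi_A^\ell(M))$ whenever $S\subseteq M$ is independent in your auxiliary graph $H$. The gap is in the last step: you need an independent set $I$ of $H$ with $|I|\ge |M|/2$, and such a set need not exist. Take $\Gamma=\Z^2$, $A=\{-1,0,1\}^2$ (so $q=1$) and $M=\{-1,0,1\}^2$; then your $H$ is the king graph on a $3\times 3$ grid, whose independence number is $4<\tfrac{9}{2}=|M|/2$. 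The appeal to \Cref{lem:bound-vertex-cover} does not rescue this: the statement of that lemma is too strong as written (the $3\times 3$ king graph, or even $K_3$, has no vertex cover of size at most $|V|/2$), and what the proof of that lemma actually produces is the smaller bipartition class of a spanning forest of $H$ --- a dominating set of $H$ and a vertex cover of the forest, but its complement need not be independent in $H$.

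The paper avoids this by building the auxiliary graph on the larger set $\varphi_A(M)$ rather than on $M$ and by using domination rather than independence. It produces $W\subseteq\varphi_A(M)$ with $|W|\le\frac{1}{2}|\varphi_A(M)|$ such that every $v\in\varphi_A(M)$ lies in $\varphi_A^q(\{w\})$ for some $w\in W$, and then shows that every $N$ with $(\varphi_A^\ell(M)\setminus\varphi_A(M))\cup W\subseteq N\subseteq\varphi_A^\ell(M)$ already has $\varphi_A^k(N)=\varphi_A^k(\varphi_A^\ell(M))$. The elements one is free to include or exclude are those of $\varphi_A(M)\setminus W$, and these are not required to be pairwise non-adjacent; each is individually covered by a nearby element of $W$ together with the annulus $\varphi_A^\ell(M)\setminus\varphi_A(M)$ (this is where $\ell>1$ enters). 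That gives $2^{|\varphi_A(M)|-|W|}\ge 2^{|M|/2}$ preimages, which is exactly the bound needed, without ever requiring a large independent set.
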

\begin{proof}
  Form the following graph $H$. The vertices of $H$ are the elements
  of $\varphi_A(M)$ and we add an edge between $u$ and $v$ in $\varphi_A(M)$
  if $u \in \varphi_A^{q}(\{v\})$ and $v \in \varphi_A^{q}(\{u\})$.
  Since for every $a \in A$ and $u \in M$ we have $u=uaa^{-1} \in
  \varphi_{A^{-1}}(\{ua\}) \subseteq \varphi_{A}^q(\{ua\})$ and $ua \in \varphi_A(\{u\})$,
  every connected component of $H$ contains at least two vertices.
  Hence by \Cref{lem:bound-vertex-cover} we can
  find a subset $W \subseteq \varphi_A(M)$ with $|W| \leq \frac12 |\varphi_A(M)|$ such that for
  all elements $v \in \varphi_A(M)$ there is a vertex $w \in W$ with $v \in
  \varphi_{A}^q(\{w\})$, hence $\varphi_A(M) \subseteq \varphi_A^{k}(W)$.
  Thus for every set $N \subseteq \varphi_A^{\ell}(M)$
  with $\left(\varphi_A^{\ell}(M) \setminus \varphi_A(M)\right) \cup W \subseteq N$
  we have $\varphi_A^{k}(N) = \varphi_A^{k}(\varphi_A^\ell(M))$. Since there
  are $2^{|\varphi_A(M)|-|W|}$ such sets, we have
  $L_A^k(\varphi_A^\ell(M)) \geq \frac{1}{2}|\varphi_A(M)|\geq \frac{1}{2}|M|$.
\end{proof}

\begin{lem}\label{lem:Lk-lower-bound}
  Let $k \in \N$ and  let $M \subseteq  \Fin(\varphi_A)$.
  If $\ell\geq (q+2)k+3$ then 
\[L_A^k(\varphi_A^\ell(M)) \geq L_A^{k}(\varphi_A^{(q+1)k}(M))
  +|\varphi_A^{\ell-1}(M) \setminus \varphi_A^{\ell-2}(M)| .\]
\end{lem}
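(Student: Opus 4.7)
The plan is to exhibit an injection
\[\Psi : \CL_A^k(\varphi_A^{(q+1)k}(M)) \times \PP(B) \to \CL_A^k(\varphi_A^\ell(M)),\]
where $B := \varphi_A^{\ell-1}(M) \setminus \varphi_A^{\ell-2}(M)$. Counting and taking $\log_2$ then yields the stated bound. I define $\Psi(W_1, W_2) := W_1 \cup F \cup W_2$ with
\[F := \bigl(\varphi_A^\ell(M) \setminus \varphi_A^{\ell-1}(M)\bigr) \cup \bigl(\varphi_A^{\ell-2}(M) \setminus \varphi_A^{(q+2)k}(M)\bigr).\]
Because $1_\Gamma \in A$ we have $W_1 \subseteq \varphi_A^k(W_1) = \varphi_A^{(q+2)k}(M)$ for any $W_1 \in \CL_A^k(\varphi_A^{(q+1)k}(M))$, and the hypothesis $\ell \geq (q+2)k+3$ forces $\ell-2 > (q+2)k$, so the three pieces $W_1 \subseteq \varphi_A^{(q+2)k}(M)$, $F$, and $W_2 \subseteq B$ are pairwise disjoint subsets of $\varphi_A^\ell(M)$. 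Injectivity of $\Psi$ is immediate: $W_1 = \Psi(W_1,W_2) \cap \varphi_A^{(q+2)k}(M)$ and $W_2 = \Psi(W_1,W_2) \cap B$.

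The main task is then to verify that $\varphi_A^k(\Psi(W_1, W_2)) = \varphi_A^{\ell+k}(M)$ for every choice, so that $\Psi$ actually lands in $\CL_A^k(\varphi_A^\ell(M))$. The inclusion $\subseteq$ is immediate since $\Psi(W_1, W_2) \subseteq \varphi_A^\ell(M)$. Since $\varphi_A^k(W_1) = \varphi_A^{(q+2)k}(M)$, for the reverse inclusion it suffices to prove $\varphi_A^k(F) \supseteq \varphi_A^{\ell+k}(M) \setminus \varphi_A^{(q+2)k}(M)$. Given $u$ in the latter set I set $j := |u|_A$, so $(q+2)k < j \leq \ell+k$, and split into cases on $j$: if $j \leq \ell-2$ then $u \in F$ directly, while if $\ell \leq j \leq \ell+k$ I pick any geodesic $m_0, m_1, \dots, m_j = u$ from $M$ to $u$; since it is a geodesic $|m_\ell|_A = \ell$, so $m_\ell \in \varphi_A^\ell(M) \setminus \varphi_A^{\ell-1}(M) \subseteq F$, and $u \in \varphi_A^{j-\ell}(\{m_\ell\}) \subseteq \varphi_A^k(\{m_\ell\})$.

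The delicate case, which I expect to be the main obstacle, is $j = \ell-1$: here $u \in B$ is explicitly excluded from $F$. I handle it by considering the penultimate vertex of a geodesic $m_0, \dots, m_{\ell-1}=u$ from $M$ to $u$; geodesicity forces $|m_{\ell-2}|_A = \ell-2$, and the hypothesis $\ell \geq (q+2)k+3$ is precisely what yields $\ell-2 > (q+2)k$, placing $v := m_{\ell-2}$ in $\varphi_A^{\ell-2}(M) \setminus \varphi_A^{(q+2)k}(M) \subseteq F$. Since $k \geq 1$ we then obtain $u \in \varphi_A(\{v\}) \subseteq \varphi_A^k(\{v\}) \subseteq \varphi_A^k(F)$, as required. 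This case both motivates the specific choice to remove $B$ from $F$ and pins down the exact form of the hypothesis.
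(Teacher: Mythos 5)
Your proof is correct and takes essentially the same approach as the paper's: you use the same injection $\Psi$ into $\CL_A^k(\varphi_A^\ell(M))$, the same three-piece decomposition, and the same recovery of $W_1,W_2$ from $\Psi(W_1,W_2)$ by intersecting with $\varphi_A^{(q+2)k}(M)$ and $B$; you have merely made explicit, via the geodesic case analysis, the shell-covering inclusion that the paper states in compressed (and slightly garbled) form. One notational slip: you set $j := |u|_A$, but what your argument actually uses is the distance from $M$ to $u$ (the minimal $n$ with $u \in \varphi_A^n(M)$), not the word length from the identity; with this reading every step, including ``$|m_\ell|_A = \ell$'' which should read $d(M,m_\ell)=\ell$, is correct.
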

\begin{proof}
By definition of $L_A^k$, we need to prove that
\[
|\CL^k(\varphi_A^\ell(M))| \ge \left| \CL^k_A(\varphi_A^{(q+1)k}(M)) \times
 \PP\left(\varphi_A^{\ell-1}(M) \setminus
    \varphi_A^{\ell-2}(M)\right) 
\right|.
\]
  We prove this by constructing an injective function
\[
    \Psi: \CL^k_A(\varphi_A^{(q+1)k}(M)) \times \PP\left(\varphi_A^{\ell-1}(M) \setminus
    \varphi_A^{\ell-2}(M)\right) 
  \to \CL_A^k(\varphi_A^\ell(M)).\]
This is given by:
 \[  \Psi(Q,P):= \left(\varphi_A^{\ell}(M) \setminus
    \varphi_A^{\ell-1}(M)\right) \cup P \cup \left(\varphi_A^{\ell-2}(M)
              \setminus \varphi_A^{(q+2)k}(M)\right)\cup Q.
  \]
  First we check that
  the image of this map lies indeed in $\CL_A^k(\varphi_A^\ell(M))$.
  Let $(Q,P) \in  \CL^k_A(\varphi_A^{(q+1)k}(M)) \times \PP\left(\varphi_A^{\ell-1}(M) \setminus
    \varphi_A^{\ell-2}(M)\right)$.
  It is clear that $\varphi_A^k(\Psi(Q,P)) \subseteq
  \varphi_A^{k+\ell}(M)$.
  We also have \[\left(\varphi_A^{\ell+k}(M) \setminus
    \varphi_A^{\ell-1}(M) \right) \cup \left(\varphi_A^{\ell-2+k}(M)
  \setminus \varphi_A^{(q+2)k}(M)\right) = \varphi_A^{\ell+k}(M) \setminus
  \varphi_A^{(q+2)k}(M) \subseteq \Psi(Q,P).\]
  Finally $\varphi_A^k(Q)=\varphi_A^{(q+2)k}(M)$,
  hence $\varphi_A^{k+\ell}(M) \subseteq \varphi_A^k(\Psi(Q,P))$.
  
  It is now enough to check that $\Psi$ is injective.
  This follows from
  \begin{align*}
    P &=\Psi(Q,P) \cap \left(\varphi_A^{\ell-1}(M)\setminus
        \varphi_A^{\ell-2}(M)\right), \\    
    Q&=\Psi(Q,P) \cap \varphi_A^{(q+2)k}(M). \qedhere
  \end{align*}
\end{proof}

\begin{thm}\label{thm:charact-amenability}
  Let $(\Gamma,A)$ be a finitely generated group and let $q \in \N$ be such that
  $A^{-1} \subseteq A^q$.
  Then $\Gamma$ is amenable if and only if
  there is a sequence of finite sets $(M_n)_{n \in \N}$
  in $\Fin(\varphi_A)$ such that
  \begin{align}
    \lim_{n \to \infty}\frac{L^q_A(\varphi_A^{(q+5)q}(M_n))}{L^q_A(\varphi_A^{(q+1)q}(M_n))}
    =1.\label{eq:chara-amenability}
  \end{align}
\end{thm}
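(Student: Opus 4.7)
The plan is to prove both implications using \Cref{lem:Lk-upper-bound}, \Cref{lem:Lk-lower-bound}, and \Cref{lem:Lk-is-size}, all applied with $k=q$. The specific exponents $(q+1)q$ and $(q+5)q$ in the statement are rigged precisely so that the hypotheses of these lemmas are met (note $q \geq 1$ since $A$ must strictly contain $\{1_\Gamma\}$, giving $(q+5)q \geq (q+2)q+3$ and $(q+5)q \geq (q+1)q$). Throughout one uses that $\Fin(\varphi_A)$ coincides with the finite subsets of $\Gamma$ by \Cref{lem:fin-is-fin}.

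For the forward direction, assume $\Gamma$ is amenable and fix a F{\o}lner sequence $(M_n)_{n \in \N}$. Applying \Cref{lem:Lk-upper-bound} with $k=q$ and $\ell=(q+5)q$ gives
\[
L^q_A(\varphi_A^{(q+5)q}(M_n)) - L^q_A(\varphi_A^{(q+1)q}(M_n)) \leq |\varphi_A^{(q+6)q}(M_n) \setminus M_n|.
\]
Applying \Cref{lem:Lk-is-size} (with $k=q$ and $\ell=(q+1)q \geq 2$) yields $L^q_A(\varphi_A^{(q+1)q}(M_n)) \geq \tfrac{1}{2}|M_n|$. Dividing, the ratio in \eqref{eq:chara-amenability} is bounded above by
\[
1 + \frac{2|\varphi_A^{(q+6)q}(M_n) \setminus M_n|}{|M_n|},
\]
which tends to $1$ by the F{\o}lner property via \Cref{lem:characterization-Folner}(3). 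Since the ratio is at least $1$ (by \Cref{lem:Lk-lower-bound}), the limit is $1$.

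For the converse, assume $(M_n) \subseteq \Fin(\varphi_A)$ satisfies \eqref{eq:chara-amenability}, and set
\[
M'_n := \varphi_A^{(q+5)q-2}(M_n).
\]
By \Cref{lem:Lk-lower-bound} with $k=q$ and $\ell=(q+5)q$,
\[
|\varphi_A(M'_n) \setminus M'_n| = |\varphi_A^{(q+5)q-1}(M_n) \setminus \varphi_A^{(q+5)q-2}(M_n)| \leq L_A^q(\varphi_A^{(q+5)q}(M_n)) - L_A^q(\varphi_A^{(q+1)q}(M_n)).
\]
On the other hand, every $N \in \CL_A^q(\varphi_A^{(q+1)q}(M_n))$ lies in $\varphi_A^{(q+2)q}(M_n)$ (since $1_\Gamma \in A$), so
\[
L_A^q(\varphi_A^{(q+1)q}(M_n)) \leq |\varphi_A^{(q+2)q}(M_n)| \leq |\varphi_A^{(q+5)q-2}(M_n)| = |M'_n|,
\]
where the second inequality uses $(q+2)q \leq (q+5)q-2$. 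Dividing,
\[
\frac{|\varphi_A(M'_n) \setminus M'_n|}{|M'_n|} \leq \frac{L_A^q(\varphi_A^{(q+5)q}(M_n))}{L_A^q(\varphi_A^{(q+1)q}(M_n))} - 1 \xrightarrow{n \to \infty} 0.
\]
Since $A$ contains $1_\Gamma$, this gives $|M'_n a \setminus M'_n|/|M'_n| \to 0$ for each $a \in A$, and the standard telescoping argument (using that $A$ positively generates $\Gamma$) upgrades this to the F{\o}lner condition $|M'_n g \setminus M'_n|/|M'_n| \to 0$ for every $g \in \Gamma$. Hence $(M'_n)$ is F{\o}lner and $\Gamma$ is amenable.

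The arguments involved are essentially routine given the preparatory lemmas; the only substantive content is verifying that the three lemmas' hypotheses are simultaneously met by the single pair of exponents $(q+1)q$ and $(q+5)q$ with $k=q$, which is exactly why those exponents were chosen in the statement.
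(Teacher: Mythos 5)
Your proof is correct and follows essentially the same route as the paper's: the forward direction combines \Cref{lem:Lk-upper-bound} and \Cref{lem:Lk-is-size} to get the upper bound on the ratio, and the converse uses \Cref{lem:Lk-lower-bound} together with the observation $L_A^q(\varphi_A^{(q+1)q}(M_n)) \leq |\varphi_A^{(q+2)q}(M_n)| \leq |\varphi_A^{(q+5)q-2}(M_n)|$ to show $(\varphi_A^{(q+5)q-2}(M_n))_n$ is F{\o}lner. The only cosmetic difference is that you rearranged the converse into an explicit inequality for $|\varphi_A(M_n')\setminus M_n'|/|M_n'|$ rather than leaving it as a chain of lower bounds on the ratio, and you invoked the telescoping argument directly instead of the paper's \Cref{lem:characterization-Folner}; the content is identical.
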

\begin{proof}
  Let $\Gamma$ be amenable and let $(M_n)_{n \in \N}$
  be a F{\o}lner sequence.
  By \Cref{lem:Lk-upper-bound} and \Cref{lem:Lk-is-size} we have
  \begin{align*}
    1 \leq \frac{L^q_A(\varphi_A^{(q+5)q}(M_n))}{L^q_A(\varphi_A^{(q+1)q}(M_n))}
    &\leq 1 +
      \frac{
      |\varphi_A^{(q+6)q}(M_n) \setminus M_n|
      }{
      L^q_A(\varphi_A^{(q+1)q}(M_n))} \\
    &\leq 1 + \frac{
      2|\varphi_A^{(q+6)q}(M_n) \setminus M_n|
      }{
      |M_n|},
  \end{align*}
  where the last inequity follows from \Cref{lem:Lk-is-size}.
  By \Cref{lem:characterization-Folner} the right hand side converges
  to one and therefore \eqref{eq:chara-amenability} is satisfied.
  
  On the other hand assume that $(M_n)_{n \in \N}$
  is a sequence in $\Fin(\varphi_A)$ satisfying
  \eqref{eq:chara-amenability}. By \Cref{lem:Lk-lower-bound} we have
  \begin{align*}
     \frac{L^q_A(\varphi_A^{(q+5)q}(M))}{L^q_A(\varphi_A^{(q+1)q}(M))}
    &\geq 1 +
      \frac{
      |\varphi_A^{q(q+5)-1}(M_n)\setminus \varphi_A^{q(q+5)-2}(M_n)|
      }{
      L^q_A(\varphi_A^{(q+1)q}(M_n))
      } \\
    &\geq 1 +
      \frac{
      |\varphi_A^{q(q+5)-1}(M_n)\setminus \varphi_A^{q(q+5)-2}(M_n)|
      }{
      |\varphi_A^{(q+2)q}(M_n)|
      }  \\
    &\geq 1 +
      \frac{
      |\varphi_A^{q(q+5)-1}(M_n)\setminus \varphi_A^{q(q+5)-2}(M_n)|
      }{
      |\varphi_A^{(q+5)q-2}(M_n)|
      }  \geq 1
  \end{align*}
  Since we assumed that the left side of this inequality converges to
  one, this shows that
  \begin{align*}
  \frac{
      |\varphi_A\left(\varphi_A^{(q+5)q-2}(M_n)\right)\setminus \varphi_A^{(q+5)q-2}(M_n)|
      }{
      |\varphi_A^{(q+5)q-2}(M_n)|
      } \to 0
  \end{align*}
  Hence by \Cref{lem:characterization-Folner}
  the sequence $(\varphi_A^{(q+5)q-2}(M_n))_{n \in \N}$ is a F{\o}lner sequence and
  $\Gamma$ is amenable.
\end{proof}

\begin{cor}\label{cor:amenablity_recognizable}
  Let $(\Gamma_1,A_1)$ and $(\Gamma_2,A_2)$ be finitely generated groups
  such that $\varphi_{A_1}$ and
  $\varphi_{A_2}$
  are topologically conjugate. If $\Gamma_1$
  is amenable, then $\Gamma_2$ is amenable too.
\end{cor}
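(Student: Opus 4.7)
The plan is to use \Cref{thm:charact-amenability} as a purely dynamical characterization of amenability, and transport a witnessing sequence through the topological conjugacy. Denote the conjugacy by $\Phi : \PP(\Gamma_1) \to \PP(\Gamma_2)$, so that $\Phi \circ \varphi_{A_1} = \varphi_{A_2} \circ \Phi$.

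First I would check that every quantity appearing in \Cref{thm:charact-amenability} is invariant under the conjugacy. Since $\Fin(\varphi_A)$ is defined purely in terms of $\varphi_A$ (see \Cref{def:fin}), we have $\Phi(\Fin(\varphi_{A_1})) = \Fin(\varphi_{A_2})$. Because $\Phi$ intertwines powers of $\varphi$, we get $\Phi(\varphi_{A_1}^\ell(M)) = \varphi_{A_2}^\ell(\Phi(M))$ for every $\ell \in \N$ and every $M \in \PP(\Gamma_1)$. Finally, since $\CL^k_{A}(M) = \varphi_A^{-k}(\{\varphi_A^k(M)\})$, the map $\Phi$ restricts to a bijection between $\CL^k_{A_1}(M)$ and $\CL^k_{A_2}(\Phi(M))$, hence $L^k_{A_1}(M) = L^k_{A_2}(\Phi(M))$.

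The only delicate point is that \Cref{thm:charact-amenability} involves a parameter $q$ depending on the generating set, and a priori these parameters differ for $A_1$ and $A_2$. To handle both simultaneously, I pick $q_i \in \N$ with $A_i^{-1} \subseteq A_i^{q_i}$ for $i = 1,2$, and set $Q := \max(q_1, q_2)$. Since $1_\Gamma \in A_i$, the inclusion $A_i^{-1} \subseteq A_i^{Q}$ holds for both $i = 1,2$. Inspecting the supporting lemmas (\Cref{lem:inner-flexibility}, \Cref{lem:Lk-upper-bound}, \Cref{lem:Lk-is-size}, \Cref{lem:Lk-lower-bound}), the hypothesis $A^{-1} \subseteq A^q$ is only used to bound expressions involving $A^{-1}$, so enlarging $q_i$ to $Q$ preserves all estimates. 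Hence \Cref{thm:charact-amenability} applies to both $(\Gamma_1,A_1)$ and $(\Gamma_2,A_2)$ with the common value $q = Q$.

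The conclusion is then immediate. Amenability of $\Gamma_1$ combined with \Cref{thm:charact-amenability} yields a sequence $(M_n)_{n \in \N}$ in $\Fin(\varphi_{A_1})$ with
\[ \lim_{n \to \infty}\frac{L^Q_{A_1}(\varphi_{A_1}^{(Q+5)Q}(M_n))}{L^Q_{A_1}(\varphi_{A_1}^{(Q+1)Q}(M_n))} = 1. \]
By the conjugacy invariance observations above, $(\Phi(M_n))_{n\in\N}$ lies in $\Fin(\varphi_{A_2})$ and satisfies the analogous identity with $A_2$ in place of $A_1$. Applying the converse direction of \Cref{thm:charact-amenability} to $(\Gamma_2, A_2)$ with $q = Q$ then yields amenability of $\Gamma_2$. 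The main conceptual obstacle is the $q$-matching step; once justified, the rest is a routine transport argument.
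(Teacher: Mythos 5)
Your proof is correct and follows essentially the same approach the paper intends: the paper does not spell out a proof for this corollary, but your argument closely parallels the proof given for \Cref{cor:growth_invariant}, which transports $\Fin$ and $L^k_A$ through the conjugacy and then chooses parameters large enough to apply the relevant theorem. The $q$-matching step you flag is indeed the one small thing that needs to be said explicitly, and your observation that $A_i^{-1}\subseteq A_i^Q$ for $Q\geq q_i$ (since $1_{\Gamma_i}\in A_i$) correctly disposes of it.
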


\section{North-South dynamics}\label{sec:north_south}
\begin{defn}
  A homeomorphism $T:X \to X$ of a compact metric space $X$ has
  \emph{north-south dynamics} if there are precisely two fixed points
  $x^+,x^- \in X$ for $T$ such that $\lim_{n \to \infty}T^n(y)=x^+$
  for every $y \in X \setminus \{x^-\}$ and
  $\lim_{n \to \infty}T^{-n}(y)=x^-$ for every
  $y \in X \setminus \{x^+\}$.
\end{defn}

Two simple examples of homeomorphisms with north-south dynamics are the
map $t \mapsto \sqrt{t}$ on the interval $[0,1]$ and the map
$n \mapsto n+1$ on the two-point compactification $\Zinfty= \Z \cup \{+\infty,-\infty\}$.

For any homeomorphism $T:X \to X$ the map $S(T):S(X) \to S(X)$ given
by $S(T)(x,t)=(T(x),\sqrt{t})$ has north-south dynamics.  Here $S(X)$
is the suspension of the topological space  $X$ given by 
\[S(X) = \left( X \times [0,1] \right) / \sim  \] 
\[ (x_1,t_1) \sim (x_2,t_2) ~\Leftrightarrow t_1=t_2= 0 \mbox{ or } t_1=t_2 = 1 \mbox{ or } (x_1,t_1)=(x2,t_2).\]
 So for instance the
suspension of the $d$-dimensional sphere is the $d+1$-dimensional
sphere.

In a similar way we can define $S_\Z(T): S_\Z(X) \to S_\Z(X)$ by
$S_\Z(T)(x,n)=(T(x), n+1)$ where $S_{\Z}(X)$ is a disconnected analog of the suspension given by:
\[S_{\Z}(X) = \left( X \times \Zinfty\right) / \sim  \] 
\[ (x_1,n_1) \sim (x_2,n_2) ~\Leftrightarrow n_1=n_2=+\infty \mbox{ or } n_1=n_2 = -\infty \mbox{ or } (x_1,n_1)=(x_2,n_2).\]
 A complete characterization
of north-south systems was obtained for many spaces, see
\cite{levittNorthSouthHomeomorphismsSierpinski2004} for a survey of
known results. This includes the Cantor set for which
the following uniqueness result was obtained in \cite{levittHomeomorphismesDynamiquementSimples1998}.
We include a short proof for self-containment.

\begin{prop}\label{prop:north_south_cantor_unique}
  Up to topological conjugacy there is a unique homeomorphism with
  north-south dynamics on the Cantor space
  $\{0,1\}^{\mathbb{N}}$.
\end{prop}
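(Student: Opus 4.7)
The plan is to build, for any north-south system $(X,T)$ with $X$ Cantor, a canonical \enquote{annulus decomposition} of $X$, and then use it to exhibit an explicit conjugacy between any two such systems. The argument has three stages.

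\emph{Producing a clopen trapping neighborhood.} First I would construct a clopen set $U\subsetneq X$ with $x^+\in U$, $x^-\notin U$, and $T(U)\subsetneq U$. Starting from disjoint clopen neighborhoods $V\ni x^+$ and $V'\ni x^-$, the pointwise attraction $T^n(y)\to x^+$ for $y\neq x^-$, together with continuity of $T$ at $x^+$, gives Lyapunov stability of $x^+$; a standard topological-dynamics argument then produces an open $V''\subseteq V$ with $x^+\in V''$ and $T(\overline{V''})\subseteq V''$. Zero-dimensionality lets me slip a clopen $U$ between the disjoint compacta $T(\overline{V''})$ and $X\setminus V''$, obtaining $T(\overline{V''})\subseteq U\subseteq V''$, and hence $T(U)\subseteq U$. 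The inclusion is strict: if $T(U)=U$, then $U$ would be $T$-invariant, but the backward orbit of any $y\in U\setminus\{x^+\}$ must both remain in the closed set $U$ and converge to $x^-\notin U$, a contradiction.

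\emph{Annulus decomposition.} Next I would set $A:=U\setminus T(U)$, a nonempty clopen subset of $X$. The sequence $(T^n(U))_{n\in\Z}$ is strictly decreasing, with $\bigcap_n T^n(U)=\{x^+\}$ (backward orbits of points other than $x^+$ exit $U$) and $\bigcup_n T^n(U)=X\setminus\{x^-\}$ (forward orbits of points other than $x^-$ enter $U$). Consequently $\{T^n(A):n\in\Z\}$ is a partition of $X\setminus\{x^+,x^-\}$ into pairwise disjoint clopen pieces. Since $X$ is perfect and $A$ is open in $X$, any isolated point of $A$ would also be isolated in $X$; hence $A$ is compact, metrizable, totally disconnected, and perfect, i.e., a Cantor set.

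\emph{Building the conjugacy.} Given two north-south systems $(X_i,T_i)$ on Cantor sets, $i=1,2$, I would apply the previous two stages to obtain clopen trapping neighborhoods $U_i$ and Cantor annuli $A_i$. Since any two Cantor sets are homeomorphic, I pick a homeomorphism $h\colon A_1\to A_2$ and define
\[
\Phi(x_1^{\pm}):=x_2^{\pm},\qquad \Phi(T_1^n(a)):=T_2^n(h(a))\quad(a\in A_1,\;n\in\Z).
\]
The identity $\Phi\circ T_1=T_2\circ\Phi$ and the bijectivity of $\Phi$ are immediate from the annulus partitions. On each clopen piece $T_1^n(A_1)$, the map $\Phi$ coincides with the homeomorphism $T_2^n\circ h\circ T_1^{-n}$, so continuity away from $x_1^{\pm}$ is automatic. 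For continuity at $x_1^+$: if $y_k\to x_1^+$ and $y_k=T_1^{n_k}(a_k)$ with $a_k\in A_1$, then $n_k\to\infty$, since for each fixed $N$ the clopen neighborhood $T_1^{N+1}(U_1)$ of $x_1^+$ contains $y_k$ eventually; hence $\Phi(y_k)\in T_2^{n_k}(U_2)$, and the compact nested sequence $T_2^n(U_2)$ shrinks to $\{x_2^+\}$, forcing $\Phi(y_k)\to x_2^+$. Continuity at $x_1^-$ and of $\Phi^{-1}$ are symmetric, so $\Phi$ is the desired conjugacy.

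\emph{Main obstacle.} The crux is the first stage: extracting a clopen, forward-invariant neighborhood of $x^+$ from nothing more than the pointwise N-S hypothesis. This requires both establishing Lyapunov stability at $x^+$ and then passing from an open trapping neighborhood to a clopen one by zero-dimensionality. Once $U$ is in hand, the remaining two stages amount to bookkeeping, supplemented by the fact that any two Cantor sets are homeomorphic.
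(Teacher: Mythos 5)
Your proposal is correct and lands on essentially the same structure as the paper's proof: produce a clopen set (your annulus $A=U\setminus T(U)$, the paper's $C=(X\setminus D)\cap\bigcap_{k\ge1}T^{-k}(D)$) whose $T$-translates partition $X\setminus\{x^+,x^-\}$, observe that it is itself a Cantor set, and use the partition to write down the conjugacy explicitly; your continuity check at the poles is the same as the paper's. The genuine difference is how you obtain the clopen fundamental domain. The paper never passes through Lyapunov stability: it takes an arbitrary clopen $D$ separating the poles and shows $\bigcap_{k\ge1}T^{-k}(D)$ is already a \emph{finite} intersection of clopens (hence clopen) by a compactness/finite-intersection argument applied to $\bigcap_{k\ge1}T^k(D)\cap(X\setminus D)=\emptyset$, together with a short induction. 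You instead first invoke Lyapunov stability of $x^+$ to get an open strict-trapping neighborhood and then slip in a clopen set via zero-dimensionality. This works, but note that the stability claim is \emph{not} automatic from ``$x^+$ attracts everything except $x^-$'' in general dynamical systems (it fails, e.g., for a parabolic fixed point on a circle attracting from both sides); it holds here precisely because the only obstruction to backward orbits staying near $x^+$ is convergence to $x^-$, which lies outside the chosen neighborhood. So the step you label ``a standard topological-dynamics argument'' is the entire content of the lemma, and one should spell it out: if $V$ is clopen with $x^+\in V$, $x^-\notin V$, and points $y_k\to x^+$ had escape times $m_k\to\infty$ from $V$, then a subsequential limit $z$ of $T^{m_k-1}(y_k)$ would satisfy $T^{-j}(z)\in V$ for all $j\ge0$ and $T(z)\notin V$, contradicting $T^{-j}(z)\to x^-$. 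In fact, once you run this argument for a clopen $V$, you find directly that $U:=\bigcap_{n\ge0}T^{-n}(V)$ is a clopen strictly-trapping neighborhood, which short-circuits the ``slip a clopen set between two compacta'' step; that makes your route essentially as economical as the paper's compactness argument.
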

\begin{proof}
We will show that any  north-south dynamics on a Cantor space is topologically conjugate to the  ``standard''  
 north-south dynamics $\varphi(x,n)= (x,n+1)$, $\varphi:S_\Z(C) \to  S_{\Z}(C)$ where
$C$ is a Cantor space and $S_{\Z}(C)$ is the ``disconnected
suspension'' defined above.
It is easy to check that $S_\Z(C)$ is compact, totally disconnected, second countable and has no isolated points, so it is a Cantor space.
 
Let $X$ be a Cantor space and $T: X \to X$ be any homeomorphism with
  north-south dynamics.
  Let $D$ be an clopen neighborhood of the unique positively attracting fixed point $x^+$ such that $X \setminus D$
  is a neighborhood of the unique negatively attracting fixed point $x^-$.
  For every point $x \in X \setminus \{x^+,x^-\}$
  the expression
  $n_D(x)=\sup \{n \in \Z \setsep T^n(x) \in X \setminus D\}$ is a well defined integer.
  We want to show that \[C:=\{x \in X \setminus \{x^+,x^-\} \setsep
  n_D(x)=0\} = (X \setminus D) \cap \bigcap_{k=1}^\infty T^{-k}(D)\]
  is a clopen set in $X$.
  For every point $x \in X \setminus
  \{x^+\}$ there is $k >0$ such that $T^{-k}(x) \in X \setminus D$,
  hence $\bigcap_{k=1}^\infty T^{k}(D) \cap (X \setminus D)$
  is empty. By compactness there must be $N \in \N$
  such that already the finite intersection
  $\bigcap_{k=1}^{N} T^{k}(D) \cap (X \setminus D)$
  is empty. In other words, $\bigcap_{k=1}^{N} T^{k}(D) \subseteq  D$
  and therefore $\bigcap_{k=1}^N T^{-k}(D) \subseteq T^{-(N+1)}(D)$.
  Thus $\bigcap_{k=1}^N T^{-k}(D)=\bigcap_{k=1}^{N+1} T^{-k}(D)
  \subseteq T^{-(N+2)}(D)$. By induction we obtain
  $\bigcap_{k=1}^N T^{-k}(D)=\bigcap_{k=1}^{\infty} T^{-k}(D)$.
  This shows that $C= (X \setminus D) \cap \bigcap_{k=1}^{\infty} T^{-k}(D)$
  is indeed a clopen subset of $X$ hence a Cantor space itself.
  Notice that the orbit of every non fixed point $x$ of $T$
  intersects $C$ in precisely one point, namely $T^{n_D(x)}(x)$,
  since $n_D(T(x))=n_D(x)-1$.
  
  Let $y^+$ be the attracting and $y^-$ the repelling fixed
  point of $\varphi$. We define  $\Phi: X \to S_\Z(C)$
  by
\[
\Phi(x)= \begin{cases}
y^+ & \text{ if }x=x^+\\
y^- & \text{ if }x=x^-\\
(T^{n_D(x)}(x), -n_D(x)) & \text{ otherwise}.
\end{cases}
\]
  It is clear that $\Phi$ intertwines $\varphi$ and $T$ and
  that it is bijective. To complete the proof, we check that $\Phi$ is continuous.

  Let $(x_n)_{n \in \N}$ be a sequence of points in $X$ so that  $\lim_{n \to \infty}x_n=x \in X$.
  If $x \not \in \{x^-,x^+\}$ then $x \in T^{-n_D(x)}(C)$.
  Since $T^{-n_D(x)}(C)$ is open, it also contains
  $x_n$ for sufficiently large $n$.
  For these $n$ we have
  $\Phi(x_n)=(T^{-n_D(x_n)}(x_n),-n_D(x_n))=(T^{-n_D(x)}(x_n),-n_D(x))
  \to \Phi(x)$.
  If $x=x^+$ then $n_D(x_n) \to -\infty$ and $\Phi(x_n) \to y^+$.
  Similarly,  if $x=x^-$ then $n_D(x_n) \to \infty$ and $\Phi(x_n)
  \to y^-$.
\end{proof}

\section{The eventual image and the natural extension of $\varphi_G$}\label{sec:eventual_image_natural_extension}

Let $(X,\varphi)$ be a topological dynamical
system,  not necessarily invertible . Namely $X$ is a compact topological space and
$\varphi:X \to X$ is a continuous self-map.  The \emph{eventual image}
(also called the \emph{maximal attractor}) is given by
$\Evt(\varphi):=\bigcap_{n=1}^\infty \varphi^n(X)$.
	\begin{prop}\label{prop:maximal_attractor}
     For a non-empty set $M \subseteq V(G)$ with $M \neq V(G)$ the following are equivalent:
     \begin{enumerate}
     \item$M$ is in the eventual image $\bigcap_{n=1}^\infty
         \varphi_{G}^n(\PP(V(G)))$
         \label{enum:max_attractor-a} 
     \item $M$ is the union of arbitrary large balls, i.e.
       for every $r$ there is $M' \subseteq M$ such that
       $M=\varphi_G^r(M')$.
       \label{enum:max_attractor-b}
     \item $M$ is a union of horoballs.
       \label{enum:max_attractor-c}
     \item $M$ is the union of Busemann balls.
       \label{enum:max_attractor-d}  
     \end{enumerate}
	\end{prop}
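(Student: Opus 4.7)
The plan is to close the cycle $(1) \Leftrightarrow (2) \Rightarrow (3) \Leftrightarrow (4) \Rightarrow (2)$, relying only on the monotonicity $W \subseteq \varphi_G(W)$, a short compactness argument, and \Cref{thm:Busemannball}. The equivalence $(1) \Leftrightarrow (2)$ is a direct reformulation: since $W \subseteq \varphi_G(W)$ for every $W$, any $N$ with $\varphi_G^r(N) = M$ automatically satisfies $N \subseteq \varphi_G^r(N) = M$, so we may take $M' := N \subseteq M$; and conversely any $M' \subseteq M$ with $M = \varphi_G^r(M')$ witnesses that $M \in \varphi_G^r(\PP(V(G)))$ for every $r$.

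For $(2) \Rightarrow (3)$ I would use a compactness argument. Fix $v \in M$; for each $r \in \N$ choose $M_r \subseteq M$ with $\varphi_G^r(M_r) = M$, and then pick $w_r \in M_r$ satisfying $v \in \varphi_G^r(\{w_r\})$. Then $\varphi_G^r(\{w_r\}) \subseteq \varphi_G^r(M_r) = M$, so we have balls of arbitrarily large radii containing $v$ and contained in $M$. A convergent subsequence $\varphi_G^{r_k}(\{w_{r_k}\}) \to H$ in $\PP(V(G))$ produces a set $H$ that contains $v$, is contained in $M \neq V(G)$, and is a limit of balls of radii tending to infinity, hence is a horoball. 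Thus every $v \in M$ lies in a horoball $H_v \subseteq M$, exhibiting $M = \bigcup_{v \in M} H_v$ as a union of horoballs.

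The equivalence $(3) \Leftrightarrow (4)$ is essentially already given. Every Busemann ball is by definition a non-empty, proper, increasing union of balls of radii tending to infinity, hence a horoball, so $(4) \Rightarrow (3)$ is trivial; the reverse direction is precisely the corollary stated immediately after \Cref{thm:Busemannball}. For $(4) \Rightarrow (2)$, fix $r \in \N$. For each $v \in M$ choose a Busemann ball $B \subseteq M$ with $v \in B$; applying \Cref{thm:Busemannball} to $B$ (itself a horoball) at the point $v$, we obtain a Busemann ball $B' \subseteq B \subseteq M$ whose defining geodesic ray $(\gamma_n)_{n \in \N_0}$ satisfies $\gamma_0 = v$ (the construction in the proof of \Cref{thm:Busemannball} makes this explicit, since there the paths $\gamma^k$ all end at the prescribed point $v$). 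Then $\gamma_r \in B' \subseteq M$, the geodesic-ray property gives $d(\gamma_r, v) = r$ so $v \in \varphi_G^r(\{\gamma_r\})$, and the whole ball $\varphi_G^r(\{\gamma_r\})$ is contained in $B' \subseteq M$. Taking $M'$ to be the collection of all such $\gamma_r$ (one per $v \in M$) therefore yields $M' \subseteq M$ with $\varphi_G^r(M') = M$.

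The main subtlety running through the steps $(2) \Rightarrow (3)$ and $(4) \Rightarrow (2)$ is to keep the balls we produce strictly \emph{inside} $M$, not merely overlapping it. In the former step this is built into the hypothesis that $M$ is a union of balls of arbitrarily large radii; in the latter it is enabled by the freedom, provided by \Cref{thm:Busemannball}, to start the defining geodesic ray of a Busemann ball at any prescribed vertex of a horoball, which allows us to realise the required radius-$r$ ball as a piece of a Busemann ball already known to lie in $M$.
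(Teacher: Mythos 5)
Your proof is correct, and it closes the same cycle $(1)\Leftrightarrow(2)\Rightarrow(3)\Leftrightarrow(4)\Rightarrow(2)$ as the paper, with the same compactness argument for $(2)\Rightarrow(3)$ and the same appeal to \Cref{thm:Busemannball} for $(3)\Leftrightarrow(4)$. You are a bit more careful than the paper about $(1)\Leftrightarrow(2)$, spelling out that monotonicity $N \subseteq \varphi_G(N)$ forces any $\varphi_G^r$-preimage of $M$ to sit inside $M$; the paper calls this ``obvious''. The one step where you genuinely diverge is $(4)\Rightarrow(2)$: you re-invoke \Cref{thm:Busemannball} at each point $v\in M$ to manufacture a Busemann ball whose defining geodesic ray \emph{starts} at $v$, then peel off its radius-$r$ sub-ball; this is valid, but it leans on a constructive detail visible only in the proof of \Cref{thm:Busemannball} (that the ray can be arranged to have $\gamma_0 = v$), not in its statement. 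The paper's route is simpler and self-contained: a Busemann ball $B=\bigcup_{n\ge 0}\varphi_G^n(\{\gamma_n\})$ is already an increasing union of arbitrarily large balls, and since $\varphi_G^n(\{\gamma_n\})=\varphi_G^r\bigl(\varphi_G^{n-r}(\{\gamma_n\})\bigr)$ with $\varphi_G^{n-r}(\{\gamma_n\})\subseteq B$ for $n\ge r$, condition $(2)$ holds for each Busemann ball individually and hence for any union of them, with no second use of \Cref{thm:Busemannball}. Both arguments are sound; the paper's $(4)\Rightarrow(2)$ is the leaner of the two.
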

	\begin{proof}
     Condition \eqref{enum:max_attractor-a} and
     \eqref{enum:max_attractor-b} are obviously equivalent
     as for any $M' \subseteq V(G)$ and $n >0$
     we have $\varphi_G^n(M')=\bigcup_{w \in M'} \varphi_G^n(\{w\})$.
     Assume now that $M$ is a union of arbitrary large balls.
     For every $v \in M$ there is a sequence of
     points $w_k$ and increasing radii $r_k$ such that
     $v \in \varphi_G^{r_k}(\{w_k\}) \subseteq M$.
     Taking a limit along a subsequence, one obtains
     a horoball in $M$ containing $v$.
     That every union of horoballs is a union of Busemann balls
     and vice versa follows directly from \Cref{thm:Busemannball}. Condition \eqref{enum:max_attractor-d} implies \eqref{enum:max_attractor-b} because every Busemann ball is an increasing union of  arbitrary large balls.
	\end{proof}
In view of the above proposition, we call elements of $\Evt(\varphi_G)$ \emph{horoballunions}.

The \emph{natural extension} of a dynamical system $(X,\varphi)$ (which is not necessarily injective nor surjective), 
 is the dynamical system
$(\hat X_\varphi,\hat \varphi)$ where
$$\hat X_\varphi :=\left\{ \hat x \in X^\Z \setsep \hat x_{n+1} = \varphi(\hat x_{n}) \mbox{ for all } n \in \Z\right\},$$
and $\hat\varphi:\hat X_\varphi  \to \hat X_\varphi$ is the shift given by
\[ \hat \varphi(\hat x)_n = \varphi( \hat x_{n}).\]
Note that $\hat \varphi(\hat x)_n = \hat x_{n+1}$ for every
$x \in \hat X_\varphi$ and $n \in \Z$.  It follows that
$\hat \varphi:\hat X_\varphi \to \hat X_\varphi$ is a homeomorphism.  The natural
extension factors onto the eventual image via the projection
$\hat x \mapsto \hat x_0$. Any other invertible extension of the
eventual image factors through the natural extension.

For a countably infinite graph $G$, the natural extension of $(\PP(V(G)),\varphi_G)$  is topologically
  conjugate to the map $  (x_v)_{v \in V(G)} \mapsto (x_v + 1)_{v \in V(G)}$ on the space 
$\tilde  X_G \subseteq \Zinfty^{V(G)}$, consisting of the two points  $x^+ :=(+\infty)^{V(G)}$ and $x^- := (-\infty)^{V(G)}$  and all the points $x \in \Z^{V(G)}$ that satisfy:
\begin{itemize}
\item  For every $v \in V(G)$ there exists $w \in V(G)$ such that $(w,v) \in E(G)$ and $x_v = x_w +1$ and
\item For every $(v,w) \in E(G)$ $x_w \le x_v +1$.
\end{itemize}
Informally we can say that the natural extension of
$\varphi_G:\PP(V(G)) \to \PP(V(G))$ is  ``pointwise incrementing by $1$ on the two-point compactification of
the integer valued 1-Lipschitz functions on $V(G)$ without local maxima''.
With  this identification, the natural extension of the subsystem
corresponding to the  closure of the horoballs consists of $\Z$-valued
functions which are vertical translations of horofunctions (with the
two additional points $x^-$ and $x^+$ ``at infinity''), see \Cref{lem:horofunction-sublevel}.

\begin{prop}\label{prop:perfect_NA_implies_unique}
  Suppose $G_1$,$G_2$ are both countable, strongly connected locally
  finite directed graphs. Let $(\hat X_{G_i},\hat \varphi_{G_i})$
  denote the natural extensions of $\varphi_{G_i}$ for $i=1,2$. If $\hat X_{G_1}$ and $\hat X_{G_2}$ have no isolated
  points, than $(\hat X_{G_1},\hat \varphi_{G_1})$ is topologically
  conjugate to $(\hat X_{G_2},\hat \varphi_{G_2})$.
\end{prop}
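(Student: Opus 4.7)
The plan is to recognize each $(\hat X_{G_i}, \hat\varphi_{G_i})$ as a homeomorphism with north-south dynamics on a Cantor space and then invoke \Cref{prop:north_south_cantor_unique}, which yields the desired conjugacy to a standard model and hence between the two systems.

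First I would verify that, under the hypotheses, $\hat X_G$ is a Cantor space. Because $V(G)$ is countable, $\PP(V(G))^{\Z}$ is a compact metrizable totally disconnected space, and $\hat X_G$, being cut out by the closed relations $\hat x_{n+1}=\varphi_G(\hat x_n)$, inherits these properties. The constant sequences $x^+:=(V(G))_{n\in\Z}$ and $x^-:=(\emptyset)_{n\in\Z}$ belong to $\hat X_G$, so it is nonempty. Moreover, any fixed point of $\hat\varphi_G$ must be a constant sequence at a fixed point of $\varphi_G$, and since a strongly connected $G$ has only $\emptyset$ and $V(G)$ as fixed points of $\varphi_G$, the fixed points of $\hat\varphi_G$ are exactly $x^\pm$. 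Combined with the no-isolated-points hypothesis, $\hat X_G$ is a Cantor space.

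The heart of the argument is to establish north-south dynamics. The key observation is that every $\hat x\in\hat X_G$ is monotone in $n$: because $\varphi_G$ is extensive, $\hat x_{n-k}\subseteq \varphi_G^k(\hat x_{n-k})=\hat x_n$ for all $k\ge 0$. Forward convergence is then immediate: for $\hat x\neq x^-$, some $\hat x_{n_0}$ is nonempty, and strong connectedness of $G$ gives $\varphi_G^m(\hat x_{n_0})\to V(G)$ in the product topology as $m\to\infty$, so $\hat\varphi_G^m(\hat x)\to x^+$ in $\hat X_G$.

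Backward convergence is what I expect to be the main obstacle. For $\hat x\neq x^+$ I would consider $M:=\bigcap_{n\in\Z}\hat x_n$, which by monotonicity is the pointwise limit of $\hat x_n$ as $n\to -\infty$. Using extensivity and continuity of $\varphi_G$ on decreasing intersections, one checks that $\varphi_G(M)=M$. Since $\hat x_{n_0}\neq V(G)$ for some $n_0$ forces $M\subseteq \hat x_{n_0}\neq V(G)$ by monotonicity, we must have $M=\emptyset$ by the classification of $\varphi_G$-fixed points. Monotonicity then upgrades $M=\emptyset$ to pointwise convergence $\hat x_n\to\emptyset$ as $n\to-\infty$: for each $v\in V(G)$, $v\notin M$ means $v\notin \hat x_n$ for some $n$, and the increasing-in-$n$ property forces $v\notin \hat x_m$ for all $m\le n$. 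Hence $\hat\varphi_G^{-m}(\hat x)\to x^-$ as $m\to\infty$. This verifies the north-south property, and \Cref{prop:north_south_cantor_unique} finishes the proof.
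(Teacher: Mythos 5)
Your proof is correct and takes essentially the same approach as the paper: identify each natural extension as a homeomorphism with north-south dynamics on a Cantor space and invoke \Cref{prop:north_south_cantor_unique}. The paper's proof is terser and simply asserts the north-south property, whereas you supply the monotonicity argument and the $\varphi_G$-invariance of $M:=\bigcap_n \hat x_n$ (which in fact follows directly without continuity: if $v \in \varphi_G(M)$, then for every $n$ there is $w \in M \subseteq \hat x_{n-1}$ with $v \in \varphi_G(\{w\}) \subseteq \hat x_n$, so $v \in M$), a welcome amplification of a step the paper leaves implicit.
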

\begin{proof}
  For any strongly connected, countable graph $G$ the natural extension of $\varphi_{G}$
  has a unique attracting fixed point $x^+$ given by $x^+_n =V(G)$ for
  every $n \in \Z$, and a unique repelling fixed point $x^-$ given by
  $x^-_n=\emptyset$. The natural extension of $\varphi_{G}$ acts on a
  closed subspace of $\PP(V(G))^\Z$, so under the assumption of
  no isolated points it has north-south dynamics on the Cantor set.
  The result follows by \Cref{prop:north_south_cantor_unique} .
\end{proof}
 
\section{The geometry of horoballs in $\Z^d$}\label{sec:Z_d_geo_horoballs}

In the following sections we will exclusively consider abelian groups ($(\Z^d,+)$ and $(\R^d,+)$), so we switch to additive notation.
For $B,C \subseteq \Z^d$ or $B,C \subseteq \R^d$ we write $B+C$ for the Minkowski sum
$$B+C = \left\{b+c \setsep b \in B,~ c \in C \right\}.$$
For $B \subseteq \Z^d$ or $B \subseteq \R^d$ and $n \in \N$ we abbreviate the $n$'th-fold  Minkowski sum of $B$ by $nB$:
$$n B = \underbrace{B+\ldots +B}_n= \left\{b_1 + \ldots + b_n \setsep b_1,\ldots,b_n \in B \right\}.$$
Note that whenever $B \subseteq \R^d$ is convex, the $n$-fold
Minkowski sum coincides with $n$-dilation, meaning $nB = \{nb \setsep
b \in B\}$ for $n \in \N$.
We denote the ball of radius $R$ in $\R^n$ around $v$ by $B_R(v)$. We denote the convex hull of a set $B \in \R^d$ by $\conv(B)$, and by $\Ext(B)$ the extremal points (or vertices) of $\conv(B)$.
Whenever we write $\conv(B)$ for a set $B \subset \Z^d$, we mean the convex hull of $B$  in $\R^d$.

In the following,  $A$ will be a positively generating set of the additive group
$\Z^d$. The set $A$  will not necessarily be symmetric, but
it will always contain $0$. This last assumption is mostly for
convenience, because for any positively generating set $A$,  $0 \in nA$  along an arithmetic progression of positive integer $n$'s.

Iterated  Minkowski sums  of a finite positively generating set of
$\Z^d$ are ``roughly'' equal to the integer points in the dilated
convex hull. A proof of this fact appears in \cite{MR2988074}.
For completeness, we include a precise statement and a short proof based on the well known Shapley-Folkman Lemma (see for instance \cite{MR0385711}).
See \cite{willsonConvergence1978} for a similar convexity-based proof
of a closely related result, and also \cite{gaubertTropical2018}.

\begin{prop}\label{prop:iterated_mikowski_sums_convex}
	Let $A \subseteq \Z^d$ be a finite positively generating set with $0 \in A$. 
	Then for any $n \in \N$ 
	$$n A \subseteq n\conv(A) \cap \Z^d,$$
	and there exists 
	$N \in \N$ so that for every  $n >N$
	$$(n-N)\conv(A) \cap \Z^d \subseteq n A.$$
\end{prop}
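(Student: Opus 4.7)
My plan: the first inclusion $nA \subseteq n\conv(A) \cap \Z^d$ is immediate, since $A \subseteq \conv(A)$ gives $nA \subseteq n\conv(A)$ while $nA \subseteq \Z^d$ is obvious. The content is in the reverse inclusion up to a shift in $n$, and I would handle it using the Shapley--Folkman lemma, as the authors hint in the paragraph preceding the proposition.

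Recall that Shapley--Folkman says: for any subsets $X_1, \dots, X_k \subseteq \R^d$ and any $x \in \conv(X_1) + \cdots + \conv(X_k)$, there is a decomposition $x = x_1 + \cdots + x_k$ with $x_i \in \conv(X_i)$ and with at most $d$ indices $i$ for which $x_i \notin X_i$; crucially, the bound $d$ does not depend on $k$. Applying this with all $X_i = A$ to any $x \in (n - N)\conv(A)$ yields $x = \sum_{i=1}^{n-N} y_i$ with $y_i \in \conv(A)$ and $|J| \le d$, where $J := \{i : y_i \notin A\}$. Writing $a_i := y_i \in A$ for $i \notin J$ and setting $z := \sum_{i \in J} y_i = x - \sum_{i \notin J} a_i$, we see that $z$ lies in $d\conv(A) \cap \Z^d$, since both $x$ and each $a_i$ are integer vectors and $0 \in A \subseteq \conv(A)$ lets us view $z$ as a sum of at most $d$ elements of $\conv(A)$.

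The decisive observation is that $d\conv(A) \cap \Z^d$ is a \emph{finite} set (a bounded subset of a lattice), and since $A$ positively generates $\Z^d$, each of its elements lies in $mA$ for some $m \in \N$. I would therefore choose $N \in \N$ large enough that $d\conv(A) \cap \Z^d \subseteq NA$. Then $z$ is a sum of exactly $N$ elements of $A$, and combining with the $|I| \le n - N$ generators $a_i$ we realize $x$ as a sum of at most $(n - N) + N = n$ elements of $A$; since $0 \in A$, we may pad to exactly $n$ summands to conclude $x \in nA$. The edge case $n - N \le d$, where Shapley--Folkman has no content, is absorbed uniformly: in that range $(n - N)\conv(A) \cap \Z^d \subseteq d\conv(A) \cap \Z^d \subseteq NA \subseteq nA$ directly. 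The main obstacle is recognizing Shapley--Folkman as the correct tool; once it is in hand the proof is essentially bookkeeping, with the key feature being that the ``non-lattice part'' of any Minkowski decomposition has dimension-bounded, $n$-independent size and hence can be absorbed into a single uniform constant $N$.
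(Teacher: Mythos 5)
Your proof is correct and takes essentially the same approach as the paper: both invoke the Shapley--Folkman lemma to bound the ``non-lattice part'' of a Minkowski decomposition by $d\conv(A)\cap\Z^d$, use finiteness of that set and positive generation to absorb it into a uniform $NA$, and pad with $0\in A$. The paper packages the lemma as the set identity $\conv(mB)=(m-d)B+d\conv(B)$ and then intersects with $\Z^d$, while you work directly with the pointwise decomposition, but the argument is the same.
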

\begin{proof}
  By definition of the convex hull, $A \subseteq \conv(A)$, so $nA \subseteq n\conv(A)
  \cap \Z^d$.
  To prove the second part we apply
  the Shapley-Folkman Lemma which implies that for any compact
  $B \subseteq \R^d$ and $m \ge d$
	
	\[\conv(mB) = (m-d)B + d\conv(B).\] 
	Since $A \subset \Z^d$, it follows that
	\[	\conv(mA) \cap \Z^d= (m-d)A + (d\conv(A) \cap \Z^d).
	\]
	Since $A$ is positively generating and contains the identity, the
   sequence $kA$ monotonically increases to $\Z^d$ as $k \to \infty$.
   Since $d\conv(A) \cap \Z^d$ is a finite subset of $\Z^d$, it
   follows that there exists $k \in \N$ so that
   $d\conv(A) \cap \Z^d \subseteq kA$.  Together this implies
	\[m\conv(A) \cap \Z^d=\conv(mA) \cap \Z^d \subseteq (m-d)A + kA.\qedhere\]
\end{proof}

\begin{defn}\label{defn:M_F}
	For a face $F$ of $\conv(A)$ let $M_F$  denote the collection of  $(d-1)$-dimensional faces $F'$ of $\conv(A)$ that contain $F$.
\end{defn}

\begin{defn}\label{def:envelope}
	For  a  $(d-1)$-dimensional face $F$ of $\conv(A)$ let $\ell_F$ denote the inward facing tangent vector of $F$, namely the unique $\ell_F \in \mathbb{R}^d$ satisfying
	\begin{itemize}
		\item $\| \ell_F\|=1$,
		\item $\langle  \ell_F ,u -v \rangle
		=0$  for all $u,v \in F$,
		\item $\langle  \ell_F ,u -v \rangle
		\ge 0$  for all $u \in \conv(A)$, $v \in F$.
	\end{itemize}
	Given a face $F$ of $\conv(A)$, we define the \emph{envelope of $A$ with
	respect to $F$} to be
	\begin{equation}\label{eq:Env_exp}
	\Env_F =\Env_{F,A}:=
	\bigcap_{F' \in M_F}
	\left\{  v\in \R^d \setsep
	\langle  \ell_{F'},v \rangle  \ge 0
	\right\},
	\end{equation}
\end{defn}
It follows from the definition that
\begin{equation}\label{eq:Env_alt_exp}
\Env_F =
\left\{\sum_{v \in \Ext(F)}\sum_{u \in A}\alpha_{u,v}(u-v) \setsep
\alpha_{u,v} \in [0,\infty) ~\forall u \in A ,\, v \in \Ext(F)
\right\}. 
\end{equation}

In particular,
if $F=\{v\}$ is a zero dimensional face of $\conv(A)$, then 
$$\Env_F = \left\{\sum_{u \in A}t_u(u-v) \setsep t \in [0,\infty) \right\}.$$
Also, 
if $F$ is a $(d-1)$-dimensional face of $\conv(A)$, then $\Env_F$ is a translation of the unique  half-plane containing $A$ whose boundary is the affine hull of $F$.

In $\Z^d$ the structure of $A$-horoballs is essentially given by the convex hull of $A$.
There is a one-to-one correspondence between faces of $\conv(A)$ and  $A$-horoballs up to translation. 
The following theorem makes this precise: 

\begin{thm}\label{thm:ZD_horoballs_are_faces_of_convex_hull}
	Let $A \subseteq \Z^d$ be a finite generating set.
	For any face $F$ of $\conv(A)$ there exists a unique $A$-horoball $H_F \subseteq \Z^d$ 	with the property that $0 \in H_F$ and
	\[ \Env_F = \conv(H_F).\]
	The horoball $H_F$ is given by
	\begin{equation}\label{eq:H_F_module}
	H_F = \sum_{w \in \Ext(F)} \bigcup_{j =0}^\infty j(A- w).
	\end{equation}
	
	Conversely, any $A$-horoball is of the form $v+ H_F$ for some $v \in \Z^d$ and a face $F$ of $\conv(A)$. This face is uniquely determined.
\end{thm}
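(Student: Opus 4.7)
The plan is to prove existence of $H_F$ as a horoball first, then establish the converse direction, and finally deduce uniqueness from the converse.

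For existence, I would rewrite $H_F = \sum_{w \in \Ext(F)} H_w$ with $H_w := \bigcup_{j=0}^\infty j(A - w)$ and exhibit $H_F$ as the limit of an explicit sequence of balls. Setting $k = |\Ext(F)|$, $n_i = ki$, and $v_i = -i \sum_{w \in \Ext(F)} w$, a direct calculation using $kiA = \sum_{w \in \Ext(F)} iA$ and translation invariance of Minkowski sums yields $v_i + n_i A = \sum_{w \in \Ext(F)} i(A - w)$. Since $0 \in A - w$ for each $w$, we have $i(A-w) \nearrow H_w$, and Minkowski sums commute with increasing unions, so $v_i + n_i A \nearrow H_F$. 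Combined with $0 \in H_F$ and $H_F \subseteq \Env_F \subsetneq \R^d$ for any proper face $F$, this shows $H_F$ is a genuine horoball. The identity $\conv(H_F) = \Env_F$ then follows by distributing $\conv$ over the finite Minkowski sum to reduce to $\conv(H_w) = \Env_{\{w\}}$ for each vertex $w$: $\conv(H_w)$ contains each ray $\{t(u-w) : t \ge 0\}$ for $u \in A$ (as the convex hull of $\{j(u-w) : j \in \N_{0}\} \subseteq H_w$), hence contains the conic hull $\Env_{\{w\}}$ by \eqref{eq:Env_alt_exp}, while the reverse inclusion follows from $A - w \subseteq \Env_{\{w\}}$ and convexity of the cone.

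For the converse, let $H = \lim_i(v_i + n_i A)$ be any $A$-horoball. Fixing any $y \in H$, the boundedness of $(y - v_i)/n_i \in \conv(A)$ gives a subsequence along which $-v_i/n_i \to x_0 \in \conv(A)$, and $x_0$ lies in the relative interior of a unique face $F$ of $\conv(A)$. Writing $x_0 = \sum_{w \in \Ext(F)} \beta_w w$ with $\beta_w > 0$ summing to $1$, I would choose integer decompositions $n_i = \sum_w n_i^{(w)} + r_i$ with $n_i^{(w)} \approx n_i \beta_w$ and $r_i$ bounded, so that $v_i + n_i A$ coincides, up to a bounded correction term coming from $r_i A$, with $v + \sum_w n_i^{(w)}(A - w)$ for some fixed $v \in \Z^d$ extracted from a further subsequence. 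The lower bound $v + H_F \subseteq \liminf(v_i + n_i A)$ then follows from $\sum_w n_i^{(w)}(A-w) \nearrow H_F$, and the upper bound follows from $v_i + n_i A \subseteq v_i + n_i\conv(A)$ together with \Cref{prop:iterated_mikowski_sums_convex}, identifying $H = v + H_F$.

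For uniqueness, $\Env_F$ determines $F$: each face of $\conv(A)$ is the intersection of the facets containing it, so distinct faces give distinct envelopes. Hence if a horoball $H$ satisfies $0 \in H$ and $\conv(H) = \Env_F$, then writing $H = v + H_{F'}$ via the converse gives $v + \Env_{F'} = \Env_F$, which forces $v = 0$ and $F = F'$ since a full-dimensional convex cone with apex at the origin is not a non-trivial translate of another such cone. The main obstacle will be the upper bound in the converse: matching $\limsup(v_i + n_i A)$ exactly with $v + H_F$ rather than with the a priori larger set $(v + \Env_F) \cap \Z^d$ requires careful handling of lattice points of $\Env_F$ near its boundary, where the Shapley--Folkman-type bound of \Cref{prop:iterated_mikowski_sums_convex} does not directly apply, and one must exploit the inductive semigroup structure of $H_F$ as a Minkowski sum over vertices of $F$.
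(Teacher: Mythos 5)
Your existence argument (exhibiting $H_F$ as an increasing limit of the balls $v_i + n_i A$ with $n_i=|\Ext(F)|\cdot i$, and computing $\conv(H_F)=\Env_F$ by distributing $\conv$ over the Minkowski sum) is essentially the paper's route and is fine.

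The converse direction, however, has a genuine gap in how you identify the face $F$. You determine $F$ as the face whose relative interior contains $x_0 = \lim(-v_i/n_i)$, and then decompose $n_i \approx \sum_w n_i^{(w)}$ with $n_i^{(w)} \approx n_i \beta_w$, absorbing the discrepancy into a \emph{bounded} correction $r_i A$. This normalization loses exactly the vertices whose coefficients grow without bound but sublinearly in $n_i$. Concretely, after applying Shapley--Folkman (as the paper does) to write $-v_i = s + \sum_{v\in\Ext(A)} \alpha_{v,i}\, v$ with $\sum_v \alpha_{v,i} = n_i - r$, suppose $\Ext(A)=\{w_1,w_2,w_3\}$ and $\alpha_{w_1,i} = n_i - \sqrt{n_i}$, $\alpha_{w_2,i} = \sqrt{n_i}$, $\alpha_{w_3,i}=0$. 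Then $-v_i/n_i \to w_1$, so your $F$ is the vertex $\{w_1\}$ and your $n_i^{(w_2)} \approx 0$; but $\alpha_{w_2,i}\to\infty$, so the actual increasing limit of $-b_i + n_i A = -s + rA + \sum_v \alpha_{v,i}(A-v)$ is a translate of $H_{[w_1,w_2]}$, not of $H_{\{w_1\}}$ (these differ: $\Env_{\{w_1\}}$ is a pointed cone while $\Env_{[w_1,w_2]}$ is a half-plane, and their stabilizers have different ranks by \Cref{lem:H_F_stab}). The difference $|\alpha_{w_2,i} - n_i^{(w_2)}|$ is unbounded, so it cannot be shoved into a bounded $r_i A$ term. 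The paper sidesteps this precisely by passing to a subsequence along which each $\alpha_{v,i}$ is \emph{monotone non-decreasing}, taking $W = \{v : \alpha_{v,i}\to\infty\}$ (which catches sublinear growth), and invoking \Cref{lem:monoid_faces} to recognize $\sum_{v\in W}\bigcup_k k(A-v)$ as $H_F$ for $F$ the minimal face containing $W$. Your concluding worry about ``lattice points near the boundary'' is pointing at a different (and less central) issue; the real fix is to track absolute rather than normalized divergence of the Shapley--Folkman coefficients.

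A secondary, minor issue: in the uniqueness step you claim $v+\Env_{F'}=\Env_F$ forces $v=0$. That is not true in general — $v$ may be any nonzero element of the lineality space $L_F$, e.g.\ when $F$ is an edge. The correct conclusion is only $v \in L_F$, and then $0\in v+H_F$ gives $-v\in H_F\cap L_F = \stab(H_F)$ by \Cref{lem:H_F_stab}, whence $v+H_F = H_F$ as sets. The uniqueness you want still follows, but via the stabilizer lemma rather than by forcing $v=0$.
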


Note that 
$H_F$ is precisely the semigroup generated by $\{(u-v)\setsep ~u \in A,~ v \in \Ext(F)\}$.

We split the proof into a number of simple lemmas:
\begin{lem}\label{lem:H_F_subset_E_F_A}
	For any face  $F$  of $\conv(A)$ we have
	\[ \conv(H_F) = \Env_F.\] 
\end{lem}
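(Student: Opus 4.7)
The plan is to establish the identity $\conv(H_F) = \Env_F$ by a direct double inclusion, using the explicit presentations of both sides in terms of the generating vectors $\{u - v \setsep u \in A,\ v \in \Ext(F)\}$. Recall from \eqref{eq:Env_alt_exp} that $\Env_F$ is the set of all non-negative \emph{real} combinations of these generators, while from \eqref{eq:H_F_module} $H_F$ is the set of all non-negative \emph{integer} combinations of the same generators (and in particular $0 \in H_F$, corresponding to $j=0$).

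For the inclusion $\conv(H_F) \subseteq \Env_F$ I would simply note that $\Env_F$ is convex (being an intersection of closed half-spaces) and contains every generator $u-v$, hence every non-negative integer combination of generators, i.e.\ every element of $H_F$. Taking convex hulls gives $\conv(H_F) \subseteq \Env_F$.

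The reverse inclusion $\Env_F \subseteq \conv(H_F)$ is the interesting direction. Enumerate the generators as $x_1,\ldots,x_k$ and write an arbitrary $y \in \Env_F$ as $y = \sum_{i=1}^k t_i x_i$ with $t_i \geq 0$. For a large integer $N$, consider the map
\[
\Phi: [0,1]^k \to \R^d, \qquad \Phi(\epsilon_1,\ldots,\epsilon_k) = \sum_{i=1}^k (\lfloor N t_i\rfloor + \epsilon_i)\, x_i.
\]
This map is affine, and at each of the $2^k$ vertices of the cube it takes values that are non-negative integer combinations of the $x_i$'s, hence values in $H_F$. Since $\Phi$ is affine, the image of $[0,1]^k$ is the convex hull of $\Phi(\{0,1\}^k)$, and is therefore contained in $\conv(H_F)$. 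Evaluating at the point $(\{N t_i\})_i \in [0,1)^k$, where $\{\cdot\}$ denotes the fractional part, gives $\Phi(\{N t_1\},\ldots,\{N t_k\}) = N y$, so $N y \in \conv(H_F)$. Finally, combining convexly with $0 \in H_F$,
\[
y = \tfrac{1}{N}(Ny) + \bigl(1-\tfrac{1}{N}\bigr)\cdot 0 \in \conv(H_F),
\]
as desired.

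The only substantive point is the affine-linearity step, which turns a real combination of generators into a convex combination of $2^k$ integer combinations (a zonotope argument); once that is in place, dividing by $N$ against the base point $0 \in H_F$ handles the discrepancy between integer and real coefficients. No additional obstacle is anticipated.
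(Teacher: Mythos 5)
Your proof is correct and follows the same route as the paper: both reduce the identity to the general fact that the convex hull of all non-negative \emph{integer} combinations of a finite set of vectors equals the set of all non-negative \emph{real} combinations, then invoke the explicit presentations \eqref{eq:H_F_module} and \eqref{eq:Env_alt_exp}. The only difference is that the paper asserts this general fact without proof, whereas you supply the zonotope argument for it (and the auxiliary scaling by $N$ is actually unnecessary -- taking $N=1$ already gives $y \in \conv(H_F)$ directly).
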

\begin{proof}
	For any finite set $B \subseteq \mathbb{R}^d$, 
	\[ \conv\left(\left\{ \sum_{v \in B} a_v v \setsep a_v \in \mathbb{Z}_+\right\}\right) = \left\{ \sum_{v \in B} \alpha_v v \setsep \alpha_v \in [0,\+\infty) \right\}.\]
	The result follows immediately  from the expression \eqref{eq:H_F_module} for $H_F$ and  the expression \eqref{eq:Env_alt_exp} for $\Env_F$.
\end{proof}
\begin{lem}\label{lem:H_F_is_horoball}
	Let $F$ be a face of $\conv(A)$. Then the set $H_F$ given by \eqref{eq:H_F_module} is an $A$-horoball.	
\end{lem}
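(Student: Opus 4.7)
The plan is to construct an explicit sequence of balls converging to $H_F$. Enumerate the extreme points of $F$ as $\Ext(F) = \{w_1, \ldots, w_k\}$ and note that each $w_i$, being an extreme point of a face of $\conv(A)$, is itself an extreme point of $\conv(A)$, hence lies in $A$ (since $A$ is finite and contains all vertices of its convex hull). Take radii $n_N := Nk$ and base points $v_N := -N(w_1 + \cdots + w_k)$. A direct Minkowski-sum manipulation using the identity $NkA = \sum_{i=1}^k NA$ yields
\[
\varphi_A^{n_N}(\{v_N\}) \;=\; v_N + n_N A \;=\; \sum_{i=1}^k (NA - Nw_i) \;=\; \sum_{i=1}^k N(A - w_i).
\]

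Since $0 \in A - w_i$ for each $i$, the sets $N(A - w_i)$ are monotonically increasing in $N$, and so is their Minkowski sum. In the product topology on $\PP(\Z^d)$ an increasing sequence of sets converges to its union, so
\[
\lim_{N \to \infty}\varphi_A^{n_N}(\{v_N\}) \;=\; \bigcup_{N=0}^{\infty}\sum_{i=1}^k N(A-w_i).
\]
To identify this union with the set $H_F$ defined in \eqref{eq:H_F_module}, I observe that for every $(j_1, \ldots, j_k) \in \N_0^k$, choosing $N \geq \max_i j_i$ gives $\sum_i j_i(A-w_i) \subseteq \sum_i N(A-w_i)$ by the same monotonicity. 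This provides cofinal containment in both directions, so the right-hand side above is precisely $H_F$, and $H_F$ is a limit of balls whose radii $n_N$ tend to infinity.

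It remains to check the non-triviality clauses in \Cref{defn:horoball}: $0 \in H_F$ (take every $j_i = 0$), so $H_F$ is non-empty; and $H_F \subseteq \conv(H_F) = \Env_F$ by \Cref{lem:H_F_subset_E_F_A}, while $\Env_F \subsetneq \R^d$ since $F$ is a proper face of $\conv(A)$, which forces $M_F$ to be non-empty and $\Env_F$ to be cut out by at least one proper half-space. Hence $H_F \neq \Z^d$, and $H_F$ is an $A$-horoball. I do not anticipate a serious obstacle: the argument is essentially bookkeeping. The only care needed is to correctly interpret convergence in $\PP(\Z^d)$ as pointwise convergence of indicator functions, and to keep straight the Minkowski-sum identities $NkA = \sum_{i=1}^k NA$ and $N(A-w_i) = NA - N w_i$.
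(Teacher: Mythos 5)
Your proof is correct and follows essentially the same route as the paper: you construct the identical sequence of balls $\sum_{w \in \Ext(F)} N(A-w) = -N\sum_{w\in\Ext(F)}w + N|\Ext(F)|A$, observe monotonicity from $0 \in A - w$, identify the increasing union with $H_F$, and use $H_F \subseteq \Env_F \subsetneq \R^d$ (from \Cref{lem:H_F_subset_E_F_A}) for the nontriviality clause. The only difference is cosmetic: you spell out the cofinality argument identifying $\bigcup_N \sum_i N(A-w_i)$ with the mixed-index union defining $H_F$ in \eqref{eq:H_F_module}, a step the paper leaves implicit.
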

\begin{proof}

	Let $m = | \Ext(F)|$. For any $j \in \N$ let
	\[ B_j := \sum_{w \in \Ext(F)} j(A-w).\]
	Then
	\[B_j = (-j\sum_{w \in \Ext(F)}w) + jm A\]
	is a translate of $jmA$.
	Since $B_{j} \subseteq B_{j+1}$ for every $j$, 
	\[
     \lim_{j \to \infty} B_j =\bigcup_{j=1}^\infty B_j
     =\bigcup_{j=1}^\infty \sum_{w \in \Ext(F)} j(A-w)=H_F.\] This
   shows $H_F$ is an increasing union of translates of iterated sums
   of $A$. It is clearly non-empty and by Lemma
   \ref{lem:H_F_subset_E_F_A} it is contained in a half-space so
   $H_F \ne \Z^d$. This proves $H_F$ is indeed a horoball.
\end{proof}

We will only need the following lemma later but we prove it here since
it is very close in spirit to the previous one.
\begin{lem}\label{lem:Env_cap_Zd_contained_H_F}
	For any face $F$ of $\conv(A)$ there exists $v \in \Z^d$ so that 
	$$ \Env_F \cap \Z^d \subseteq v + H_F.$$
\end{lem}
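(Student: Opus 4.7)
The plan is to combine two ingredients: on the one hand, the containment $jmA - jw_0 \subseteq H_F$ for every $j \geq 0$ (where $m = |\Ext(F)|$ and $w_0 = \sum_{w \in \Ext(F)} w \in \Z^d$), which is already established in the proof of \Cref{lem:H_F_is_horoball}; and on the other hand, the asymptotic description of iterated Minkowski sums supplied by \Cref{prop:iterated_mikowski_sums_convex}.

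The first step I would carry out is to single out the centroid $\bar w := w_0/m \in \R^d$ of the vertex set of $F$. Because $\bar w$ is a strictly positive convex combination of the elements of $\Ext(F)$, it lies in the relative interior of $F$. This has two consequences I would use: the facets of $\conv(A)$ passing through $\bar w$ are exactly the elements of $M_F$, so the tangent cone of $\conv(A)$ at $\bar w$ is described by precisely the inequalities defining $\Env_F$ in \eqref{eq:Env_exp}; and since $\bar w \in \conv(A)$, an elementary convexity argument (using $0 \in \conv(A) - \bar w$) shows that $\{t(\conv(A) - \bar w)\}_{t \geq 0}$ is a nested, increasing family of convex sets whose union is precisely this tangent cone $\Env_F$.

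Next I invoke \Cref{prop:iterated_mikowski_sums_convex} to obtain $N_0 \in \N$ with $(n - N_0)\conv(A) \cap \Z^d \subseteq nA$ for all $n \geq N_0$, and I fix $N_1 \in \N$ with $mN_1 \geq N_0$. For each $j \geq N_1$, setting $n = jm$, one checks the key algebraic identity
\[
(jm - mN_1)\conv(A) - jw_0 \;=\; (jm - mN_1)(\conv(A) - \bar w) - N_1 w_0,
\]
since $(jm - mN_1)\bar w = (j - N_1)w_0$. Combined with $jmA - jw_0 \subseteq H_F$ and with the fact that $N_1 w_0 \in \Z^d$ (so that the intersection with $\Z^d$ can be translated by $N_1 w_0$), this yields
\[
\bigl((jm - mN_1)(\conv(A) - \bar w)\bigr) \cap \Z^d \;\subseteq\; H_F + N_1 w_0.
\]

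The final step is to take the union of the above inclusions over $j \geq N_1$ and exchange this union with the intersection against $\Z^d$; by the identification from the first step the left-hand side becomes $\Env_F \cap \Z^d$, so $\Env_F \cap \Z^d \subseteq N_1 w_0 + H_F$, and $v := N_1 w_0 \in \Z^d$ is a valid choice. The step requiring the most care is the tangent-cone identification in the first paragraph: matching the half-space description of $\Env_F$ with $\bigcup_{t \geq 0} t(\conv(A) - \bar w)$ hinges on verifying that the facet inequalities active at $\bar w$ are exactly those indexed by $M_F$, which is precisely what membership of $\bar w$ in the relative interior of $F$ provides.
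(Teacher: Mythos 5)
Your proof is correct, and its first two-thirds coincide with the paper's argument: both express $H_F$ as the increasing union of the translates $-jw_0 + jmA$ and then invoke \Cref{prop:iterated_mikowski_sums_convex} to replace $jmA$ by a dilated copy of $\conv(A)$ (up to a fixed margin). Where you diverge is in the final identification step. The paper writes each translated dilate $W_j = -jw_0 + (mj-N)\conv(A)$ in ``generalized barycentric'' coordinates $\sum_{v\in\Ext(A)} a_v v$ with $\sum a_v = -N$ and appropriate sign constraints, and matches the union $\bigcup_j W_j$ with $\Env_F - Nv_0$ for a single vertex $v_0\in\Ext(F)$ by an explicit coordinate count. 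You instead recenter at the centroid $\bar w = w_0/m$, which lies in the relative interior of $F$, and use the standard fact that $\bigcup_{t\geq 0} t(\conv(A)-\bar w)$ is the tangent cone of the polytope at $\bar w$, whose defining half-spaces are precisely those indexed by $M_F$, i.e.\ $\Env_F$; this yields the translation vector $N_1 w_0$. The tangent-cone route is arguably cleaner and more geometric, while the paper's coordinate computation is more elementary; both are valid. (One cosmetic point: take $mN_1 > N_0$ rather than $mN_1 \geq N_0$, or restrict to $j > N_1$, since \Cref{prop:iterated_mikowski_sums_convex} is stated for $n>N$ rather than $n\geq N$.)
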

\begin{proof}
	As in the proof of \Cref{lem:H_F_is_horoball}, for every $N \in \mathbb{N}$ we have
	\begin{equation}	\label{eq:H_F_alt}
	H_F = \bigcup_{j=N}^\infty  \left((-j\sum_{w \in \Ext(F)}w) + jm A\right),
	\end{equation}
	where $m=| \Ext(F)|$.
	By \Cref{prop:iterated_mikowski_sums_convex}  there exists $N \in
   \N$ so that for any $j \geq N$  
	\[(j-N)\conv(A) \cap \Z^d \subseteq jA.\]
	From this it follows that for every $j \ge N$
	\[(-j\sum_{w \in \Ext(F)}w)+(mj-N)\conv(A) \cap \Z^d \subseteq jmA+(-j\sum_{w \in \Ext(F)}w).\]
	Taking the union over $j \ge N$, using \eqref{eq:H_F_alt}, it follows that 
	$$\bigcup_{j =N}^\infty \left( (-j\sum_{w \in \Ext(F)}w) +(mj-N)\conv(A) \cap \Z^d \right)\subseteq  H_F.$$
	For each $j$ the set
	$ W_j = (-j\sum_{w \in \Ext(F)}w) +(mj-N)\conv(A)$
	consists precisely of vectors $u \in \R^d$ of the form $u=  \sum_{v \in \Ext(A)}a_v v$
	so that   $\sum_{v \in \Ext(A)}a_v = -N$, $a_v \ge 0$ for $v \in \Ext(A) \setminus \Ext(F)$, and $a_w \ge -j$ for $w \in \Ext(F)$.
	Also, $u \in \Env_F$ if and only if it is of the form $u = \sum_{v \in \Ext(A)}a_v v$ with 
	$\sum_{v \in \Ext(A)}a_v =0$ and $a_v \ge 0$ for any $v \in \Ext(A)\setminus \Ext(F)$. It follows that for $v_0 \in \Ext(F)$,  $\bigcup_{j} W_j = \Env_F - Nv_0$. Thus
	\[ \Env_F \cap \Z^d 	\subseteq Nv_0 + H_F. \qedhere\]
\end{proof}

Here is a simple criterion for a semigroup in $\mathbb{Z}^d$ to be a group:
\begin{lem}\label{lem:semigroup_ZD_group}
	Let $S \subseteq \Z^d$ be a semigroup. Then $S$ is a group if and only if the convex hull of $S$ is equal to the linear span of $S$.
	Equivalently, a semigroup $S \subseteq \Z^d$ is a group if and only if it is not contained in a proper half-space of its linear span.
\end{lem}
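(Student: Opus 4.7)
The plan is to establish the two equivalences separately: first the main one that $S$ is a group if and only if $\conv(S) = V$, where $V := \mathrm{span}_\R(S)$, and then to derive the half-space reformulation from the separation theorem in finite dimensions.

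For the forward direction, if $S$ is a group then it is a subgroup of $\Z^d$, hence a finitely generated free abelian group whose $\R$-span equals $V$. Picking a $\Z$-basis $v_1,\ldots,v_k$ of $S$ (which is automatically an $\R$-basis of $V$), we have $\pm v_i \in S$ for every $i$, so $\conv(S)$ contains the symmetric convex neighborhood $C := \conv\{\pm v_1, \ldots, \pm v_k\}$ of $0$ in $V$. Since $S$ is closed under addition, $nC \subseteq \conv(S)$ for every $n \in \N$, and taking the union over $n$ yields $\conv(S) = V$.

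For the reverse direction, assume $\conv(S) = V$ and fix $s \in S$; the task is to show $-s \in S$. Since $-s \in V = \conv(S)$, write $-s = \sum_{i=1}^m \lambda_i s_i$ with $s_i \in S$, $\lambda_i \ge 0$, and $\sum_i \lambda_i = 1$. The set of feasible tuples $(\lambda_i) \in \R^m$ is a non-empty bounded polyhedron cut out by linear equations and inequalities with integer coefficients (the entries of the $s_i$ and of $s$), so any of its vertices has rational coordinates by Cramer's rule. Choose such a rational solution and let $N$ be a common denominator; setting $m_i := N\lambda_i \in \Z_{\ge 0}$ gives $-Ns = \sum_i m_i s_i$ with $\sum_i m_i = N \ge 1$, so $-Ns \in S$ by closure of $S$ under addition. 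If $N=1$ this already yields $-s \in S$; otherwise $(N-1)s \in S$ as an $(N-1)$-fold sum, and hence $-s = (-Ns) + (N-1)s \in S + S \subseteq S$.

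The equivalent half-space formulation is then immediate. Any proper half-space of $V$ is a proper convex subset, so if $S$ lies in one then $\conv(S) \ne V$. Conversely, if $\conv(S) \ne V$, pick $p \in V \setminus \conv(S)$ and apply the separation theorem to the convex sets $\{p\}$ and $\conv(S)$ inside $V$, obtaining a non-zero linear functional $\ell$ on $V$ and $c \in \R$ with $S \subseteq \conv(S) \subseteq \{x \in V : \ell(x) \ge c\}$; this half-space is proper because $\ell$ is non-zero on the vector space $V$ and hence surjective onto $\R$. The main obstacle is the rationality step in the reverse direction: converting the real convex combination provided by $\conv(S) = V$ into a $\Z_{\ge 0}$-combination with integer total weight, which is what the semigroup structure can actually digest to produce $-Ns \in S$.
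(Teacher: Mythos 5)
Your proof is correct and follows essentially the same route as the paper: the forward direction via "subgroup $\Rightarrow$ lattice in its span $\Rightarrow$ convex hull is the span," and the reverse direction via extracting a rational convex combination, clearing denominators to get $-Ns \in S$, and then writing $-s = -Ns + (N-1)s$. The only difference is that you spell out the rationality step (vertices of a rational polyhedron are rational) and the half-space reformulation via separation, both of which the paper leaves implicit.
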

\begin{proof}
	Any subgroup of $\Z^d$ is a lattice in its linear span, and thus its convex hull  is equal to its linear span.
	Conversely, suppose $S \subseteq \Z^d$ is a  semigroup and that the convex hull of $S$ is equal to the linear span of $S$.
	Then the rational convex hull of $S$ is equal to the rational span of $S$. Now fix $a \in S$. Then in particular $-a$ is in the rational convex hull of $S$, so there exists $q_1,\ldots,q_n \in \mathbb{Q} \cap [0,\infty)$ and
	$v_1,\ldots ,v_m \in S$ so that
	\[ -a = \sum_{i=1}^mq_i v_i.\]
	Multiplying by the common denominator of the $q_i$'s, we see that for some positive integer $N$ and positive integers $n_1,\ldots,n_m$ we have $-Na=\sum_{i=1}^m n_i v_i$. This shows that $-Na$ is in the semigroup. Thus $-a= -Na + (N-1)a$ is also in the semigroup $S$. So the semigroup $S$ is closed under inverses, thus it is a group.
\end{proof}
For a set $L \subseteq \Z^d$ we denote the stabilizer  of $L$ by

\[\stab(L) := \left\{v \in \Z^d \setsep L+v = L \right\}.\]

The set $\stab(L)$ is always a subgroup of $\Z^d$.

Also, for a face $F$ of $\conv(A)$ let $L_F$ denote the linear span of $\{ u-v \setsep u,v \in F \cap A\}$.
Equivalently,
\[ L_F = \bigcup_{F' \in M_F}\left\{ v \in \mathbb{R}^d \setsep \langle \ell_{F'} , v \rangle =0 \right\}.\]

The following lemma identifies the stabilizers of horoballs in $\Z^d$.
\begin{lem}\label{lem:H_F_stab}
	For any face  $F$  of $\conv(A)$, 
	\[ \stab(H_F) = H_F \cap L_F.\]
	Equivalently, $\stab(H_F)$ is equal to the group generated by $\{
   u-v \setsep u,v \in F \cap A\}$.
   In particular $\stab(H_F)$ is a finite index subgroup of $L_F \cap \Z^d$.
\end{lem}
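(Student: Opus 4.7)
The plan is to establish $\stab(H_F) = H_F \cap L_F$ by two inclusions, and then identify this common set with the group $G$ generated by $\{u - v : u, v \in F \cap A\}$. The main tools are Lemma \ref{lem:H_F_subset_E_F_A}, which yields $\conv(H_F) = \Env_F$, and Lemma \ref{lem:semigroup_ZD_group}, which detects when a sub-semigroup of $\Z^d$ is a group via its convex hull. Two structural observations will be used throughout: first, $H_F$ is a commutative monoid containing $0$, being a Minkowski sum of the monoids generated by the sets $A - w$; second, $L_F$ coincides with the lineality space $\bigcap_{F' \in M_F}\{u : \langle \ell_{F'}, u \rangle = 0\}$ of the polyhedral cone $\Env_F$.

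For $\stab(H_F) \subseteq H_F \cap L_F$: if $v + H_F = H_F$, then $v = v + 0 \in H_F$, and $v$ translates $\Env_F$ to itself, forcing $v$ into the lineality space $L_F$. For the reverse inclusion, let $v \in H_F \cap L_F$. The inclusion $v + H_F \subseteq H_F$ is immediate from $H_F + H_F \subseteq H_F$, and the opposite inclusion reduces to $-v \in H_F$. I would obtain this by applying Lemma \ref{lem:semigroup_ZD_group} to $H_F \cap L_F$: for any $w, w_0 \in \Ext(F) \subseteq A$, both $w - w_0$ and $w_0 - w$ lie in $H_F$ (as elements of $A - w_0$ and $A - w$, respectively) and in $L_F$; these vectors together with their negatives span $L_F$ over $\R$, so the convex hull of $H_F \cap L_F$ equals $L_F$, and the semigroup is a group.

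For the equivalent description $\stab(H_F) = G$: each generator $u - v$ of $G$ lies in $\stab(H_F)$ by the following device. Fix any $v_0 \in \Ext(F)$ and write $u - v = (u - v_0) - (v - v_0)$; each of $u - v_0$ and $v - v_0$ is of the form \enquote{element of $A$ minus an extremal point of $F$}, hence lies in $H_F$, and both lie in $L_F$ by definition, so both lie in $\stab(H_F)$. The main obstacle is the reverse inclusion $\stab(H_F) \subseteq G$: given $v \in H_F \cap L_F$, one can expand $v = \sum_{w \in \Ext(F)} \sum_i (a_i^{(w)} - w)$ with $a_i^{(w)} \in A$, but a priori the $a_i^{(w)}$ need not lie in $F$, so individual summands need not lie in $G$. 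To force every $a_i^{(w)}$ into $F \cap A$ I would pass to the quotient $\pi : \R^d \to \R^d / L_F$: the image $\pi(\Env_F)$ is a pointed convex cone (this is the content of the lineality space characterization), and $\pi$ is constant on $F$. Collecting like terms in the expansion and using $\pi(v) = 0$ yields the relation $\sum_{a \in A} c_a (\pi(a) - \pi(w_0)) = 0$, where $c_a \geq 0$ is the integer multiplicity of $a$ in the expansion and $w_0 \in \Ext(F)$ is arbitrary. Pointedness of the cone forces $\pi(a) = \pi(w_0)$ whenever $c_a > 0$, hence $a \in \aff(F) \cap A = F \cap A$ (since $F$ is a face of $\conv(A)$). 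Therefore every summand $a_i^{(w)} - w$ lies in $G$, giving $v \in G$. The finite-index assertion then follows because $G$ contains the spanning set $\{w - v_0 : w \in \Ext(F)\}$ of $L_F$, so $G$ and $L_F \cap \Z^d$ have the same rank $\dim L_F$.
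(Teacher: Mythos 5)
Your proof is correct and follows essentially the same route as the paper's: both identify $\stab(H_F)$ with $H_F \cap L_F$ via the convexity of $\Env_F$, invoke \Cref{lem:semigroup_ZD_group} to see that this set is a group, and recognize it as the group generated by $\{u-v \setsep u,v \in F \cap A\}$. The one step you work out in more detail is the fact that any expansion $v=\sum_i(a_i - w_i)\in L_F$ with $a_i \in A$, $w_i \in \Ext(F)$ forces every $a_i$ into $F\cap A$ --- the paper asserts the resulting identity $H_F \cap L_F=\sum_{w \in \Ext(F)}\bigcup_j j((A\cap F)-w)$ in a single line, citing only that $F$ is a face. Your quotient/pointed-cone argument is a valid proof of this, and is morally the same as choosing a supporting functional $\phi$ for $F$ (which vanishes on $L_F$ and is nonnegative on each $a_i-w_i$) and observing that $\phi(v)=0$ forces each term to vanish.
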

\begin{proof}
	Set $G_F := H_F \cap L_F$.
	From \eqref{eq:H_F_module} and the fact that $F$ is a face of the convex hull of $A$,
	\[ G_F = \sum_{v \in \Ext(A) \cap F} \bigcup_{j=0}^\infty j( (A
     \cap F) -v).\] In particular, $G_F$ is a semigroup.  Also since
   any vector of the form $v -w$ with $v,w \in \Ext(A \cap F)$ is in
   $G_F$, the convex hull of $G_F$ is equal to $L_F$, which contains the
   linear span of $G_F$.  Thus by \Cref{lem:semigroup_ZD_group} $G_F$
   is a group and therefore contains any vector of the form $u-v$ with
   $u,v \in A \cap F$.

   Now since $0 \in H_F$ , for any
   $v \in \stab(H_F)$, $v \in v +H_F = H_F$. If
   $v \in H_F \setminus L_F$ then the convex hull of $H_F +v$ is
   contained in the interior of the convex hull of $H_F$, so
   $v \not\in \stab(H_F)$. It follows that
   $\stab(H_F) \subseteq H_F \cap L_F$. Conversely, it follows
   directly from \eqref{eq:H_F_module} that $u-v \in H_F$ for every
   $u,v \in A \cap F$. Since $\stab(H_F)$ is a group, this completes
   the proof.
\end{proof}

The following result, which will be used in \Cref{sec:nat_ext_Z_d},
expresses the dynamics of $\varphi_A$ on horoballs $H_F$:
 \begin{lem}\label{lem:varphi_H_F}
 	For any face $F$ of $\conv(A)$ and any $v \in \Ext(F)$ we have
 	\[\varphi_A(H_F)= H_F +v.\]
 \end{lem}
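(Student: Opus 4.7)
The plan is to exploit the explicit expression \eqref{eq:H_F_module} for $H_F$, which presents it as a Minkowski sum of sub-semigroups of $\Z^d$, so that $H_F$ itself is a semigroup containing $0$.

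First I would observe the inclusion $H_F + v \subseteq \varphi_A(H_F) = H_F + A$. This is immediate once we note that $v \in \Ext(F) \subseteq \Ext(\conv(A)) \subseteq A$, since any extreme point of a face of $\conv(A)$ is automatically an extreme point of $\conv(A)$ itself, and every extreme point of the convex hull of a finite subset of $\Z^d$ lies in the set. Hence $v \in A$, so $H_F + v \subseteq H_F + A$.

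For the reverse inclusion $\varphi_A(H_F) \subseteq H_F + v$, the key observation is that for every $a \in A$ the element $a - v$ lies in $H_F$. Indeed, in the expression \eqref{eq:H_F_module} choose the summand $j = 1$ in the factor $\bigcup_{j \ge 0} j(A - v)$ (giving the element $a - v \in A - v$) and $j = 0$ in every other factor $\bigcup_{j \ge 0} j(A - w)$ for $w \in \Ext(F) \setminus \{v\}$ (each contributing $0$); the sum is $a - v$. Next, I would note that $H_F$ is a semigroup: each of the sets $\bigcup_{j \ge 0} j(A - w)$ is visibly a sub-semigroup of $\Z^d$ containing $0$, and Minkowski sums of semigroups containing $0$ are semigroups containing $0$. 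Therefore $H_F + H_F \subseteq H_F$, and in particular $H_F + (a - v) \subseteq H_F$ for every $a \in A$. Rewriting, $H_F + a = H_F + (a-v) + v \subseteq H_F + v$, and taking the union over $a \in A$ yields $\varphi_A(H_F) = H_F + A \subseteq H_F + v$.

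There is no real obstacle here; the only subtlety is being careful that $\Ext(F) \subseteq A$ (which follows from $F$ being a face of the convex hull of the finite set $A$), and recognizing that the representation \eqref{eq:H_F_module} exhibits $H_F$ as a semigroup, so that the desired equality becomes a two-line combination of $v \in A$ and $a - v \in H_F$.
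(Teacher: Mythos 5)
Your proof is correct and follows essentially the same route as the paper: the easy inclusion $H_F + v \subseteq H_F + A$ from $v \in A$, and the reverse via the semigroup structure of $H_F$ together with $a - v \in H_F$ for all $a \in A$. The paper phrases the reverse inclusion element-by-element, rewriting a generic $u = \sum_k (w_k - v_k) + w$ as $\sum_k (w_k - v_k) + (w - v) + v$, which is precisely the semigroup fact you made explicit.
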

\begin{proof}
  Let $F$ be a face of $\conv(A)$ and let $v \in \Ext(F)$.
  Since $v \in A$, we have $H_F+v \subseteq H_F+A=\varphi_A(H_F)$.
  On the other hand, suppose $u \in \varphi_A(H_F)= H_F +A$. Then by definition of $H_F$,
	 there exists $w,w_1,\dots,w_j \in A$ and $v_1,\ldots v_j \in \Ext(F)$ so that
	 $u = \sum_{k=1}^j (w_k-v_k) + w$.
	 It follows that $u=\sum_{k=1}^j (w_k-v_k) +(w-v) + v \in H_F +v$.
	This shows that $\varphi_A(H_F) \subseteq H_F +v$.\end{proof}
\begin{lem}\label{lem:monoid_faces}
	Let $W \subseteq \Ext(A)$ be a non-empty subset.
	If  $F$ is the minimal face of  $\conv(A)$ that contains $W$, then 
	$$ H_F = \sum_{v \in W} \bigcup_{j =0}^\infty j(A-v).$$
\end{lem}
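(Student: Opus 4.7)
The plan is to prove the two inclusions $H_W \subseteq H_F$ and $H_F \subseteq H_W$ separately, where I write $H_W := \sum_{v \in W}\bigcup_{j=0}^\infty j(A-v)$ for the putative right-hand side. The easy direction will follow from $W \subseteq \Ext(A) \cap F = \Ext(F)$ (extreme points of $\conv(A)$ lying on a face $F$ are automatically extreme points of $F$) together with the observation that each factor $\bigcup_j j(A-v)$ contains $0$, so the Minkowski sum over the smaller index set $W$ is dominated by the sum over $\Ext(F)$.

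For the harder inclusion $H_F \subseteq H_W$, I will show that every generator $a - v^*$ (with $a \in A$, $v^* \in \Ext(F)$) belongs to $H_W$. The case $v^* \in W$ is immediate. For $v^* \in \Ext(F) \setminus W$, I will use the decomposition $a - v^* = (a - w) + (w - v^*)$ for an arbitrary fixed $w \in W$: the first term is already a generator of $H_W$, so everything reduces to showing $(w - v^*) \in H_W$. Since $w, v^* \in \Ext(F) \subseteq A \cap F$, my strategy is to prove the stronger inclusion $\stab(H_F) \subseteq H_W$, which by \Cref{lem:H_F_stab} (identifying $\stab(H_F)$ with the group generated by $(A \cap F) - (A \cap F)$) will contain $w - v^*$.

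To establish $\stab(H_F) \subseteq H_W$, I will analyze the sub-semigroup $S := H_W \cap L_F$ of $L_F \cap \Z^d$. A supporting-hyperplane argument (using that for any $\ell$ whose maximum face on $\conv(A)$ contains $F$, and hence $\ell|_W$ is constant, the identity $\sum \ell(a_i - w_i) = 0$ forces each $a_i$ into $F$) identifies $S$ with the semigroup generated by $(A \cap F) - W$. The crucial step is then to verify that $S$ is in fact a group via \Cref{lem:semigroup_ZD_group}, i.e., by checking that $\conv(S) = \mathrm{span}(S) = L_F$. Here the minimality of $F$ enters decisively: it forces $c_W := \frac{1}{|W|}\sum_{w \in W} w$ to lie in the relative interior of $F$, so for any $u \in L_F$ and small $\epsilon > 0$ one can write $c_W + \epsilon u = \sum_{v \in \Ext(F)} \lambda_v v$ as a convex combination, and the identities $v - c_W = \frac{1}{|W|}\sum_{w \in W}(v - w)$ then express $u$ as a non-negative combination of elements of $(A \cap F) - W$, giving $L_F \subseteq \conv(S)$.

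Once $S$ is known to be a group containing $(A \cap F) - W$, taking differences $a - a' = (a - w) - (a' - w)$ shows $(A \cap F) - (A \cap F) \subseteq S$, whence $\stab(H_F) \subseteq S \subseteq H_W$; the reverse inclusion $S \subseteq \stab(H_F)$ is clear since each generator $(a - w)$ with $a, w \in A \cap F$ already lies in $\stab(H_F)$. The main obstacle I anticipate is the supporting-hyperplane characterization of $H_W \cap L_F$, which must be verified uniformly with respect to every $\ell$ whose maximal face contains $F$; once that is in hand, the group structure of $S$ does the heavy lifting and sidesteps the integer-approximation issues that \Cref{prop:iterated_mikowski_sums_convex} cannot resolve when the target point lies on the boundary of $\conv(A)$.
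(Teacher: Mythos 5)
Your proof is correct but takes a genuinely different route from the paper. The paper proves the identity in one stroke: since $\tfrac{1}{|W|}\sum_{w\in W}w$ lies in the relative interior of $F$, one can write $m\sum_{w\in W}w=\sum_{v\in\Ext(F)}\beta_v v$ with $m$ and all $\beta_v$ positive integers and $\sum_v\beta_v=m|W|$; using $\bigcup_j j(A-w)=\bigcup_j jm(A-w)$, a direct rearrangement of Minkowski sums and increasing unions then transforms $\sum_{v\in W}\bigcup_j j(A-v)$ into $\sum_{v\in\Ext(F)}\bigcup_j j(A-v)=H_F$. No inclusions are argued separately, and the stabilizer structure never enters. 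Your proof instead splits into two inclusions and, for the substantive one $H_F\subseteq H_W$, reduces to $\stab(H_F)\subseteq H_W$, which you then obtain from \Cref{lem:H_F_stab} and \Cref{lem:semigroup_ZD_group}. Both routes rest on the same geometric fact (that the barycenter of $W$ lies in the relative interior of $F$, which is the sole place minimality is used), but yours trades the paper's short algebraic computation for a longer structural argument leaning on the established lemmas about $\stab(H_F)$ and semigroups in $\Z^d$. A small simplification to your argument: the supporting-hyperplane identification of $H_W\cap L_F$ with the semigroup generated by $(A\cap F)-W$ is not needed; the one-sided inclusion $(A\cap F)-W\subseteq H_W\cap L_F$, which is immediate, already feeds the cone argument, and once $H_W\cap L_F$ is known to be a group containing $(A\cap F)-W$ the conclusion follows. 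It is also cleaner to invoke the ``not contained in a proper half-space of its span'' formulation of \Cref{lem:semigroup_ZD_group}, since your barycenter argument most directly yields $\epsilon u$ in the convex hull rather than $u$ itself.
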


\begin{proof}
	The point $\frac{1}{|W|}\sum_{w \in W}w$
	is contained in the relative interior of $F$.
   Therefore we can represent it as the rational convex
   combination of the extremal points of $F$ with all coefficients
   positive.
   Multiplying by the common denominator we obtain $m \in \N$ and  positive integer
   coefficients
   $(\beta_v)_{v \in \Ext(F)}$ such that 
   \begin{align*}
   m \sum_{w \in W} w &= \sum_{v \in \Ext(F)} \beta_v v
	\end{align*}
	and 
	\begin{align*}
	\sum_{v \in \Ext(F)} \beta_v=m|W|.
	\end{align*}
	
	Note that for any $w \in W$, $\bigcup_{j=0}^\infty j(A-w)=\bigcup_{j=0}^\infty jm(A-w)$. 
	Therefore 
	\begin{align*}
	\sum_{w \in W} \bigcup_{j =0}^\infty j(A-w) &= \bigcup_{j=0}^\infty jm|W|A - jm\sum_{w \in W} w \\
	& =\bigcup_{j=0}^\infty jm|W|A - j\sum_{v \in \Ext(F)} \beta_v v \\
	&= \sum_{v \in \Ext(F)} \bigcup_{j=0}^\infty j\beta_v (A- v)\\
	&=\sum_{v \in \Ext(F)} \bigcup_{j=0}^\infty j (A- v) \\
	&=H_F. \qedhere
	\end{align*}
\end{proof}

\begin{proof}[Proof of Theorem \ref{thm:ZD_horoballs_are_faces_of_convex_hull}]
	
	We first show that any $A$-horoball is of the form $v + H_F$ for some $v \in \Z^d$ and a face $F$ of $\conv(A)$.
	By \Cref{lem:monoid_faces} it suffices to show that any $A$-horoball is  a translate of a set of the form 
	$\sum_{w \in W}\bigcup_{j=1}^\infty j(A-w)$ for some $W \subseteq \Ext(V)$. 
	Let $L$ be a horoball containing $0$. There is a sequence of sets
	of the form $-b_{k}+n_kA$ with increasing $n_k$ and $b_{k} \in n_kA$
	converging to $L$. By the Shapley-Folkman Lemma as in the proof of \Cref{prop:iterated_mikowski_sums_convex}, 
	passing to  a subsequence we can ensure that there is $r \in \N$ and 
	$s \in rA$ such that $b_{k} \in s + (n_k-r)\Ext(A)$. Hence
	we can find tuples $(\alpha_{v,k})_{v \in \Ext(A)}$ of non-negative integers such
	that $b_{k}=s+\sum_{v \in \Ext(A)} \alpha_{v,k} v$ and $\sum_{v \in \Ext(A)} \alpha_{k,v}=n_k-r$.
	Again passing to a subsequence we may assume that for every
   $v \in \Ext(A)$  the sequence $(\alpha_{v,k})_{k \in \N}$ is
   non-decreasing and that therefore the limit
   $\alpha_v := \lim_{k \to \infty}\alpha_{v,k} \in \N_0 \cup
   \{+\infty\}$ exists. Let
   $W = \left\{v \in \Ext(A):~ \alpha_v = + \infty \right\}$ be the
   set of vertices $v$ for which $(\alpha_{v,k})_{k \in \N}$ tends to
   $+\infty$.
	We claim that 
	$L$ is a translate of 	$\sum_{w \in W}\bigcup_{j=1}^\infty j(A-w)$.
	
	First of all
	\begin{align*}
	-b_k+n_k A &= -s - (\sum_{v \in W} \alpha_{v,k} v) + n_k A \\
	&=-s +rA + \sum_{v \in W} \alpha_{v,k}(A-v)
	\end{align*}
	and hence $L$ is the increasing union of the sets
	$-b_k+n_k A$.
	Now
	\begin{align*}
	L=\bigcup_{k \in \N} -b_k+n_k A
	&= -s+rA+ \bigcup_{k \in \N} \sum_{v \in W} \alpha_{v,k}(A-v) \\
	&= -s+rA + \sum_{v \in W} \bigcup_{k \in \N} \alpha_{v,k}(A-v) \\
	&= -s+rA + \sum_{v \in W} \bigcup_{k \in \N} k(A-v).
	\end{align*}
	Now choose $w \in W$. Then $rA = r(A-w) + rw$. Since $(A-w)$ is contained in the semigroup  $\sum_{v \in W} \bigcup_{k \in \N} k(A-v)$ and contains $0$ we have:
	\[r(A-w)  +\sum_{v \in W} \bigcup_{k \in \N} k(A-v)= \sum_{v \in W} \bigcup_{k \in \N} k(A-v).\]
	It follows that
	\[L = -s + rw +  \sum_{v \in W} \bigcup_{k \in \N} k(A-v).\]
	This completes the proof that any $A$-horoball is a translate of
   some $H_F$. Let $v + H_F$ be an $A$-horoball for some face $F$ of
   $\conv(A)$ and $v \in \Z^d$ such that $0 \in v+H_F$ and so that
   $\conv(v+H_F ) = \Env_F$.
	By \Cref{lem:H_F_subset_E_F_A}, $\conv(v+H_F)= v+ \Env_F$. Thus, $v+\Env_F=\Env_F$ so $v \in L_F$. But $0 \in v +H_F$ so $-v \in H_F$. By \Cref{lem:H_F_stab} it follows that $v+H_F=H_F$.
	
\end{proof}

\section{Horoballs in $\Z^d$ as a topological space and a topological dynamical system}\label{sec:Z_d_top_dynam}
In this section we make a brief digression from the study of
$\varphi_A$ to study the space $\overline{\Hor(\Z^d,A)}$ as a compact totally disconnected topological space  and as $\Z^d$-topological
dynamical system. 

For $u \in \R^d$ and $R >0$ we let $B_R(u)$ denote the Euclidean ball of radius $R$ centered at $u$.



\begin{lem}\label{lem:conv_H_F_local}
There exists $R>0$ such that for any face $F$ of $\conv(A)$, 
if $u\in \conv(H_F)$ then $u \in\conv( H_F \cap B_{R}(u))$.
\end{lem}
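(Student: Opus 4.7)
The plan is to exploit that both the horoball $H_F$ and its convex hull $\Env_F$ are generated by the same finite set $S_F := \{a-w \setsep a \in A,\, w \in \Ext(F)\}$: by \eqref{eq:H_F_module}, $H_F$ equals the additive semigroup generated by $S_F$ (its $\N_0$-span), and by \eqref{eq:Env_alt_exp}, $\Env_F$ equals the convex cone generated by $S_F$ (its $\R_{\ge 0}$-span). This parallel description is exactly the setup for a rounding argument.

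Given $u \in \conv(H_F) = \Env_F$, I would first write $u = \sum_{s \in S_F} \alpha_s s$ with $\alpha_s \ge 0$. Round down: set $n_s := \lfloor \alpha_s\rfloor$ and $t_s := \alpha_s - n_s \in [0,1)$. Then $v := \sum_{s \in S_F} n_s s$ lies in $H_F$, and $|u - v| = \bigl|\sum_s t_s s\bigr| \le \sum_{s \in S_F}|s|$, so $v$ is close to $u$ with a bound depending only on $A$.

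The second step produces a finite set of points of $H_F$ near $u$ whose convex hull contains $u$. Set $m := |S_F|$ and consider, besides $v$, the points $v + m s$ for each $s \in S_F$; each lies in $H_F$ since $H_F$ is closed under adding any element of $S_F$. A direct check shows that $u$ equals the convex combination of these $m+1$ points with coefficient $t_s/m$ on $v+ms$ and coefficient $1 - \sum_s t_s/m \ge 0$ on $v$, using that $\sum_s t_s/m \le 1$. Each of these points lies within distance $\le 2m \max_{s \in S_F} |s|$ of $u$, a quantity bounded by $4|A|^2 \max_{a \in A}|a|$ uniformly across all faces $F$ of $\conv(A)$ (using $|S_F| \le |A|^2$ and $\max_{s \in S_F}|s| \le 2 \max_{a \in A}|a|$). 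Taking $R$ equal to this uniform bound finishes the proof.

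There is no serious obstacle to this argument: the key conceptual observation is the shared generating-set description of $H_F$ (as a semigroup) and $\Env_F$ (as a cone). Once that is in hand, the rest reduces to rounding real coefficients and repackaging the fractional remainders into a convex combination of bounded translates of $v$, with uniformity in $F$ falling out of the explicit bounds on $|S_F|$ and on $\max_{s \in S_F}|s|$.
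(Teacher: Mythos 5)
Your proof is correct and rests on the same core idea as the paper's: write $u\in\Env_F$ as a nonnegative real combination of the generators $S_F=\{a-w\setsep a\in A,\ w\in\Ext(F)\}$, round to nonnegative integer combinations (which then lie in the semigroup $H_F$), and observe that the rounding error is bounded uniformly in $F$. Where you genuinely diverge is in how you package $u$ as a convex combination of nearby lattice points: the paper uses the standard multilinear trick, writing $u$ as a convex combination of all $2^{|\Ext(F)\times A|}$ vectors obtained by independently rounding each coefficient up or down (vertices of a translated unit box), each of which sits in $H_F\cap(u+A')$; you instead round everything down once to get a base point $v$, and then express $u$ as a convex combination of $v$ and the $|S_F|$ translates $v+|S_F|\,s$ with coefficients $t_s/|S_F|$ and $1-\sum_s t_s/|S_F|$. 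Both produce a bounded finite subset of $H_F$ whose convex hull captures $u$, with a radius depending only on $A$. The paper's hypercube decomposition is the familiar piecewise-multilinear interpolation move; yours is more economical (only $|S_F|+1$ points instead of exponentially many), at the mild cost of a slightly larger, but still uniform, radius $R$. Either is acceptable here since the lemma asks only for existence of some $R$.
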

\begin{proof}
Choose $u \in \conv(H_F)$. Then $u$ is of the form 
\[ u= \sum_{v \in \Ext(F)}\sum_{w \in A} \alpha_{v,w}(w-v),\]
with $\alpha_{v,w} \in [0,\infty)$.
For $\alpha \in \mathbb{R}$ let $[\alpha]_0 = \lfloor \alpha \rfloor$ and $[\alpha]_1 =\lceil \alpha \rceil$.
Then $u$ is in the convex hull of the set
\[
A_u = \left\{\sum_{v \in \Ext(F)}\sum_{ w \in A} [\alpha_{v,w}]_{f(v,w)}(w-v)  \setsep f \in \{0,1\}^{\Ext(F) \times A} \right\}. 
\] 
Let 
\[ A'= \left\{ \sum_{v \in \Ext(F)}\sum_{ w \in A} b_{v,w}(w-v)  \setsep b \in [-1,1]^{\Ext(F) \times A}\right\}.\]
Then $A_u \subseteq H_F \cap (u + A')$.
Choose $R>0$ sufficiently large so that $A'$ is contained in $B_R(0)$. Then  $u \in\conv( H_F \cap B_{R}(u))$. 
\end{proof}

Recall that for a face $F$ of $\conv(A)$, $M_F$ has been defined in \Cref{defn:M_F}.

\begin{lem}\label{lem:conv_F_1_to_conv_F_2}
Suppose $F_1, F_2$ are faces of $\conv(A)$,
and that
$(v_i)_{i \in \N}$ is a sequence of points in $\Z^d$. Then 
$v_n+ \Env_{F_1} \to \Env_{F_2}$ as $n \to \infty$ (with respect to the product topology on $\{0,1\}^{\R^d}$) if and only if $F_1 \subseteq F_2$ and for every $F' \in M_{F_1}$ we have
\begin{equation}\label{eq:ell_F_dot_prod_lim}
\lim_{n \to \infty} \langle \ell_{F'},v_n\rangle = \begin{cases}
-\infty & \text{ if }F'  \in M_{F_1} \setminus M_{F_2}\\
0  & \text{ if }F' \in M_{F_2}.
\end{cases}
\end{equation}
\end{lem}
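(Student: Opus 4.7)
The proof has two directions. For the backward direction, the key observation is that since $A \subset \Z^d$, for each $(d-1)$-face $F'$ of $\conv(A)$ the normal $\ell_{F'}$ is a positive scalar multiple of a primitive integer vector, so $\{\langle \ell_{F'}, v\rangle : v \in \Z^d\}$ is a discrete subgroup of $\R$. Hence the hypothesis $\langle \ell_{F'}, v_n\rangle \to 0$ for $F' \in M_{F_2}$ forces $\langle \ell_{F'}, v_n\rangle = 0$ for all large $n$. Combined with $M_{F_2} \subseteq M_{F_1}$ (which follows from $F_1 \subseteq F_2$) and $\langle \ell_{F'}, v_n\rangle \to -\infty$ for $F' \in M_{F_1} \setminus M_{F_2}$, the pointwise convergence $\indi_{v_n + \Env_{F_1}}(x) \to \indi_{\Env_{F_2}}(x)$ is verified for each $x \in \R^d$ by simple case analysis on whether $x \in \Env_{F_2}$ or not.

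For the forward direction I would first exploit that $0 \in \Env_{F_2}$: by convergence, $0 \in v_n + \Env_{F_1}$ eventually, so $-v_n \in \Env_{F_1}$ and thus $\langle \ell_{F'}, v_n\rangle \leq 0$ eventually for every $F' \in M_{F_1}$. To obtain $F_1 \subseteq F_2$ I would show $\Env_{F_1} \subseteq \Env_{F_2}$: if some $r \in \Env_{F_1}\setminus \Env_{F_2}$ existed, then picking $u$ in the interior of $\Env_{F_2}$ and a sufficiently large $t > 0$, one has $u + tr \notin \Env_{F_2}$, yet $u + tr - v_n = (u-v_n) + tr \in \Env_{F_1}$ for all $n$ with $u - v_n \in \Env_{F_1}$ (using that $\Env_{F_1}$ is a cone), contradicting convergence at the point $u + tr$.

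To handle $F' \in M_{F_1}\setminus M_{F_2}$ I would use that the facet normals $\{\ell_{F'} : F' \in M_F\}$ are precisely the extreme rays of the normal cone $N_F$ at $F$, with $\Env_F$ polar to $N_F$. Since $\ell_{F'}$ is an extreme ray of $N_{F_1}$ but $F' \notin M_{F_2}$, we have $\ell_{F'} \notin N_{F_2}$, so there exists $r \in \Env_{F_2}$ with $\langle \ell_{F'}, r\rangle < 0$. For each $t > 0$ the convergence at $tr \in \Env_{F_2}$ yields $\langle \ell_{F'}, v_n\rangle \leq t \langle \ell_{F'}, r\rangle$ eventually, and sending $t \to \infty$ forces $\langle \ell_{F'}, v_n\rangle \to -\infty$.

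The subtlest step is showing $\langle \ell_{F'}, v_n\rangle \to 0$ for $F' \in M_{F_2}$, and I expect this to be the main obstacle. The plan is to choose a point $p$ in the relative interior of the facet of the polyhedral cone $\Env_{F_2}$ on which only the $F'$-constraint is tight; this facet is non-degenerate because $F'$ corresponds to an essential facet of $\Env_{F_2}$, again via the extreme-ray duality. For small $\epsilon > 0$ the perturbation $p - \epsilon \ell_{F'}$ lies outside $\Env_{F_2}$, yet strictly satisfies the $F''$-constraints for $F'' \in M_{F_2}\setminus\{F'\}$ and, combined with the previous step, the $F'''$-constraints for $F''' \in M_{F_1}\setminus M_{F_2}$ are satisfied for all sufficiently large $n$. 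Hence the only constraint of $v_n + \Env_{F_1}$ that can fail at $p - \epsilon \ell_{F'}$ is the $F'$-constraint, giving $\langle \ell_{F'}, v_n\rangle > -\epsilon$ eventually. Combined with $\langle \ell_{F'}, v_n\rangle \leq 0$, this yields $\langle \ell_{F'}, v_n\rangle \to 0$.
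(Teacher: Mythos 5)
Your proposal is correct, but in the forward direction it takes a genuinely different route from the paper's. To show $\langle \ell_{F'},v_n\rangle \to -\infty$ for $F' \in M_{F_1}\setminus M_{F_2}$ and $\langle \ell_{F'},v_n\rangle \to 0$ for $F' \in M_{F_2}$, you appeal to polar/normal-cone duality: $\Env_F$ is the polar of the normal cone $N_F$, the $\ell_{F'}$ with $F' \in M_F$ are precisely the extreme rays of $N_F$, and $N_{F_2}$ is a face of $N_{F_1}$ when $F_1\subseteq F_2$. This gives you abstract witnesses — an $r \in \Env_{F_2}$ with $\langle \ell_{F'},r\rangle <0$ in the first case, and a point $p$ in the relative interior of the $F'$-facet of $\Env_{F_2}$ in the second. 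The paper instead builds an \emph{explicit} witness $u = \sum_{v\in\Ext(F')}(v - m_{F_1})$, where $m_{F_1}$ is the barycenter of $\Ext(F_1)$, and checks by hand that $\langle u,\ell_{F'}\rangle=0$ while $\langle u,\ell_{F''}\rangle>0$ for all other $F''\in M_{F_1}$; a single perturbed point $\alpha u - \epsilon\ell_{F'}$ then plays the role of your $p-\epsilon\ell_{F'}$. The two approaches establish the same inequalities, but yours outsources the existence and non-redundancy of the needed witnesses to standard polytope facts (which you invoke but do not verify), whereas the paper's construction is self-contained and verifiable from $\Env_F$'s definition alone. The discreteness remark in your backward direction (integrality of $\langle \ell_{F'},v_n\rangle$ on $\Z^d$ forces eventual equality, not just convergence, to $0$) is a small but genuine clarification that the paper leaves implicit.
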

\begin{proof}
Note that $F_1 \subseteq F_2$ implies $M_{F_2} \subseteq M_{F_1}$.
If $\langle \ell_{F'},v_n\rangle < -R$ for every $F' \in  M_{F_1} \setminus M_{F_2}$ and
$\langle \ell_{F'},v_n\rangle =0$  for every $(d-1)$-dimensional face $F'$ of $\conv(A)$ which contains $F_2$, then 
\[(v_n + \Env_{F_1}) \cap B_R(0) = \Env_{F_2} \cap B_R(0).\]
This proves that the conditions $F_1 \subseteq F_2$ together with \eqref{eq:ell_F_dot_prod_lim} imply that $v_n+ \Env_{F_1} \to \Env_{F_2}$ as $n \to \infty$.

Conversely, suppose that $v_n+ \Env_{F_1} \to \Env_{F_2}$ as
$n \to \infty$.  Since $0 \in \Env_{F_2}$, it follows that
$0 \in v_n + \Env_{F_1}$ and thus
$-v_n \in \Env_{F_1}$ for all sufficiently large $n$. This implies
that $\Env_{F_1} \subseteq v_n + \Env_{F_1}$ for all sufficiently
large $n$. Taking $n \to \infty$, we deduce that
$\Env_{F_1} \subseteq \Env_{F_2}$, which implies $F_1 \subseteq
F_2$. Let $F' \in M_{F_1}$. Define points
\begin{align*}
  m_{F_1}&:=\frac{1}{|\Ext(F_1)|} \sum_{w \in
  \Ext(F_1)} w \in F_1 \subseteq F',\\
  u&:=\sum_{v \in \Ext(F')}(v -m_{F_1}).
\end{align*}

Suppose $F' \in M_{F_2}$.  Let $\epsilon>0$.  We have
$\langle u, \ell_{F'} \rangle = 0$.  For
$F'' \in M_{F_1} \setminus \{F'\}$ we have
$\langle u, \ell_{F''} \rangle > 0$.  We thus can choose $\alpha>0$
large enough such that for all these $F''$ we have
$\langle \alpha u - \epsilon \ell_{F'}, \ell_{F''}\rangle >0$.  Since
$\langle \alpha u - \epsilon \ell_{F'},\ell_{F'}\rangle =
-\epsilon<0$, our point $\alpha u - \epsilon \ell_{F'}$ is not
contained in $\Env_{F_2}$, and hence also not in $v_n +\Env_{F_1}$ for
sufficiently large $n \in \N$.  As shown above, we may also assume
that $-v_n \in \Env_{F_1}$.  Therefore there must be $F'' \in M_{F_1}$
with
$\langle \alpha u - \epsilon \ell_{F'}-v_n, \ell_{F''}\rangle <0$.
Since $\langle -v_n, \ell_{F''} \rangle \geq 0$ this implies
$\langle \alpha u - \epsilon \ell_{F'}, \ell_{F''}\rangle <0$.  By our
choice of $\alpha$ this can only happen for $F''=F'$. But then
$\langle \alpha u
- \epsilon \ell_{F'}-v_n, \ell_{F'}\rangle<0$, thus
$-\epsilon = \langle \alpha u- \epsilon \ell_{F'},\ell_{F'}\rangle <\langle
v_n,\ell_{F'}\rangle \leq 0$. Hence
$\lim_{n \to \infty} \langle v_n,\ell_{F'}\rangle=0$.

Suppose
$F' \in M_{F_1} \setminus M_{F_2}$. For every $F'' \in M_{F_2}$ we
have
$\langle u,\ell_{F''}\rangle>0$. It follows that for every $R >0$ ,
there exists $Q>0$ such that $Qu-R \ell_{F'} \in v_n + \Env_{F_1}$
for all sufficiently large $n$. So there is $w_n \in \Env_{F_1}$
such that $v_n=Qu-R\ell_{F'}-w_n$. This implies that
\[\langle \ell_{F'},v_n \rangle= \langle \ell_{F'},Qu-R\ell_{F'}-w_n\rangle  = -R -\langle
  \ell_{F'}, w_n\rangle\leq -R \]
and thus
\[ \lim_{n \to \infty} \langle \ell_{F'},v_n\rangle  =
  -\infty,\]
completing the proof.
\end{proof}

\begin{lem}\label{lem:H_F_to_emptyset}
Let $F$ be a face of $\conv(A)$ and  let
$(v_n)_{n \in \N}$ be a sequence of points in $\Z^d$.
If 
\[ \lim_{n \to \infty} \max_{F' \in M_F}  \langle \ell_{F'},v_n\rangle =+\infty, \]
then $(v_n + H_F) \to \emptyset$ as $n \to \infty$.
\end{lem}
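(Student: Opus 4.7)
The plan is to exploit the fact that $H_F \subseteq \Env_F$ (which follows immediately from \Cref{lem:H_F_subset_E_F_A}, since $H_F \subseteq \conv(H_F) = \Env_F$), and then work entirely inside $\Env_F$, reducing a statement about horoballs to a linear-algebraic statement about half-spaces.

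Recall that convergence $v_n + H_F \to \emptyset$ in $\PP(\Z^d) \subseteq \{0,1\}^{\Z^d}$ simply means pointwise convergence of the indicator functions, so it suffices to fix an arbitrary $w \in \Z^d$ and verify that $w \notin v_n + H_F$ for all sufficiently large $n$. Since $H_F \subseteq \Env_F$, it is enough to check $w \notin v_n + \Env_F$, equivalently $w - v_n \notin \Env_F$.

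By the definition of $\Env_F$ in \eqref{eq:Env_exp}, $w - v_n \notin \Env_F$ iff there exists some $F' \in M_F$ with $\langle \ell_{F'}, w - v_n\rangle < 0$, that is,
\[
\langle \ell_{F'}, v_n \rangle > \langle \ell_{F'}, w \rangle.
\]
Now set $R_w := \max_{F' \in M_F} \langle \ell_{F'}, w \rangle$, which is a finite real number since $M_F$ is finite. By the hypothesis, $\max_{F' \in M_F} \langle \ell_{F'}, v_n \rangle \to +\infty$, so for all $n$ large enough this maximum exceeds $R_w$, which means there is some (possibly $n$-dependent) $F' \in M_F$ satisfying the displayed inequality. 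Hence $w - v_n \notin \Env_F$ for all sufficiently large $n$, and so $w \notin v_n + H_F$.

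There is no real obstacle here; the lemma is essentially a direct unpacking of the definitions once one observes that $H_F$ is contained in the intersection of closed half-spaces cut out by the $\ell_{F'}$, $F' \in M_F$. The only subtlety worth mentioning explicitly is that the particular $F'$ witnessing $w - v_n \notin \Env_F$ may depend on $n$, but this is harmless since $M_F$ is finite and we only need existence of such an $F'$ for each large $n$.
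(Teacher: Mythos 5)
Your proof is correct and follows essentially the same route as the paper: both arguments use $H_F \subseteq \conv(H_F) = \Env_F$ together with the half-space description of $\Env_F$, and show that $\max_{F' \in M_F} \langle \ell_{F'}, \cdot \rangle$ is bounded on any fixed compact set but unbounded along $(v_n)$. The only cosmetic difference is that you phrase convergence pointwise at a single $w \in \Z^d$ while the paper works with a ball $B_R(0)$ and a uniform bound $Q$, but these are the same thing under the product topology.
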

\begin{proof}
  For any $R >0$ there exists $Q >0$ such that
  $\max_{F' \in M_F} \langle \ell_{F'},v\rangle < Q$ for every
  $v \in B_R(0)$. It follows that whenever
  $\max_{F' \in M_F} \langle \ell_{F'},v_n\rangle \geq Q$ we have
  $(v_n + H_F) \cap B_R(0) = \emptyset$.  Thus the condition
\[ \lim_{n \to \infty} \max_{F' \in M_F}  \langle \ell_{F'},v_n\rangle =+\infty.\]
implies that  $(v_n + H_F) \to \emptyset$ as $n \to \infty$.
\end{proof}

\begin{lem}\label{lem:v_n_H_F_lims}
  Suppose $F_1,F_2$ are faces of $\conv(A)$ and that
  $(v_i)_{i \in \N}$ is a sequence of points in $\Z^d$.  For ease of
  notation we allow $F_2 = \conv(A)$ (considered as a
  ``$d$-dimensional face''), in which case we denote $H_{F_2}= \Z^d$.
  Then $(v_n+ H_{F_1}) \to H_{F_2}$ as $n \to \infty$ if and only if
  $\left( v_n+ \conv(H_{F_1}) \right)\to \conv(H_{F_2})$ as $n \to \infty$ and there
  exists $N \in \mathbb{N}$ so that $v_n \in \stab(H_{F_2})$ for all
  $n >N$.
\end{lem}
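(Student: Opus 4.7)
The plan is to prove the two implications separately, using \Cref{lem:conv_F_1_to_conv_F_2} together with a lattice coset argument in $\stab(H_{F_2})$ in both cases. For the forward direction, suppose $v_n + H_{F_1} \to H_{F_2}$. I first upgrade this to convex-hull convergence $v_n + \Env_{F_1} \to \Env_{F_2}$: any $u \in \Env_{F_2} = \conv(H_{F_2})$ is a convex combination of finitely many points of $H_{F_2}$ by Carath\'eodory's theorem, each of which lies in $v_n + H_{F_1}$ for large $n$ by pointwise convergence, so $u \in v_n + \Env_{F_1}$; conversely, \Cref{lem:conv_H_F_local} applied to the translated horoball $v_n + H_{F_1}$ writes any $u \in v_n + \Env_{F_1}$ as a convex combination of points of $v_n + H_{F_1}$ in a ball $B_R(u)$ of uniform radius, and for large $n$ these coincide with $H_{F_2} \cap B_R(u)$, forcing $u \in \Env_{F_2}$.

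Now \Cref{lem:conv_F_1_to_conv_F_2} yields $F_1 \subseteq F_2$, $\langle \ell_{F'}, v_n\rangle \to -\infty$ for $F' \in M_{F_1} \setminus M_{F_2}$, and $\langle \ell_{F'}, v_n\rangle \to 0$ for $F' \in M_{F_2}$. Since the $\ell_{F'}$ are positive multiples of primitive integer vectors and $v_n \in \Z^d$, the finite limits are attained eventually, so $v_n \in L_{F_2} \cap \Z^d$ for all large $n$. Then $(v_n + H_{F_1}) \cap L_{F_2} = v_n + (H_{F_1} \cap L_{F_2})$ is contained in the coset $v_n + \stab(H_{F_2})$, because $F_1 \subseteq F_2$ implies $H_{F_1} \subseteq H_{F_2}$ and \Cref{lem:H_F_stab} identifies $H_{F_2} \cap L_{F_2}$ with $\stab(H_{F_2})$. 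Since $0 \in \stab(H_{F_2})$, pointwise convergence forces $-v_n \in H_{F_1} \cap L_{F_2} \subseteq \stab(H_{F_2})$ eventually, so $v_n \in \stab(H_{F_2})$.

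For the reverse direction, \Cref{lem:conv_F_1_to_conv_F_2} again gives $F_1 \subseteq F_2$, hence $H_{F_1} \subseteq H_{F_2}$, and combined with $v_n + H_{F_2} = H_{F_2}$ this yields $v_n + H_{F_1} \subseteq H_{F_2}$ for large $n$. For the pointwise inclusion in the opposite direction, I use the decomposition $H_{F_2} = H_{F_1} + \stab(H_{F_2})$, obtained by rewriting each generator $a - w$ of $H_{F_2}$ (with $a \in A$, $w \in \Ext(F_2)$) as $(a - w_0) + (w_0 - w)$ for a fixed $w_0 \in \Ext(F_1) \subseteq \Ext(F_2)$. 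Given $p \in H_{F_2}$, write $p = q + s$ accordingly, with $q \in H_{F_1}$ and $s \in \stab(H_{F_2})$; then $p - v_n = q + (s - v_n)$, so it suffices to show $s - v_n \in H_{F_1}$ for large $n$. The element $s - v_n$ lies in $\stab(H_{F_2}) \subseteq L_{F_2}$ and satisfies $\langle \ell_{F'}, s - v_n\rangle \to +\infty$ for every $F' \in M_{F_1} \setminus M_{F_2}$. Applying \Cref{lem:Env_cap_Zd_contained_H_F} inside $\stab(H_{F_2}) \cong \Z^{\dim F_2}$, equipped with the positively generating set $(F_2 \cap A) - w_0$ and the face $F_1 - w_0$, identifies $H_{F_1} \cap L_{F_2}$ with the corresponding sub-horoball (whose envelope is $\Env_{F_1} \cap L_{F_2}$) and produces an auxiliary shift $w^{(1,2)} = N(w_1 - w_0)$ with $w_1 \in \Ext(F_1)$ such that $(s - v_n) - w^{(1,2)} \in H_{F_1} \cap L_{F_2}$ for all large $n$. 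Since $w^{(1,2)}$ is a nonnegative integer multiple of the generator $w_1 - w_0$ of $H_{F_1}$, it lies in $H_{F_1}$, so $s - v_n \in H_{F_1}$ by the semigroup closure $H_{F_1} + H_{F_1} \subseteq H_{F_1}$.

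The hardest step will be this final sub-problem application of \Cref{lem:Env_cap_Zd_contained_H_F}: it requires recognizing $H_{F_1} \cap L_{F_2}$ as the horoball for a lower-dimensional iterated Minkowski problem on the sub-lattice $\stab(H_{F_2})$, and re-running the Shapley--Folkman-based proof of that lemma in the sub-structure so that the auxiliary shift it produces is itself automatically an element of $H_{F_1}$.
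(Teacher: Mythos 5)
Your forward direction is essentially the paper's argument: both upgrade the hypothesis to $v_n + \conv(H_{F_1}) \to \conv(H_{F_2})$ via \Cref{lem:conv_H_F_local} (your Carath\'eodory step is an unnecessary detour --- the argument through \Cref{lem:conv_H_F_local} already yields both inclusions), then apply \Cref{lem:conv_F_1_to_conv_F_2} together with $-v_n \in H_{F_1}\cap L_{F_2}\subseteq \stab(H_{F_2})$ and \Cref{lem:H_F_stab}.

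The converse is a genuinely different, more explicit route, and it has a real gap exactly where you flagged it. You want a shift $w^{(1,2)}\in H_{F_1}$ with $\Env_{F_1}\cap\stab(H_{F_2})\subseteq w^{(1,2)}+(H_{F_1}\cap L_{F_2})$, proposing to get it from \Cref{lem:Env_cap_Zd_contained_H_F} applied in $\stab(H_{F_2})$ with generating set $(F_2\cap A)-w_0$. But $(F_2\cap A)-w_0$ is \emph{not} positively generating for $\stab(H_{F_2})$: since $w_0\in\Ext(F_1)\subseteq\Ext(F_2)$ is a vertex of $F_2$, the origin is a vertex of $\conv\bigl((F_2\cap A)-w_0\bigr)=F_2-w_0$ rather than a relative-interior point, so $\bigcup_n n\bigl((F_2\cap A)-w_0\bigr)$ is a proper corner cone of $\stab(H_{F_2})$, not the whole group. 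Worse, the containment you want is actually false: let $F_2$ be an edge of $\conv(A)$ with $(F_2\cap A)-w_0=\{0,2,3\}$ under the identification $\stab(H_{F_2})\cong\Z$. Then $\Env_{F_1}\cap\stab(H_{F_2})=\{0,1,2,\dots\}$ while $H_{F_1}\cap L_{F_2}=\{0,2,3,4,\dots\}$, and no $w^{(1,2)}\in H_{F_1}$ makes $\{0,1,2,\dots\}\subseteq w^{(1,2)}+\{0,2,3,\dots\}$: any $w^{(1,2)}\in H_{F_1}\cap L_{F_2}$ is nonnegative, so $1$ is never covered, and any $w^{(1,2)}\notin L_{F_2}$ shifts off the line entirely. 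What your argument actually needs is only that $s-v_n\in H_{F_1}$ once $\langle\ell_{F'},s-v_n\rangle$ is large for all $F'\in M_{F_1}\setminus M_{F_2}$; that weaker ``deep points'' statement is plausible, but it does not follow from your invocation of \Cref{lem:Env_cap_Zd_contained_H_F} and would require a new Frobenius-type sub-lemma. The paper avoids this arithmetic entirely: it passes to a convergent subsequence $v_{n_k}+H_{F_1}\to W$ by compactness, uses $v_n+H_{F_1}\subseteq H_{F_2}$ to get $W\subseteq H_{F_2}$, invokes \Cref{thm:ZD_horoballs_are_faces_of_convex_hull} so that $W$ is $\emptyset$, $\Z^d$, or a translate of some $H_F$, feeds $W$ back through the already-proved forward implication to pin down $\conv(W)=\conv(H_{F_2})$, and finally uses \Cref{lem:H_F_stab} to conclude $W=H_{F_2}$. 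I would switch to that route.
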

\begin{proof}

  Suppose $(v_n + H_{F_1}) \to H_{F_2}$.  Let $R>0$ be as in
  \Cref{lem:conv_H_F_local}.  Recall that $\Env_{F_1}= \conv(H_{F_1})$
  and $\Env_{F_2}= \conv(H_{F_2})$.  Let $u \in \R^n$. Since
  $(v_n + H_{F_1}) \to H_{F_2}$, it follows that there exists $N \in \N$ such
  that $H_{F_2} \cap B_R(u) =(v_n+H_{F_1}) \cap B_R(u)$ for all
  $n >N$. Thus, for all $n >N$,
  $\conv(H_{F_2} \cap B_R(u)) = \conv((v_n+H_{F_1}) \cap B_R(u))$. By
  \Cref{lem:conv_H_F_local},
  $u \in \conv(H_{F_2})$ iff $u \in \conv(H_{F_2} \cap
  B_R(u))=\conv((H_{F_1}+v_n) \cap B_R(u)) = v_n + \conv(H_{F_1} \cap
  B_R(u-v_n))$ and this is the case iff $u \in v_n + \conv(H_{F_1})$.
  Hence $(v_n + \conv(H_{F_1})) \to \conv(H_{F_2})$.

By \Cref{lem:conv_F_1_to_conv_F_2} this implies that $v_n$ is in the
linear span of $\{ (v-w) \setsep v,w \in \Ext(F_2)\}$ for sufficiently
large $n$ and that $F_1 \subseteq F_2$. Since $v_n
+ H_{F_1} \to H_{F_2}$, it follows that $0 \in v_n + H_{F_1}$ for
large $n$. So $-v_n \in H_{F_1}$. By \Cref{lem:H_F_stab}, the
intersection of  the linear span of $\{ (v-w) \setsep v,w \in
\Ext(F_2)\}$ with $H_{F_1}$ is contained in $\stab(H_{F_2})$. We conclude that there
exists $N \in \mathbb{N}$ so that $v_n \in \stab(H_{F_2})$ for all
$n >N$.

Conversely, suppose that $v_n+ \conv(H_{F_1}) \to \conv(H_{F_2})$ as
$n \to \infty$ and there exists $N \in \mathbb{N}$ so that
$v_n \in \stab(H_{F_2})$ for all $n >N$.   
 By compactness, it suffices
to show that for any converging subsequence
$(v_{n_k} + H_{F_1})_{k \in \N}$ , we have
$(v_{n_k}+ H_{F_1}) \to H_{F_2}$ as $k \to \infty$.  Let
$(v_{n_k} + H_{F_1})_{k \in \N}$ be a converging subsequence, so
$(v_{n_k} + H_{F_1}) \to W$ for some $W \subseteq \Z^d$. 
By \Cref{lem:conv_F_1_to_conv_F_2} 
 $v_n+ \conv(H_{F_1}) \to \conv(H_{F_2})$ implies $F_1 \subseteq F_2$.
Since $v_n \in  \stab(H_{F_2}) \subseteq H_{F_2}$,  and $H_{F_2}$ is a semigroup, it follows that
$v_n + H_{F_1} \subseteq H_{F_2}$ for all $n>N$.  This implies that $W \subseteq H_{F_2}$.
 By
\Cref{thm:ZD_horoballs_are_faces_of_convex_hull} the limit $W$ must
either be of the form $v+H_F$ for some face $F$ of $\conv (A)$, or
$\Z^d$ or $\emptyset$.  But the first part of the proof shows that
$\conv(W) = \conv(H_{F_2})$. It follows that $W = v + H_{F_2}$ for
some $v \in \Z^d$ in the linear span of
$\{ (u-w) \setsep u,w \in \Ext(F_2)\}$. 
We have already concluded that $v+ H_{F_2}=W \subseteq H_{F_2}$.
Using \Cref{lem:H_F_stab}, the conditions $v+ H_{F_2} \subseteq  H_{F_2}$ and $v + \conv(H_{F_2}) = \conv(H_{F_2})$ together imply that 
 $v \in \stab(H_{F_2})$
 and
$(v_{n_k} + H_{F_1}) \to H_{F_2}$.
\end{proof}

\begin{lem}\label{lem:H_F_orbit_limits}
Let $F$ be a face of $\conv(A)$ and let $(v_i)_{i \in \N}$ be a sequence of points in $\Z^d$.
Then:
\begin{enumerate}[(1)]
\item Suppose $\lim_{n \to \infty} \max_{F' \in M_F}  \langle
  \ell_{F'},v_n\rangle =+\infty$.
  Then
  $(v_n + H_F) \to \emptyset$ as $n \to \infty$.
\item Suppose that
  $\lim_{n \to \infty} \langle \ell_{F'},v_n\rangle =-\infty$ for
  every $(d-1)$-dimensional face $F'$ of $\conv(A)$ containing
  $F$. Then $(v_n + H_F) \to \Z^d$ as $n \to \infty$.
\item Otherwise, suppose that the limit
  $\lim_{n \to \infty} \langle \ell_{F'},v_n\rangle \in
  [-\infty,+\infty)$ exists for every $F' \in M_F$ and let $F''$
  denote the face of $\conv(A)$ which is the intersection of all
  $F' \in M_F$ such that
  $\lim_{n \to \infty} \langle \ell_{F'},v_n\rangle$ is
  finite. Furthermore, suppose all but finitely many elements of
  $(v_n)_{n \in \N}$ belong to a fixed coset of $\stab(H_{F''})$. Then
  there exists $v \in \Z^d$ so that $v_n + H_F \to v+H_{F''}$ as
  $n \to \infty$.
\item In all other cases, the sequence $v_n+H_F$ does not converge.
\end{enumerate}
\end{lem}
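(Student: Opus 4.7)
The plan is to treat the four cases in order, leaning on the structural lemmas already established. Case (1) is precisely \Cref{lem:H_F_to_emptyset}. For case (2), the assumption $\langle \ell_{F'}, v_n\rangle \to -\infty$ for every $F' \in M_F$ combined with the description $\Env_F = \bigcap_{F' \in M_F}\{u : \langle \ell_{F'}, u\rangle \geq 0\}$ yields $v_n + \Env_F \to \R^d$. Using \Cref{lem:Env_cap_Zd_contained_H_F} to pick $w_0 \in \Z^d$ with $\Env_F \cap \Z^d \subseteq w_0 + H_F$, for any finite $K \subseteq \Z^d$ we then have $K + w_0 - v_n \subseteq \Env_F \cap \Z^d \subseteq w_0 + H_F$ eventually, hence $K \subseteq v_n + H_F$, so $v_n + H_F \to \Z^d$.

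For case (3), I would pick a representative $v \in \Z^d$ of the eventually-fixed coset of $\stab(H_{F''})$ containing $(v_n)$ and set $w_n := v_n - v$. Since $\stab(H_{F''}) \subseteq L_{F''}$ by \Cref{lem:H_F_stab}, and $L_{F''}$ is orthogonal to $\ell_{F'}$ precisely when $F' \in M_{F''}$, the hypothesis translates to $\langle \ell_{F'}, w_n\rangle \to 0$ for $F' \in M_{F''}$ and $\langle \ell_{F'}, w_n\rangle \to -\infty$ for $F' \in M_F \setminus M_{F''}$. \Cref{lem:conv_F_1_to_conv_F_2} then gives $w_n + \conv(H_F) \to \conv(H_{F''})$, and together with $w_n \in \stab(H_{F''})$ eventually, \Cref{lem:v_n_H_F_lims} yields $w_n + H_F \to H_{F''}$, i.e., $v_n + H_F \to v + H_{F''}$.

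Case (4) is the main obstacle and I would prove it by contrapositive. Suppose $v_n + H_F$ converges to some $W$. Since each $v_n + H_F$ is an $A$-horoball and $\overline{\Hor(\Z^d, A)} = \Hor(\Z^d, A) \cup \{\emptyset, \Z^d\}$, \Cref{thm:ZD_horoballs_are_faces_of_convex_hull} forces $W \in \{\emptyset, \Z^d\}$ or $W = v + H_{F''}$ for some face $F''$ and $v \in \Z^d$; I must verify that each option forces respectively the hypothesis of (1), (2), or (3). For $W = \emptyset$, if $\max_{F' \in M_F}\langle \ell_{F'}, v_n\rangle$ stays bounded above along a subsequence, I would pick a lattice point $p \in \interior(\Env_F) \cap \Z^d$ (which exists because $\Env_F$ is the full-dimensional, rational tangent cone to $\conv(A)$ at $F$) and observe that for $k$ large enough the fixed point $kp - w_0$ lies in $v_n + H_F$ for every $n$ in the subsequence via \Cref{lem:Env_cap_Zd_contained_H_F}, contradicting convergence to $\emptyset$. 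For $W = \Z^d$, if some $\langle \ell_{F'}, v_n\rangle$ stays bounded below along a subsequence, then $v_n + H_F \subseteq v_n + \Env_F$ is contained in the proper half-space $\{u : \langle \ell_{F'}, u\rangle \geq -C\}$, ruling out convergence to $\Z^d$. Finally, for $W = v + H_{F''}$, applying \Cref{lem:v_n_H_F_lims} to $w_n := v_n - v$ yields $w_n \in \stab(H_{F''})$ eventually (so $(v_n)$ lies in the coset $v + \stab(H_{F''})$) and $w_n + \conv(H_F) \to \conv(H_{F''})$, from which \Cref{lem:conv_F_1_to_conv_F_2} reads off that $\langle \ell_{F'}, v_n\rangle$ converges in $[-\infty,+\infty)$ for each $F' \in M_F$, with finite limit exactly for $F' \in M_{F''}$; using that every face of a polytope is the intersection of its containing facets, $F''$ is then the intersection of the $F' \in M_F$ with finite limit, matching (3). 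The hard part I anticipate is producing the witness point $kp - w_0$ in the $W = \emptyset$ subcase, since this requires the non-trivial geometric fact that the tangent cone $\Env_F$ has non-empty interior.
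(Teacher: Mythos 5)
Your proof is correct and follows essentially the same route as the paper: case (1) from \Cref{lem:H_F_to_emptyset}, cases (2) and (3) from \Cref{lem:conv_F_1_to_conv_F_2} and \Cref{lem:v_n_H_F_lims}, and case (4) by classifying the possible limits via \Cref{thm:ZD_horoballs_are_faces_of_convex_hull} and running the previous lemmas in reverse, the only cosmetic difference being that you handle (2) directly through \Cref{lem:Env_cap_Zd_contained_H_F} while the paper folds it into (3) with the convention $F''=\conv(A)$. Your extra detail in the $W=\emptyset$ subcase of (4) is a genuine supplement (the paper elides why convergence to $\emptyset$ forces the hypothesis of (1)), and the ``hard part'' you flag is in fact easy: since $\conv(A)$ is full-dimensional, the vector from any point of $F$ to the centroid of $\conv(A)$ pairs strictly positively with $\ell_{F'}$ for every $F' \in M_F$, so $\Env_F$ is a full-dimensional rational polyhedral cone and hence contains a lattice point in its interior.
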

\begin{proof}
\begin{enumerate}
\item[(1)]  If $\lim_{n \to \infty} \max_{F' \in M_F}  \langle \ell_{F'},v_n\rangle =+\infty$, then $(v_n + H_F) \to \emptyset$ by \Cref{lem:H_F_to_emptyset}.
\item[(2)+(3)] 
Note that $(2)$ is essentially a particular case of $(3)$ if we agree that $F''=\conv(A)$
 when 
  $\lim_{n \to \infty} \langle \ell_{F'},v_n\rangle=-\infty$ for all
  $F' \in M_F$.  By \Cref{lem:conv_F_1_to_conv_F_2}, $v_n + \conv(H_F) \to v + \conv(H_{F''})$. Using the fact that $v_n -v \in \stab(H_{F''})$ for all $n \ge N$,  by \Cref{lem:v_n_H_F_lims}, it follows that $v_n + H_F \to v+H_{F''}$.

\item[(4)]  Let $(v_i)_{i \in \N}$ be a sequence of points in $\Z^d$,
  and suppose that the sequence $(v_n +H_F)_{n \in \N}$ converges.
  By compactness this sequence converges if and only if all of its
  converging subsequences have the same limit.  Then by
  \Cref{thm:ZD_horoballs_are_faces_of_convex_hull} the possible limit points are
  $\emptyset$, $\Z^d$ and $v+H_{F'}$ for faces $F'$ of $\conv(A)$ and
  $v \in \Z^d$. If
  $\lim_{n \to \infty} \max_{F' \in M_F} \langle \ell_{F'},v_n\rangle
  =+\infty$, we are in case $(1)$. Otherwise we know from
  \Cref{lem:v_n_H_F_lims} that $(v_n + \conv(H_{F_1})) \to
  (v + \conv(H_{F_2}))$ and that $v_n \in (v+\stab(H_{F_2}))$ for all
  sufficiently large $n \in \N$. By \Cref{lem:conv_F_1_to_conv_F_2} we
  can conclude that if $(v_n + H_F)_{n \in \N}$
converges, we are either in case $(1)$, case $(2)$ or case $(3)$. \qedhere
\end{enumerate}
\end{proof}

For a topological space $X$ let $X'$ denote the set of accumulation points of $X$.
The \emph{$k$-th Cantor-Bendixson derivative} is inductively defined as $X^{(0)}=X$ and 
\[ X^{(k+1)} = (X^{(k)})'.\]
Later in \Cref{sec:eventual-image-zd} we will also need the
$\alpha$-th Cantor-Bendixson derivative for an ordinal number
$\alpha$. For successor ordinals the same definition as above holds
and for limit ordinals $\alpha$ it is defined as
\[ X^{(\alpha)} = \bigcap_{\beta < \alpha} X^{(\beta)}.\]
A topological space $X$ has \emph{Cantor–Bendixson rank $\alpha$} iff
$\alpha$ is the smallest ordinal with $X^{(\alpha)} = X^{(\alpha+1)}$.

Using Theorem \ref{thm:ZD_horoballs_are_faces_of_convex_hull} and  \Cref{lem:H_F_orbit_limits} we can summarize the topological structure of $\Hor(\Z^d,A)$ as follows:
\begin{thm}\label{thm:Hor_topological_structure}
	Let $A \subseteq \Z^d$ be a finite positively generating set.
	Then the closure of $\Hor(\Z^d,A)$ in $\PP(\Z^d)$ is a countable compact subset of $\PP(\Z^d)$ having Cantor–Bendixson rank $d+1$.
	Furthermore,
	$$\overline{\Hor(\Z^d,A)}^{(d)}= \left\{\emptyset, \Z^d \right\},$$
	 and for $0 \le k < d$ the isolated points of
	$\overline{\Hor(\Z^d,A)}^{(k)}$ are precisely \[\{v + H_F\setsep F
   \text{ is a  $k$-dimensional face of  $\conv(A)$ and } v \in \Z^d\}.\]
\end{thm}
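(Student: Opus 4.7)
The strategy is induction on $k$ to prove that
\[
\overline{\Hor(\Z^d,A)}^{(k)} = X_k := \{v + H_F \setsep F \text{ face of } \conv(A),\ \dim F \geq k,\ v \in \Z^d\} \cup \{\emptyset, \Z^d\}
\]
for $0 \leq k \leq d$. This directly yields both the isolated points claim and the description $\overline{\Hor(\Z^d,A)}^{(d)} = \{\emptyset, \Z^d\}$; as the latter is a two-point discrete set, $\overline{\Hor(\Z^d,A)}^{(d+1)} = \emptyset$, giving Cantor--Bendixson rank exactly $d+1$. Countability is immediate from \Cref{thm:ZD_horoballs_are_faces_of_convex_hull}: there are finitely many faces $F$, and for each $F$ the distinct translates $v + H_F$ are indexed by $\Z^d / \stab(H_F)$, which is countable. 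Compactness is automatic since $\PP(\Z^d)$ is compact.

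The base case $k=0$ follows from \Cref{thm:ZD_horoballs_are_faces_of_convex_hull} together with parts $(1)$ and $(2)$ of \Cref{lem:H_F_orbit_limits}, which exhibit $\emptyset$ and $\Z^d$ as limits of horoballs. For the inductive step, it suffices to show that the isolated points of $X_k$ are precisely $\{v + H_F \setsep \dim F = k\}$. Suppose first $\dim F = k$ and that a sequence of distinct points of $X_k$ converges to $v + H_F$. Excluding terms equal to $\emptyset$ or $\Z^d$ (which cannot contribute to such a convergence) and extracting a subsequence with constant face, the sequence has the form $v_n + H_{F'}$ for a fixed face $F'$ of dimension at least $k$. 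Applying \Cref{lem:H_F_orbit_limits}, the convergence places us in case $(3)$ with intersection face equal to $F$, forcing $F' \subseteq F$; since $\dim F' \geq k = \dim F$ this gives $F' = F$, and the coset condition yields $v_n - v \in \stab(H_F)$ eventually, so $v_n + H_{F'} = v + H_F$ for large $n$, contradicting distinctness.

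For the converse, suppose $\dim F > k$. Choose any $k$-dimensional face $F' \subsetneq F$ of $\conv(A)$ together with a vertex $q \in \Ext(F') \subseteq \Ext(F)$, and let $p := \frac{1}{|\Ext(F)|}\sum_{w \in \Ext(F)} w$, which lies in the relative interior of $F$. Then $u := |\Ext(F)|(q - p) \in L_F \cap \Z^d$ satisfies $\langle \ell_{F''}, u\rangle = 0$ for every $F'' \in M_F$ (since $p, q \in F \subseteq F''$ and $\ell_{F''}$ is constant on $F''$) and $\langle \ell_{F''}, u\rangle < 0$ for every $F'' \in M_{F'} \setminus M_F$ (since $q \in F''$ but $p$ lies in the strict positive side of $F''$ in $\conv(A)$). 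By \Cref{lem:H_F_stab}, $\stab(H_F)$ has finite index in $L_F \cap \Z^d$, so some positive integer multiple $u_0$ of $u$ lies in $\stab(H_F)$ while preserving the sign conditions. Setting $v_n := v + n u_0$ and noting $v_n - v = n u_0 \in \stab(H_F)$, the hypotheses of \Cref{lem:H_F_orbit_limits}$(3)$ are fulfilled and $v_n + H_{F'} \to v + H_F$, so $v + H_F$ is not isolated in $X_k$.

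To close the induction, one verifies that $\emptyset$ and $\Z^d$ remain non-isolated in each $X_k$ for $k < d$: for any face $F$ of dimension at least $k$ (which exists since $k < d$), parts $(1)$ and $(2)$ of \Cref{lem:H_F_orbit_limits} produce translates $v_n + H_F \to \emptyset$ and $v_n + H_F \to \Z^d$ respectively, and these translates lie in $X_k$. The main technical obstacle is the explicit construction of the approximating sequence in the second half of the inductive step, which requires identifying the correct direction $u \in L_F$ from the polyhedral geometry of $F$ relative to its sub-face $F'$ and then coordinating the strict sign conditions with the integer lattice structure on $\stab(H_F)$ provided by \Cref{lem:H_F_stab}.
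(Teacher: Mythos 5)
Your proof is correct and takes the same essential route as the paper's (which states the argument in two terse sentences): both reduce the problem to the limit structure characterized by \Cref{lem:H_F_orbit_limits}, namely that $H_{F'}$ arises as a limit of $\Z^d$-translates of $H_F$ exactly when $F \subseteq F'$. Your version carries out the induction on the Cantor--Bendixson derivative explicitly and fills in the details the paper leaves implicit -- in particular the explicit vector $u = \sum_{w \in \Ext(F)}(q-w) \in L_F \cap \Z^d$ with the required sign conditions against $\ell_{F''}$, the passage to a multiple $u_0 \in \stab(H_F)$ via the finite-index statement in \Cref{lem:H_F_stab}, and the verification that in the isolation direction the coset condition of case (3) forces the sequence to be eventually constant.
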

\begin{proof}
  Let $F,F'$ be  faces of $\conv(A)$. 
By \Cref{lem:H_F_orbit_limits}, the  horoball $H_{F'}$ is a limit point of
the orbit of $H_F$ under translations if
  and only if $F \subseteq F'$.
This show that $v+H_F$ is an isolated point of $\overline{\Hor(\Z^d,A)}^{(k)}$  if and only if $F$ is a $k$-dimensional face of  $\conv(A)$.
\end{proof}

\begin{cor}\label{cor:Hor_Z_d_top_invariant}
  Suppose that $d_1,d_2\in \N$, $A_i \subseteq \Z^{d_i}$ are finite
  positively generating sets for $i=1,2$. Then
  $\overline{\Hor(\Z^{d_1},A_1)}$ is homeomorphic to
  $\overline{\Hor(\Z^{d_2},A_2)}$ if and only if $d_1=d_2$.
 \end{cor}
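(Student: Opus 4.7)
The plan is to combine the Cantor--Bendixson analysis supplied by \Cref{thm:Hor_topological_structure} with the classical Mazurkiewicz--Sierpi\'nski classification of countable compact metrizable spaces.

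First I would verify that $\overline{\Hor(\Z^d,A)}$ is a countable compact metrizable space. Compactness and metrizability are inherited from $\PP(\Z^d) = \{0,1\}^{\Z^d}$, and countability follows immediately from \Cref{thm:ZD_horoballs_are_faces_of_convex_hull}: every horoball is a translate $v + H_F$ with $v \in \Z^d$ and $F$ one of the finitely many faces of $\conv(A)$, so $\Hor(\Z^d,A)$ is countable, and the closure only adds the two points $\emptyset$ and $\Z^d$.

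For the ``only if'' direction, I would use that Cantor--Bendixson rank is a topological invariant. By \Cref{thm:Hor_topological_structure} the space $X := \overline{\Hor(\Z^d,A)}$ satisfies $X^{(d)} = \{\emptyset, \Z^d\}$, which is a two-point discrete set, hence $X^{(d+1)} = \emptyset$. So the Cantor--Bendixson rank equals $d+1$. If $\overline{\Hor(\Z^{d_1},A_1)}$ and $\overline{\Hor(\Z^{d_2},A_2)}$ were homeomorphic, their ranks would agree and we would get $d_1 = d_2$.

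For the ``if'' direction, I would invoke Mazurkiewicz--Sierpi\'nski: every nonempty countable compact metrizable space is homeomorphic to the ordinal interval $[1,\omega^\alpha \cdot n]$ with its order topology, and the pair $(\alpha,n)$ is determined by the Cantor--Bendixson rank $\alpha+1$ and the cardinality $n = |X^{(\alpha)}|$. In our situation \Cref{thm:Hor_topological_structure} yields $\alpha = d$ and $n = 2$. So whenever $d_1 = d_2$ both spaces are homeomorphic to $[1,\omega^{d_1} \cdot 2]$, hence to one another.

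The main issue is merely locating the right form of the classical classification theorem; all the geometric content is already packaged inside \Cref{thm:Hor_topological_structure}. If one wished to avoid citing Mazurkiewicz--Sierpi\'nski, the alternative is to construct a homeomorphism by induction on $d$, inductively matching the isolated points of successive Cantor--Bendixson derivatives via a back-and-forth argument --- but this route essentially reproduces the proof of the classification in our specific setting.
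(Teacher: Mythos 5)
Your proof is correct and follows essentially the same route as the paper: establish via \Cref{thm:Hor_topological_structure} that $\overline{\Hor(\Z^d,A)}$ is a countable compact metrizable space with $X^{(d)}$ a two-point set (hence Cantor--Bendixson rank $d+1$), and then invoke the Mazurkiewicz--Sierpi\'nski classification to conclude that all such spaces with the same $d$ are homeomorphic to the ordinal $\omega^d\cdot 2 + 1$. The only cosmetic difference is that you spell out the ``only if'' direction (Cantor--Bendixson rank is a topological invariant) which the paper leaves implicit, and you cite \Cref{thm:Hor_topological_structure} directly rather than the underlying \Cref{lem:H_F_orbit_limits}.
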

 \begin{proof}
 \Cref{lem:H_F_orbit_limits} shows that for any finite positively generating set $A$ of $\Z^d$, if $X_A= \overline{\Hor(\Z^{d},A)}$ then $X_A^{(d)}=\{\emptyset,\Z^d\}$ consists of $2$ points.
It follows from a theorem of Mazurkiewicz and  Sierpi{\'n}ski \cite{mazurkiewicz1920contribution} that any
such countable compact metrizable topological space of
Cantor-Bendixson rank $d+1$ is homeomorphic to the ordinal $2\omega^d +1$, with the order topology.
 \end{proof}

\begin{prop}\label{prop:conjugacy_of_Hor_ZD_A}
  Suppose $A_1,A_2 \subseteq \Z^{d}$ are finite positively generating
  sets.  Then the $\Z^d$ actions by translations on
  $\overline{\Hor(\Z^{d},A_1)}$ and $\overline{\Hor(\Z^{d},A_2)}$ are
  topologically conjugate if and only if there is a bijection $\Phi$
  between the faces  of $\conv(A_1)$ and the faces of $\conv(A_2)$
  such that for every face $F$ of $\conv(A_1)$, the following two conditions
  hold:
\begin{enumerate}
\item
  \[ \sum_{u,v \in F \cap A_1} \Z (u-v) = \sum_{u',v' \in \Phi(F)
      \cap A_2} \Z (u' - v').\] In other words, the group generated by
  differences of elements in $F \cap A_1$ is equal to the group
  generated by differences of elements in $\Phi(F) \cap A_2$.
\item
\[\Env_{F,A_1}=\Env_{\Phi(F),A_2}.\]
\end{enumerate}
\end{prop}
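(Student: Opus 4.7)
For the ``if'' direction, the plan is to define $\Psi:\overline{\Hor(\Z^d,A_1)} \to \overline{\Hor(\Z^d,A_2)}$ by $\Psi(\emptyset) := \emptyset$, $\Psi(\Z^d) := \Z^d$, and $\Psi(v + H_F) := v + H_{\Phi(F)}$ for every face $F$ of $\conv(A_1)$ and every $v \in \Z^d$. Condition~(1) together with \Cref{lem:H_F_stab} identifies $\stab(H_F) = \stab(H_{\Phi(F)})$, making $\Psi$ a well-defined $\Z^d$-equivariant bijection (using \Cref{thm:ZD_horoballs_are_faces_of_convex_hull} to ensure every horoball is hit exactly once). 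Condition~(2), i.e.\ $\Env_F = \Env_{\Phi(F)}$, forces the facet structures of these polyhedral cones to coincide, so $F' \mapsto \Phi(F')$ restricts to a bijection $M_F \to M_{\Phi(F)}$ with matching normals $\ell_{F'} = \ell_{\Phi(F')}$, and $\Phi$ automatically preserves the face containment order. Continuity of $\Psi$ then follows from \Cref{lem:H_F_orbit_limits}: after passing to a subsequence with fixed face type $F$, the limit of $v_n + H_F$ is determined by the asymptotics of $\langle \ell_{F'}, v_n\rangle$ for $F' \in M_F$ together with the coset of $v_n$ modulo the stabilizer of the limit face, and both data transfer verbatim to $v_n + H_{\Phi(F)}$ by conditions~(1) and~(2).

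For the ``only if'' direction, suppose $\Psi$ is a $\Z^d$-equivariant homeomorphism; we first treat the case $\Psi(\emptyset) = \emptyset$, $\Psi(\Z^d) = \Z^d$ (the swap case is addressed below). By \Cref{thm:Hor_topological_structure}, $\Psi$ preserves Cantor-Bendixson rank, so for each $k$-face $F$ of $\conv(A_1)$ the image $\Psi(H_F)$ equals $v_F + H_{\Phi(F)}$ for a unique $k$-face $\Phi(F)$ of $\conv(A_2)$ and some $v_F \in \Z^d$. $\Z^d$-equivariance makes $\Phi$ independent of the orbit representative, and invertibility of $\Psi$ makes $\Phi$ a bijection on faces. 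Condition~(1) is then immediate: the $\Z^d$-equivariant bijection between the orbits of $H_F$ and $\Psi(H_F)$ forces $\stab(H_F) = \stab(H_{\Phi(F)})$, which is precisely the required equality of groups generated by differences in view of \Cref{lem:H_F_stab}. The orbit-closure description of \Cref{lem:H_F_orbit_limits} says $v + H_{F'}$ lies in the closure of the $\Z^d$-orbit of $H_F$ iff $F \subseteq F'$, so $\Phi$ preserves the face lattice. For condition~(2), the plan is to first treat $(d-1)$-faces $F$: \Cref{lem:H_F_orbit_limits} specializes to $v_n + H_F \to \emptyset$ iff $\langle \ell_F, v_n\rangle \to +\infty$, and similarly for $\Z^d$; the hypothesis $\Psi(\emptyset)=\emptyset$ matches these characterizations for $\ell_F$ and $\ell_{\Phi(F)}$, and since both are unit vectors orthogonal to the common lineality space $L_F = L_{\Phi(F)}$ (a consequence of condition~(1)), they must be equal. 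For a general face $F$, face-lattice preservation gives $M_{\Phi(F)} = \{\Phi(F') : F' \in M_F\}$, and then~\eqref{eq:Env_exp} yields $\Env_F = \Env_{\Phi(F)}$ directly.

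The main subtlety is the possibility that a $\Z^d$-equivariant homeomorphism $\Psi$ swaps the two fixed points $\emptyset$ and $\Z^d$. In that ``swap case'' the naive $\Phi$ derived from $\Psi$ satisfies $\Env_{\Phi(F)} = -\Env_F$ rather than $\Env_F = \Env_{\Phi(F)}$, so it does not furnish the bijection required by condition~(2). The plan to resolve this is to exploit the asymmetry of $\overline{\Hor(\Z^d,A_i)}$ near $\emptyset$ versus near $\Z^d$ revealed by \Cref{lem:H_F_orbit_limits}: convergence of a vertex horoball orbit $v_n+H_F$ to $\emptyset$ requires just one of the $\langle \ell_{F'},v_n\rangle$ with $F'\in M_F$ to tend to $+\infty$, whereas convergence to $\Z^d$ requires all of them to tend to $-\infty$. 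This genuine local asymmetry of the space is the main obstacle: one must either show that the swap case forces both $\conv(A_1)$ and $\conv(A_2)$ to have centrally-symmetric face lattices (in which case precomposing $\Psi$ with a reflection produces a non-swapping homeomorphism and reduces to the case above), or directly construct a bijection $\Phi$ matching faces with identical (rather than opposite) envelopes whenever the swap case does occur.
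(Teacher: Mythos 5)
Your ``if'' direction and the non-swap half of the ``only if'' direction track the paper's own proof almost verbatim: define $\phi(v+H_F)=v+H_{\Phi(F)}$, use \Cref{lem:H_F_stab} and \Cref{thm:ZD_horoballs_are_faces_of_convex_hull} for well-definedness and bijectivity, then invoke \Cref{lem:H_F_orbit_limits} for continuity in both directions; in the forward direction the Cantor--Bendixson ranks from \Cref{thm:Hor_topological_structure} match up $k$-faces, the orbit-closure description gives face-lattice preservation, stabilizers give condition~(1), and the convergence-to-$\emptyset$ criterion gives condition~(2). Your additional detour through $(d-1)$-faces first is cosmetic.

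The genuinely interesting part of your write-up is the swap case, and here you have identified a real gap that the paper's own proof glosses over: the argument deriving $\conv(H_F)=\conv(H_{\Phi(F)})$ from \Cref{lem:H_F_to_emptyset} silently assumes $\phi(\emptyset)=\emptyset$. You have also correctly identified the relevant structural feature -- the asymmetry between the convergence criteria to $\emptyset$ (one $\langle \ell_{F'},v_n\rangle\to+\infty$ suffices) and to $\Z^d$ (all must tend to $-\infty$). However, you then treat this asymmetry as an \emph{obstacle} and sketch two unresolved fall-back plans (central symmetry of the face lattice, or an ad hoc $\Phi$), neither of which you carry out. This is where the proposal is incomplete.

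In fact, the asymmetry you found is not an obstacle but the resolution: it \emph{rules out} the swap for $d\ge 2$. Briefly: if $\phi(\emptyset)=\Z^d$, then applying the equivalence of convergence criteria to a facet $F$ (where $|M_F|=1$, so max and min coincide) forces $\ell_{\Phi(F)}=-\ell_F$. Now take a vertex $F$ of $\conv(A_1)$; since $\conv(A_1)$ is $d$-dimensional with $d\ge 2$, we have $|M_F|\ge 2$. The swap would then require ``$\max_{F'\in M_F}\langle\ell_{F'},v_n\rangle\to+\infty$ iff $\min_{F'\in M_F}\langle\ell_{F'},v_n\rangle\to+\infty$'' for all integer sequences $v_n$, which fails whenever $M_F$ contains at least two distinct normals (take $v_n=nw$ with $\langle\ell_{F_0'},w\rangle>0$ and $\langle\ell_{F_1'},w\rangle\le 0$). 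So no $\Z^d$-equivariant homeomorphism can swap the fixed points for $d\ge2$. For $d=1$ the swap genuinely does occur (e.g.\ $(-\infty,n]\leftrightarrow[n,\infty)$ for $A=\{-1,0,1\}$), but there the proposition is checked by hand: the only proper faces are the two endpoints, condition~(1) is vacuous, and the ``identity'' matching of left endpoint to left endpoint and right to right always satisfies condition~(2). Fold that short argument in and your proof is complete; as written, the swap case is left open.
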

\begin{proof}
  By \Cref{thm:Hor_topological_structure}, the orbits of the $\Z^d$
  action on $\overline{\Hor(\Z^d,A_i)}$ are the two fixed points
  $\emptyset, \Z^d$ together with the orbits of $H_F$ where $F$ runs
  over the faces of $\conv(A_i)$.  
Suppose 
$\phi:\overline{\Hor(\Z^{d},A_1)} \to \overline{\Hor(\Z^{d},A_2)}$ is
a topological conjugacy of the $\Z^d$ actions.  By
\Cref{thm:Hor_topological_structure}, the $\Z^d$-orbits of $\Hor(\Z^{d},A_i)$
which are isolated points in  $\overline{\Hor(\Z^{d},A)}^{(k)}$  (with $0 \le k <d$) are in bijection with
the $k$-dimensional faces of $\conv(A_i)$.  Thus, for each
$k$-dimensional face $F$ of $\conv(A_1)$ there exist
$v_{F} \in \Z^d$ and a $k$-dimensional face $\Phi(F)$ of $\conv(F_2)$
such that $\phi(H_F) = v_F + H_{\Phi(F)}$.  Clearly $\Phi$ is a
bijection between the faces of $\conv(A_1)$ and the faces of
$\conv(A_2)$.  Furthermore, by \Cref{lem:H_F_orbit_limits} for faces
$F,F'$ of $\conv(A_1)$, $H_{F'}$ is an accumulation point of the orbit of $H_F$
if and only if $F \subseteq F'$. Thus the bijection $\Phi$ respects
the incidence relations between the faces of the polytopes $\conv(A_1)$ and
$\conv(A_2)$. In other words, the polytopes $\conv(A_1)$ and
$\conv(A_2)$ are \emph{combinatorially isomorphic}.  Since $\phi$ is a
topological conjugacy, we must have $\stab(H_F)= \stab(\phi(H_F))$.
By \Cref{lem:H_F_stab}, the stabilizer of $H_F$ is equal to the group
generated by differences of elements in $F \cap A_1$.  This shows that
the group generated by differences of elements in $F \cap A_1$ is
equal to the group generated by differences of elements in
$\Phi(F) \cap A_2$.  By \Cref{lem:H_F_to_emptyset},
$v_n + H_F \to \emptyset$ if and only if the distance between
$-v_n$ and $\conv(H_F)$ tends to $\infty$. This shows that
$\conv(H_F)=\conv(H_{\Phi(F)})$. By \Cref{lem:H_F_subset_E_F_A} this
implies that $\Env_{F,A_1}=\Env_{\Phi(F),A_2}$.

Conversely, suppose that there is a bijection $\Phi$ between the faces
$F$ of $\conv(A_1)$ and the faces of $\conv(A_2)$ as above.  Define
$\phi:\overline{\Hor(\Z^{d},A_1)} \to \overline{\Hor(\Z^{d},A_2)}$ by
$\phi(\emptyset)=\emptyset$, $\phi(\Z^d)=\Z^d$ and
$\phi(v+H_F) = v + H_{\Phi(F)}$ for $v \in \Z^d$ and $F$ a face of
$\conv(A_1)$.  By the assumption that the group generated by
differences of elements in $F \cap A_1$ is equal to the group
generated by differences of elements in $\Phi(F) \cap A_2$ and using
\Cref{lem:H_F_stab}, $v_1 + H_F = v_2 + H_F$ if and only if
$v_1 +H_{\Phi(F)} = v_2 + H_{\Phi(F)}$ so the map $\phi$ is well
defined and it is a bijection (here we also use
\Cref{thm:Hor_topological_structure}). It is clear that the map $\phi$
is equivariant. Combining \Cref{lem:H_F_orbit_limits} with the
assumption that $\Env_{F,A_1}=\Env_{\Phi(F),A_2}$ for every face $F$ of
$\conv(A_1)$, it follows that $(v_n + H_F) \to (v + H_{F'})$ if and only
if $(v_n + H_{\Phi(F)}) \to (v+H_{\Phi(F')})$. The analogous
statements for $\Z^d$ and $\emptyset$ hold as well. This shows that $\phi$
is a homeomorphism.
\end{proof}

\begin{figure}
  \begin{center}
    \includegraphics[scale=5]{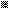}
    \hspace{1cm}
  \includegraphics[scale=5]{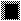}
  \end{center} 
\caption{A generating set $A$ of $\Z^2$, whose
  convex hull is a centered square, and the ball $A^3$ }.
  \label{fig:gen-and-ball}
\end{figure}

\begin{exam}
  Consider the generating set depicted on the left side of
  \Cref{fig:gen-and-ball}. Rotation by $90^{\circ}$
  gives a generating set whose space of horoballs together with
  the action by translation is conjugate to
  the original one but none of the horoballs agree.
\end{exam}

From \Cref{prop:conjugacy_of_Hor_ZD_A} it follows that there is an
algorithm to decide given two finite positively generating sets
$A_1,A_2 \subseteq \Z^d$ if the $\Z^d$ actions by translation on $\overline{\Hor(\Z^{d},A_1)}$
and $\overline{\Hor(\Z^{d},A_2)}$ are topologically conjugate, as one can reduce this to several applications of the following problems: 
\begin{itemize}
\item Given $v \in \Z^d$ and a finite set $C \subseteq \Z^d$, decide
  if $v$ is in the convex cone generated by $C$ (this is a linear programming
  problem),
\item Given $v \in \Z^d$ and a finite set $C \subseteq \Z^d$, decide if
  $v$ is in the group generated by $C$ (this amounts to solving a
  system of linear Diophantine equations).
\end{itemize}

\section{The natural extension of $\varphi_A$, $A \subseteq \Z^d$}\label{sec:nat_ext_Z_d}

Our next goal is to show that for $G=\Cayley(\Z^d,A)$ with $d \geq 2$ the
natural extension of $\varphi_A$ is perfect. This will take the remainder of this section.

\begin{lem}\label{lem:dense-set-in-evtl}
  For every finite positively generating set  $A \subseteq \Z^d$ 
  the set
\[\tilde{X}_A = \left\{ \bigcup_{i=1}^m(w_i + H_{\{v_i\}}) \setsep m\geq |\Ext(A)|,~
    w_1,\ldots,w_m \in \Z^d,~ \{v_1,\dots,v_m\}=\Ext(A)\right\}.\]
is dense in the eventual image of $\varphi_A$.
\end{lem}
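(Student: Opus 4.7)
The plan is to prove density by showing that for each $M \in \Evt(\varphi_A)$ and each finite window $K \subseteq \Z^d$, there exists some $N \in \tilde X_A$ with $N \cap K = M \cap K$; in the product topology on $\PP(\Z^d)$ this is precisely what density amounts to.

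The main structural input is a consequence of \Cref{prop:maximal_attractor} combined with \Cref{thm:ZD_horoballs_are_faces_of_convex_hull}: any $M \in \Evt(\varphi_A) \setminus \{\emptyset, \Z^d\}$ is a union of translates $u + H_F$ of horoballs, with $F$ ranging over faces of $\conv(A)$. The crucial observation I will rely on is that each such horoball is itself covered by translates of the ``elementary'' horoballs $H_{\{v\}}$ with $v \in \Ext(F) \subseteq \Ext(A)$: reading the Minkowski-sum formula \eqref{eq:H_F_module} shows that $H_F$ is a monoid containing $H_{\{v\}}$ for every $v \in \Ext(F)$, so $H_F + H_{\{v\}} = H_F$, and hence if $k \in u + H_F$ and $v \in \Ext(F)$, writing $k = u + h$ with $h \in H_F$ gives $k + H_{\{v\}} = u + (h + H_{\{v\}}) \subseteq u + H_F \subseteq M$.

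Armed with this, for each $k \in K \cap M$ I select some horoball $u_k + H_{F_k}$ from the decomposition of $M$ containing $k$, pick any $v_k \in \Ext(F_k)$, and form
\[
N_0 := \bigcup_{k \in K \cap M}\bigl(k + H_{\{v_k\}}\bigr).
\]
Then $N_0 \subseteq M$ and $k \in k + H_{\{v_k\}}$ together force $N_0 \cap K = M \cap K$. The two degenerate cases are handled analogously: take $N_0 = \emptyset$ when $M = \emptyset$, and when $M = \Z^d$ fix any $v \in \Ext(A)$ and use $v_k = v$ for every $k \in K$.

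To promote $N_0$ to an element of $\tilde X_A$ I still need the label set $\{v_i\}$ to equal all of $\Ext(A)$, so for each missing $v \in \Ext(A)$ I will add a dummy translate $w_v + H_{\{v\}}$ chosen disjoint from $K$. Such a $w_v$ exists because every $x \in H_{\{v\}} \subseteq \Env_{\{v\}}$ satisfies $\langle \ell_{F'}, x \rangle \geq 0$ for each $F' \in M_{\{v\}}$ (this set of facets is nonempty since $\conv(A)$ is $d$-dimensional), so taking $w_v \in \Z^d$ with $\langle \ell_{F'}, w_v \rangle$ sufficiently large pushes $w_v + H_{\{v\}}$ into a half-space disjoint from any prescribed bounded $K$. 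The resulting augmented set $N$ then lies in $\tilde X_A$ and still satisfies $N \cap K = M \cap K$. I do not anticipate a serious obstacle here: the argument is essentially an assembly of structural results from \Cref{sec:Z_d_geo_horoballs}, the only point requiring real care being the semigroup identity $H_F + H_{\{v\}} = H_F$ for $v \in \Ext(F)$, which is however immediate from \eqref{eq:H_F_module}.
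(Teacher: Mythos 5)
Your proposal is correct and takes essentially the same route as the paper's proof: approximate $M$ on a finite window by selecting translates of the elementary horoballs $H_{\{v\}}$ that cover $M$ on that window, then append ``dummy'' translates $w_v + H_{\{v\}}$ pushed far from the window to ensure every $v \in \Ext(A)$ appears. The one place you add genuine value is in explicitly justifying, via the monoid identity $H_F + H_{\{v\}} = H_F$ for $v \in \Ext(F)$ extracted from \eqref{eq:H_F_module}, that every horoballunion is covered by translates of the $H_{\{v\}}$ with $v \in \Ext(A)$ --- a step the paper's proof asserts without comment.
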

\begin{proof}
  It is clear that $\tilde{X}_A$ consists of horoballunions and
  hence is a subset of the eventual image of $\varphi_A$.
  Let $R>0$. There is $n \in \N$ such that $(-nv + H_{\{v\}}) \cap
  B_R(0)=\emptyset$
  for every $v \in \Ext(A)$.
  Let $M$ be a set in the eventual image of $\varphi_A$, hence $M$ is
  the union of translates of horoballs of the form $H_{\{v\}}$ for $v
  \in \Ext(A)$. The unions of finitely many of these horoballs cover $M \cap
  B_R(0)$. Denote this union by $M_1$. Then
  $M_2:=M_1 \cup \bigcup_{v \in V} (-nv+H_{\{v\}}) \in \tilde{X}_A$.
  Furthermore $M_2 \cap B_R(0)=M \cap B_R(0)$.
  Therefore $\tilde{X}_A$ is dense in the eventual image of $\varphi_A$.
\end{proof}

\begin{figure}
  \begin{center}
  \begin{subfigure}{.48\textwidth}
    \centering
     \begin{tikzpicture}[scale=0.5]
\begin{scope}[every node/.style= {circle,inner sep=0cm, draw=black, fill= black, minimum size=0.2cm}]
\node (A1) at (0,0) {};
\node (A2) at (1,0) {};
\node (A3) at (2,0) {};
\node (A4) at (-1,1) {};
\node (A5) at (0,1) {};
\node (A6) at (1,1) {};
\node (A7) at (2,1) {};
\node (A8) at (0,2) {};
\node (A9) at (1,2) {};
\node (A10) at (2,2) {};
\node (A11) at (1,3) {};
\end{scope}
\begin{scope}[black, thin, dashed]
	\draw (A1.center) -- (A3.center)  -- (A10.center) -- (A11.center) -- (A4.center) -- cycle;	
\end{scope}
\draw[-latex] (0,-1) -- (0,4);
\draw[-latex] (-2,1) -- (3,1);
\end{tikzpicture}
\caption{A generating set $A$ of $\Z^2$.}
\label{fig:translated-cones-a}
\end{subfigure}
\begin{subfigure}{0.48\textwidth}
  \centering
  \begin{tikzpicture}[scale=0.5]
	\clip (-3,-4) rectangle (4,3);
\begin{scope}[black, opacity=0.1.5] 
	\fill (0,0) -- (-10,0) -- (0,10) -- cycle;
	\fill (0,-2) -- (0,-20) -- (-6,4) -- cycle;
	\fill (2,0) -- (20,0) -- (-4,6) -- cycle;
	\fill (2.5,-0.5) -- (8.5,5.5) -- (8,-6) -- cycle;
	\fill (0.5,-2.5) -- (-3,-6) -- (3,-5) -- cycle;
\end{scope}
	\begin{scope}[shorten <=-5cm, shorten >=-5cm, very thin]
	\draw (-2,0) -- (3,0);
	\draw (0,2) -- (0,-3);
	\draw (0,-3) -- (3,0);
	\draw (-2,0) -- (1,-3);
	\draw (0,2) -- (2,0);
\end{scope}
\begin{scope}[black, very thick]
  \draw (0,0) -- (2,0) -- (2.5,-0.5) -- (0.5,-2.5) -- (0,-2) -- cycle;
\end{scope}
\end{tikzpicture}
\caption{$-\conv(A)$ together with $\check{A}$}
\label{fig:translated-cones-b}
\end{subfigure}
\end{center}
\caption{Illustration of the proof of \Cref{lem:hausdorff-limit-of-preimages}.}
	\label{fig:translated-cones}
\end{figure}
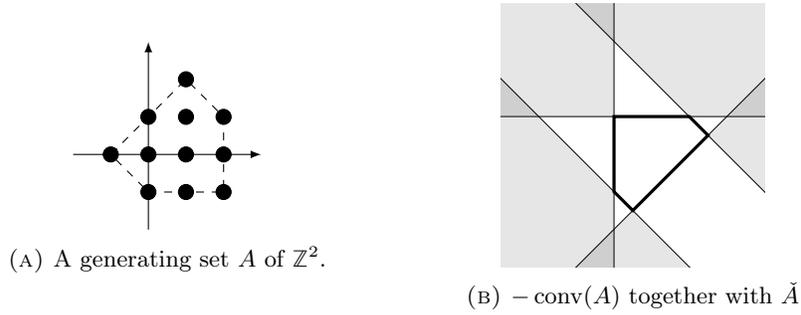
 
\begin{defn}
  For a finite positively generating set $A \subseteq \Z^d$ define $\check{A}$  by
  \[\check{A} := \bigcup_{v \in \Ext(A)}(-v + \Env_{\{v\},A}),\]
  see \Cref{fig:translated-cones} for an illustration.
\end{defn}

\begin{lem}\label{lem:hausdorff-limit-of-preimages}
  Let $A \subseteq \Z^d$ be a finite positively generating set. Let
  $m\geq |\Ext(A)|$, $w_1,\ldots,w_m \in \Z^d$ and
  $\{v_1,\dots,v_m\}=\Ext(A)$.  Set
  $M_n:=\bigcup_{i=1}^m(w_i -nv_i+ H_{\{v_i\}}) \in \tilde{X}_A$.  Then
  $M_n \in \varphi_A^{-n}(\{M_0\})$ and $\frac{1}{n} M_n$ converges to
  $\tilde{A}$ with respect to the Hausdorff metric, i.e. for all
  $\varepsilon$ and $n$ large enough we have
  $\frac{1}{n} M_n \subseteq \check{A} + B_\epsilon(0)$ and
  $\check{A} \subseteq \frac{1}{n}M_n + B_\epsilon(0)$.
\end{lem}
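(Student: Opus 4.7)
The plan is to handle the two assertions of the lemma separately. First, the identity $M_n \in \varphi_A^{-n}(\{M_0\})$ would follow by iterating \Cref{lem:varphi_H_F}: for every $v \in \Ext(A)$ the zero-dimensional face $\{v\}$ satisfies $\varphi_A(H_{\{v\}}) = H_{\{v\}}+v$, hence $\varphi_A^n(H_{\{v_i\}}) = H_{\{v_i\}}+nv_i$. Using that $\varphi_A$ is translation equivariant and commutes with unions, one obtains $\varphi_A^n(M_n) = \bigcup_{i=1}^m(w_i + H_{\{v_i\}}) = M_0$.

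For the Hausdorff convergence, the easy direction $\frac{1}{n}M_n \subseteq \check{A} + B_\varepsilon(0)$ uses that $H_{\{v\}} \subseteq \Env_{\{v\},A}$ by \Cref{lem:H_F_subset_E_F_A}, and that $\Env_{\{v\},A}$ is a convex cone with apex at $0$ by \eqref{eq:Env_alt_exp}, hence invariant under positive scalings. Therefore
\[
\tfrac{1}{n}(w_i - nv_i + H_{\{v_i\}}) \subseteq \tfrac{w_i}{n} + (-v_i + \Env_{\{v_i\},A}) \subseteq \tfrac{w_i}{n} + \check{A},
\]
and the inclusion follows as soon as $n$ is large enough that $\|w_i\|/n < \varepsilon$ for every $i$.

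For the reverse inclusion $\check{A} \subseteq \frac{1}{n}M_n + B_\varepsilon(0)$, I would fix $v \in \Ext(A)$ and $x \in -v + \Env_{\{v\},A}$, use the explicit description \eqref{eq:Env_alt_exp} to write $x+v = \sum_{u \in A}\alpha_u(u-v)$ with $\alpha_u \in [0,\infty)$, and set $h_n := \sum_{u \in A}\lfloor n\alpha_u\rfloor(u-v)$, which belongs to the semigroup $H_{\{v\}}$ by \eqref{eq:H_F_module}. Choosing any $i$ with $v_i = v$ (possible since $\{v_1,\dots,v_m\} = \Ext(A)$), the point $\frac{w_i}{n} - v + \frac{1}{n}h_n$ lies in $\frac{1}{n}M_n$ and approximates $x$ to within $(C + \max_j\|w_j\|)/n$, where $C := \sum_{u \in A}\|u-v\|$ depends only on $A$. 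The main subtle point is that $\check{A}$ itself is unbounded, so one might fear that uniform approximation fails; the decisive observation is that integer rounding of each of the finitely many coefficients $\alpha_u$ contributes an error of at most $\|u-v\|$ regardless of the magnitude of $\alpha_u$ itself, yielding a uniform bound on the rounding error across the entire cone. For $n$ large this bound drops below $\varepsilon$ uniformly, concluding the argument.
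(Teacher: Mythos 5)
Your proposal is correct, and for the harder inclusion $\check{A}\subseteq \frac{1}{n}M_n+B_\varepsilon(0)$ it takes a genuinely different and arguably more self-contained route than the paper. The paper's proof routes through \Cref{lem:Env_cap_Zd_contained_H_F}: it first observes that the rescaled lattice $\frac{1}{n}\Z^d$ is $\varepsilon/2$-dense inside the cone $\Env_{\{v\}}$, then invokes that lemma to absorb $\Env_{\{v\}}\cap\Z^d$ into a translate of $H_{\{v\}}$, and finally combines the two errors. You instead pick, for each $x\in -v+\Env_{\{v\}}$, an explicit conic representation $x+v=\sum_{u\in A}\alpha_u(u-v)$ from \eqref{eq:Env_alt_exp}, round the scaled coefficients, and land directly in the semigroup $H_{\{v\}}=\bigcup_j j(A-v)$ from \eqref{eq:H_F_module}; the rounding error $\frac{1}{n}\sum_{u\in A}\lVert u-v\rVert$ is uniform across the (unbounded) cone, which is precisely the subtle point you flag. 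This bypasses \Cref{lem:Env_cap_Zd_contained_H_F} entirely and gives an explicit $O(1/n)$ rate, at the (small) cost of not reusing machinery the paper had already established. Two cosmetic remarks: your constant $C=\sum_{u\in A}\lVert u-v\rVert$ depends on $v$, so one should take the maximum over the finitely many $v\in\Ext(A)$; and the prose in the last sentence says the per-coefficient error is "at most $\lVert u-v\rVert$" when after dividing by $n$ it is at most $\frac{1}{n}\lVert u-v\rVert$ --- but your displayed bound is stated correctly. The first claim $M_n\in\varphi_A^{-n}(\{M_0\})$ via iterating \Cref{lem:varphi_H_F} and translation-equivariance, and the easy inclusion via $H_{\{v\}}\subseteq\Env_{\{v\}}$ and cone scaling-invariance, match the paper.
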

\begin{proof}
From \Cref{lem:varphi_H_F} it follows that
 $M_n \in \varphi_A^{-n}(\{M_0\})$.
  Also there is
  $N>0$ such that $\Env_{\{v\}} \subseteq B_{\eps/2}(0)+(\Env_{\{v\}} \cap
  \frac{1}{n}\Z^d)$ for every $n >N$ and
  every $v \in \Ext(A)$.
  By \Cref{lem:Env_cap_Zd_contained_H_F} this implies that for every $v \in \Ext(A)$  there
  exists $u_v \in \Z^d$  such that
  $\Env_{\{v\}} \cap \Z^d \subseteq u_v + H_{\{v\}}$, hence 
  $\Env_{\{v\}} \subseteq
  B_{\eps/2}(0)+(\frac{1}{n}u_v+\frac{1}{n}H_{\{v\}})$.
  Finally for large enough $n$ we have
  
  \begin{align*}
     \bigcup_{i=1}^m (\frac{1}{n}w_i - v_i + \frac{1}{n}H_{v_i}) + B_{\eps/2}(0)
    &\supseteq  \bigcup_{i=1}^m ( \frac{1}{n} u_v - v_i + \frac{1}{n}H_{v_i}) \\
    \frac{1}{n} M_n +B_\eps(0) 
    &= \bigcup_{i=1}^m (\frac{1}{n}w_i - v_i + \frac{1}{n}H_{v_i})  +B_{\eps}(0) \\
    &\supseteq  \bigcup_{i=1}^m (\frac{1}{n}u_v - v_i +
      \frac{1}{n}H_{v_i})  +B_{\eps/2}(0) \\
    &\supseteq  \check{A}.
  \end{align*}
  On the other hand $\Env_{\{v\}}\supseteq H_{\{v\}}$ for every $v \in
  \Ext(A)$ and for sufficiently large $n$
  \begin{equation*}
    \frac{1}{n}M_n=\bigcup_{i=1}^m (\frac{1}{n}w_i-v_i+\frac{1}{n}H_{v_i}) \subseteq \check{A}+B_{\eps}(0).\qedhere
  \end{equation*}
\end{proof}

\begin{lem}\label{lem:space-in-star-arms}
  Let $d \geq 2$ and let $A \subseteq \Z^d$ be a finite positively generating set
  and let $v \in \Ext(A)$.
  There is a point $p \in \R^d$ and an $\eps>0$
  such that
  $B_\eps(p) \cap \check{A} = \emptyset$
  and $(B_\eps(p) + H_{\{v\}}) \cap \conv(-A) = \emptyset$.
\end{lem}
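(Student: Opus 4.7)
The plan is to exhibit an explicit $p$ and $\eps$. First observe that since $A$ positively generates $\Z^d$, it cannot be contained in any closed half-space, so $0$ lies in the interior of $\conv(A)$. Hence for every $(d-1)$-face $G$ of $\conv(A)$ the quantity $r_G := -\min_{x \in \conv(A)} \langle x, \ell_G\rangle$ is strictly positive, and $\conv(-A) \subseteq \{x : \langle x, \ell_G\rangle \leq r_G\}$. Because $v$ is a vertex of the $d$-dimensional polytope $\conv(A)$ with $d \geq 2$, the set $M_{\{v\}}$ contains at least two distinct $(d-1)$-faces; I would choose two such, call them $F$ and $F'$, with unit inward normals $\ell_F$ and $\ell_{F'}$. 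Distinctness of the unit vectors gives $\langle \ell_F, \ell_{F'}\rangle < 1$.

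My candidate is $p := -v + \epsilon(\ell_F - \ell_{F'})$ for a small $\epsilon > 0$ to be chosen. The condition $(B_\eps(p) + H_{\{v\}}) \cap \conv(-A) = \emptyset$ will follow from the computation
\[
\langle p, \ell_F\rangle = r_F + \epsilon\bigl(1 - \langle \ell_{F'}, \ell_F\rangle\bigr) > r_F,
\]
combined with the observation that $\ell_F$ appears in the defining inequalities of $\Env_{\{v\}}$, so every $u \in \Env_{\{v\}}$ satisfies $\langle u, \ell_F\rangle \geq 0$; in particular every $h \in H_{\{v\}} \subseteq \Env_{\{v\}}$ does. Taking $\eps > 0$ small enough that $\langle b, \ell_F\rangle > r_F$ for every $b \in B_\eps(p)$ then gives $\langle b + h, \ell_F\rangle > r_F \geq \langle a, \ell_F\rangle$ for all $a \in \conv(-A)$, forcing the disjointness.

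For $B_\eps(p) \cap \check{A} = \emptyset$ I would verify $p \notin -v' + \Env_{\{v'\}}$ for every $v' \in \Ext(A)$; closedness of the finite union $\check{A}$ then promotes this to the ball version after possibly shrinking $\eps$. Equivalently, for each $v'$ I need some $G \in M_{\{v'\}}$ with $\langle p + v', \ell_G\rangle < 0$. For $v' = v$ I would take $G = F'$: then $\langle p + v, \ell_{F'}\rangle = \epsilon(\langle \ell_F, \ell_{F'}\rangle - 1) < 0$ for any $\epsilon > 0$. For $v' \neq v$, I use the standard fact that a vertex of a polytope equals the intersection of the facets containing it; since $v \neq v'$, this lets me pick $G \in M_{\{v'\}}$ with $v \notin G$, giving $\langle v' - v, \ell_G\rangle = c_G - \langle v, \ell_G\rangle < 0$ strictly. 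Hence
\[
\langle p + v', \ell_G\rangle = \langle v' - v, \ell_G\rangle + \epsilon\bigl(\langle \ell_F, \ell_G\rangle - \langle \ell_{F'}, \ell_G\rangle\bigr) < 0
\]
for all sufficiently small $\epsilon$, and because $\Ext(A)$ is finite a single small $\epsilon$ will handle all vertices simultaneously.

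The main point requiring care is bookkeeping: for each vertex $v' \neq v$ I must locate a $(d-1)$-face through $v'$ that avoids $v$, and then choose $\epsilon$ smaller than the minimum ratio of the strict-negativity gap $|\langle v' - v, \ell_G\rangle|$ to the perturbation size, uniformly over the finitely many $v'$. The hypothesis $d \geq 2$ enters essentially in guaranteeing that $v$ lies in at least two distinct $(d-1)$-faces, so that $\ell_F - \ell_{F'}$ is nonzero and pushes $p + v$ out of $\Env_{\{v\}}$ through its $F'$-facing boundary; this is the geometric content of the lemma.
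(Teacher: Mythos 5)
Your proof is correct, and it pursues the same perturbation strategy as the paper's argument: choose a facet $F$ of $\conv(A)$ containing $v$, place $p$ near the reflected facet, and use the inward facet normals $\ell_{(\cdot)}$ both to push $(B_\eps(p) + H_{\{v\}})$ past the supporting hyperplane of $\conv(-A)$ and to exclude $p$ from each $-v' + \Env_{\{v'\}}$. The constructions differ in the explicit choice of $p$: the paper takes $p_\delta = -\frac{1}{|\Ext(F)|}\sum_{w\in\Ext(F)} w + \delta\ell_F$, basing at the negative centroid of $\Ext(F)$, which lies in the relative interior of $F$ and is therefore already strictly on the wrong side of any second facet $\tilde F$ before perturbation; the perturbation $\delta\ell_F$ is only needed for the $\conv(-A)$ condition, and one subsequently shrinks $\delta$ so as not to destroy the second condition. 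You instead base at $-v$ itself, which sits exactly at the apex of $-v + \Env_{\{v\}}$, so escaping that cone genuinely requires the $-\epsilon\ell_{F'}$ component of your perturbation; the remaining vertices $v' \neq v$ are dispatched, as you note, by picking a facet through $v'$ that misses $v$, and the simultaneous choice of a single $\epsilon$ works because $\Ext(A)$ is finite. Both are valid; the paper's centroid choice keeps the $\tilde v = v$ case uniform with the others and avoids singling out a second facet $F'$, while your version is somewhat more explicit and the inequality $\langle p + v, \ell_{F'}\rangle = \epsilon(\langle\ell_F,\ell_{F'}\rangle - 1) < 0$ is pleasingly direct. One small point worth recording: the strict inequality $\langle\ell_F,\ell_{F'}\rangle < 1$ (and hence the existence of $F' \neq F$) is exactly where the hypothesis $d \geq 2$ enters, mirroring the paper's appeal to a second facet $\tilde F$ through $v$.
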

\begin{proof}
  Let $F$ be a $d-1$ dimensional face of $\conv(A)$ containing $v$. Set
  \begin{align*}
    p_{\delta}:=\frac{1}{|\Ext(F)|}\sum_{w \in \Ext(F)} -w + \delta \ell_F.
  \end{align*}
  For all $w \in B_{\epsilon}(p_\delta)+H_{\{v\}}$ we have
  $\langle \ell_F,w \rangle \geq \langle \ell_F,-v\rangle -\epsilon + \delta$
  but for $w \in \conv(-A)$ we have
  $\langle \ell_F,w \rangle \leq \langle \ell_F,-v \rangle$.
  Hence for $\epsilon < \delta$ we have
  \[(B_\epsilon(p_\delta) + H_{\{v\}}) \cap \conv(-A) = \emptyset.\]
  
  To show that $B_{\epsilon}(p_\delta) \cap \check{A} =\emptyset$ for sufficiently
  small $\delta$,
  we have to show that $p_\delta$ is not contained in
  $-\tilde{v}+\Env_{\{\tilde{v}\}}$ for every point $\tilde{v} \in \Ext(A)$. 
  In other words, we have to show that $p_\delta +\tilde{v} \not\in \Env_{\{\tilde{v}\}}$.
  Let $\tilde{F} \neq F$ be a $d-1$ dimensional face of $A$ containing $\tilde{v}$.
  Then all points $u \in \Env_{\{\tilde{v}\}}$ have
  $\langle \ell_{\tilde{F}},u \rangle \geq 0$ but
  \begin{align*}
    \langle \ell_{\tilde{F}}, p_\delta+\tilde{v} \rangle
    = \frac{1}{|\Ext(F)|}
    \sum_{w \in \Ext(F)} \langle \ell_{\tilde{F}},\tilde{v}-w \rangle
    +
    \delta \langle  \ell_{\tilde{F}},  \ell_{F} \rangle
  \end{align*}
  which is negative for small $\delta$ because
  $\langle \ell_{\tilde{F}},\tilde{v}-w \rangle$ is non-positive for all $w
  \in \Ext(F)$ and negative for
  at least one $w \in \Ext(F)$.
\end{proof}

\begin{prop}\label{prop:natural_extension_perfect}
  For any finite positively generating set $A \subseteq \Z^d$, $d\geq 2$, the
  natural extension of $\varphi_A$ is perfect.
\end{prop}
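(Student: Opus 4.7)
The plan is to show directly that $\hat X_A$ has no isolated points. The two fixed points $x^\pm$ are non-isolated because they are limits of forward/backward iterates of any non-fixed orbit, so I concentrate on a non-fixed $\hat x\in\hat X_A$ and a basic neighborhood $U=\{\hat z:\hat z_k\cap B_R(0)=\hat x_k\cap B_R(0),\ |k|\le N\}$. The strategy is to produce two distinct points $\hat y^{(1)},\hat y^{(2)}\in U$; at least one must differ from $\hat x$, so $\hat x$ is not isolated.

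Fix $K\gg N$ and put $R_*:=R+(K+N)\|A\|$ with $\|A\|=\max_{a\in A}\|a\|$. By \Cref{lem:dense-set-in-evtl} applied to $\hat x_{-K}\in\Evt(\varphi_A)$, pick $\tilde M\in\tilde X_A$, say $\tilde M=\bigcup_{i=1}^m(w_i+H_{\{v_i\}})$ with $\{v_i\}=\Ext(A)$, satisfying $\tilde M\cap B_{R_*}(0)=\hat x_{-K}\cap B_{R_*}(0)$. \Cref{lem:hausdorff-limit-of-preimages} supplies the explicit preimage chain $\tilde M_n:=\bigcup_{i=1}^m(w_i-nv_i+H_{\{v_i\}})$ with $\tfrac{1}{n}\tilde M_n\to\check A$ in Hausdorff distance. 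Setting $\hat y^{(1)}_{-K-n}:=\tilde M_n$ for $n\ge 0$ and $\hat y^{(1)}_k:=\varphi_A^{k+K}(\tilde M)$ for $k\ge -K$ gives a point $\hat y^{(1)}\in\hat X_A$; it lies in $U$ because $\varphi_A(M)\cap B_r(0)$ depends only on $M\cap B_{r+\|A\|}(0)$, so agreement on $B_{R_*}$ propagates forward to every window time.

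For the second point I invoke \Cref{lem:space-in-star-arms}, which is where $d\ge 2$ enters: it yields a vertex $v\in\Ext(A)$, a point $p\in\R^d$ and $\varepsilon>0$ with $B_\varepsilon(p)\cap\check A=\emptyset$ and $(B_\varepsilon(p)+H_{\{v\}})\cap\conv(-A)=\emptyset$. Choose $L$ very large and $u\in\Z^d$ with $u/L\in B_{\varepsilon/2}(p)$, and define the \emph{extra arm} $\Delta_j:=(u-jv)+H_{\{v\}}$. Set $\hat y^{(2)}_{-K-L-j}:=\tilde M_{L+j}\cup\Delta_j$ for $j\ge 0$ and extend forward by iteration. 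Consistency of this backward chain is immediate from \Cref{lem:varphi_H_F}, which gives $\varphi_A(\Delta_{j+1})=\Delta_j$, and each $\hat y^{(2)}_{-K-L-j}$ is a horoballunion, hence in $\Evt(\varphi_A)$; a further backward extension in $\hat X_A$ exists by compactness. The first gap property $B_\varepsilon(p)\cap\check A=\emptyset$, together with the Hausdorff convergence $\tfrac{1}{L}\tilde M_L\to\check A$, forces $u\notin\tilde M_L$ for large $L$, so $\hat y^{(2)}_{-K-L}\ne\hat y^{(1)}_{-K-L}$ and the two lifts are distinct.

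The most delicate step, which I expect to be the main obstacle, is verifying $\hat y^{(2)}\in U$: at each window time $k\in[-N,N]$ the new contribution inside $\hat y^{(2)}_k$ is the cone $(u+(k+K+L)v)+H_{\{v\}}$, whose apex sits at distance of order $L$ from the origin in the direction $p+v$. One must use the second gap condition of \Cref{lem:space-in-star-arms} to guarantee that this shifted cone stays clear of $B_R(0)$: after scaling by $L$, the separation of $B_\varepsilon(p)+H_{\{v\}}$ from $\conv(-A)\ni 0$ translates into a genuine distance separating the cone from the origin, and taking $L$ large relative to $R,N,K$ makes that separation exceed $R$ throughout the window. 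Granting this, the extra arm leaves every window slice unchanged, so $\hat y^{(2)}\in U$; together with $\hat y^{(1)}\in U$ and $\hat y^{(1)}\ne\hat y^{(2)}$ this completes the argument.
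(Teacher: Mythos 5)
Your proof is correct and follows essentially the same route as the paper: density of $\tilde X_A$ (Lemma \ref{lem:dense-set-in-evtl}), the explicit preimage chain $\tilde M_n$ with Hausdorff convergence to $\check A$ (Lemma \ref{lem:hausdorff-limit-of-preimages}), and the gap from Lemma \ref{lem:space-in-star-arms} used to attach an extra cone that stays clear of $B_R(0)$. The only difference is one of bookkeeping — you build the two bi-infinite sequences explicitly (hence the extra $K$, the propagation estimate with $R_*=R+(K+N)\|A\|$, and a rescaling by $L$ instead of $n$), whereas the paper phrases the same idea as producing two distinct $W_1,W_2\in\Evt(\varphi_A)$ whose $n$-th images agree with $M$ on $B_R(0)$; the delicate scaling step you flag as ``granting this'' is exactly the short inclusion chain the paper writes out at the end and does go through as you outline.
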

\begin{proof}
  By \Cref{lem:dense-set-in-evtl}
  it suffices to show that for any $M \in \tilde{X}_A$ and $R>0$ there exists $n
  \in \N, W_1,W_2 \in \varphi_A^{-n}\left(\{M\}\right)$ and $W_2 \ne
  W_1$, both in the eventual image of
  $\varphi_A$, such that $M \cap B_R(0)=\varphi_A^{n}(W_1) \cap B_R(0) =
  \varphi_A^{n}(W_2) \cap B_R(0)$.

  Let $M = \bigcup_{i=1}^m(w_i +
  H_{v_i})$ with $ w_1,\ldots,w_n \in \Z^d$ and $\{v_1,\ldots,v_m\} =
  \Ext(A)$.  If we define, as in \Cref{lem:hausdorff-limit-of-preimages},
  \[M_n := \bigcup_{i=1}^m(w_i - n v_i + H_{\{v_i\}}),\] for any $n \in
  \N$, then $M_n \in
  \varphi_A^{-n}(\{M\})$.
  
  By \Cref{lem:space-in-star-arms} there is $\epsilon>0$, a point $p \in \R^d$ and a point $v
  \in \Ext(A)$
  such that $B_\eps(p) \cap \check{A} = \emptyset$
  and $(B_\eps(p)+\Env_{\{v\}}) \cap \conv(-A) = \emptyset$.

  Since $\frac{1}{n}M_n$ converges to $\check{A}$ by
  \Cref{lem:hausdorff-limit-of-preimages}, we can choose $n$ large
  enough such that
  \begin{align*}
    \frac{1}{n} M_n \cap B_{\eps/2}(p)  &= \emptyset, \\
    \frac{1}{n} \Z^d \cap B_{\eps/2}(p)   &\neq \emptyset, \\
    \frac{R}{n} &< \frac{\eps}{2}.
  \end{align*}
  Let $w \in B_{n\eps/2}(p) \cap \Z^d$
  and set $W_1:=M_n$, $W_2:=M_n \cup (w+H_{\{v\}})$.
  Since $W_1 \cap B_{n\eps/2}(p) =
  \emptyset$ and $w \in W_2 \cap B_{n\eps/2}(p)$
  we have $W_2 \neq W_1$.
  We also have $\varphi_A^n(W_2)=M \cup (w+nv+H_{\{v\}})$.

  It remains to show that $(w+nv+H_{\{v\}}) \cap B_R(0) = \emptyset$.
  We know that $(B_{\eps/2}(p) + \Env_{\{v\}})\cap
  (B_{\eps/2}(0)+\conv(-A))=\emptyset$.
  Hence
  \begin{align*}
    (B_{n\eps/2}(np) + \Env_{\{v\}}) \cap (n\conv(-A) + B_{n\eps/2}(0))&=\emptyset,\\
    (w + H_{\{v\}}) \cap (-nv  +B_R(0)) &= \emptyset,\\
    (w+nv+H_{\{v\}}) \cap B_R(0)&=\emptyset.                       \qedhere
  \end{align*}
\end{proof}

Combining \Cref{prop:perfect_NA_implies_unique} and \Cref{prop:natural_extension_perfect} we conclude:
\begin{cor}\label{thm:Zd_natural_extension_is_north_south_pole}
  For any $d >1$ and any finite positively generating set $A \subset
  \Z^d$ that contains $0$, the natural extension of
  $\varphi_A$ is topologically conjugate to the
  (unique) north-south system on the Cantor set.
\end{cor}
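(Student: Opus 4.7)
The plan is to assemble the two preceding propositions with the uniqueness result \Cref{prop:north_south_cantor_unique}; there is essentially no fresh computation involved, only bookkeeping.

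First I would invoke \Cref{prop:natural_extension_perfect} to deduce that, since $d \geq 2$, the natural extension $(\hat X_{\varphi_A},\hat\varphi_A)$ is perfect. Since it sits inside $\PP(\Z^d)^\Z$ as a closed subset, it is automatically compact, metrizable, and totally disconnected; perfectness then upgrades it to a Cantor space.

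Next I would recall (as noted inside the proof of \Cref{prop:perfect_NA_implies_unique}) that for any strongly connected locally finite directed graph the natural extension of $\varphi_G$ has a unique attracting fixed point $\hat x^+$ given by $\hat x^+_n = V(G)$ and a unique repelling fixed point $\hat x^-$ given by $\hat x^-_n = \emptyset$, with every non-fixed orbit tending forward to $\hat x^+$ and backward to $\hat x^-$. Applied to $G = \Cayley(\Z^d,A)$, this means that $(\hat X_{\varphi_A},\hat\varphi_A)$ is a homeomorphism of a Cantor space exhibiting north-south dynamics.

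Finally, \Cref{prop:north_south_cantor_unique} states that up to topological conjugacy there is a unique homeomorphism with north-south dynamics on the Cantor set, and this gives the claimed conjugacy. The only place the hypothesis $d > 1$ enters is in \Cref{prop:natural_extension_perfect}, which is also the step that does the actual geometric work (all other ingredients being general statements about $\varphi_G$). No step is a genuine obstacle here; the substantive content lies entirely in \Cref{prop:natural_extension_perfect}, already proved above.
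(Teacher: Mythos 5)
Your proposal is correct and follows essentially the same route as the paper: the paper's own proof consists of the single line ``Combining \Cref{prop:perfect_NA_implies_unique} and \Cref{prop:natural_extension_perfect} we conclude,'' and you have simply unpacked \Cref{prop:perfect_NA_implies_unique} one level further to exhibit the north-south dynamics and the invocation of \Cref{prop:north_south_cantor_unique} explicitly. No discrepancy.
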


\section{The eventual image of $\varphi_A$ ,  $A \subseteq \Z^d$}\label{sec:eventual-image-zd}

In this section we still consider the dynamics of $\varphi_A:\PP(\Z^d) \to \PP(\Z^d)$ where $A$ is a positive generating set of $\Z^d$ containing $0$. Our goal is to  show that the Cantor-Bendixon rank of the eventual image is a non-trivial invariant.
More precisely, we show by examples that for $A \subseteq \Z^2$, the
Cantor-Bendixon rank of $\Evt(\varphi_A)$ can be $0,1$ or
$\omega_0$. We suspect that these are the only possibilities, at
least for $d=2$.  Recall that we call elements of $\Evt(\varphi_A)$ \emph{horoballunions}.

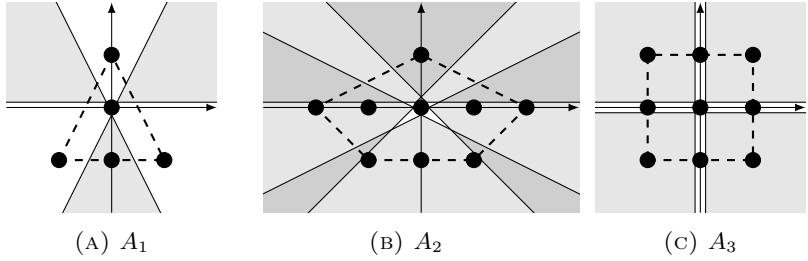
\begin{figure}[h]
  \centering
    \begin{subfigure}{0.3\textwidth}
      \centering
\begin{tikzpicture}[scale=0.7]
\begin{scope}
\clip (-2,-2) rectangle (2,2);
\begin{scope}[every node/.style= {circle,inner sep=0cm, draw=black, fill= black, minimum size=0.2cm}]
\node (A1) at (-1,-1) {};
\node (A2) at (0,-1) {};
\node (A3) at (1,-1) {};
\node (A4) at (0,0) {};
\node (A5) at (0,1) {};
\end{scope}
\draw[thick, dashed] (A1.center) -- (A3.center) -- (A5.center) -- cycle;
\draw[-latex] (0,-2) -- (0,2);
\draw[-latex] (-2,0) -- (2,0);
\draw[fill=black, fill opacity=0.1,xshift=0,yshift=-0.141cm] (-4,-8) -- (0,0) -- (4,-8) -- cycle;
\draw[fill=black, fill opacity=0.1,xshift=0.1cm,yshift=0.1cm] (8,0) -- (0,0) -- (4,8) -- cycle;
\draw[fill=black, fill opacity=0.1,xshift=-0.1cm,yshift=0.1cm]  (-8,0) -- (0,0) -- (-4,8) -- cycle;
\end{scope}
\end{tikzpicture}
\caption{$A_1$}
\end{subfigure}
\begin{subfigure}{0.3\textwidth}
  \centering
\begin{tikzpicture}[scale=0.7]
\begin{scope}
\clip (-3,-2) rectangle (3,2);
\begin{scope}[every node/.style= {circle,inner sep=0cm, draw=black, fill= black, minimum size=0.2cm}]
\node (B11) at (-1,-1) {};
\node (B12) at (0,-1) {};
\node (B13) at (1,-1) {};
\node (B21) at (-2,0) {};
\node (B22) at (-1,0) {};
\node (B23) at (0,0) {};
\node (B24) at (1,0) {};
\node (B25) at (2,0) {};
\node (B31) at (0,1) {};
\end{scope}
\draw[thick, dashed] (B11.center) -- (B13.center) -- (B25.center) -- (B31.center) -- (B21.center) -- cycle;
\draw[-latex] (0,-2) -- (0,2);
\draw[-latex] (-3,0) -- (3,0);
\draw[fill=black, fill opacity=0.1,xshift=0,yshift=-0.141cm] (-8,-4) -- (0,0) -- (8,-4) -- cycle;
\draw[fill=black, fill opacity=0.1,xshift=0.141cm,yshift=0] (8,-8) -- (0,0) -- (8,4) -- cycle;
\draw[fill=black, fill opacity=0.1,xshift=0.1cm,yshift=0.1cm] (8,0) -- (0,0) -- (-40,40) -- cycle;
\draw[fill=black, fill opacity=0.1,xshift=-0.1cm,yshift=0.1cm] (-8,0) -- (0,0) -- (40,40) -- cycle;
\draw[fill=black, fill opacity=0.1,xshift=-0.141cm,yshift=0] (-8,-8) -- (0,0) -- (-8,4) -- cycle;
\end{scope}
\end{tikzpicture}
\caption{$A_2$}
\end{subfigure}
\begin{subfigure}{0.3\textwidth}
  \centering
\begin{tikzpicture}[scale=0.7]
\begin{scope}
\clip (-2,-2) rectangle (2,2);
\begin{scope}[every node/.style= {circle,inner sep=0cm, draw=black, fill= black, minimum size=0.2cm}]
\node (C11) at (-1,-1) {};
\node (C12) at (0,-1) {};
\node (C13) at (1,-1) {};
\node (C21) at (-1,0) {};
\node (C22) at (0,0) {};
\node (C23) at (1,0) {};
\node (C31) at (-1,1) {};
\node (C32) at (0,1) {};
\node (C33) at (1,1) {};
\end{scope}
\draw[thick, dashed] (C11.center) -- (C13.center) -- (C33.center) -- (C31.center) -- cycle;
\draw[-latex] (0,-2) -- (0,2);
\draw[-latex] (-2,0) -- (2,0);
\draw[fill=black, fill opacity=0.1,xshift=-0.1cm ,yshift=-0.1cm] (-8,0) -- (0,0) -- (0,-8) -- cycle;
\draw[fill=black, fill opacity=0.1,xshift=-0.1cm,yshift=0.1cm] (-8,0) -- (0,0) -- (0,8) -- cycle;
\draw[fill=black, fill opacity=0.1,xshift=0.1cm,yshift=-0.1cm] (8,0) -- (0,0) -- (0,-8) -- cycle;
\draw[fill=black, fill opacity=0.1,xshift=0.1cm,yshift=0.1cm] (8,0) -- (0,0) -- (0,8) -- cycle;
\end{scope}
\end{tikzpicture}
\caption{$A_3$}
\end{subfigure}
\caption{Three generating sets whose eventual images have different
  Cantor-Bendixson rank, together with the envelopes corresponding to
  the extremal points of the convex hull, moved slightly outward for greater clarity}
  \label{fig:generators-cb-rank}
\end{figure}

\begin{exam}
  Consider the positive generating sets $A_1,A_2$ and $A_3$ of $\Z^2$ depicted
  in \Cref{fig:generators-cb-rank}. The eventual image of
  $\varphi_{A_i}$ has
  the following structure.
  \begin{enumerate}
  \item $\Evt(\varphi_{A_1})$ is perfect by \Cref{prop:conv_triangle_perfect} below.
   \item $\Evt(\varphi_{A_2})$ has Cantor-Bendixson rank 1 by \Cref{prop:evt_rank_1} below.
  \item $\Evt(\varphi_{A_3})$ has Cantor-Bendixson rank $\omega_0$ by \Cref{prop:rect_rank_omega_0} below.
  \end{enumerate}
\end{exam}

We now turn to prove the statement claimed regarding the example above.

Here is  some ad-hoc terminology:
\begin{defn}
  Let $W \subseteq \Z^d$ be finite and $M \in \Evt(\varphi_A)$.
  We call $\tilde{M} \in \Evt(\varphi_A)$ a \emph{$W$-approximation} of $M$
  if $\tilde{M} \cap W = M \cap W$.
\end{defn}

\begin{defn}
  A horoballunion $M \in \Evt(\varphi_A)$ is called \emph{deficient}, if there
  is a finite set of horoballs $u_i+H_{\{v_i\}}, i \in I$ such
  that $M = \bigcup_{i \in I} (u_i+H_{\{v_i\}})$
  and $\bigcup_{i \in I} \Env_{\{v_i\}} \neq \R^d$.
  Denote the set of deficient horoballunions by $D_A$.
\end{defn}

\begin{lem}\label{lem:deficient-perfect}
  The set of all deficient horoballunions has no isolated points.
\end{lem}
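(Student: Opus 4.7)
The plan is to show, for any $M \in D_A$ and any $R > 0$, the existence of some $M' \in D_A$ with $M' \neq M$ and $M' \cap B_R(0) = M \cap B_R(0)$. Writing $M = \bigcup_{i \in I}(u_i + H_{\{v_i\}})$ with $\bigcup_{i \in I}\Env_{\{v_i\}} \neq \R^d$, the idea is to enlarge $M$ by adjoining one extra vertex-horoball $w + H_{\{v_j\}}$ whose vertex $v_j$ already appears in the decomposition (so that the envelope union is preserved and $M'$ stays deficient), placed far enough from the origin to miss $B_R(0)$, yet whose base point $w$ lies outside $M$ so that the new union is strictly larger.

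The key geometric step is to choose a direction $\eta \in \R^d$ and an index $j \in I$ satisfying $\eta \notin \bigcup_{i \in I}\Env_{\{v_i\}}$ and $-\eta \notin \Env_{\{v_j\}}$ simultaneously. I claim such a pair exists whenever $d \geq 2$. Set $U := \R^d \setminus \bigcup_{i \in I}\Env_{\{v_i\}}$, a non-empty open set by deficiency, and $C := \bigcap_{j \in I}\Env_{\{v_j\}}$. If no admissible pair existed, every $\eta \in U$ would satisfy $-\eta \in \Env_{\{v_j\}}$ for all $j \in I$, giving $U \subseteq -C$. Since $v_i$ is a vertex of the $d$-dimensional polytope $\conv(A)$, the tangent cone $\Env_{\{v_i\}}$ is pointed, i.e.\ $\Env_{\{v_i\}} \cap -\Env_{\{v_i\}} = \{0\}$; combined with $-C \subseteq -\Env_{\{v_i\}}$ this yields $\Env_{\{v_i\}} \cap -C = \{0\}$, and hence $(\bigcup_i \Env_{\{v_i\}}) \cap -C = \{0\}$. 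On the other hand, $U \subseteq -C$ together with $U \cup \bigcup_i \Env_{\{v_i\}} = \R^d$ forces $\bigcup_i \Env_{\{v_i\}} \cup -C = \R^d$. Thus $\R^d \setminus \{0\}$ would decompose as the disjoint union of the two non-empty closed subsets $\bigcup_i \Env_{\{v_i\}} \setminus \{0\}$ and $-C \setminus \{0\}$, contradicting its connectedness (this is the one place where $d \geq 2$ is used). The degenerate subcase $C = \{0\}$ is ruled out immediately, since it would force $\bigcup_i \Env_{\{v_i\}} = \R^d$.

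Given $\eta$ and $j$ as above, let $w_t \in \Z^d$ be a lattice point within unit distance of $t\eta$ and put $M'_t := M \cup (w_t + H_{\{v_j\}})$. Since $\eta$ lies outside each closed cone $\Env_{\{v_i\}}$ and $H_{\{v_i\}} \subseteq \Env_{\{v_i\}}$, the distance from $t\eta$ to each $u_i + \Env_{\{v_i\}}$ grows linearly in $t$, so $w_t \notin M$ for large $t$ and therefore $M'_t \neq M$. Since $-\eta \notin \Env_{\{v_j\}} \supseteq H_{\{v_j\}}$, we similarly get $\dist(-t\eta, H_{\{v_j\}}) \to \infty$, so $(w_t + H_{\{v_j\}}) \cap B_R(0) = \emptyset$ for large $t$, giving $M'_t \cap B_R(0) = M \cap B_R(0)$. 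Finally, $M'_t$ is a finite union of $|I|+1$ vertex-horoballs whose envelope union is $\bigcup_{i \in I}\Env_{\{v_i\}} \cup \Env_{\{v_j\}} = \bigcup_{i \in I}\Env_{\{v_i\}} \neq \R^d$, confirming $M'_t \in D_A$.

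The substantive step is the first one; the construction of $M'_t$ and the three checks above are routine linear-growth estimates on distance to a closed cone. The connectedness argument in Step 1 requires $d \geq 2$, consistently with the $\Z^2$ setting driving this section; in dimension one the lemma actually fails, for instance $H_{\{a_1\}}$ is isolated in $D_A$ when $A \subseteq \Z$, since any $M' \in D_A$ agreeing with $H_{\{a_1\}}$ on a large window is forced to equal it.
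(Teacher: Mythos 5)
Your proof is correct, and it takes a genuinely different route from the paper for the key geometric step. Both arguments must produce a direction $\eta$ outside $\bigcup_i \Env_{\{v_i\}}$ together with some $j$ for which $-\eta \notin \Env_{\{v_j\}}$, and then perform the same final construction (adjoin a far-away translate of $H_{\{v_j\}}$, which preserves both the local data near the origin and the envelope union, hence deficiency). For the key step, the paper restricts the union of envelopes to the unit sphere $S^{d-1}$, picks a boundary point $u$ of that closed set, uses pointedness of the cone $\Env_{\{v\}}$ containing $u$ to get $-u \notin \Env_{\{v\}}$, and then perturbs $u$ slightly off the union; you instead assume the pair does not exist and derive a clopen partition $\R^d \setminus \{0\} = (\bigcup_i \Env_{\{v_i\}} \setminus\{0\}) \sqcup (-C \setminus \{0\})$, contradicting connectedness. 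Both rely on pointedness of the vertex cones of a full-dimensional polytope and, ultimately, on connectedness for $d \ge 2$; the paper's proof uses connectedness of $S^{d-1}$ implicitly to guarantee the boundary point is non-trivial, while yours makes the connectedness hypothesis explicit. Your approach avoids the perturbation step and is cleaner to state, at the mild cost of an argument by contradiction; the paper's is more constructive, pinning down a concrete direction near the boundary. Your observation that the lemma fails for $d=1$ is correct and is not addressed in the paper (the lemma as stated has no dimension restriction, but it is only applied in the $\Z^2$ setting, and the paper's own proof also tacitly requires $d \ge 2$ for the boundary of $U$ in $S^{d-1}$ to be nonempty).
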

\begin{proof}
  Let $M = \bigcup_{i \in I} (u_i+H_{\{v_i\}})$ be a finite deficient
  union of horoballs.  Set
  $U := \{w \in \bigcup_{i \in I} \Env_{\{v_i\}} \setsep ||w||=1\}$.
  Let $u$ be an element in the boundary of $U$ considered as a subset
  of $S^{d-1}=\{w \in \R^d \setsep ||w||=1\}$. We can find
  $v \in \{v_i \setsep i \in I\}$ such that $u \in \Env_{\{v\}}$. Then
  $-u \not \in \Env_{\{v\}}$ and we can find
  $\tilde{u} \in \Z^d \setminus \bigcup_{i \in I} \Env_{\{v_i\}}$ such
  that still $-\tilde{u} \not \in \Env_{\{v\}}$.  Hence, for every finite
  $W \subseteq \Z^d$ we can find $t_0 >0$ such that for all $t>t_0$ we
  have $(t\tilde{u}+\Env_{\{v\}}) \cap W = \emptyset$.  Since
  $M = \bigcup_{i \in I} (u_i+H_{\{v_i\}})$, by our choice of
  $\tilde{u}$ we can find $t_1\in \N$, $t_1>t_0$ such that
  $t_1 \tilde{u} \not\in M$, hence
  $M \cup (t_1 \tilde{u} + H_{\{v\}})$ is a $W$-approximation of $M$
  which is different from $M$ but still deficient.
\end{proof}

The following results will show that there are cases where the Cantor-Bendixon rank is  $0$ or $1$:

\begin{prop}\label{prop:conv_triangle_perfect}
If $A \subseteq \Z^2$ is a positively generating set  with $0 \in A$ and $\conv(A)$ is a triangle then $\Evt(\varphi_A)$ is perfect.	
\end{prop}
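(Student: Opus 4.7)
The plan is to establish that the set $D_A$ of deficient horoballunions is dense in $\Evt(\varphi_A)$, and then invoke \Cref{lem:deficient-perfect} to deduce that $\Evt(\varphi_A)$ has no isolated points.

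The geometric heart of the argument is the following observation, which crucially uses the triangle hypothesis: for each vertex $v \in \Ext(A)$ the envelope $\Env_{\{v\}}$ is a two-dimensional convex cone with apex at the origin whose opening angle equals the interior angle of $\conv(A)$ at $v$. Since the three interior angles of a triangle sum to $\pi$, the union $\bigcup_{v \in \Ext(A)} \Env_{\{v\}}$ has total angular measure at most $\pi < 2\pi$ on $S^1$, so it is a proper subset of $\R^2$. Consequently \emph{any} finite union of vertex-horoballs $\bigcup_{i=1}^k(u_i + H_{\{v_i\}})$, where each $v_i \in \Ext(A)$, is automatically deficient.

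For density, I would fix $M \in \Evt(\varphi_A)$ and a finite window $W \subseteq \Z^2$ and produce a deficient $W$-approximation of $M$. By \Cref{prop:maximal_attractor}, $M$ is a union of horoballs, and by \Cref{thm:ZD_horoballs_are_faces_of_convex_hull} each such horoball has the form $u' + H_F$ for a face $F$ of $\conv(A)$. From \eqref{eq:H_F_module} one reads off that $H_{\{v\}} \subseteq H_F$ for every $v \in \Ext(F)$; since $H_F$ is a subsemigroup of $\Z^2$ containing $0$, whenever $w \in u' + H_F$ the stronger inclusion $w + H_{\{v\}} \subseteq u' + H_F \subseteq M$ also holds. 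Hence for each $w \in M \cap W$ we can select $v_w \in \Ext(A)$ with $w + H_{\{v_w\}} \subseteq M$, and setting
\[
M' := \bigcup_{w \in M \cap W}(w + H_{\{v_w\}})
\]
gives a finite union of vertex-horoballs with $M' \subseteq M$ and $M \cap W \subseteq M'$, so $M' \cap W = M \cap W$. The geometric observation then forces $M' \in D_A$, while $M' \in \Evt(\varphi_A)$ by \Cref{prop:maximal_attractor}.

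To conclude: given any $M \in \Evt(\varphi_A)$ and any neighborhood $U$ of $M$, density supplies a deficient $M_1 \in U$, and \Cref{lem:deficient-perfect} then supplies a second deficient $M_2 \in U$ with $M_2 \neq M_1$; at least one of $M_1, M_2$ differs from $M$, so $M$ is not isolated. I expect the only delicate step to be a clean statement of the angular count for the three tangent cones of the triangle (together with the identification of $\Env_{\{v\}}$ with this tangent cone translated to the origin); the rest of the argument is elementary manipulation of the formula \eqref{eq:H_F_module}.
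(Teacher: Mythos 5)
Your proof is correct and follows the same route as the paper: observe that the triangle angle sum $\pi < 2\pi$ makes every finite union of vertex-horoballs deficient, show that $D_A$ is dense in $\Evt(\varphi_A)$, and invoke \Cref{lem:deficient-perfect}. The paper's proof is terser — it leaves the density of $D_A$ implicit (the relevant fact is essentially \Cref{lem:dense-set-in-evtl}, applied together with the deficiency observation) — whereas you give a self-contained density argument by choosing, for each $w \in M \cap W$, a vertex-horoball $w + H_{\{v_w\}} \subseteq M$ via the semigroup structure of $H_F$; this is a small, valid variation that avoids citing \Cref{lem:dense-set-in-evtl}.
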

\begin{proof}
	Let $A$ be as above.
	Because the sum of the angles of a triangle is $\pi$, which is strictly less than $2\pi$, every finite union
	of $A$-horoballs  is deficient. Hence by \Cref{lem:deficient-perfect}
	$D_{A}$ is a
	dense subset of $\Evt(\varphi_{A})$ without isolated points.
\end{proof}

\begin{prop}\label{prop:evt_rank_1}
	Suppose $A \subseteq \Z^2$ is a positively generating set  with $0 \in A$ that satisfies the following properties:
	\begin{enumerate}
   \item \label{cond:cover} For every
     $w \in \mathbb{R}^2 \setminus \{0\}$ there exists
     $v \in \Ext(A)$ so that $w$ is contained in the interior
     of $\Env_{\{v\}}$.
   \item \label{cond:not_cover}For every $v \in \Ext(A)$ there exists
     $w \in \mathbb{R}^2 \setminus \{0\}$ which is not contained in
      \[\bigcup_{v' \in \Ext(A)\setminus \{v\}}\Env_{\{v'\}}.\]
	\end{enumerate}
Then the Cantor-Bendixon rank of $\Evt(\varphi_A)$ is equal to $1$.
\end{prop}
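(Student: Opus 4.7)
Since Cantor--Bendixson rank one means $\Evt(\varphi_A)\neq \Evt(\varphi_A)'$ but $\Evt(\varphi_A)'=\Evt(\varphi_A)''$, I will prove the two halves separately.

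To produce an isolated point, I plan to build a specific $M^{*}\in\Evt(\varphi_A)$ of the form $M^{*}=\bigcup_{v\in\Ext(A)}(u_v+H_{\{v\}})$. Condition~(1) guarantees that $\bigcup_v\Env_{\{v\}}=\R^2$ with overlap, so the translations $u_v\in\Z^2$ can be arranged so that $F^{*}:=\Z^2\setminus M^{*}$ is finite; in the limit of the argument in the proof of \Cref{lem:hausdorff-limit-of-preimages} and using \Cref{lem:Env_cap_Zd_contained_H_F}, a suitable choice yields $F^{*}$ as small as desired. Condition~(2) supplies, for each $v\in\Ext(A)$, a private direction $w_v$ lying outside every other envelope, and I will use this to arrange that $F^{*}$ is \emph{rigid}: for every $p \in \Z^{2} \setminus F^{*}$ outside a sufficiently large finite window $W\supseteq F^{*}$, the set $\Z^2\setminus(F^{*}\cup\{p\})$ is no longer a union of horoballs, because adding $p$ completes the trap of some boundary point $q$ of $F^{*}$ that was previously coverable only by a $v$-horoball aligned with the direction from $q$ to $p$. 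The key computation (analogous to the discussion surrounding $p\in q+H_{\{v\}}$) is that every $v$-horoball through $q$ contains every element of $q+H_{\{v\}}$. Given rigidity, $M^{*}$ is isolated: any $M'\in\Evt(\varphi_A)$ with $M'\cap W = M^{*}\cap W$ satisfies $M'^{c}\cap W = F^{*}$; a finite-complement perturbation $M'^{c}=F^{*}\cup F''$ with $F''\subseteq \Z^2\setminus W$ nonempty is excluded by rigidity, and an infinite-complement perturbation is excluded by the private-direction argument, since $M'$ would then have to omit infinitely many points from the tail of some $u_v+H_{\{v\}}$, producing a direction $w_v$ with insufficient coverage that forces a discrepancy within $W$.

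To show the derivative is perfect, I plan to identify $\Evt(\varphi_A)' = \overline{D_A}$ and apply \Cref{lem:deficient-perfect}. One inclusion is immediate from that lemma. For the reverse, I will argue that every non-isolated $M\in\Evt(\varphi_A)$ lies in $\overline{D_A}$: if $M$ has infinite complement, truncation of its horoball decomposition to any finite subunion is deficient (since removing horoballs strictly decreases the union of envelopes, and for an infinite decomposition no finite subunion achieves full coverage), so $M$ is an accumulation point of $D_A$; and if $M$ has finite but non-rigid complement, a similar localization at the point of non-rigidity produces a sequence of deficient approximations. Since $\overline{D_A}$ inherits the perfectness of $D_A$, the derivative is perfect.

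The main obstacle is the rigidity construction. It requires a delicate combinatorial placement of the translations $u_v$ so that the boundary of $F^{*}$ consists of points each of which is almost trapped (blocked for all vertex types except one), with the single residual vertex type varying over $\Ext(A)$, and with the corresponding residual cones collectively covering $\Z^2\setminus W$; only then does every $p$ outside $W$ land in some residual cone and complete a trap. Verifying that conditions~(1) and~(2) together make such a configuration possible is the core of Step~1.
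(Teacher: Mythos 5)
Your proposal takes a genuinely different route from the paper, and it has several gaps.

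\textbf{Comparison with the paper's approach.} The paper does not attempt to construct a single explicit isolated point and then separately analyze the derived set. Instead it proves directly that any $M \notin \overline{D_A}$ is isolated: by definition of the closure there is a finite window $W$ so that every $W$-approximation of $M$ is non-deficient; condition~(2) then forces every such $W$-approximation to contain one of the finitely many cofinite sets $\bigcup_{w\in W}(w+H_{\{v_w\}})$ with $\{v_w\setsep w\in W\}=\Ext(A)$ (and these are all cofinite by condition~(1)); enlarging $W$ to a finite $\tilde W$ containing all those finite complements then pins down $M$ exactly. Combined with \Cref{lem:deficient-perfect}, this yields $\Evt(\varphi_A)'=\overline{D_A}$, which is perfect, at one stroke. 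The paper's route avoids any rigidity bookkeeping and any case-split by the size of the complement.

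\textbf{Gaps in your argument.}
\begin{enumerate}
\item Your Step~1 (the rigidity construction of $M^*$) is not carried out, and you say so yourself. But this is not a technicality to be filled in later: without it your proof does not exhibit a single isolated point, so you have not shown $\Evt(\varphi_A)'\neq\Evt(\varphi_A)$, i.e.\ that the rank is at least $1$. Worse, your rigidity notion as stated only addresses adjoining a \emph{single} point $p$ outside $W$ to the complement, whereas a $W$-approximation could enlarge the complement by an arbitrary (possibly infinite) set outside $W$; your remark about the infinite case is informal and does not pin down what discrepancy inside $W$ is forced.
\item In your proof that infinite complement implies $M\in\overline{D_A}$, the stated justification is wrong: removing a horoball from a decomposition does \emph{not} in general strictly decrease the union of envelopes (several horoballs may share the same vertex type), and it is not true in general that no finite subunion achieves full coverage. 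The correct argument is different: under condition~(1) any finite subunion whose envelopes cover $\R^2$ is cofinite, and a cofinite subset of $M$ would force $M$ itself to be cofinite, contradicting the assumption; hence every finite truncation is deficient, which gives $M\in\overline{D_A}$. Your reason doesn't establish the conclusion even though the conclusion is salvageable.
\item The case ``finite but non-rigid complement'' relies on an undefined notion (``non-rigid''), and the accompanying argument (``a similar localization at the point of non-rigidity produces a sequence of deficient approximations'') is not a proof. This whole case is rendered unnecessary by the paper's single implication $M\notin\overline{D_A}\Rightarrow M$ isolated.
\end{enumerate}

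In short: the route is substantially different from the paper's and more intricate, and as written the two load-bearing steps (the rigidity construction and the case analysis proving $\Evt(\varphi_A)'\subseteq\overline{D_A}$) are either absent or incorrectly justified.
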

\begin{proof}
  Let $A$ be as above.  We will prove the result by showing that any
  horoballunion $M$ which is not in the closure of $D_{A}$ is isolated
  in $\Evt(\varphi_A)$. Let $M$ be such a horoballunion.  There exists
  a finite set $W \subseteq \Z^d$ such that all $W$-approximations of
  $M$ are non-deficient. Consider the horoballunions of the form
  $\bigcup_{w \in W} (w+H_{\{v_w\}})$ with
  $\{v_w \setsep w \in W\} = \Ext(A)$.  There are only finitely many
  of them, they are all cofinite by \eqref{cond:cover}, and by
  \eqref{cond:not_cover} every $W$-approximation of $M$ contains one
  of them.  Therefore we can find a finite set $\tilde{W}$
  such that all $\tilde{W}$-approximations of $M$ contain the
  complement of $\tilde{W}$.  Thus $M$ is isolated.  This shows
  that the isolated points of $\Evt(\varphi_{A})$ are precisely all
  points not in $\overline{D_{A}}$.
\end{proof}

The following result demonstrates the case of Cantor-Bendixson rank $\omega_0$.

\begin{prop}\label{prop:rect_rank_omega_0}
	Suppose $n \in \N$ and let $A = \{-1,0,-1\}^2 \subseteq \Z^2$.
	Then $\Evt(\varphi_{A})$ has Cantor-Bendixson rank $\omega_0$.
\end{prop}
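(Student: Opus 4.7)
The strategy is to exhibit the strict descent $\Evt(\varphi_A) = \Evt^{(0)} \supsetneq \Evt^{(1)} \supsetneq \Evt^{(2)} \supsetneq \cdots$ by constructing explicit horoballunions $M_k \in \Evt^{(k)} \setminus \Evt^{(k+1)}$ for each $k \in \N$, and then verify that $\Evt^{(\omega_0)} = \Evt^{(\omega_0+1)}$, giving rank exactly $\omega_0$.

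The structural input is \Cref{thm:ZD_horoballs_are_faces_of_convex_hull}: since $\conv(A) = [-1,1]^2$, the horoballs in $\Z^2$ are, up to translation, the four vertex-type quadrants $H_{\{v\}}$ for $v \in \Ext(A)$ and the four edge-type half-planes $H_F$. Every element of $\Evt(\varphi_A)$ is therefore a union of translated quadrants and half-planes; dually, its complement is a sheltered subset of $\Z^2$. For each $k \ge 0$ I will construct a ``staircase'' horoballunion
\[
  M_k := \bigcup_{j=0}^{k} \bigl(H_{\{(-1,-1)\}} + (j,-j)\bigr),
\]
a union of $k+1$ translated first-quadrant horoballs stepped along the anti-diagonal. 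The complement $M_k^c$ is a sheltered staircase with $k+1$ concave corners. Sending the outermost corner $(k,-k)$ out to infinity produces a sequence of horoballunions converging in $\PP(\Z^2)$ to $M_{k-1}$; iterating this procedure (replacing successive corners by running translates) exhibits $M_k$ as an accumulation point of order $k$, so $M_k \in \Evt^{(k)}$.

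To show $M_k \notin \Evt^{(k+1)}$ I will identify a finite window $W_k$ containing all $k+1$ staircase corners together with a buffer, and prove that any element of $\Evt^{(k)}$ agreeing with $M_k$ on $W_k$ must equal $M_k$. The argument relies on the combinatorial characterisation of sheltered subsets of $\Z^2$ with respect to the square generator — every two sheltered points on a common horizontal or vertical line force the whole axis-aligned segment between them — which makes the staircase pattern rigid on $W_k$ at level $k$. Finally, $\Evt^{(\omega_0)} = \bigcap_k \Evt^{(k)}$ is identified as the set of ``infinitely accumulable'' horoballunions (including $\emptyset$, $\Z^2$, the translated single horoballs, and limits of staircase constructions); this set is perfect because each of its elements admits approximations by distinct elements of $\Evt^{(\omega_0)}$ obtained by translating horoball components along lattice directions.

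The principal obstacle I anticipate is the verification that $M_k$ is isolated in $\Evt^{(k)}$ on the chosen window $W_k$. Ruling out alternative horoballunions in $\Evt^{(k)}$ that mimic the staircase of $M_k$ on $W_k$ without being equal to $M_k$ requires a delicate combinatorial analysis: one must track how the Cantor--Bendixson rank of a horoballunion is determined by the corner structure of its sheltered complement, and establish that any such alternative either has strictly smaller rank (and hence does not lie in $\Evt^{(k)}$) or already differs from $M_k$ on $W_k$. Controlling this interplay between rank and local combinatorial structure, specific to the square generator $\{-1,0,1\}^2$, is the technical heart of the proof.
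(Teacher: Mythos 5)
Your construction has a fatal defect that undermines the whole strategy: the staircase $M_k = \bigcup_{j=0}^{k}\bigl(H_{\{(-1,-1)\}}+(j,-j)\bigr)$ uses only one horoball type, so $\bigcup_i \Env_{\{v_i\}} = \Env_{\{(-1,-1)\}} \neq \R^2$. That makes every $M_k$ \emph{deficient} in the sense of the paper's $D_A$. But \Cref{lem:deficient-perfect} shows $D_A$ has no isolated points, and a routine induction then gives $D_A \subseteq \Evt(\varphi_A)^{(\alpha)}$ for every ordinal $\alpha$. Consequently your $M_k$ sits in the perfect kernel $\bigcap_k \Evt^{(k)}$ from the very start: it can never lie in $\Evt^{(k)} \setminus \Evt^{(k+1)}$, and the ``push the outermost corner to infinity'' argument is doing no work at the level you want, since $M_k$ is already in \emph{every} derivative. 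The attempt to prove $M_k$ isolated in $\Evt^{(k)}$ via a finite window is bound to fail for the same reason — arbitrarily small perturbations inside $D_A$ always exist.

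The element you need at level $k$ must be \emph{non-deficient}: it should be a horoballunion whose envelopes cover $\R^2$ (so in particular using all four quadrant types) and whose complement consists of exactly $k$ pairwise disjoint coordinate rays. This is what the paper's \Cref{fig:rank-k-non-empty} depicts: a horoballunion that is cofinite except for $k$ disjoint rays. More fundamentally, the paper proceeds not by exhibiting a descending chain of explicit points, but by introducing a global rank invariant — the maximal number of pairwise disjoint coordinate rays in the complement, or $\infty$ for elements of $\overline{D_A}$ — and showing (i) rank is finite off $\overline{D_A}$, (ii) $\{\text{rank} \geq k\}$ is closed, and (iii) every rank-$(k+1)$ element is approximable by rank-$k$ elements. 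That simultaneously identifies $\Evt^{(k)}$ as $\{\text{rank} \geq k\}$ and pins the perfect kernel to $\overline{D_A}$. Even if you repair your witnesses $M_k$, establishing $M_k \in \Evt^{(k)} \setminus \Evt^{(k+1)}$ will effectively force you to reinvent something like this rank function, because the combinatorial rigidity you hope to exploit on a finite window is precisely governed by the ray structure in the complement.
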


\begin{proof}
  Let $A = \{-1,0,1\}^2 \subseteq \Z^2$.
    Denote \[B=\{(-1,0),(1,0),(0,-1),(0,1)\} \subseteq \Z^2.\]
    Let  a \emph{ray} be a set of the form $\{(x,y) +tv \setsep t \in
    \N\}$ with $(x,y) \in \Z^2$ and $v \in B$.
    For
    $M \in \Evt(\varphi_A)$ define the \emph{rank} of $M$ as follows.
    If $M$ is in the closure of $D_{A}$, the rank of $M$ is $\infty$.
    If $M$ is not in the closure of $D_{A}$, the rank of $M$ is the
    maximal number of pairwise disjoint rays contained in the
    complement of $M$.

    Let us first establish that for
    $M \not \in \overline{D_A}$, the rank is indeed finite.  Let
    $M \not \in \overline{D_A}$, hence there is a finite set $W$ such
    that all $W$-approximations of $M$ are non-deficient. Hence there
    must be $w_v \in \Z^2$ for $v \in \Ext(A)$ such that all
    $W$-approximations of $M$ contain $\bigcup_{v \in \Ext(A)} (w_v + H_{\{v\}})$. But
    no ray contained in $\{(0,y) + t(1,0) \setsep t \geq 0\}$, with $y$ between
    the y-coordinates of $w_{(-1,-1)}$ and $w_{(-1,1)}$ can be
    contained in the complement of $M$. Hence there can be only
    finitely many pairwise disjoint rays with direction $(1,0)$ in the complement of
    $M$. Similarly for the other directions.

    Now we want to show that
    the set of all horoballunions having rank $k$ or larger is closed.
    Let  $M \not\in
    \overline{D_A}$. We have to show that every sufficiently good
    approximation of $M$ has rank at most that of $M$.
    As above there is a finite set $W$ and there are
    $w_v \in \Z^2$ for $v \in \Ext(A)$ such that all
    $W$-approximations of $M$ contain
    $\tilde{M}:=\bigcup_{v \in \Ext(A)} \left(w_v + H_{\{v\}}\right)$.  Therefore $\tilde{M}$
    has finite rank larger or equal that of $M$.
    Let
    $R=\{(x,y) + t(1,0) \setsep t \in \N\} $ be a ray contained in $M$
    but disjoint from $\tilde{M}$.
    It is now enough to show that every sufficiently good approximation of $M$ also
    contains $R$. Set $L:=\{(x,y) + t(1,0) \setsep t \in \Z\}$
  
  If $L \cap (X \setminus M) \neq
  \emptyset$ then every sufficiently good approximation
  of $M$ by a horoballunion will also contain the ray
  $R$.

  If, on the other hand, $L \subseteq M$ we will show that every
  sufficiently good approximation of $M$ by horoballunions must
  contain $L$.  Assume there are arbitrary good approximations of $M$
  by horoballunions not containing $L$.  It is easy to see that in
  this case either the upper or lower half space defined by $L$ is
  contained in $M$.  Without loss of generality assume it is the lower
  half space, call it $S$, and that $S$ is the maximal one contained
  in $M$.  Every sufficiently good approximation of $M$ must contain
  $\left((x_1,y_1)+H_{(1,1)}\right) \cup \left((x_{-1},y_{-1})+H_{(-1,1)}\right)$
  for points $(x_1,y_1) \in \Z^2$ and $(x_{-1},y_{-1}) \in \Z^2$ with
  $y_1 > y$, $y_{-1}>y$ and $x_{1} < x_{-1}$.  This is due to the fact
  that $M$ is not contained in $\overline{D_A}$ and that $S$ is the
  maximal lower half space contained in $M$.  Consider a vertical line
  $V=\{(a,0) + t(0,-1) \setsep t \in \Z\}$ with $y_{1} < a < y_{-1}$.
  It is now enough to show that $S \cap V$ is contained in every
  sufficiently good approximation of $M$.  We have to treat two cases.
  \begin{enumerate}
    \item The left or the right half space define by $V$ is contained
      in $M$. Assume it is the left one and call it $\tilde{S}$.
      As discussed above, since $M$ is not in the closure of $D_A$,
      there must be a point $\tilde{w} \in \Z^2
      \setminus (S \cup \tilde{S})$ such that $\tilde{w} + H_{\{(1,1)\}}$
    is contained in every sufficiently good approximation of $M$.
    But then $V \cap S \subseteq \tilde{w} + H_{\{(1,1)\}}$
    is also contained in every sufficiently good approximation of $M$.
    A similar argument works if the right half space defined by $V$
    is contained in $M$.
    \item Neither the left nor the right half space defined by $V$
  is contained in $M$.
  Hence $V \cap (\Z^2 \setminus M) \neq \emptyset$, as discussed above, so
  $V \cap S$ is contained in every sufficiently good
  approximation of $M$.
\end{enumerate}
This shows that the whole of $S$
  is contained in every sufficiently good approximation of $M$.
  All in all we thus showed that the horoballunions having rank $k$
  or larger form a closed subset of $\Evt(\varphi_A)$.

  Now we have to show that every horoballunion of rank $k+1$
  can be arbitrarily well approximated by a horoballunion of rank
  $k$. Assume that $\Z^2 \setminus M$ contains
  a ray with direction $(0,1)$ and let $\{(x,y) + t(0,1) \setsep t \in
  \N\}$ be the rightmost of them. Then $(M \cup ((x,n) +
  H_{(-1,-1)}))_{n \in \N}$ is sequence of horoballunions
  converging to $M$ whose rank is $k$ for sufficiently large $n$.
  See \Cref{fig:approx-horoballunion} for an illustration.

  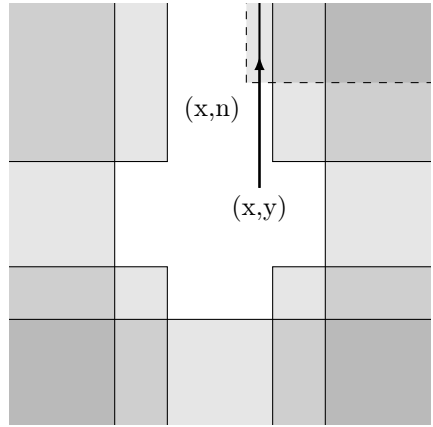
\begin{figure}
    \centering
    \begin{tikzpicture}[scale=0.7]
      \begin{scope}
        \clip (-4,-4) rectangle (4,4);
        \draw[fill=black, fill opacity=0.1] (1,1)  rectangle (10,10) ;
        \draw[fill=black, fill opacity=0.1](-1,-1)  rectangle (-10,-10) ;
        \draw[fill=black, fill opacity=0.1](1,-1)  rectangle (10,-10) ;
        \draw[fill=black, fill opacity=0.1] (-1,1)  rectangle (-10,10) ;
        \draw[fill=black, fill opacity=0.1] (-10,-2)  rectangle (10,-20) ;
        \draw[fill=black, fill opacity=0.1] (-10,-10)  rectangle (-2,10) ;
        \draw[fill=black, fill opacity=0.1] (10,-10)  rectangle (2,10) ;
        
        \draw[fill=black, dashed, fill opacity=0.1] (0.5,2.5)  rectangle (10,10) ;
        \draw[-latex, thick] (0.75,0.5) -- (0.75,10);
        \draw[-latex, thick] (0.75,0.5) -- (0.75,3);
        \node (A) at (0.75,0.1) {(x,y)};
        \node (B) at (-0.15,2) {(x,n)};
      \end{scope}
    \end{tikzpicture}
    \caption{Approximating a horoballunion of rank $k+1$ by one of
      rank $k$}.
    \label{fig:approx-horoballunion}.
  \end{figure}
  
  By induction this shows that $\Evt(\varphi_A)^{(k)}$
  consists of all horoballunions of rank at least $k$
  and the isolated points in this set are those
  of rank precisely $k$. We also saw that $\bigcap_{k \in \N_0}
  (\Evt(\varphi_A))^{(k)} = \overline{D_A}$.
  Finally see \Cref{fig:rank-k-non-empty} for an 
  element in $\Evt(\varphi_A)^{(k)}$, so all these sets are non-empty.
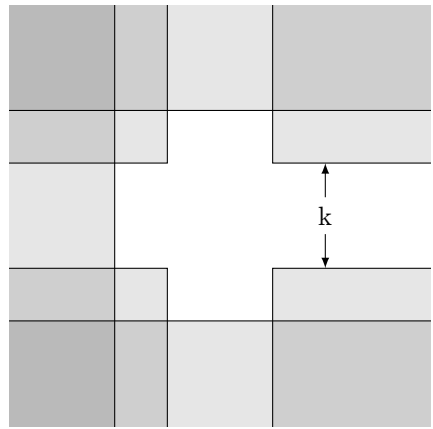
\begin{figure}
  \centering
  \begin{tikzpicture}[scale=0.7]
    \begin{scope}
      \clip (-4,-4) rectangle (4,4);
      \draw[fill=black, fill opacity=0.1](-1,-1)  rectangle (-10,-10) ;
      \draw[fill=black, fill opacity=0.1](1,-1)  rectangle (10,-10) ;
      \draw[fill=black, fill opacity=0.1] (-1,1)  rectangle (-10,10) ;
      \draw[fill=black, fill opacity=0.1] (1,1)  rectangle (10,10) ;
      \draw[fill=black, fill opacity=0.1] (-10,2)  rectangle (10,20) ;
      \draw[fill=black, fill opacity=0.1] (-10,-2)  rectangle (10,-20) ;
      \draw[fill=black, fill opacity=0.1] (-10,-10)  rectangle (-2,10) ;
      \node (A) at (2,0) {k};
      \draw[-latex] (A.north) -- (2,1);
      \draw[-latex] (A.south) -- (2,-1);
    \end{scope}
  \end{tikzpicture}
  \caption{An element of $\Evt(\varphi_A)$ with rank $k$.}
  \label{fig:rank-k-non-empty}
\end{figure}

  All in all we therefore showed that $\Evt(\varphi_A)$ has rank $\omega_0$.
\end{proof}

\section{Factoring and non-factoring results for $\varphi_A$, $A
  \subseteq\Z^d$}\label{sec:factoring}

The main purpose of this final section is to present the following seemingly innocent question, which we have been unable to resolve:

\begin{quest}\label{quest:factoring}
Given two finite positively generating sets $A_1 \subseteq \Z^{d_1}$ and $A_2 \subseteq \Z^{d_2}$ when does $(\PP(\Z^{d_1}),\varphi_{A_1})$ factor onto $(\PP(\Z^{d_2}),\varphi_{A_2})$?
\end{quest}
In fact, apart from the case $d_1=1$ and $d_2>1$, we do not know of any example where the answer is false.  On the other hand, we know very few examples of non-trivial factor maps between such systems.

The rest of  the section presents feeble partial results on the above question.

It is a folklore result that any factor map between  topological dynamical systems induces a factor map between the eventual images and between the natural extensions. The simple proof (below) is  a  compactness argument.
\begin{lem}\label{lem:facor_natural_extension_lift}
  Let $X,Y$ be compact Hausdorff topological spaces and $f: X \to X$ and $g: Y \to Y$ be continuous maps, and let  $\pi: X \to Y$ be a continuous surjective map such that $ \pi \circ f = g \circ \pi$. 
Then:
\begin{enumerate}
\item The restriction of $\pi$ to $\Evt(f)$ is onto $\Evt(g)$.
\item Let  $\hat \pi:\hat X_f \to \hat X_g$ be given by 
  $\hat \pi ((x_n)_{n \in \Z})_k = \pi (x_k)$. Then $\hat{\pi}$ is
  a factor map from $(\hat X_f,\hat f)$ onto $(\hat X_g,\hat g)$.
\end{enumerate} 
\end{lem}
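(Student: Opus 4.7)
The plan is to handle the two parts in sequence; both reduce to applying the finite intersection property in a compact space.

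For part (1), I would first verify by induction that $\pi \circ f^n = g^n \circ \pi$, so that $\pi(f^n(X)) = g^n(\pi(X)) = g^n(Y)$ for every $n \ge 0$. The inclusion $\pi(\Evt(f)) \subseteq \Evt(g)$ is then immediate, since
\[
\pi\Bigl(\bigcap_{n} f^n(X)\Bigr) \subseteq \bigcap_{n} \pi(f^n(X)) = \bigcap_{n} g^n(Y) = \Evt(g).
\]
For the reverse inclusion, fix $y \in \Evt(g)$ and put $K_n := f^n(X) \cap \pi^{-1}(\{y\})$. Each $K_n$ is closed (hence compact) in $X$, nonempty (because $y \in g^n(Y) = \pi(f^n(X))$), and $K_{n+1} \subseteq K_n$. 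By compactness, $\bigcap_n K_n$ is nonempty, and any point in it lies in $\Evt(f)$ and maps to $y$.

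For part (2), I would first dispatch the routine checks: $\hat\pi$ sends $\hat X_f$ into $\hat X_g$ (apply $\pi \circ f = g \circ \pi$ to each coordinate of the relation $\hat x_{n+1} = f(\hat x_n)$), $\hat\pi$ is continuous as a coordinate-wise map between product spaces, and $\hat \pi \circ \hat f = \hat g \circ \hat \pi$ because both sides amount to shifting and then applying $\pi$. The real content is surjectivity. Given $\hat y \in \hat X_g$, note that $\hat y_n = g^k(\hat y_{n-k})$ for every $k \ge 0$, so $\hat y_n \in \Evt(g)$ for all $n \in \Z$. For each $N \in \N$ set
\[
L_N := \{\hat x \in \hat X_f \setsep \pi(\hat x_k) = \hat y_k \text{ for } -N \le k \le N\}.
\]
The $L_N$ are closed in the compact space $\hat X_f$ and nested decreasing. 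If each is nonempty, the finite intersection property produces some $\hat x \in \bigcap_N L_N$ with $\hat\pi(\hat x) = \hat y$, finishing the proof.

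The key remaining step is that $L_N \neq \emptyset$. By part (1) pick $x_{-N} \in \Evt(f)$ with $\pi(x_{-N}) = \hat y_{-N}$, and set $x_k := f^{k+N}(x_{-N})$ for $k \ge -N$; then $\pi(x_k) = g^{k+N}(\hat y_{-N}) = \hat y_k$, so the ``future'' constraints are met. The task is to produce a bi-infinite trajectory through $x_{-N}$. Because $x_{-N} \in \bigcap_{j} f^j(X)$, for each $j$ there exists $w^{(j)} \in X$ with $f^j(w^{(j)}) = x_{-N}$; define $\hat x^{(j)} \in X^\Z$ by $\hat x^{(j)}_{-N-\ell} := f^{j-\ell}(w^{(j)})$ for $0 \le \ell \le j$, $\hat x^{(j)}_k := f^{k+N}(x_{-N})$ for $k \ge -N$, and arbitrary filler for $k < -N-j$. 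By compactness of $X^\Z$, extract a convergent subsequence $\hat x^{(j_m)} \to \hat x$; for each fixed $k$ the identity $\hat x^{(j_m)}_{k+1} = f(\hat x^{(j_m)}_k)$ holds once $j_m$ is large enough, and passes to the limit, so $\hat x \in \hat X_f$. Since $\hat x_{-N} = x_{-N}$, we have $\hat x \in L_N$. The main obstacle here is precisely this backward extension, which is a standard diagonal/compactness argument in the spirit of inverse limits.
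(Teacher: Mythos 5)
Your proof is correct, and it takes a genuinely different route from the paper's. The paper proves part (2) first via a single compactness step: given $(y_n)\in \hat X_g$, it picks (by surjectivity of $\pi$, not part (1)) an arbitrary $x^k$ with $\pi(x^k)=y_{-k}$, forms the sequence $(x^k_n)_n\in X^\Z$ that equals $x^k$ for $n<-k$ and $f^{k+n}(x^k)$ for $n\ge -k$ --- crucially, these approximants are not themselves in $\hat X_f$ --- and takes a subsequential limit, which lands in $\hat X_f$ and maps to $(y_n)$; part (1) is then read off as a corollary (a point of $\Evt(g)$ extends to a bi-infinite $g$-trajectory, whose $\hat\pi$-preimage has $0$-th coordinate in $\Evt(f)$). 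Your argument reverses the order: you prove part (1) directly and cleanly via the finite intersection property applied to $K_n=f^n(X)\cap\pi^{-1}(\{y\})$, then for part (2) you run a two-level compactness argument (outer FIP on the sets $L_N\subseteq\hat X_f$, plus an inner subsequential-limit argument to show each $L_N\neq\emptyset$). This is longer and arguably over-engineered --- you could have defined the $L_N$ as subsets of $X^\Z$ with only the constraints relevant to coordinates in $[-N,N]$, which would make nonemptiness trivial and eliminate the inner compactness step, essentially recovering the paper's proof --- but it is valid. One small technical point that applies to both proofs: the lemma is stated for general compact Hausdorff $X$, where $X^\Z$ is compact but not sequentially compact, so the phrase ``extract a convergent subsequence'' (which you use, as does the paper) should formally be a subnet or an FIP argument; your outer FIP argument actually handles this correctly, while your inner extraction inherits the same informality as the paper.
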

\begin{proof}
  For every $y \in \Evt(g)$ we can find $(y_n)_{n \in \Z} \in \hat X_g$ with
  $y_0=y$ by compactness. It is therefore enough to show that $\hat \pi$ is surjective.
  Let $(y_n)_{n \in \Z} \in \hat X_g$.
  For every $k \in \N$ there is $x^k \in X$ with $y_{-k} = \pi(x^{k})$.
  Set
  \[   x^k_n:= 
    \begin{cases}
      x^k & \text{ if } n < -k, \\
   f^{k+n}(x^k) &\text{ if } n \geq -k .
    \end{cases}
  \]
  Let $x$ be a limit of some subsequence of $((x^k_n)_{n \in \Z})_{k
    \in \N}$. Then $x \in \hat X_f$ and $(y_n)_{n \in \Z} =
    \hat \pi((x_n)_{n \in \Z})$.
  \end{proof}

\begin{prop}
  There is no factor map from $(\PP(\Z),\varphi_{\{-1,0,1\}})$ onto
  $(\PP(\Z^2),\varphi_{\{-1,0,1\}^2})$.
\end{prop}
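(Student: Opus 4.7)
The plan is to apply \Cref{lem:facor_natural_extension_lift}(1) to reduce matters to eventual images: any factor map $\pi : \PP(\Z) \to \PP(\Z^2)$ intertwining $\varphi_{\{-1,0,1\}}$ and $\varphi_{\{-1,0,1\}^2}$ would restrict to a continuous surjection from $\Evt(\varphi_{\{-1,0,1\}})$ onto $\Evt(\varphi_{\{-1,0,1\}^2})$. Since the image of a set under any map has cardinality at most that of the domain, it suffices to show that the left-hand eventual image is countable while the right-hand one is uncountable.

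For the countability of $\Evt(\varphi_{\{-1,0,1\}})$, I would invoke \Cref{thm:ZD_horoballs_are_faces_of_convex_hull}: in $\Z$ with generating set $\{-1,0,1\}$, the two proper faces of $\conv(A)=[-1,1]$ yield only two horoballs up to translation, namely $H_{\{-1\}} = \N_0$ and $H_{\{1\}} = -\N_0$. Consequently every horoball is a half-line, and by \Cref{prop:maximal_attractor} every element of $\Evt(\varphi_{\{-1,0,1\}})$ is a union of half-lines. Such unions are exhausted by $\emptyset$, $\Z$, a single right half-line $[a,\infty)$ with $a \in \Z$, a single left half-line $(-\infty,b]$ with $b \in \Z$, or a disjoint union $(-\infty,b]\cup[a,\infty)$ with $a > b+1$, giving a countable list.

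For the uncountability of $\Evt(\varphi_{\{-1,0,1\}^2})$, I would exhibit $2^{\aleph_0}$ distinct horoball unions. Let $Q := H_{\{(-1,-1)\}} = \{(x,y)\in\Z^2 : x,y\geq 0\}$, which is a horoball by \Cref{thm:ZD_horoballs_are_faces_of_convex_hull}, and for $n \in \N$ set $Q_n := (n,-n)+Q$. A direct check shows that $(n,-n) \in Q_m$ if and only if $n \geq m$ and $-n \geq -m$, i.e.\ $m=n$. Hence for each $S \subseteq \N$ the union
\[
M_S := \bigcup_{n \in S} Q_n
\]
is a union of horoballs and therefore lies in $\Evt(\varphi_{\{-1,0,1\}^2})$ by \Cref{prop:maximal_attractor}, and the assignment $S \mapsto M_S$ is injective since $(n,-n) \in M_S$ if and only if $n \in S$.

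I do not expect a substantial obstacle: the entire argument rests on \Cref{lem:facor_natural_extension_lift}, \Cref{thm:ZD_horoballs_are_faces_of_convex_hull}, and \Cref{prop:maximal_attractor}, all already established. The only care needed is to arrange the translates $Q_n$ along a ``staircase'' so that the corners are pairwise incomparable in the coordinate-wise order, for which the choice $(n,-n)$ suffices.
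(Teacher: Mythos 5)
Your proof is correct and follows essentially the same cardinality argument as the paper: the left-hand eventual image is a countable list of half-lines and complements of intervals, while the right-hand one contains an uncountable ``staircase'' family of translated quadrant horoballs indexed by subsets of $\Z$ or $\N$. The only difference is cosmetic and actually an improvement: you invoke part (1) of \Cref{lem:facor_natural_extension_lift} (surjectivity on eventual images) directly, whereas the paper passes through natural extensions (part (2)) and then justifies the countability claim by computing the eventual image anyway, leaving a small step implicit; your phrasing avoids that detour.
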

\begin{proof}
  By \Cref{lem:facor_natural_extension_lift} it suffices to show that
  the natural extension of $(\PP(\Z),\varphi_{\{-1,0,1\}})$ does not
  factor onto the natural extension of
  $(\PP(\Z^2),\varphi_{\{-1,0,1\}^2})$. This follows from the fact
  that the
  natural extension of $(\PP(\Z),\varphi_{\{-1,0,1\}})$ is countable,
  whereas the natural extension of
  $(\PP(\Z^2),\varphi_{\{-1,0,1\}^2})$ is not:

  The eventual image of $(\PP(\Z),\varphi_{\{-1,0,1\}})$ consists of
  elements of the form
  \[\{\Z \setminus (s,t) \setsep s\le t\}\]
  with $s,t \in \Z \cup \{-\infty,\infty\}$.
  In particular it is countable.
  
   On the other hand the eventual image of
   $(\PP(\Z),\varphi_{\{-1,0,1\}})$
   is uncountable, as it contains the pairwise different elements
   $\bigcup_{i \in I} \left((i,-i) +  H_{\{(1,1)\}}\right)$ for $I \subseteq \Z$.
\end{proof}

We finish with some easy cases where there is a factor map.
\begin{prop}
  There is a factor map from
  \[(\Evt(\varphi_{\{-1,0,1\}^2}),\varphi_{\{-1,0,1\}^2})\] onto
  \[(\Evt(\varphi_{\{-1,0,1\}}),\varphi_{\{-1,0,1\}}).\]
\end{prop}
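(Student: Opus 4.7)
The plan is to exhibit an equivariant continuous surjection using the first-coordinate projection $\pi_1\colon\Z^2\to\Z$, $(x,y)\mapsto x$. Concretely, set
\[
\pi(M) \;:=\; \pi_1(M) \;=\; \{x\in\Z \setsep \exists y\in\Z,\ (x,y)\in M\}.
\]
I would then verify in order: equivariance, the image containment $\pi(\Evt(\varphi_{A_2}))\subseteq\Evt(\varphi_{A_1})$, surjectivity, and continuity. The first three are essentially bookkeeping, and continuity is the main obstacle.

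Equivariance is immediate: since $\pi_1$ is a group homomorphism with $\pi_1(A_2)=A_1$, the induced map on power sets satisfies $\pi_1(M+A_2)=\pi_1(M)+A_1$, hence $\pi\circ\varphi_{A_2}=\varphi_{A_1}\circ\pi$. For the image containment, every $M\in\Evt(\varphi_{A_2})$ is a union of $A_2$-horoballs by \Cref{prop:maximal_attractor}, and by \Cref{thm:ZD_horoballs_are_faces_of_convex_hull} applied to $\conv(A_2)$ (a square), those horoballs are translates of one of the four quadrants $H_{\{v\}}$ (for $v\in\{\pm1\}^2$) or one of the four half-planes $H_F$ (for edges $F$). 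Each translated quadrant and each translated vertical half-plane projects under $\pi_1$ to a half-line in $\Z$, i.e.\ an $A_1$-horoball; each horizontal half-plane projects to all of $\Z$. A union of half-lines in $\Z$ is exactly a complement of an open interval, which by the analysis of \Cref{sec:Z_d_top_dynam} (in rank one) is the generic form of elements of $\Evt(\varphi_{A_1})$. Surjectivity is direct: for any $W\in\Evt(\varphi_{A_1})$, the vertical cylinder $W\times\Z$ is a union of vertical half-plane horoballs and so lies in $\Evt(\varphi_{A_2})$, with $\pi(W\times\Z)=W$.

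The hard part is continuity, because $\pi_1$ in its naive form is \emph{not} continuous: the sequence of single quadrants $M_k=[0,\infty)\times[k,\infty)$ satisfies $M_k\to\emptyset$ pointwise in $\Evt(\varphi_{A_2})$ (each fixed point of $\Z^2$ is eventually outside $M_k$), yet $\pi_1(M_k)=[0,\infty)$ for every $k$, which does not converge to $\pi_1(\emptyset)=\emptyset$. This is the fundamental obstruction: a horoball can ``escape'' in the $y$-direction while its shadow on the first coordinate persists. To remove this pathology I would refine the projection so that only horoballs with a bounded-depth apex contribute, and then prove that this refined $\pi$ is still equivariant and surjective; alternatively, one can try to work at the level of natural extensions (where \Cref{prop:natural_extension_perfect} gives us a Cantor north-south system, and the uniqueness statement of \Cref{prop:north_south_cantor_unique} may be used to produce an equivariant map that then descends to the eventual images via the time-zero projection $\hat x\mapsto\hat x_0$). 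Either route, the technical heart of the proof is showing that the corrected projection remains well-defined, equivariant, and surjective while gaining continuity, and this is where I expect the main care to be needed.
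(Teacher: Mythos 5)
Your analysis is on the right track diagnostically, but the proof is incomplete: you correctly set up the naive coordinate projection, correctly verify equivariance, image containment and surjectivity, and — crucially — correctly identify the discontinuity obstruction with an explicit counterexample ($M_k=(0,k)+H_{\{(1,1)\}}\to\emptyset$ while $\pi_1(M_k)$ is constant). But you stop there, and neither of the two escape routes you sketch actually closes the gap.

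The second route cannot work at all, for two independent reasons. First, \Cref{thm:Zd_natural_extension_is_north_south_pole} applies only to $\Z^d$ with $d\geq 2$; the natural extension of $(\PP(\Z),\varphi_{\{-1,0,1\}})$ is \emph{countable} (the eventual image consists of complements of intervals, and the preimage tower over each one is countable), so it is not a Cantor set and \Cref{prop:north_south_cantor_unique} does not give a conjugacy. Second, and more fundamentally, even if one had a factor map (or conjugacy) between the natural extensions, it does not descend to a map on the eventual images: \Cref{lem:facor_natural_extension_lift} goes in the opposite direction (from a factor map between the systems to one between natural extensions). A map $\hat\pi\colon\hat X_{G_2}\to\hat X_{G_1}$ induces a well-defined map on the eventual images only if $\hat x_0=\hat x_0'$ implies $\hat\pi(\hat x)_0=\hat\pi(\hat x')_0$, i.e.\ only if $\hat\pi$ is compatible with the time-zero fibration, which a generic factor map of natural extensions is not.

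Your first route (``refine so that only horoballs with bounded-depth apex contribute'') is closer in spirit to what works, but as stated it is too vague: ``bounded-depth apex'' is not a local condition, and any condition that feels the apex directly will still suffer from the same escape-to-infinity discontinuity. The fix the paper uses is instead a \emph{local canonical-shape test}: it projects along the diagonal, declaring $k\in\pi(M)$ iff $(k,k)\in M$ \emph{or} the window $M\cap\{-|k|,\dots,|k|\}^2$ fails to look like the union of two opposite diagonal quadrant horoballs $((s,s)+H_{\{(-1,-1)\}})\cup((t,t)+H_{\{(1,1)\}})$. Because the test depends only on a finite window, $\pi$ is automatically continuous; because the default when the test fails is $k\in\pi(M)$, the escaping-horoball phenomenon is absorbed (in your example $M_k\in X_n$ for $k>n$ and $(j,j)\notin M_k$, so $\pi(M_k)\cap\{-n,\dots,n\}=\emptyset$, i.e.\ $\pi(M_k)\to\emptyset$); and equivariance is checked by hand from three elementary facts about the sets $X_k$. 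In short: you found the right obstacle, but the proposal leaves the construction unrealized, and the natural-extension shortcut is a dead end for structural reasons independent of any technical detail.
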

\begin{proof}
  Define a map $\pi$ from
  $(\Evt(\varphi_{\{-1,0,1\}^2}),\varphi_{\{-1,0,1\}^2})$
  to
  $\PP(\Z)$
  by
\[
\pi(M) := \left\{k \in \Z \setsep ~ (k,k) \in M \mbox{ or } M \not \in X_{|k|} \right\}
\]
  where
  \begin{align*}
    X_k := \{M \in \PP(\Z) \setsep & \exists s,t \in \Z:M \cap
  \{-k,\dots,k\}^2 \\
    &= \left(\left((s,s)+H_{\{(-1,-1)\}}\right) \cup ( (t,t) + H_{\{(1,1)\}})\right)\cap \{-k,\dots,k\}^2\}.
  \end{align*}
Let us check that $\pi$ indeed defines a factor map from   
  $(\Evt(\varphi_{\{-1,0,1\}}),\varphi_{\{-1,0,1\}})$:

  The map $\pi$ is continuous, because whether $k \in \pi(M)$
  depends only on $M \cap \{-k,\dots,k\}^2$.  Recall that
\[\Evt(\varphi_{\{-1,0,1\}}) =
  \{\Z \setminus (s,t) \setsep s< t\} \cup
  \{\Z \setminus (s,+\infty) \setsep s \in \Z\} \cup
  \{Z \setminus (-\infty,t) \setsep t \in \Z\}\cup \{\Z,\emptyset \}.
\]
Now for any $s < t$,
\begin{align*}
  \pi
  \left(
    \left((s,s) + H_{\{(-1,-1)\}}\right) \cup
    \left((t,t) + H_{\{(1,1)\}}\right)
  \right)
  &=
  \Z \setminus (s,t).\\
  \pi((s,s) + H_{\{(-1,-1)\}}) &= \Z \setminus (s,+\infty),\\
  \pi((t,t) + H_{\{(1,1)\}}) &= \Z \setminus (-\infty,t),\\
  \pi(\Z^2)&=\Z,~ \pi(\emptyset) = \emptyset.
\end{align*}
Thus the image of $\pi$ contains
  $\Evt(\varphi_{\{-1,0,1\}})$.
  It remains to show that $\pi$ intertwines the dynamics.
  For this we record the following easy facts about the sets $X_k, k \in \N$.
  \begin{enumerate}[(1)]
  \item $X_{k+1} \subseteq X_{k}$, \label{enum:X-k-1}
  \item if $M \in X_{k+1}$, then $\varphi_{\{-1,0,1\}^2}(M) \in
    X_{k}$,  \label{enum:X-k-2}
  \item If $\varphi_{\{-1,0,1\}^2}(M) \in X_k$ and there is $\ell \in
     \{-k,\dots,k\}$ with $(\ell,\ell) \not\in
     \varphi_{\{-1,0,1\}^2}(M)$, then $M \in X_{k+1}$.\label{enum:X-k-3}
  \end{enumerate}
  We first show that  $\varphi_{\{-1,0,1\}}(\pi(M)) \subseteq
  \pi(\varphi_{\{-1,0,1\}^2}(M))$.  Let $k \in
  \varphi_{\{-1,0,1\}}(\pi(M))$, so we must have $\{k-1,k,k+1\} \cap
  \pi(M) \neq
  \emptyset$.  Therefore either
  \[\{(k-1,k-1),(k,k),(k+1,k+1)\} \cap M \neq \emptyset\] or
  \[M \not\in X_{|k-1|} \cap X_{|k|} \cap X_{|k+1|}.\] In the first
  case, $(k,k) \in \varphi_{\{-1,0,1\}^2}(M)$ and $k \in
  \pi(\varphi_{\{-1,0,1\}^2}(M))$.  In the second case by \eqref{enum:X-k-1} $M \not \in
  X_{|k|+1}$ and thus by \eqref{enum:X-k-3} either $\varphi_{\{-1,0,1\}^2}(M) \not \in
  X_{|k|}$ or $(k,k) \in
  \varphi_{\{-1,0,1\}^2}(M)$. In both cases we have $k \in
  \pi(\varphi_{\{-1,0,1\}^2}(M))$.

  Now we show $\varphi_{\{-1,0,1\}}(\pi(M)) \supseteq
  \pi(\varphi_{\{-1,0,1\}^2}(M))$.
  Let $k \in \pi(\varphi_{\{-1,0,1\}^2}(M))$, so
  $(k,k) \in \varphi_{\{-1,0,1\}^2}(M)$
  or $\varphi_{\{-1,0,1\}^2}(M) \not \in X_{|k|}$.
  In the first case \[\{(k-1,k-1),(k,k),(k+1,k+1)\} \cap M \neq
  \emptyset,\] hence $\{k-1,k,k+1\} \cap \pi(M) \neq \emptyset$
  and thus $k \in \varphi_{\{-1,0,1\}}(\pi(M))$.
  In the second case $\varphi_{\{-1,0,1\}^2}(M) \not\in X_{|k|}$ hence
  $M \not\in X_{|k|+1}$. Therefore either $k+1 \in \pi(M)$ or
  $k-1 \in \pi(M)$, depending on the sign of $k$, and thus finally
  $k \in \varphi_{\{-1,0,1\}}(\pi(M))$. 
\end{proof}

Let
$G_\N$ denote the subgraph of $\Cayley(\Z,\{-1,0,1\})$ induced by
$\N$, i.e., $V(G_\N)=\N$ and
\[E(G_\N)=\{ (n,n+1)\setsep n \in \N\} \cup \{ (n+1,n)\setsep n \in
  \N\} \cup \{(n,n) \setsep n \in \N\}.\]
\begin{prop}
  Let $(\Gamma,A)$ be a finitely generated group without dead ends and
  with $A=A^{-1}$.
  Then $(\PP(\Gamma),\varphi_A)$
  factors onto $(\PP(\N), \varphi_{G_{\N}})$.
\end{prop}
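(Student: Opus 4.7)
The plan is to construct the candidate factor map $\pi: \PP(\Gamma) \to \PP(\N)$ by sending a subset to the set of word lengths of its elements,
\[\pi(W) := \{|g|_A \setsep g \in W\},\]
and then verify continuity, surjectivity, and equivariance.

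Continuity will be immediate: $n \in \pi(W)$ if and only if $W \cap (A^n \setminus A^{n-1}) \neq \emptyset$ (with the convention $A^{-1} := \emptyset$), a condition depending only on $W \cap A^n$, which is a finite set. For surjectivity, given any $S \subseteq \N$, the subset $W_S := \bigcup_{n \in S}(A^n \setminus A^{n-1})$ will satisfy $\pi(W_S) = S$, provided each sphere $A^n \setminus A^{n-1}$ is non-empty. This sphere non-emptiness follows by iterating the no-dead-end hypothesis from $1_\Gamma$: starting at $g_0 := 1_\Gamma$ and recursively choosing $a_i \in A$ with $|g_i a_i|_A > |g_i|_A$ produces elements $g_n \in A^n \setminus A^{n-1}$ for every $n \in \N$, using the triangle inequality to see that successive word lengths increase by exactly one.

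The heart of the argument is equivariance, $\pi(WA) = \varphi_{G_\N}(\pi(W))$. Since $1_\Gamma \in A$ and $A = A^{-1}$, the triangle inequality gives $\bigl||ga|_A - |g|_A\bigr| \leq 1$ for every $g \in \Gamma$ and $a \in A$, yielding the inclusion $\pi(WA) \subseteq \pi(W) \cup (\pi(W)+1) \cup ((\pi(W)-1)\cap \N) = \varphi_{G_\N}(\pi(W))$. For the reverse inclusion the three components will be handled separately: $\pi(W) \subseteq \pi(WA)$ follows directly from $1_\Gamma \in A$; for $n \in \pi(W)$ with a witness $g \in W$ of word length $n$, the no-dead-end hypothesis produces $a \in A$ with $|ga|_A = n+1$, giving $n+1 \in \pi(WA)$; finally, when $n \geq 1$, writing a witness $g$ as $g = ha$ with $h \in A^{n-1}$ and $a \in A$ and invoking $a^{-1} \in A$ shows $ga^{-1} = h \in WA$ has word length $n-1$, so $n-1 \in \pi(WA)$.

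The main conceptual point, rather than a technical obstacle, is recognizing where each hypothesis enters: the symmetry $A = A^{-1}$ is what allows us to realize downward steps $n \mapsto n-1$ in the quotient, while the absence of dead ends is what realizes upward steps $n \mapsto n+1$. Neither hypothesis is superfluous. Once the definition of $\pi$ is in place, the verifications themselves are short and elementary.
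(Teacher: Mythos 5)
Your map is exactly the one the paper uses (the paper defines $k \in \pi(M)$ iff $(A^k\setminus A^{k-1})\cap M\neq\emptyset$, which is precisely your word-length set under the convention $A^{-1}=\emptyset$), and the paper simply states the map without spelling out the verification. Your proof of continuity, surjectivity, and equivariance is correct and fills in exactly the details the paper omits, so this is the same approach, just more explicit.
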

\begin{proof}
  The factor map is given by $\pi:\PP(\Gamma) \to \PP(\N)$ with
  \begin{align*}
    0 \in \pi(M) &\iff 1_{\Gamma} \in M  \text{ and } \\
    k \in \pi(M) &\iff (A^k \setminus A^{k-1}) \cap M \neq \emptyset \text{
  for } k >0.\qedhere
  \end{align*}
\end{proof}

\begin{prop}
  Let $A = \{-1,0,1\} \subseteq \Z$.
  There is factor map from $(\PP(\Z),\varphi_{A^n})$ to $(\PP(\Z),\varphi_{A})$
  for all $n \in \N$.
\end{prop}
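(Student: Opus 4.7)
The plan is to exhibit an explicit block coding factor map, using the observation that $A^n = \{-n,-n+1,\ldots,n\}$, so $\varphi_{A^n}$ adds $\{-n,\ldots,n\}$ to a subset of $\Z$ while $\varphi_A$ adds $\{-1,0,1\}$.

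First I would define $\pi : \PP(\Z) \to \PP(\Z)$ by partitioning $\Z$ into consecutive blocks of length $n$ and recording which blocks are hit: set
\[
\pi(W) := \bigl\{k \in \Z \setsep W \cap \{kn, kn+1, \ldots, (k+1)n-1\} \neq \emptyset\bigr\}.
\]
Continuity of $\pi$ is immediate, because the indicator of $k \in \pi(W)$ depends only on the restriction of $W$ to the finite set $\{kn,\ldots,(k+1)n-1\}$. Surjectivity is also clear: given $U \in \PP(\Z)$, the set $W := nU$ satisfies $\pi(W) = U$.

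The core of the proof is the intertwining identity $\pi \circ \varphi_{A^n} = \varphi_A \circ \pi$. I would establish it by computing both sides directly: a straightforward expansion shows
\[
\pi(\varphi_{A^n}(W)) = \bigl\{k \in \Z \setsep W \cap \{(k-1)n,\ldots,(k+2)n-1\} \neq \emptyset\bigr\},
\]
since $(W+\{-n,\ldots,n\}) \cap \{kn,\ldots,(k+1)n-1\} \neq \emptyset$ iff $W$ meets the enlarged interval $\{kn-n,\ldots,(k+1)n-1+n\}$. On the other hand,
\[
\varphi_A(\pi(W)) = \pi(W) + \{-1,0,1\} = \bigl\{k \setsep \{k-1,k,k+1\} \cap \pi(W) \neq \emptyset\bigr\},
\]
and the union of the three blocks $\{(k-1)n,\ldots,kn-1\}$, $\{kn,\ldots,(k+1)n-1\}$, $\{(k+1)n,\ldots,(k+2)n-1\}$ corresponding to $k-1,k,k+1 \in \pi(W)$ is exactly $\{(k-1)n,\ldots,(k+2)n-1\}$. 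The two descriptions agree, so $\pi$ is a factor map.

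There is no real obstacle here: the construction reduces to a one-line block code, and the verification is a routine computation involving two intervals of length $3n$. The only thing to observe is that $A^n$ happens to be a symmetric interval, which is what makes the blocks align correctly with the dynamics.
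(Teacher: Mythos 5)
Your proof is correct and uses exactly the same block-coding factor map $\pi(W)=\{k\in\Z\setsep W\cap\{kn,\dots,kn+n-1\}\neq\emptyset\}$ as the paper; the paper simply states the map without carrying out the verification. Your explicit check of the intertwining identity and of surjectivity via $W=nU$ is a welcome elaboration of the paper's one-line argument.
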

\begin{proof}
  The factor map is given by $\pi:\PP(\Z) \to \PP(\Z)$ with
  \begin{align*}
   \pi(M)&=\{k \in \Z \setsep \{kn,\dots,kn+(n-1)\} \cap M \neq \emptyset\}.\qedhere
  \end{align*}
\end{proof}

We conclude by stating  the following concrete instance of \Cref{quest:factoring}:
\begin{quest}
 Is there a factor map from $(\PP(\Z),\varphi_{A^3})$ to $(\PP(\Z),\varphi_{A^2})$
 or vice versa?
\end{quest}

\printbibliography
\end{document}